\newtheorem{defi}{Definition}[section]
\newtheorem{lem}[defi]{Lemma}
\newtheorem{theo}[defi]{Theorem}
\newtheorem{cor}[defi]{Corollary}
\newtheorem{pro}[defi]{Proposition}
\newtheorem{rem}[defi]{Remark}
\DeclareMathOperator{\divop}{div}
\DeclareMathOperator{\curl}{curl}
\DeclareMathOperator{\jac}{Jac}
\DeclareMathOperator{\supp}{supp}
\DeclareMathOperator{\Ker}{Ker}
\DeclareMathOperator{\Img}{Im}
\DeclareMathOperator{\NN}{\mathbb{N}}
\DeclareMathOperator{\RR}{\mathbb{R}}
\DeclareMathOperator{\CC}{\mathbb{C}}
\begin{document}

\title[Generalized Beltrami fields and  vortex structures in the Euler equations]{Stability results, almost global generalized Beltrami fields and applications to vortex structures in the Euler equations}{\thanks{This work has been partially supported by the MINECO-Feder (Spain) research grant, the project MTM2014-53406-R (D.P.\ and J.S.), the Junta de Andaluc\'ia (Spain) Project FQM 954 (D.P.\ and J.S.), the MECD (Spain) research grant FPU2014/06304 (D.P.) and by the ERC Starting Grant~633152 and the ICMAT--Severo Ochoa grant SEV-2011-0087 (A.E.).
}}

\author{Alberto Enciso}
\address{Instituto de Ciencias Matem\'aticas, Consejo Superior de Investigaciones Cient\'\i ficas, 28049 Madrid, Spain}
\email{aenciso@icmat.es}

\author{David Poyato}
\address{Departamento de Matem\'atica Aplicada, Universidad de Granada, 18071 Granada, Spain}
\email{davidpoyato@ugr.es}

\author{Juan Soler}
\address{Departamento de Matem\'atica Aplicada, Universidad de Granada, 18071 Granada, Spain}
\email{jsoler@ugr.es}

\begin{abstract}
Strong Beltrami fields, that is, vector fields in three dimensions whose curl is the product of the field itself by a constant factor, have long played a key role in fluid mechanics and magnetohydrodynamics. In particular, they are the kind of stationary solutions of the Euler equations where one has been able to show the existence of vortex structures (vortex tubes and vortex lines) of arbitrarily complicated topology. On the contrary, there are very few results about the existence of generalized Beltrami fields, that is, divergence-free fields whose curl is the field times a non-constant function. In fact, generalized Beltrami fields (which are also stationary solutions to the Euler equations) have been recently shown to be rare, in the sense that for ``most'' proportionality factors there are no nontrivial Beltrami fields of high enough regularity (e.g., of class $C^{6,\alpha}$), not even locally.

Our objective in this work is to show that, nevertheless, there are ``many'' Beltrami fields with non-constant factor, even realizing arbitrarily complicated vortex structures. This fact is relevant in the study of turbulent configurations. The core results are an ``almost global'' stability theorem for strong Beltrami fields, which ensures that a global strong Beltrami field with suitable decay at infinity can be perturbed to get ``many'' Beltrami fields with non-constant factor of arbitrarily high regularity and defined in the exterior of an arbitrarily small ball, and a ``local'' stability theorem for generalized Beltrami fields, which is an analogous perturbative result which is valid for any kind of Beltrami field (not just with a constant factor) but only applies to small enough domains. 

The proof relies on an iterative scheme of Grad--Rubin type.  For this purpose, we study the Neumann problem for the inhomogeneous Beltrami equation in exterior domains via a boundary integral equation method and we obtain H\"{o}lder estimates, a sharp decay at infinity and some compactness properties for these sequences of approximate solutions. Some of the parts of the proof are of independent interest.
\end{abstract}

\maketitle

\tableofcontents

\section{Introduction}

{\em Beltrami fields}\/, that is, three dimensional vector fields whose curl is proportional to the field, are a particularly important class of smooth stationary solutions of the three-dimensional incompressible Euler equations:
\[
\partial_t u+ (u\cdot \nabla) u=-\nabla p\,,\qquad \divop u=0\,.
\]
In a way, what makes them so special is the celebrated structure theorem of Arnold~\cite{Arnold}, which asserts that, under suitable technical hypotheses, the velocity field of a smooth stationary solution to the Euler equations is either a Beltrami field or ``laminar'', in the sense that it admits a regular first integral whose smooth level sets provide ``layers'' to which the fluid flow is tangent. In fluid mechanics, a Beltrami field is interpreted as a fluid whose velocity is parallel to its vorticity.

Understanding the knot and link type of stream lines and tubes in stationary fluids has also attracted the attention of many researchers, both from the theoretical and the experimental points of view \cite{Enciso2,Enciso3,KlecknerIrvine,WiegelmannSakurai}, because knotted stationary vortex structures turned out to play a key role in the so called Lagrangian theory of turbulence. From a numerical point of view, the description of the flows in the literature that allow for arbitrary vortex structures is mainly based on an active vector formulation of Euler's equations (see \cite{Constantin} and the references therein). The existence of knotted and linked vortex lines and tubes in stationary solutions to the Euler equations has been recently established in \cite{Enciso2,Enciso3} 
using {\em strong Beltrami fields}, that is, Beltrami fields with a constant proportionaly factor:
\begin{equation}\label{stBF}
\curl u=\lambda u\,, \qquad \lambda\in\RR\backslash\{0\}\,.
\end{equation}
Notice that the Beltrami fields in~\cite{Enciso2,Enciso3} can be assumed to fall off as $1/|x|$ at infinity, and that this decay rate is optimal (see the global obstructions in the form of a Liouville type theorem in \cite{CC,Nadirashvili}). Concrete examples of Beltrami fields with constant proportionality factor are the ABC flows, whose analysis has yielded considerable insight into the aforementioned phenomenon of Lagrangian turbulence~\cite{Dombre}. 

The main objective of this paper is to study the existence, regularity and stability results of {\em generalized Beltrami fields} (i.e., Beltrami fields with nonconstant proportionality factor). This vector fields play a fundamental role in the understanding of turbulence. The idea that turbulent  flows can be understood as a superposition of Beltrami flows has already been proposed in \cite{CM,Orz}. They are also relevant in magnetohydrodynamics in the context of vanishing Lorentz force (\textit{force-free fields}) and they can be used to model magnetic relaxation, which is relevant in some astrophysical applications \cite{Kaiser,Kress,Mo,Mo2}. Indeed, to the best of our knowledge there are just a handful of explicit examples, all of which have Euclidean symmetries, and the analysis of Beltrami fields with nonconstant factor has proved to be extremely hard. The heart of the matter is that, as it was recently proved in~\cite{Enciso1}, the equation for a generalized Beltrami field,
\begin{equation}\label{campobeltramigen.domgeneral.defi}
\curl u=fu\,,\qquad \divop u=0\,,
\end{equation}
does not admit any nontrivial solution, even locally, for a ``generic'' nonconstant function~$f$. In a very precise sense, it shows that Beltrami fields with a nonconstant factor are rare and such obstruction is of a purely local nature. These results have been carefully stated in Appendix \ref{Appendix.B} for the reader's convenience.

One of the aims of this paper is to show that, although generalized Beltrami fields are indeed rare, one can still prove some kind of partial stability result. Specifically, we will show that for each nontrivial Beltrami field, there are ``many'' close enough nonconstant proportionality factor that enjoy close nontrivial generalized Beltrami fields. The stabilility result is ``partial'' in the sense that a ``full'' stability result cannot be expected since the space of factors that enjoy nontrivial generalized Beltrami fields does not contain any ball in the $C^{k,\alpha}$ norm by the above-mentioned obstructions.The analysis of stability can be crucial to shed some light on the interactions between the different scales in the study of relevant configurations in a fully turbulent state.

More concretely, we will prove two stability results for generalized Beltrami fields. The first one (Theorem~\ref{paso.limite.teo}) is an ``almost global'' perturbation result for strong Beltrami fields defined on $\RR^3$. Roughly speaking, it asserts that given any nontrivial solution of~\eqref{stBF} on $\RR^3$ with optimal fall-off at infinity (i.e., $1/|x|$) and any arbitrarily small ball~$G$, there are infinitely many nonconstant factors $f$, as close to the constant~$\lambda$ as one wishes in $C^{k,\alpha}(\RR^3)$, such that the corresponding equation~\eqref{campobeltramigen.domgeneral.defi} admits nontrivial solutions on the complement $\RR^3\backslash \overline{G}$. This can be combined with the results in~\cite{Enciso2,Enciso3} to construct almost global Beltrami fields with a nonconstant factor that feature vortex lines and vortex tubes of arbitrarily complicated topology (Theorem~\ref{BeltramiGeneralizado.TubosAunudados.teo}). The second stability result (Theorem~\ref{perturbacion.local.teo}) states an analogue for perturbations of nontrivial Beltrami fields with constant or nonconstant factor defined in a small enough open set where the field does not to vanish. The point of these stability results is that the perturbation of the initial proportionality factor is defined by recursively propagating a two-variable function along the integral curves of a velocity vector field, so that is the flexibility in choosing the proportionality factor that is granted by the method of proof. Notice that the idea of constructing the proportionality factor by dragging along the integral curves of a field is somehow inherent to the problem, as the incompressibility condition $\divop u=0$ implies that, if it is nonconstant, the factor must be a first integral of the generalized Beltrami field, i.e.,
\[
u\cdot \nabla f=0\,.
\]

Let us outline the key aspects of the proofs. For concreteness, since all the ideas involved in the proof of the local partial stability result are essentially present in that of the almost global theorem, we shall only discuss the latter result in this Introduction. As we have already mentioned, the point of the partial stability result is to develop a perturbation technique allowing us to deform the initial factor~$f$, which for the purpose of this discussion can be taken to be a nonzero constant $\lambda$. This requires analyzing a related boundary value problem, namely, the \textit{Neumann boundary value problem for the inhomogeneous Beltrami equation} with constant proportionality factor $\lambda$ in exterior domains. To our best knowledge, this problem has not been directly studied in the literature. Our  analysis is based on a boundary integral equation method for complex-valued solutions which requires some potential theory estimates for generalized volume and single layer potentials and an analysis of the decay properties and radiation conditions of the solutions. They will be determined through the natural connections between the complex-valued solutions of the Beltrami, Helmholtz and Maxwell systems. 

In~\cite{Kaiser}, the authors show that one can perturb a harmonic field (i.e., a Beltrami field with $\lambda=0$) defined in an exterior domain to construct a generalized Beltrami field with a nonconstant factor. However, the perturbed fields and factors are of low regularity (of class $C^{1,\alpha}$ and $C^{0,\alpha}$, respectively). In view of the relevance and important applications of Beltrami fields with nonzero $\lambda$, we have striven to extend the result for harmonic fields to general Beltrami fields, and also to show the existence of perturbations of arbitrarily high regularity (the field will be in $C^{k+1,\alpha}$ and the factor in $C^{k,\alpha}$ for any fixed integer~$k$). It should be stressed that the passing from $\lambda=0$ to nonzero $\lambda$ is not a trivial matter, since the behavior of the equations at infinity is completely different (oversimplifying a little, for $\lambda=0$ the behavior of the fields at infinity is that of a harmonic function, so one gets uniqueness simply from a decay condition, while for nonzero $\lambda$, Beltrami fields solve Helmhotz's equation, so radiation conditions must be specified to obtain uniqueness.) We will present a detailed treatment of these topics (Section~\ref{Beltrami.NoHomogenea.Seccion} and Appendix~\ref{Teoria.Potencial.Tecnicas.Seccion}), since we consider that they are of independent interest.

The gist of the proof of the almost global partial stability result for strong Beltrami fields is to study the convergence in $C^{k,\alpha}$ of an {iterative scheme} that takes the form
\begin{equation*}
\left\{\begin{array}{ll}
\nabla \varphi_n\cdot u_n=0, & x\in \Omega,\\
\varphi_n=\varphi^0, & x\in \Sigma,
\end{array}\right.\hspace{1cm}
\left\{\begin{array}{l}
\curl u_{n+1}-\lambda u_{n+1}=\varphi_n u_n, \hspace{0.5cm} x\in \Omega,\\
u_{n+1}\cdot \eta= u_0\cdot \eta, \hspace{2.08cm} x\in S.
\end{array}\right.
\end{equation*}
Here, $\Omega$ stands for an exterior domain with smooth boundary $S$, $\eta$ is its outward unit normal vector field and $\Sigma$ is some open subset of the boundary. This is a modified Grad--Rubin method (see \cite{Amari2,Bineau} for the original Grad--Rubin method in the setting of force-free fields perturbations of harmonic fields), which we will start up with a strong Beltrami field $u_0$ of constant proportionality factor $\lambda$ (which can be assumed to exhibit knotted and linked vortex structures) and prescribes the value $\varphi^0$ of the perturbation of the proportionality factor $\lambda$ over $\Sigma$. Notice that $\{\varphi_n\}_{n\in\NN}$ and $\{u_n\}_{n\in\NN}$ are taken in a consistent way so that whenever they have limits $\varphi$ and $u$ in some sense, then $\varphi$ is a global first integral of $u$ and such vector field verifies the Beltrami equation (\ref{campobeltramigen.domgeneral.defi}) with $f=\lambda+\varphi$. 

Our approach will be based again on the analysis of \textit{stationary transport equations} along stream tubes and a sequence of inhomogeneous problems of div-curl type that we will call \textit{inhomogeneous Beltrami equations} and which are intimately linked to the Helmhotz equation. In fact, we will start with the complex-valued fundamental solution of the Helmholtz equation in $\RR^3$
$$\Gamma_\lambda(x)=\frac{e^{i\lambda \vert x\vert}}{4\pi\vert x\vert},\ x\in \RR^3\setminus\{0\},$$
and will arrive at a representation formula of Helmholtz--Hodge type for its complex-valued solutions. Then, it is necessary to specify the optimal decay and radiation conditions that allow dealing with generalized volume and single layer potentials, namely,
\begin{align}
\int_{\partial B_R(0)}\vert u(x)\vert\,d_xS & =o(R^2),\hspace{0.37cm}R\rightarrow +\infty,\label{CondCaidaBeltrami.Intro.form}\\
\int_{\partial B_R(0)} \left\vert i \frac{x}{R}\times u(x)- u(x)\right\vert\,d_xS &=o(R),\hspace{0.5cm}R\rightarrow +\infty.\label{CondicionRadiacion.Beltrami.L1SilverMullerBeltrami.Intro.form}
\end{align}
Here, (\ref{CondCaidaBeltrami.Intro.form}) is nothing but a weak decay condition of the velocity field $u$ in $L^1$ and (\ref{CondicionRadiacion.Beltrami.L1SilverMullerBeltrami.Intro.form}) will be called the $L^1$ \textit{Silver--M\"{u}ller--Beltrami radiation condition} ($L^1$ SMB)  and will be deduced from both the classical Sommerfeld and Silver--M\"{u}ller radiation conditions, whose connections with the Helmholtz equation and the Maxwell system are classical. 

Summing up, we will be interested in analyzing the existence and uniqueness of complex-valued smooth solution with high order H\"{o}lder-type regularity  of the general \textit{Neumann boundary value problem for the inhomogeneous Beltrami equation} (NIB)
\begin{equation}\label{BeltramiNoHomog.Intro.eq}
\left\{\begin{array}{l}
\curl u-\lambda u=w, \hspace{0.5cm} x\in\Omega,\\
u\cdot\eta=g, \hspace{1.6cm} x\in \Omega,\\
+\ L^1\mbox{ decay property }(\ref{CondCaidaBeltrami.Intro.form}),\\
+\ L^1\mbox{ SMB radiation condition }(\ref{CondicionRadiacion.Beltrami.L1SilverMullerBeltrami.Intro.form}).
\end{array}\right.
\end{equation}
Notice that although we were originally interested in real-valued Beltrami fields, we will be concerned with complex-valued solutions to (\ref{BeltramiNoHomog.Intro.eq}) and we will then take real parts to obtain the real-valued ones. The reason to do it is twofold. Firstly, this will allow us to employ a representation formula for complex-valued \textit{radiating} fields. Secondly, this presents no problems related to the application to knotted structures as one can realize the fields in~\cite{Enciso2,Enciso3} as the real parts of complex-valued radiating Beltrami fields. Problem (\ref{BeltramiNoHomog.Intro.eq}) was previously studied in \cite{Kress}, who proved $C^1$ regularity results in bounded domains. We introduce some potential theory estimates of high order for generalized potentials associated with inhomogeneous kernels in exterior domains and adapt the boundary integral method  to the unbounded setting. We will also improve regularity from $C^1$ to $C^{k+1,\alpha}$. 

Consequently, we will rely on the complex-valued counterpart of the  modified Grad--Rubin method:
\begin{equation}\label{paso.limite.esquemaiterativo.Intro.form}
\left\{\begin{array}{ll}
\nabla \varphi_n\cdot u_n=0, & x\in \Omega,\\
\varphi_n=\varphi^0, & x\in \Sigma,
\end{array}\right.\hspace{1cm}
\left\{\begin{array}{l}
\curl v_{n+1}-\lambda v_{n+1}=\varphi_n u_n, \hspace{0.5cm} x\in \Omega,\\
v_{n+1}\cdot \eta= u_0\cdot \eta, \hspace{2.08cm} x\in S,\\
+\ L^1\mbox{ Decay property }(\ref{CondCaidaBeltrami.Intro.form}),\\
+\ L^1\mbox{ SBM radiation condition }(\ref{CondicionRadiacion.Beltrami.L1SilverMullerBeltrami.Intro.form}),
\end{array}\right.
\end{equation}
where $u_n=\Re v_n$ are the real parts of the complex-valued solutions $v_n$. The compactness of $\{u_n\}_{n\in \NN}$ in $C^{k+1,\alpha}(\overline{\Omega},\RR^3)$ follows from some Schauder estimates of Equation (\ref{BeltramiNoHomog.Intro.eq}) in H\"{o}lder spaces. Similarly, $\{\varphi_n\}_{n\in \NN}$ will be shown to be compact in $C^{k,\alpha}(\overline{\Omega})$ too. Concerning the application to solutions $u_0$ with knotted vortex structures of the kind constructed in~\cite{Enciso2,Enciso3}, we will see that the solution $u$ inherits the knotted vortex structures from $u_0$ (up to a small deformation) by virtue of structural stability. This is a straightforward consequence of the fact that $u$ can be chosen close to $u_0$ as long as the prescribed value $\varphi^0$ is small enough.

The paper is organized as follows. Section \ref{Esquema.Iterativo.Seccion} is devoted to study the iterative scheme (\ref{paso.limite.esquemaiterativo.Intro.form}). First, we analize the linear transport equations in the right hand side and the convergence of the iterative scheme will then follow from the analysis of NIB (\ref{BeltramiNoHomog.Intro.eq}). Such problem will be studied in Section \ref{Beltrami.NoHomogenea.Seccion} by extending the results in \cite{Kress,Neudert,vonWahl}. By comparison with the \textit{vector-valued divergence-free Helmhotz equation}, the \textit{reduced Maxwell system} and the Beltrami equation, we will deduce the appopriate radiation and decay conditions. The SBM radiation condition (\ref{CondicionRadiacion.Beltrami.L1SilverMullerBeltrami.Intro.form}) will then be connected with the classical Silver--M\"{u}ller and Sommerfeld radiation conditions and we will then present a representation formula of Helmholtz--Hodge type which involves these radiation conditions and that will be extremely useful to obtain our existence, uniqueness and regularity results. In Section \ref{Estructuras.Anudadas.Seccion} we combine the above results to construct small perturbations of the constant proportionality factor $\lambda$ leading to nontrivial generalized Beltrami fields that exhibit the same kind of knots and links and so to construct stationary solutions to the Euler equations. In order to support the above regularity results, Section \ref{Teoria.Potencial.Tecnicas.Seccion} will focus on obtaining  H\"{o}lder estimates of high order for volume and single layer potentials associated with the kernel $\Gamma_\lambda(x)$. The underlying ideas can be adapted to many other general inhomogeneous kernels with a controlled decay at infinity. The local partial stability result for generalized Beltrami fields will be discussed in Section~\ref{Ch.local}. Finally, Appendix \ref{Appendix.A} summarizes some geometric results that will be used throughout the paper and Appendix \ref{Appendix.B} recalls, for the benefit of the reader, the results on the generic non-existence of generalized Beltrami fields proved in~\cite{Enciso1}.\bigskip

\subsubsection*{Notation} 

Let us conclude this Introduction by summing up some notation that will be used throughout the paper without further notice. The notation regarding the domains can be stated as follows:
\begin{equation}\label{GSigmaMu.hipot}
\left\{\hspace{-0.6cm}\text{\parbox{0.90\textwidth}{\begin{itemize}
\item $G$ is a $C^{k+5}$ bounded domain homeomorphic to an Euclidean ball and containing the origin, i.e., $0\in G$.
\item $\Omega:=\RR^3 \backslash \overline G$ is its exterior domain and $S:=\partial \Omega=\partial G$ is the boundary surface.
\item $\eta$ denotes the outward unit normal vector field of $S$.
\end{itemize}}}\right.
\end{equation}
Although most of our results hold under weaker assumption on the boundary regularity (specifically $C^{k+1,\alpha}$ boundaries), there are certain results concerning a singular boundary integral equations which need $S$ to be at least $C^{k+5}$ because higher order derivatives of the normal vector field $\eta$ are involved (see for instance  Theorem \ref{Potencial.capasimple.regularidad.frontera.teo}).

Concerning functional spaces, we will essentially use the same notation as in~\cite{Gilbarg}. Let us agree to say that $C^k(\Omega)$ is the space of functions of class $C^k$ on~$\Omega$ with finite $C^k$~norm (meaning that all their derivatives up to order~$k$ are bounded). We will replace $\Omega$ by $\overline{\Omega}$ when the function and all its derivatives up to order~$k$ can be continuously extended to the closure of $\Omega$. The space $C^{k,\alpha}(\Omega)$ is the inhomogeneous H\"older space with exponent $\alpha\in(0,1)$ and $k$-th order regularity. We will use similar notation $C^k(S)$, $C^{k,\alpha}(S)$ for functions defined on~$S$. Vector-valued analogues of these spaces are denoted in the usual fashion, e.g.\ $C^{k,\alpha}(\Omega,\RR^3)$.

\section{Neumann problem for the inhomogeneous Beltrami equation and radiation conditions}\label{Beltrami.NoHomogenea.Seccion}
In this section we analyze the existence and uniqueness of solutions in $C^{k+1,\alpha}$ of the NIB problems (\ref{BeltramiNoHomog.Intro.eq}) arising in the modified Grad--Rubin iterative method (\ref{paso.limite.esquemaiterativo.Intro.form}). The key tool is a representation formula of Helmholtz--Hodge type for its solutions, which we will combine with the well-posedness of the underlying boundary integral equation for the tangential components in the space of $C^{k+1,\alpha}$ tangent vector fields to the boundary. For this we will need to improve some regularity results for high order derivatives of generalized volume and single layer potentials arising in the classical potential theory, which will require some potential-theoretic estimates for inhomogeneous singular integral kernels that are relegated to Section \ref{Teoria.Potencial.Tecnicas.Seccion} for simplicity of exposition. Regarding the representation formula, we will introduce and discuss in detail the weakest decay and radiation conditions under which this formula holds (namely, (\ref{CondCaidaBeltrami.Intro.form}) and (\ref{CondicionRadiacion.Beltrami.L1SilverMullerBeltrami.Intro.form})), as this topic is of independent interest.  Notice that many other radiation conditions have been used in the literature for related models: the natural one for the scalar complex-valued Helmholtz equation is the \textit{Sommerfeld radiation condition} and those of the reduced Maxwell system are called the \textit{Silver--M\"{u}ller radiation conditions} (SM) (see e.g.\ \cite{ColtonKress,ColtonKress2,Nedelec,Wilcox}).

Let us first recall some previous results in the literature on the exterior NIB boundary value problem (\ref{BeltramiNoHomog.Intro.eq}). Although the same problem is studied in \cite{Kress} for bounded domains and $C^1$ vector fields by means of a related approach \cite{Kress} (which also establishes a Helmholtz--Hodge like representation formula for such fields and employs boundary integral equations), the technique that we present in this section has not been studied in the case of exterior domains and $C^{k,\alpha}$-regularity.  We recall that in \cite{Kress} it was essential to assume that $\lambda$ is ``regular'' with respect to the interior problem. This is the case when $\lambda$ is not a Dirichlet eigenvalue of the Laplacian in the interior domain, or if it is a simple eigenvalue whose eigenfunction has non-zero mean, so this condition holds generically (as it can be seen e.g.\ by considering arbitrarily small rescalings of the domain).

Related results for exterior domains are proved in \cite{Neudert}. Indeed, the technique used in bounded domain by~\cite{vonWahl} and~\cite{Kress} (for $\lambda=0$ and $\lambda\neq 0$, respectively) goes through to the case of $\lambda=0$ and exterior domains via sharp estimates of harmonic volume and single layer potentials in $C^{1,\alpha}$. Roughly speaking, the main technical difference that we will encounter here is that in our case $\lambda$ is a nonzero constant, which leads to inhomogeneous kernels where the above estimates in unbounded domains are much harder to obtain. In fact, while these estimates are standard for $\lambda=0$, only estimates for the first order derivatives have been derived in the case of nonzero $\lambda$ (see \cite{ColtonKress}). In fact, \cite{Kress} only considers $C^1$ estimates even for the (easier) interior problem.

There is some literature regarding Laplace's equation in less regular settings (e.g. $L^p$ data and Lipschitz domains). For $C^1$ domains, \cite{Dahlberg1,Dahlberg2} solved it via the analysis of harmonic measures and \cite{FJR} introduced a method of layer potentials. The latter looks like the method that we propose and is supported by Fredholm's theory: some boundary singular integral operator is shown to be compact and one to one in the $C^1$ setting, leading to biyectivity and an useful lower estimate that entails the well posedness. For purely Lipschitz domains, compactness does no longer hold \cite{FJL} whilst biyectivity is preserved \cite{DahlbergKenig}. Regarding non-symmetric elliptic operators $L=-\divop A(x)\nabla$ in the half-space $(x,t)\in\RR^n\times \RR^+$, the well posedness of the Dirichlet problem with $L^p$ data \cite{HKMP} follows from the method of ``$\varepsilon$-approximability'' and the absolute continuity of the $L$-harmonic measure with respect to the surface measure.

This section is organized as follows. In the first part, we analyze the representation formula, the radiation conditions and some existence and uniqueness results for the scalar complex-valued Helmholtz equation. We will introduce there some classical notation and powerful tools like the \textit{far field pattern} of a \textit{radiating} solution not only in the homogeneous setting but also in the inhomogeneous one. In the second part we move to the Beltrami problem and try to carry out the same program as with Helmholtz equation. We will introduce the natural SM radiation conditions of the reduced Maxwell system and will link them with the natural radiation conditions both for the inhomogeneous Beltrami equation and an intimately related model: the divergence-free Helmholtz equation. Then, we prove the aforementioned representation formula and our existence and uniqueness results, which follow from the generalized potential theory estimates in Section \ref{Teoria.Potencial.Tecnicas.Seccion}
 along with the analysis of the well-posedness for the boundary integral equation for the tangential components. This will be studied in the last paragaph of this section.

\subsection{Inhomogeneous Helmholtz equation in the exterior domain}
The Helmholtz equation with wave number $\lambda\in\RR$ in the exterior domain $\Omega$ stands for the elliptic PDE
$$\Delta a+\lambda^2 a=0,\ \ x\in \Omega,$$
where the unknown is  a possibly complex-valued scalar function $a\in C^2(\Omega,\CC)$. This equation arises in acoustic and electromagnetic mathematics \cite{ColtonKress2,Nedelec} and in the study of high energy eigenvalue asymptotics. 
The Helmholtz equation also appears in the study of Beltrami fields arising either from the incompressible Euler equation or from the force-free field system of magnetohydrodynamics
$$\curl u=\lambda u,\ \ x\in \Omega.$$
Taking $\curl$ for $\lambda\neq 0$ one arrives at the following vector-valued equation
$$\nabla(\divop u)-\Delta u=\lambda^2u,\ \ x\in\Omega.$$
Since Beltrami fields are divergence-free, then one recovers the vector-valued Helmholtz equation in the domain $\Omega$. This relation with the Beltrami equation suggests to study the representation formulas, radiation conditions and uniqueness lemmas for the Helmholtz equation in the literature.

First of all, let us define the next hierarchy of radiation conditions for a complex-valued scalar function $a\in C^1(\Omega,\CC)$.
\begin{defi}\label{CondicionesRadiacion.defi}
$\,$
\begin{enumerate}
\item $L^1$ Sommerfeld radiation condition
\begin{equation}\label{CondicionRadiacion.Helmholtz.L1Sommerfeld.form}
\int_{\partial B_R(0)} \left\vert\nabla a(y)\cdot\frac{y}{R}-i\lambda a(y)\right\vert\,d_yS=o\left(R\right),\ \ R\rightarrow +\infty.
\end{equation}
\item $L^2$ Sommerfeld radiation condition
\begin{equation}\label{CondicionRadiacion.Helmholtz.L2Sommerfeld.form}
\int_{\partial B_R(0)} \left\vert\nabla a(y)\cdot\frac{y}{R}-i\lambda a(y)\right\vert^2\,d_yS=o(1),\ \ R\rightarrow +\infty.
\end{equation}
\item ($L^\infty$) Sommerfeld radiation condition
\begin{equation}\label{CondicionRadiacion.Helmholtz.Sommerfeld.form}
\sup_{y\in \partial B_R(0)}\left\vert\nabla a(y)\cdot \frac{y}{R}-i\lambda a(y)\right\vert=o\left(\frac{1}{R}\right),\ \ R\rightarrow +\infty.
\end{equation}
\end{enumerate}
\end{defi}

The following chain of implications is obvious:
$$L^\infty\mbox{ Sommerfeld }\Longrightarrow L^2\mbox{ Sommerfeld }\Longrightarrow L^1\mbox{ Sommerfeld}.$$
Originally, only the strongest one (\ref{CondicionRadiacion.Helmholtz.Sommerfeld.form}) was considered. However, several authors \cite{ColtonKress2,Nedelec} came to the conclusion that a weaker radiation condition (\ref{CondicionRadiacion.Helmholtz.L2Sommerfeld.form}) may be assumed to obtain representation formulas and certain uniqueness results.  Although we follow the same approach, we weaken the radiation condition to an even weaker one (\ref{CondicionRadiacion.Helmholtz.L1Sommerfeld.form}) by assuming some kind of decay at infinity that will be much weaker than $\vert x\vert^{-1}$ though. As it will be shown later, both decay and radiation conditions can be recovered from the $L^2$ Sommerfeld radiation condition for solution to the Helmholtz equation. Before showing that this radiation conditions leads to a representation formula of Stokes type, let us analyze them in the case of the fundamental solution to the $3$-D Helmholtz equation, 
\begin{equation}\label{SolucionFundamental.form}
\Gamma_\lambda(x)=\frac{e^{i\lambda \vert x\vert}}{4\pi\vert x\vert}=\left(\frac{\cos(\lambda\vert x\vert)}{4\pi\vert x\vert}+i\frac{\sin(\lambda \vert x\vert)}{4\pi\vert x\vert}\right).
\end{equation}
Since
\begin{equation}\label{DerivadaSolucionFundamental.form}
\nabla\Gamma_\lambda(x)=\left(i\lambda-\frac{1}{\vert x\vert}\right)\Gamma_\lambda(x)\frac{x}{\vert x\vert},
\end{equation}
a straightforward inductive argument shows that all the partial derivatives of $\Gamma_\lambda(x)$ up to second order verify an even stronger version of the Sommerfeld radiation condition (\ref{CondicionRadiacion.Helmholtz.Sommerfeld.form}). Hence we easily infer:

\begin{pro}\label{CondicionSommerfeld.SolucionFundamentalHelmholtz.pro}
The fundamental solution of the Helmholtz equation, together with its partial derivatives up to order $2$ satisfy the identities
\begin{align*}
\nabla \Gamma_\lambda(x)\cdot\frac{x}{\vert x\vert}-i\lambda \Gamma_\lambda(x)&=-\frac{\Gamma_\lambda(x)}{\vert x\vert},\\
\nabla \left(\frac{\partial \Gamma_\lambda}{\partial x_i}\right)(x)\cdot \frac{x}{\vert x\vert}-i\lambda \frac{\partial \Gamma_\lambda}{\partial x_i}(x)&=\left(\frac{2}{\vert x\vert}-i\lambda\right)\Gamma_\lambda(x)\frac{x_i}{\vert x\vert^2},\\
\nabla\left(\frac{\partial^2 \Gamma_\lambda}{\partial x_i\partial x_j}\right)(x)\cdot \frac{x}{\vert x\vert}-i\lambda \frac{\partial^2 \Gamma_\lambda}{\partial x_i\partial x_j}(x)&=-\nabla\left(\frac{\partial \Gamma_\lambda}{\partial x_i}\right)(x)\cdot \frac{\partial}{\partial x_j}\left(\frac{x}{\vert x\vert}\right)+\frac{\partial}{\partial x_j}\left(\left(\frac{2}{\vert x\vert}-i\lambda\right)\Gamma_\lambda(x)\frac{x_i}{\vert x\vert^2}\right),
\end{align*}
for every $i,j\in\{1,2,3\}$. Consequently,
$$\sup_{x\in\partial B_R(0)}\left\vert\nabla(D^\gamma \Gamma_\lambda)(x)\cdot \frac{x}{R}-i\lambda D^\gamma\Gamma_\lambda(x)\right\vert=O\left(\frac{1}{R^2}\right),\mbox{ for } R\rightarrow +\infty,$$
for every multi-index with $\vert \gamma\vert\leq 2$.
\end{pro}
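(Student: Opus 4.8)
The plan is to exploit the explicit closed form of $\Gamma_\lambda$ and reduce everything to a direct computation combined with an elementary induction on the derivative order. First I would establish the two base identities: the formula for $\nabla\Gamma_\lambda$ displayed in~\eqref{DerivadaSolucionFundamental.form} gives immediately
\[
\nabla\Gamma_\lambda(x)\cdot\frac{x}{|x|}-i\lambda\Gamma_\lambda(x)=\Bigl(i\lambda-\frac1{|x|}\Bigr)\Gamma_\lambda(x)-i\lambda\Gamma_\lambda(x)=-\frac{\Gamma_\lambda(x)}{|x|},
\]
which is the first assertion. For the first-order derivatives $\partial\Gamma_\lambda/\partial x_i$ one differentiates~\eqref{DerivadaSolucionFundamental.form} with respect to $x_i$; using $\partial_i(x/|x|)=e_i/|x|-x_ix/|x|^3$ and again~\eqref{DerivadaSolucionFundamental.form} to rewrite $\partial_i\Gamma_\lambda$, a short algebraic manipulation collapses the resulting terms into $\bigl(\tfrac{2}{|x|}-i\lambda\bigr)\Gamma_\lambda(x)\tfrac{x_i}{|x|^2}$, giving the second identity. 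The third identity is then obtained purely formally: writing $\partial^2\Gamma_\lambda/\partial x_i\partial x_j=\partial_j(\partial_i\Gamma_\lambda)$ and applying the product/chain rule for the operator $v\mapsto \nabla v\cdot\frac{x}{|x|}-i\lambda v$, one sees that this operator applied to $\partial_j w$ equals $\partial_j$ of the operator applied to $w$, minus the correction term $\nabla w\cdot\partial_j(x/|x|)$ coming from the $x$-dependence of the direction $x/|x|$; taking $w=\partial_i\Gamma_\lambda$ and inserting the second identity for the operator applied to $\partial_i\Gamma_\lambda$ yields exactly the stated expression.

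Next I would prove the asymptotic bound. The point is that each of the three right-hand sides above is, after differentiation, a finite sum of terms of the form $(\text{polynomial in }1/|x|,\lambda)\cdot\Gamma_\lambda(x)\cdot(\text{bounded rational function of }x/|x|)$, where the polynomial in $1/|x|$ has \emph{no constant term}, i.e.\ every summand carries at least one factor of $1/|x|$ beyond the $1/|x|$ already present in $\Gamma_\lambda$. Since $|\Gamma_\lambda(x)|=\frac{1}{4\pi|x|}$ and all the angular factors $x/|x|$, $\partial_j(x/|x|)\sim 1/|x|$, etc.\ are bounded by a constant times a nonnegative power of $1/|x|$ on $\partial B_R(0)$, one concludes
\[
\sup_{x\in\partial B_R(0)}\Bigl|\,\nabla(D^\gamma\Gamma_\lambda)(x)\cdot\tfrac{x}{R}-i\lambda D^\gamma\Gamma_\lambda(x)\,\Bigr|=O\!\left(\frac1{R^2}\right),\qquad R\to+\infty,
\]
for $|\gamma|\le 2$, simply by inspecting the finitely many terms produced for $|\gamma|=0,1,2$. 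To make the argument uniform (and to see why the structure persists) I would phrase the induction as: for every multi-index $\gamma$ the function $D^\gamma\Gamma_\lambda(x)$ equals $\Gamma_\lambda(x)$ times a finite linear combination of terms $|x|^{-m}\,(\text{homogeneous degree-}0\text{ rational function of }x)$ with $m\ge 0$, and moreover $\nabla(D^\gamma\Gamma_\lambda)\cdot\frac{x}{|x|}-i\lambda D^\gamma\Gamma_\lambda$ has the same form but with every $m$ replaced by $m\ge 1$; the inductive step is precisely the product/chain-rule identity used for the third formula above, together with the base case~\eqref{DerivadaSolucionFundamental.form}.

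I do not expect any genuine obstacle here: the statement is essentially a lemma whose content is that applying $\curl$-type or gradient-type differential operators to the Helmholtz fundamental solution does not destroy — indeed improves by one power of $1/|x|$ — the Sommerfeld radiation decay, and the proof is a bookkeeping exercise on explicit expressions. The only mild care needed is in the second identity, where one must verify that the two contributions $i\lambda\,\partial_i\Gamma_\lambda\cdot\frac{x}{|x|}$ (from differentiating $(i\lambda-1/|x|)$ implicitly) and the $\partial_i$ of the $1/|x|$ factor combine correctly; this is a one-line computation using $\partial_i|x|=x_i/|x|$. Everything else is routine, and the $O(R^{-2})$ rate (stronger than the $o(R^{-1})$ required by~\eqref{CondicionRadiacion.Helmholtz.Sommerfeld.form}) falls out automatically from the extra $1/|x|$ factor identified in the induction.
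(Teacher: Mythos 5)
Your proposal is correct and follows precisely the route the paper indicates: the paper simply asserts that, starting from the explicit derivative formula \eqref{DerivadaSolucionFundamental.form}, ``a straightforward inductive argument'' yields the three identities and the $O(R^{-2})$ bound, and you have supplied exactly that bookkeeping. In particular, your observation that the operator $L[v]=\nabla v\cdot\frac{x}{|x|}-i\lambda v$ commutes with $\partial_j$ up to the correction $-\nabla v\cdot\partial_j(x/|x|)$ is the clean way to see why the third displayed identity has the shape it does, and your tracking of the extra $1/|x|$ power produced by $L$ at each order is what makes the $O(R^{-2})$ decay (rather than merely $o(R^{-1})$) come out automatically.
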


In particular, $\Gamma_\lambda(x)$ together with its partial derivatives up to order two verify the Sommerfeld radiation condition (\ref{CondicionRadiacion.Helmholtz.Sommerfeld.form}). It is then an easy task to obtain new complex-valued solutions to the homogeneous Helmholtz equation enjoying such radiation condition through the definition of the generalized volume and single layer potentials associated with the kernel $\Gamma_\lambda(x)$.

\begin{pro}\label{PotencialesCapaSimple.HelmholtzRadiantes.pro}
Let $a$ be the generalized single layer potential with density $\zeta\in C(S)$ associated with the Helmholtz equation, i.e.,
$$a(x):=(\mathcal{S}_\lambda\zeta)(x)=\int_S \Gamma_\lambda(x-y)\zeta(y)\,d_yS,$$
for every $x\in \Omega$. Then, $a$ and all its partial derivatives up to second order are solutions to the Helmholtz equation which verify the Sommerfeld radiation condition (\ref{CondicionRadiacion.Helmholtz.Sommerfeld.form}).
\end{pro}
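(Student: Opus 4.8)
The plan is to reduce the claim to the behaviour of the fundamental solution $\Gamma_\lambda$ together with Proposition~\ref{CondicionSommerfeld.SolucionFundamentalHelmholtz.pro}, and then to exploit the fact that on a bounded surface $S$ the single layer potential is, for $x$ in any fixed region far from $S$, an absolutely convergent integral that can be differentiated under the integral sign. First I would fix $x\in\Omega$ away from $S$ and note that, since $\zeta\in C(S)$ and $S$ is compact, the map $y\mapsto \Gamma_\lambda(x-y)\zeta(y)$ and all its $x$-derivatives up to order two are continuous in $(x,y)$ on $(\Omega\setminus N)\times S$ for any neighbourhood $N$ of $S$ with $\overline N$ compact; hence $a\in C^\infty$ away from $S$ and
\[
D^\gamma a(x)=\int_S (D^\gamma\Gamma_\lambda)(x-y)\,\zeta(y)\,d_yS
\]
for every multi-index $\gamma$ with $|\gamma|\le 2$, where the derivative is taken in the $x$-variable. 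Because $\Delta_x\Gamma_\lambda(x-y)+\lambda^2\Gamma_\lambda(x-y)=0$ for $x\ne y$, differentiating under the integral sign immediately gives $\Delta (D^\gamma a)+\lambda^2 (D^\gamma a)=0$ in $\Omega$, so $a$ and all its derivatives up to second order solve the Helmholtz equation. (One should be a little careful here that $D^\gamma a$ with $|\gamma|=2$ solves Helmholtz only because $\Delta$ commutes with $D^\gamma$; writing $\Delta D^\gamma a = D^\gamma\Delta a = -\lambda^2 D^\gamma a$ handles this.)

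Next I would establish the Sommerfeld radiation condition~\eqref{CondicionRadiacion.Helmholtz.Sommerfeld.form} for each $D^\gamma a$, $|\gamma|\le 2$. The idea is simply to pull the radiation operator $x\mapsto \nabla_x(\cdot)\cdot\frac{x}{|x|}-i\lambda(\cdot)$ inside the integral. Writing $b:=D^\gamma a$, for $x=R\omega$ with $\omega\in\mathbb S^2$ and $R$ large one has
\[
\nabla b(x)\cdot\frac{x}{R}-i\lambda b(x)=\int_S\left[\nabla_x(D^\gamma\Gamma_\lambda)(x-y)\cdot\frac{x}{R}-i\lambda (D^\gamma\Gamma_\lambda)(x-y)\right]d_yS.
\]
The bracket is not quite the Sommerfeld expression for $D^\gamma\Gamma_\lambda$ centred at $y$, because the radial direction is $\frac{x}{|x|}$ rather than $\frac{x-y}{|x-y|}$; so the step is to add and subtract $\nabla_x(D^\gamma\Gamma_\lambda)(x-y)\cdot\frac{x-y}{|x-y|}$. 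The first resulting piece is exactly $\nabla(D^\gamma\Gamma_\lambda)(z)\cdot\frac{z}{|z|}-i\lambda(D^\gamma\Gamma_\lambda)(z)$ with $z=x-y$, which by Proposition~\ref{CondicionSommerfeld.SolucionFundamentalHelmholtz.pro} is $O(|z|^{-2})=O(R^{-2})$ uniformly for $y\in S$ when $R\to\infty$. The second (error) piece is $\nabla_x(D^\gamma\Gamma_\lambda)(x-y)\cdot\bigl(\frac{x}{|x|}-\frac{x-y}{|x-y|}\bigr)$; since $y$ ranges over the bounded set $S$ one has $\bigl|\frac{x}{|x|}-\frac{x-y}{|x-y|}\bigr|=O(|y|/|x|)=O(R^{-1})$, and from~\eqref{DerivadaSolucionFundamental.form} (plus its iterates) $|\nabla (D^\gamma\Gamma_\lambda)(x-y)|=O(R^{-1})$, so this error is also $O(R^{-2})$. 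Integrating over the fixed compact surface $S$ against the bounded density $\zeta$ gives $\sup_{x\in\partial B_R(0)}|\nabla b(x)\cdot\frac{x}{R}-i\lambda b(x)|=O(R^{-2})=o(R^{-1})$, which is~\eqref{CondicionRadiacion.Helmholtz.Sommerfeld.form}.

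The only genuine subtlety — and the step I would expect to cost the most care — is the uniformity of the estimates in $y\in S$ as $R\to\infty$: one must check that the implicit constants coming from Proposition~\ref{CondicionSommerfeld.SolucionFundamentalHelmholtz.pro} and from the bounds on $\nabla(D^\gamma\Gamma_\lambda)$ are uniform when the argument $z=x-y$ has $|z|\in[R-\operatorname{diam}S,\,R+\operatorname{diam}S]$ and $y$ ranges over $S$. This is straightforward because all the relevant quantities are $O(|z|^{-2})$ (respectively $O(|z|^{-1})$) with constants depending only on $\lambda$, and $|z|\ge R-\operatorname{diam}S\to\infty$; but it is the place where the boundedness of $S$ is essential. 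Everything else — continuity of $a$ and its derivatives in $\Omega$, differentiation under the integral sign, the spherical-surface-measure factor $d_yS$ being finite — is routine. I would close by remarking that the same argument applies verbatim to the generalized volume potential with compactly supported continuous density, which is why the statement is phrased for the single layer potential alone in the proposition that the iterative scheme will actually use.
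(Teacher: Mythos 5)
Your proposal is correct and follows essentially the same route as the paper: differentiate under the integral sign (justified because $|x-y|\ge\operatorname{dist}(x,S)>0$ for $x\in\Omega$, $y\in S$), then for the radiation condition add and subtract the Sommerfeld expression centred at $x-y$, control the first piece via Proposition~\ref{CondicionSommerfeld.SolucionFundamentalHelmholtz.pro}, and control the directional error $\bigl|\tfrac{x}{|x|}-\tfrac{x-y}{|x-y|}\bigr|=O(R^{-1})$ with the mean-value-theorem bound, exactly as in the paper's inequality~\eqref{PotencialesCapaSimple.HelmholtzRadiantes.ValorMedio.ineq}. The only cosmetic difference is that the paper proves the radiation condition centred at an arbitrary point $z$ and then specializes to $z=0$, whereas you work with $z=0$ directly; the estimates and their uniformity in $y\in S$ are identical.
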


\begin{proof}
Taking derivatives under the integral sign, one checks that $a$ solves the complex-valued Helmholtz equation in $\Omega$. In order to check Sommerfeld radiation condition (\ref{CondicionRadiacion.Helmholtz.Sommerfeld.form}), let us use the preceding properties in Proposition \ref{CondicionSommerfeld.SolucionFundamentalHelmholtz.pro}. Fixing  $z\in\RR^3$ and taking derivatives under the integral sign, we have
\begin{align*}
\nabla a(x)\cdot \frac{x-z}{\vert x-z\vert}-i\lambda a(x)&= \int_S \left(\nabla_x \Gamma_\lambda(x-y)\frac{x-z}{\vert x-z\vert}-i\lambda \Gamma_\lambda(x-y)\right)\zeta(y)\,d_yS\\
&=\int_S \left(\nabla_x \Gamma_\lambda(x-y)\frac{x-y}{\vert x-y\vert}-i\lambda \Gamma_\lambda(x-y)\right)\zeta(y)\,d_yS\\
&\qquad \qquad \qquad \qquad \qquad+\int_S \nabla_x \Gamma_\lambda(x-y)\left(\frac{x-z}{\vert x-z\vert}-\frac{x-y}{\vert x-y\vert}\right)\zeta(y)\,d_yS.
\end{align*}
Multiplying the first term by $\vert x-z\vert$ and assuming that $\vert x-z\vert$ is big enough (Proposition \ref{CondicionSommerfeld.SolucionFundamentalHelmholtz.pro}), we find
$$
\vert x-z\vert\left\vert\int_S \left(\nabla_x \Gamma_\lambda(x-y)\frac{x-y}{\vert x-y\vert}-i\lambda \Gamma_\lambda(x-y)\right)\zeta(y)\,d_yS\right\vert\leq \frac{\vert x-z\vert}{(\vert x-z\vert-d)^2}\Vert \zeta\Vert_{L^1(S)},
$$
where $d$ stands for $\max\{\vert y-z\vert:\,y\in S\}$. Therefore, this term  vanishes  for $\vert x-z\vert\rightarrow +\infty$. Regarding the second term, it is easily checked that $\nabla \Gamma_\lambda(x)=O(\vert x\vert^{-1})$, when $\vert x\vert\rightarrow +\infty$. To conclude the proof of this result, let us obtain some extra decay from the difference in the middle, which can be upper bounded through the next straightforward reasonings involving the mean value theorem
\begin{align}
\left\vert\frac{x_i-z_i}{\vert x-z\vert}\right.&-\left.\frac{x_i-y_i}{\vert x-y\vert}\right\vert
=\left\vert\int_0^1\frac{d}{d\theta}\frac{x_i-(\theta y_i+(1-\theta)z_i)}{\vert x-(\theta y+(1-\theta)z)\vert}\,d\theta\right\vert\nonumber\\
&=\left\vert\int_0^1\frac{\vert x-(\theta y+(1-\theta)z)\vert (z_i-y_i)-(x_i-(\theta y_i+(1-\theta)z_i))\frac{x-(\theta y+(1-\theta)z)}{\vert x-(\theta y+(1-\theta)z)\vert}\cdot (z-y)}{\vert x-(\theta y+(1-\theta)z)\vert^2}\,d\theta\right\vert\nonumber\\
&\leq 2\vert y-z\vert\int_0^1\frac{1}{\vert x-(\theta y+(1-\theta)z)\vert}\,d\theta \leq \frac{2d}{\vert x-z\vert-d}.\label{PotencialesCapaSimple.HelmholtzRadiantes.ValorMedio.ineq}
\end{align}
Therefore,
$$\vert x-z\vert\left\vert\int_S \nabla_x \Gamma_\lambda(x-y)\left(\frac{x-z}{\vert x-z\vert}-\frac{x-y}{\vert x-y\vert}\right)\zeta(y)\,d_yS\right\vert\leq C\frac{2d\vert x-z\vert}{(\vert x-z\vert -d)^2}\Vert \zeta\Vert_{L^1(S)},$$
whose limit also vanishes as $\vert x-z\vert\rightarrow +\infty$. Consequently, $a$ verifies the Sommerfeld radiation condition centered at any $z\in\RR^3$. In particular, the above assertion also holds  for $z=0$. A similar reasoning with the partial derivatives of $a$ up to second order also holds according to Proposition \ref{CondicionSommerfeld.SolucionFundamentalHelmholtz.pro}.
\end{proof}

The same result remains true for generalized volume potential with compactly supported densities. In this case, radiating solutions for the inhomogeneous complex-valued Helmholtz equation can be obtained. The proof is identical, with the only distinction that we must change the constant $d$ in the lower bounds of the denominators from $d=\max\{\vert y-z\vert:\,y\in S\}$ to $d=\max\{\vert y-z\vert:\,y\in \supp \zeta\}$.

\begin{pro}\label{PotencialesVolumen.HelmholtzRadiantes.pro}
Let $a$ be the generalized volume potential with density $\zeta\in C_c(\overline{\Omega})$ associated with the Helmholtz equation, i.e.,
$$a(x):=(\mathcal{N}_\lambda\zeta)(x)=\int_\Omega \Gamma_\lambda(x-y)\zeta(y)\,d_yS,$$
for every $x\in \Omega$. Then, $a$ solves the inhomogeneous Helmholtz equation
$$-(\Delta a+\lambda^2 a)=-\zeta,$$
in the exterior domain $\Omega$. Moreover, $a$ and all its partial derivatives up to second order verify the Sommerfeld radiation condition (\ref{CondicionRadiacion.Helmholtz.Sommerfeld.form}).
\end{pro}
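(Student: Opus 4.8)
The plan is to treat the two assertions separately, each being essentially a known fact once the machinery of the preceding propositions is in place. Since $\supp\zeta$ is compact and contained in $\overline\Omega$, I may extend $\zeta$ by zero to all of $\RR^3$; the integral defining $a$ then coincides with the convolution $\Gamma_\lambda*\zeta$, which is well defined because $\Gamma_\lambda$ has only an integrable $|x|^{-1}$ singularity at the origin. On the open set $\RR^3\setminus\supp\zeta$ --- in particular on $\partial B_R(0)$ for every $R$ larger than $d:=\max\{|y|:y\in\supp\zeta\}$ --- one may differentiate under the integral sign to any order, so there $a$ is smooth and, since $\Gamma_\lambda$ solves the homogeneous Helmholtz equation away from the origin, $a$ is a smooth solution of $\Delta a+\lambda^2 a=0$ there. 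To obtain the inhomogeneous equation on a neighbourhood of $\supp\zeta$ I would split $\Gamma_\lambda=\Gamma_0+(\Gamma_\lambda-\Gamma_0)$, where $\Gamma_0(x)=(4\pi|x|)^{-1}$ is the Newtonian kernel and $\Gamma_\lambda-\Gamma_0$ extends continuously across the origin; the Newtonian part contributes $-\Delta(\Gamma_0*\zeta)=\zeta$ by the classical potential theory for Poisson's equation, and the identity $(\Delta+\lambda^2)\Gamma_\lambda=-\delta_0$ (which follows from $\Delta\Gamma_\lambda=-\lambda^2\Gamma_\lambda$ away from $0$, as in~(\ref{DerivadaSolucionFundamental.form}), together with the Newtonian singularity) accounts for the rest, $\Gamma_\lambda$ being, up to sign, the fundamental solution of the Helmholtz operator; this yields the asserted inhomogeneous Helmholtz equation.

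For the Sommerfeld radiation condition I would simply repeat, \textit{mutatis mutandis}, the proof of Proposition~\ref{PotencialesCapaSimple.HelmholtzRadiantes.pro}, replacing the surface $S$ by $\Omega$ and the constant $d=\max\{|y-z|:y\in S\}$ by $d=\max\{|y-z|:y\in\supp\zeta\}$, as already announced in the remark preceding the statement. Fixing $z\in\RR^3$ and differentiating under the integral sign (legitimate once $|x-z|$ is large enough), one writes
\begin{align*}
\nabla a(x)\cdot\frac{x-z}{|x-z|}-i\lambda\,a(x) &= \int_\Omega\Big(\nabla_x\Gamma_\lambda(x-y)\cdot\tfrac{x-y}{|x-y|}-i\lambda\,\Gamma_\lambda(x-y)\Big)\zeta(y)\,dy\\
&\quad+\int_\Omega\nabla_x\Gamma_\lambda(x-y)\cdot\Big(\tfrac{x-z}{|x-z|}-\tfrac{x-y}{|x-y|}\Big)\zeta(y)\,dy.
\end{align*}
By Proposition~\ref{CondicionSommerfeld.SolucionFundamentalHelmholtz.pro} the first integrand equals $-\Gamma_\lambda(x-y)/|x-y|=O(|x-y|^{-2})$, while the second is controlled by $\nabla\Gamma_\lambda=O(|x|^{-1})$ together with the mean value bound~(\ref{PotencialesCapaSimple.HelmholtzRadiantes.ValorMedio.ineq}); multiplying by $|x-z|$ and letting $|x-z|\to\infty$ shows that both terms are $O(|x-z|^{-1})\,\|\zeta\|_{L^1}\to 0$, so $a$ satisfies~(\ref{CondicionRadiacion.Helmholtz.Sommerfeld.form}) with centre at any $z$, in particular $z=0$. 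Running the same computation with $\Gamma_\lambda$ replaced by $D^\gamma\Gamma_\lambda$, $|\gamma|\le 2$ --- again licit on $\partial B_R(0)$ for $R>d$, where $a$ is smooth --- and invoking the higher-order estimates of Proposition~\ref{CondicionSommerfeld.SolucionFundamentalHelmholtz.pro} gives the radiation condition for all partial derivatives of $a$ up to second order.

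The only point requiring a little care is the passage from the distributional form of the Helmholtz equation to a classical identity, i.e.\ the $C^2$ regularity of the volume potential: for merely continuous $\zeta$ this can fail (Petrini's example), so one should either read the equation weakly, approximate $\zeta$ in $C^0$ by H\"older-continuous densities, or --- as is in fact the case in all the applications of this proposition in the paper --- restrict to $\zeta\in C^{0,\alpha}$, for which the sharp H\"older estimates for the inhomogeneous kernel $\Gamma_\lambda$ proved in Section~\ref{Teoria.Potencial.Tecnicas.Seccion} yield $a\in C^{2,\alpha}$ and hence the equation in the classical sense. Everything else is routine bookkeeping, the integrability at infinity being automatic from the compactness of $\supp\zeta$.
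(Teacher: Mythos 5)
Your proof is correct and takes essentially the same route as the paper, which at this point simply instructs the reader that the argument for Proposition~\ref{PotencialesCapaSimple.HelmholtzRadiantes.pro} goes through verbatim with $d=\max\{|y-z|:y\in S\}$ replaced by $d=\max\{|y-z|:y\in\supp\zeta\}$ --- which is exactly what you do for the radiation condition. Your supplementary argument for the PDE via the splitting $\Gamma_\lambda=\Gamma_0+(\Gamma_\lambda-\Gamma_0)$ and the distributional identity $(\Delta+\lambda^2)\Gamma_\lambda=-\delta_0$ is the classical fact the paper tacitly invokes, and your caveat about the $C^2$ regularity of $a$ for merely continuous $\zeta$ is well taken: for the equation to hold in the classical sense one does need $\zeta\in C^{0,\alpha}_c$, which is precisely what Lemma~\ref{Potencial.volumetrico.regularidad.lem1} later supplies and what every downstream use of this proposition assumes. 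One small remark: $(\Delta+\lambda^2)\Gamma_\lambda=-\delta_0$ yields $-(\Delta a+\lambda^2 a)=\zeta$, not $-\zeta$; the sign on the right-hand side of the displayed equation in the statement is a typo (compare the formula $a=\int_\Omega\Gamma_\lambda(\cdot-y)f(y)\,dy+\cdots$ given later for $-(\Delta a+\lambda^2a)=f$), and your derivation produces the correct sign.
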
 

To establish the representation formula for the inhomogeneous Helmholtz equation,  we study the radiation conditions for the volume and single layer potentials, as well as its decay properties at infinity. We will need the \textit{Hardly--Littlewood--Sobolev estimates of fractional integrals} \cite[Theorem 1.2.1]{Stein}, which we state not in terms of $L^p$~integrability conditions but in terms of pointwise decay at infinity. For the convenience of the reader, we include a simple derivation of this form of the estimates:

\begin{theo}\label{DecPotencialRiesz}
Consider any dimension $N$ and exponent $0<\alpha<N$. Define the associated Riesz potential by
$$R_\alpha(x):=\frac{1}{\vert x\vert^\alpha},\ x\in \RR^N.$$
For any measurable function $f:\RR^N\longrightarrow\RR$, we have that
\begin{enumerate}
\item the decay property 
$$\vert (R_\alpha*f)(x)\vert\leq C\frac{\Vert \vert x\vert^{\rho}f\Vert_{L^\infty(\RR^N)}}{\vert x\vert^{\alpha-(N-\rho)}},$$
holds for every $x\in\RR^N$ as long as $f=O(\vert x\vert^{-\rho})$ for $\vert x\vert\rightarrow +\infty$ and $\rho$ is any nonnegative exponent such that 
$$N-\alpha<\rho<N.$$ 
Here, $C$ stands for a positive constant that depends on $N$, $\alpha$ and $\rho$ but do not depend on $f$.
\item the optimal decay $\vert x\vert^{-\alpha}$ is obtained in the compactly supported case, i.e.,
$$\vert (R_\alpha*f)(x)\Vert\leq C\frac{\Vert f\Vert_{L^\infty(\RR^N)}}{\vert x\vert^\alpha},$$
for every $x\in\RR^N$, as long as $f\in L^\infty(\RR^N)$ has compact support inside some ball $B_{R_0}(0)$. Now, not only does $C$ depend on $N$ and $\alpha$ but also on the size $R_0>0$ of the support. 
\end{enumerate}
\end{theo}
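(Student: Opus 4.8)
The plan is to estimate the convolution $(R_\alpha * f)(x) = \int_{\RR^N} |x-y|^{-\alpha} f(y)\,dy$ by splitting the domain of integration into regions according to the relative sizes of $|y|$, $|x-y|$ and $|x|$, and in each region using only the prescribed pointwise decay of $f$ together with an elementary computation of an integral of a single power of the distance. Concretely, fix $x$ with $|x|$ large (the bounded range of $x$ is trivial since everything is locally integrable, as $\alpha < N$), and write $r = |x|$. I would split $\RR^N$ into three pieces: (i) the ``near the singularity'' ball $A_1 = \{|x-y| < r/2\}$, on which $|y| \sim r$ so $|f(y)| \lesssim r^{-\rho}$ and the remaining integral $\int_{A_1} |x-y|^{-\alpha}\,dy \lesssim r^{N-\alpha}$; (ii) the ``near the origin'' ball $A_2 = \{|y| < r/2\}$, on which $|x-y| \sim r$ so $|x-y|^{-\alpha} \lesssim r^{-\alpha}$ and one needs $\int_{A_2} |f(y)|\,dy \lesssim \int_{|y|<r/2} |y|^{-\rho}\,dy \lesssim r^{N-\rho}$, which converges near the origin precisely because $\rho < N$; and (iii) the ``far field'' region $A_3 = \{|y| \ge r/2,\ |x-y| \ge r/2\}$, on which both distances are comparable to $|y|$ and one estimates $\int_{A_3} |y|^{-\alpha}\,|y|^{-\rho}\,dy \lesssim \int_{|y| \ge r/2} |y|^{-\alpha-\rho}\,dy \lesssim r^{N-\alpha-\rho}$, which converges at infinity precisely because $\alpha + \rho > N$. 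Collecting the three contributions, each is bounded by $C\,\|\,|x|^\rho f\,\|_{L^\infty}\, r^{N-\alpha-\rho} = C\,\|\,|x|^\rho f\,\|_{L^\infty}\,|x|^{-(\alpha-(N-\rho))}$, which is the claimed bound. The double inequality $N-\alpha < \rho < N$ is exactly what makes the near-origin integral in (ii) and the far-field integral in (iii) simultaneously finite, so the hypotheses are used in an essential and transparent way.

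For part (2), the argument is the same but simpler: if $\supp f \subset B_{R_0}(0)$ and $|f| \le \|f\|_{L^\infty}$, then for $|x| \ge 2R_0$ the whole support lies in the region $|x-y| \ge |x|/2$, so $(R_\alpha*f)(x)$ is bounded by $(|x|/2)^{-\alpha}\|f\|_{L^\infty}\,|B_{R_0}(0)| \lesssim R_0^N \|f\|_{L^\infty}\,|x|^{-\alpha}$; for $|x| \le 2R_0$ one uses $\int_{|x-y| < 3R_0} |x-y|^{-\alpha}\,dy \lesssim R_0^{N-\alpha}$ together with $|x|^{-\alpha} \gtrsim R_0^{-\alpha}$ to absorb the constant, yielding the stated bound with a constant depending on $R_0$. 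No decay hypothesis on $f$ beyond boundedness is needed here, and the exponent $\alpha$ is optimal because it is already attained by, say, $f = \mathbf 1_{B_{R_0}(0)}$.

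I do not expect any serious obstacle: this is a standard dyadic/three-region splitting and every integral that appears is of the form $\int |y|^{-s}\,dy$ over an annulus, a ball, or the exterior of a ball, all of which are elementary. The only point requiring a little care is bookkeeping the constants so that they depend only on $N$, $\alpha$, $\rho$ (and, in part (2), on $R_0$) and not on $f$, and checking that the overlap of the regions $A_1, A_2, A_3$ only costs a harmless factor. One should also verify that in region (i) one indeed has $|y| \ge |x| - |x-y| \ge r/2$, so that the decay hypothesis $|f(y)| \le \|\,|\cdot|^\rho f\,\|_{L^\infty}\,|y|^{-\rho} \le \|\,|\cdot|^\rho f\,\|_{L^\infty}\,(r/2)^{-\rho}$ applies there; this is immediate from the triangle inequality. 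Everything else is routine.
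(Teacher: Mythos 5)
Your proposal is correct and follows essentially the same strategy as the paper's proof: a three-region decomposition (near the singularity, near the origin, far field) with elementary power-integral estimates in each region, using $\rho<N$ for the near-origin piece and $\alpha+\rho>N$ for the far field. The only (cosmetic) difference is that the paper splits the far-field region further according to whether $|x-y|\le|y|$ or $|x-y|>|y|$, whereas you observe directly that $|y|\le 3|x-y|$ holds on all of $A_3$ (since $|y|\le r+|x-y|\le 3|x-y|$ when $|x-y|\ge r/2$), which streamlines that step; part~(2) is handled identically in both.
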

\begin{proof}
Let us begin with the first item. Fix any constant $0<R<1$ (e.g., $R=1/2$) and split the integral we are interested in into the next two parts
$$\int_{\RR^3}\frac{1}{\vert x-y\vert^\alpha}f(y)\,dy=I_1+I_2,$$
where
$$I_1=\int_{B_{R\vert x\vert}(0)}\frac{1}{\vert x-y\vert^\alpha}f(y)\,dy,\hspace{1cm}I_2=\int_{B_{R\vert x\vert}^c}\frac{1}{\vert x-y\vert^\alpha}f(y)\,dy.$$
In order to estimate $I_1$, notice that
$$y\in B_{R\vert x\vert}(0)\Longrightarrow \vert x-y\vert\geq (1-R)\vert x\vert.$$
Therefore, $I_1$ is bounded by
\begin{align*}
\int_{B_{R\vert x\vert}(0)}\frac{1}{\vert x-y\vert^\alpha}\vert f(y)\vert\,dy&\leq \frac{K}{(1-R)^\alpha}\frac{1}{\vert x\vert^\alpha}\int_{B_{R\vert x\vert}(0)}\frac{1}{\vert y\vert^\rho}\,dy\\
&=\frac{K\omega_N}{(1-R)^\alpha}\frac{1}{\vert x\vert^\alpha}\int_0^{R\vert x\vert}r^{N-1}\frac{1}{r^\rho}\,dr =\frac{K\omega_N}{N-\rho}\frac{R^{N-\rho}}{(1-R)^\alpha}\frac{1}{\vert x\vert^{\alpha-(N-\rho)}}.
\end{align*}
Here $K:=\Vert \vert x\vert^{\rho} f\Vert_{L^\infty(\RR^3)}$ and $\omega_N$ stands for the $(N-1)$-dimensional area of the unit sphere in $\RR^N$. It is worth remarking that we are dealing with finite integrals as a consequence of the hypothesis $\rho<N$. Similarly, the second integral, $I_2$, can also be split as follows
\begin{align*}
\int_{B_{R\vert x\vert}(0)^c}&\frac{1}{\vert x-y\vert^\alpha}\vert f(y)\vert\,dy\\
&=\int_{B_{R\vert x\vert}(x)\setminus B_{R\vert x\vert}(0)}\frac{1}{\vert x-y\vert^\alpha}\vert f(y)\vert\,dy+\int_{(B_{R\vert x\vert}(0)\cup B_{R\vert x\vert}(x))^c}\frac{1}{\vert x-y\vert^\alpha}\vert f(y)\vert\,dy.
\end{align*}
An analogous argument can be used to obtain the next upper bound of the first term
\begin{align*}
\int_{B_{R\vert x\vert}(x)\setminus B_{R\vert x\vert}(0)}\frac{1}{\vert x-y\vert^\alpha}\vert f(y)\vert\,dy&\leq \int_{B_{R\vert x\vert}(x)}\frac{1}{\vert x-y\vert^\alpha}\vert f(y)\vert\,dy \leq K\int_{B_{R \vert x\vert}(x)}\frac{1}{\vert x-y\vert^\alpha}\frac{1}{\vert y\vert^\rho}\,dy\\
&=K \int_{B_{R\vert x\vert}(0)}\frac{1}{\vert x-y\vert^\rho}\frac{1}{\vert y\vert^\alpha}\,dy =\frac{K\omega_N}{N-\alpha}\frac{R^{N-\alpha}}{(1-R)^\rho}\frac{1}{\vert x\vert^{\alpha-(N-\rho)}}.
\end{align*}
This time, finite integrals are involved due to the hypothesis $N-\alpha<\rho$. Regarding the second term, let us decompose the integral into two parts once more. The appropriate subdomains to be considered are
\begin{align*}
A&=\{y\in (B_{R\vert x\vert}(0)\cup B_{R\vert x\vert}(x))^c:\,\vert x-y\vert \leq \vert y\vert\},\\
B&=\{y\in (B_{R\vert x\vert}(0)\cup B_{R\vert x\vert}(x))^c:\,\vert x-y\vert > \vert y\vert\}.
\end{align*}
Let us complete the proof of the first inequality with the following estimates for the integrals over $A$ and $B$, which follow from the same reasoning involving the hypothesis $N-\alpha<\rho$:
\begin{align*}
\int_A\frac{1}{\vert x-y\vert^\alpha}\vert f(y)\vert\,dy&\leq K\int_A \frac{1}{\vert x-y\vert^\alpha}\frac{1}{\vert y\vert^\rho}\,dy\leq K\int_A \frac{1}{\vert x-y\vert^{\alpha+\rho}}\,dy \leq K\int_{B_{R\vert x\vert}(x)^c}\frac{1}{\vert x-y\vert^{\alpha+\rho}}\,dy\\
&=K\omega_N\int_{R\vert x\vert}^{+\infty}r^{N-1}\frac{1}{r^{\alpha+\rho}}\,dr =\frac{K\omega_N}{\alpha-(N-\rho)}\frac{1}{R^{\alpha-(N-\rho)}}\frac{1}{\vert x\vert^{\alpha-(N-\rho)}},\\
& \\
\int_B\frac{1}{\vert x-y\vert^\alpha}\vert f(y)\vert\,dy&\leq K\int_B \frac{1}{\vert x-y\vert^\alpha}\frac{1}{\vert y\vert^\rho}\,dy\leq K\int_B\frac{1}{\vert y\vert^{\alpha+\rho}}\,dy \leq K\int_{B_{R\vert x\vert}(0)^c}\frac{1}{\vert y\vert^{\alpha+\rho}}\,dy\\
&=K\omega_N\int_{R\vert x\vert}^{+\infty}r^{N-1}\frac{1}{r^{\alpha+\rho}}\,dr =\frac{K\omega_N}{\alpha-(N-\rho)}\frac{1}{R^{\alpha-(N-\rho)}}\frac{1}{\vert x\vert^{\alpha-(N-\rho)}}.
\end{align*}

Let us now pass to the second item. Let us start with $\vert x\vert>2R_0$, so that
$$\vert (R_\alpha*f)(x)\vert\leq \int_{B_{R_0}(0)}\frac{1}{\vert x-y\vert^\alpha}\vert f(y)\vert\,dy.$$
Notice that whenever $y\in B_{R_0}(0)$, then one has
$$\vert x-y\vert\geq \vert x\vert-\vert y\vert\geq \vert x\vert -R_0=\left(1-\frac{R_0}{\vert x\vert}\right)\vert x\vert\geq \frac{1}{2}\vert x\vert.$$
Therefore
$$\vert (R_\alpha*f)(x)\vert\leq \frac{2^\alpha}{\vert x\vert^\alpha}\Vert f\Vert_{L^1(\RR^3)}\leq 2^\alpha\vert B_{R_0}(0)\vert\frac{\Vert f\Vert_{L^\infty(\RR^N)}}{\vert x\vert^\alpha}.$$
The case $\vert x\vert\leq 2R_0$ is easier since
$$y\in B_{R_0}(0)\Longrightarrow \vert x-y\vert\leq\vert x\vert+\vert y\vert<3R_0,$$
and consenquently, Young inequality for the convolution of $L^p$ functions leads to
\begin{align*}
\vert (R_\alpha*f)(x)\vert&\leq \int_{B_{3R_0}(x)}\frac{1}{\vert x-y\vert^\alpha}\vert f(y)\vert\,dy =\int_{B_{3R_0}(0)}\vert f(x-y)\vert\frac{1}{\vert y\vert^\alpha}\,dy=\vert f\vert*\left(\chi_{B_{3R_0}(0)}R_\alpha\right)(x)\\
&\leq \Vert R_\alpha\Vert_{L^1(B_{3R_0}(0))}\Vert f\Vert_{L^\infty(\RR^N)}\leq (2R_0)^\alpha\Vert R_\alpha\Vert_{L^1(B_{3R_0}(0))}\frac{\Vert f\Vert_{L^\infty(\RR^N)}}{\vert x\vert^\alpha},
\end{align*}
where $1\leq \frac{2R_0}{\vert x\vert}$ has been used in the last inequality.
\qedhere
\end{proof}

The above results permit obtaining a Stokes-type formula to represent the solutions to the inhomogeneous Helmholtz equation. Now, we deal with the weakest radiation condition, namely, the $L^1$ Sommerfeld radiation condition and some property of weak decay at infinity in $L^1$. Since the proof is completely analogous to the more important result for complex-valued solutions of the inhomogeneous Beltrami equation that we present in the next subsection (Theorem~\ref{HelmholtzHodgeBeltrami.teo}), we will skip the proof. A detailed proof with the more restrictive $L^2$ Sommerfeld radiation condition (\ref{CondicionRadiacion.Helmholtz.L2Sommerfeld.form}) can be found in \cite[Theorem 2.4]{ColtonKress2} and \cite[Theorem 3.1.1]{Nedelec}. 

\begin{theo}\label{FormulaGreenHelmholtz.teo}
Let $a\in C^2(\Omega,\CC)\cap C^1(\overline{\Omega},\CC)$ be any function which verifies the $L^1$ Sommerferld radiation condition (\ref{CondicionRadiacion.Helmholtz.L1Sommerfeld.form}) and the following decay property at infinity
\begin{equation}\label{HipotesisFormulaGreenHelmholtzL1.form}
\int_{\partial B_R(0)}\vert a(y)\vert\,d_yS=o(R^2), \mbox{ when }R\rightarrow +\infty.
\end{equation}
Assume that $\Delta a+\lambda^2 a=O(\vert x\vert^{-\rho})$ when $\vert x\vert\rightarrow +\infty$, for some exponent $2<\rho<3$. Then,
\begin{align}\label{FormulaGreenHelmholtz.form}
a(x)=&-\int_{\Omega} \Gamma_\lambda(x-y)(\Delta a(y)+\lambda^2 a(y))\,dy\\
&+\int_{S}\frac{\partial \Gamma_\lambda(x-y)}{\partial \eta(y)}a(y)\,d_yS-\int_S\Gamma_\lambda(x-y)\frac{\partial a}{\partial \eta}(y)\,d_yS,\nonumber
\end{align}
for every $x\in\Omega$ and, as a consequence,
$$a=O(\vert x\vert^{-(\rho-2)}), \mbox{ when }\vert x\vert\rightarrow +\infty.$$
Indeed, when $\Delta a+\lambda^2 a$ has compact support, one obtains the optimal decay at infinity, namely, 
$$a=O(\vert x\vert^{-1}),  \mbox{ when }\vert x\vert\rightarrow +\infty.$$
\end{theo}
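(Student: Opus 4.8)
The plan is to derive the representation formula~\eqref{FormulaGreenHelmholtz.form} by the standard Green's identity argument, but carried out on the truncated domain $\Omega_R := \Omega \cap B_R(0)$ and then passing to the limit $R \to +\infty$, the novelty being that the boundary terms at $\partial B_R(0)$ are controlled only by the weak $L^1$ hypotheses~\eqref{CondicionRadiacion.Helmholtz.L1Sommerfeld.form} and~\eqref{HipotesisFormulaGreenHelmholtzL1.form} rather than the usual pointwise Sommerfeld condition. First I would fix $x \in \Omega$ and apply Green's second identity to the pair $a(y)$ and $\Gamma_\lambda(x-y)$ on $\Omega_R \setminus \overline{B_\vep(x)}$ (removing a small ball around the singularity), using that $\Delta \Gamma_\lambda(x-\cdot) + \lambda^2 \Gamma_\lambda(x-\cdot) = 0$ away from $y = x$. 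Letting $\vep \to 0$ produces, by the usual computation with the fundamental solution, the value $a(x)$ plus the volume integral of $\Gamma_\lambda(x-y)(\Delta a + \lambda^2 a)(y)$ over $\Omega_R$, the two single/double layer integrals over $S$, and two analogous boundary integrals over $\partial B_R(0)$. The integrability of the volume term over all of $\Omega$ as $R \to \infty$ is guaranteed by Theorem~\ref{DecPotencialRiesz}(1) with $N = 3$, $\alpha = 1$: since $\Gamma_\lambda(x-y) = O(|x-y|^{-1})$ and $\Delta a + \lambda^2 a = O(|x|^{-\rho})$ with $2 < \rho < 3$, the convolution converges absolutely and in fact decays like $|x|^{-(\rho-2)}$.

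The crux is to show that the two integrals over $\partial B_R(0)$,
\[
\int_{\partial B_R(0)} \frac{\partial \Gamma_\lambda(x-y)}{\partial \nu(y)}\, a(y)\, d_yS - \int_{\partial B_R(0)} \Gamma_\lambda(x-y)\, \frac{\partial a}{\partial \nu}(y)\, d_yS
\]
(with $\nu(y) = y/R$ the outward normal of $B_R$) tend to zero along a suitable sequence $R_n \to +\infty$. The standard trick is to rewrite this combination by inserting the term $i\lambda \Gamma_\lambda(x-y) a(y)$: it becomes
\[
\int_{\partial B_R(0)} \Gamma_\lambda(x-y)\left(i\lambda\, a(y) - \frac{\partial a}{\partial \nu}(y)\right) d_yS + \int_{\partial B_R(0)} a(y)\left(\frac{\partial \Gamma_\lambda(x-y)}{\partial \nu(y)} - i\lambda\,\Gamma_\lambda(x-y)\right) d_yS.
\]
For the first integral, on $\partial B_R(0)$ one has $|\Gamma_\lambda(x-y)| \le C/R$ for $R$ large (uniformly for $y \in \partial B_R$, using $|x-y| \ge R - |x|$), while $\nabla a \cdot \nu$ differs from $\nabla a \cdot y/R$ only by lower-order curvature corrections; hence this term is bounded by $\frac{C}{R}\int_{\partial B_R(0)}|\nabla a \cdot \tfrac{y}{R} - i\lambda a|\,d_yS = \frac{C}{R}\cdot o(R) = o(1)$ by~\eqref{CondicionRadiacion.Helmholtz.L1Sommerfeld.form}. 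For the second integral, Proposition~\ref{CondicionSommerfeld.SolucionFundamentalHelmholtz.pro} (with the center shifted to $x$, exactly as in the proof of Proposition~\ref{PotencialesCapaSimple.HelmholtzRadiantes.pro}) gives $|\partial_{\nu}\Gamma_\lambda(x-y) - i\lambda \Gamma_\lambda(x-y)| = O(R^{-2})$ on $\partial B_R(0)$, so this term is bounded by $\frac{C}{R^2}\int_{\partial B_R(0)}|a(y)|\,d_yS = \frac{C}{R^2}\cdot o(R^2) = o(1)$ by the decay hypothesis~\eqref{HipotesisFormulaGreenHelmholtzL1.form}. Thus both pieces vanish as $R \to +\infty$, and~\eqref{FormulaGreenHelmholtz.form} follows.

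Once the formula is established, the asserted decay of $a$ is read off term by term. The two surface integrals over the fixed compact $S$ inherit the decay of $\Gamma_\lambda(x-y)$ and $\nabla\Gamma_\lambda(x-y)$, which is $O(|x|^{-1})$ as $|x| \to \infty$ by~\eqref{DerivadaSolucionFundamental.form}. The volume term is handled by Theorem~\ref{DecPotencialRiesz}: part (1), applied with $N=3$, $\alpha=1$, $\rho$ as in the statement (note $3 - 1 = 2 < \rho < 3$ is exactly the admissible range), yields the bound $O(|x|^{-(\rho-2)})$; when $\Delta a + \lambda^2 a$ is compactly supported, part (2) gives instead the optimal $O(|x|^{-1})$. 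Since $\rho - 2 < 1$ in the general case, the surface terms' $|x|^{-1}$ decay is dominated and the overall rate is $O(|x|^{-(\rho-2)})$, while in the compactly supported case everything is $O(|x|^{-1})$, as claimed.

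\textbf{Main obstacle.} The only genuinely delicate point is the passage to the limit on $\partial B_R(0)$: one must be careful that the $o(R)$ and $o(R^2)$ hypotheses are only asymptotic, so strictly speaking the vanishing of the boundary integrals is obtained along a sequence $R_n\to\infty$ realizing the $\liminf$, and one has to argue (as is standard) that this suffices to pin down the value $a(x)$ since the left-hand side and all the remaining terms have genuine limits. A secondary technical nuisance is justifying the differentiation under the integral sign and the absolute convergence of the volume potential uniformly in $x$ on compact subsets of $\Omega$, but this is routine given Theorem~\ref{DecPotencialRiesz} and the local boundedness of $\Gamma_\lambda$ away from the diagonal. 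As the excerpt notes, the argument is a verbatim scalar shadow of the Beltrami computation in Theorem~\ref{HelmholtzHodgeBeltrami.teo}, so I would simply refer forward for the details rather than repeat them.
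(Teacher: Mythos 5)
Your proof is correct and follows exactly the route the paper intends (it omits the explicit proof, pointing to Theorem~\ref{HelmholtzHodgeBeltrami.teo} as the vector analogue): truncate the domain, apply Green's second identity, insert $i\lambda\Gamma_\lambda a$ at the far boundary, control the two resulting pieces by the $L^1$ Sommerfeld condition~\eqref{CondicionRadiacion.Helmholtz.L1Sommerfeld.form} and the $L^1$ decay property~\eqref{HipotesisFormulaGreenHelmholtzL1.form}, and read off the decay rates from Theorem~\ref{DecPotencialRiesz}. Two small corrections to your side remarks: on $\partial B_R(0)$ the outward unit normal \emph{is} $y/R$, so there is no ``curvature correction'' to account for; and the $o(R)$ and $o(R^2)$ hypotheses are genuine limits as $R\to+\infty$, so the boundary integrals vanish for the full limit and no extraction of a subsequence is needed.
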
 

The properties  follow from Theorem \ref{DecPotencialRiesz} and they may also be found in  \cite{ColtonKress2,Nedelec}. Notice that the decay rates $\vert x\vert^{-(\rho-2)}$ (for the inhomogeneous equation) and $\vert x\vert^{-1}$ (for the homogeneous one) are straighforward consequences of the representation formula. 

Let us now show the link between our $L^1$ version an the $L^2$ version (\cite{ColtonKress2,Nedelec}). To this end, we shall next see that  $(\ref{CondicionRadiacion.Helmholtz.L2Sommerfeld.form}) \Rightarrow(\ref{CondicionRadiacion.Helmholtz.L1Sommerfeld.form})+(\ref{HipotesisFormulaGreenHelmholtzL1.form})$ in the homogeneous case. Indeed, let $a\in C^2(\Omega,\CC)\cap C^1(\overline{\Omega},\CC)$ be any solution to the complex-valued homogeneous Helmholtz equation in the exterior domain fulfilling the $L^2$ Sommerfeld radiation condition (\ref{CondicionRadiacion.Helmholtz.L2Sommerfeld.form}). Computing the square in such radiation condition, we arrive at
\begin{align}\label{HipotesisRepresentacionL1SommerfeldL2.eq1.form}
\left\vert\nabla a(x)\cdot \frac{x}{R}-i\lambda a(x)\right\vert^2&=\left\vert\nabla a(x)\cdot \frac{x}{R}\right\vert^2+\lambda^2\left\vert a(x)\right\vert^2-2\Re\left(i\lambda a(x)\,\nabla \overline{a}(x)\cdot \frac{x}{R}\right)\nonumber\\
&=\left\vert\nabla a(x)\cdot \frac{x}{R}\right\vert^2+\lambda^2\left\vert a(x)\right\vert^2+2\lambda \Im\left(a(x)\,\nabla \overline{a}(x)\cdot \frac{x}{R}\right),
\end{align}
for any $x\in\partial B_R(0)$, where $\Re$ and $\Im$ mean the real and imaginary parts of the corresponding complex numbers. Consider any positive radius $R_0$ such that $\overline{G}\subseteq B_{R_0}(0)$ and define the subdomains $\Omega_R:=B_R(0)\setminus\overline{G}$, for each $R>R_0$. Therefore, the homogeneous Helmholtz equation and Green's formula lead to
$$-\lambda^2\int_{\Omega_R}\vert a(x)\vert^2\,dx=\int_{\Omega_R} a(x)\Delta\overline{a}(x)\,dx=\int_{\partial \Omega_R}a(x)\,\nabla \overline{a}(x)\cdot \nu(x)\,d_xS-\int_{\Omega_R}\nabla a(x)\cdot \nabla\overline{a}(x)\,dx.$$
Let us split the boundary integral into the boundary's connected components
\begin{gather*}
\int_{\partial B_R(0)}a(x)\,\nabla \overline{a}(x)\cdot \frac{x}{R}\,d_yS-\int_S a(x)\,\nabla \overline{a}(x)\cdot \eta(x)\,d_xS\\
=\int_{\Omega_R}\vert\nabla a(x)\vert^2\,dx-\lambda^2\int_{\Omega_R}\vert a(x)\vert^2\,dx.
\end{gather*}
and take imaginary parts in the preceding equation to arrive at
\begin{equation}\label{HipotesisRepresentacionL1SommerfeldL2.eq2.form}
\Im\left(\int_{\partial B_R(0)}a(x)\,\nabla \overline{a}(x)\cdot \frac{x}{R}\,d_xS\right)=\Im\left(\int_{S}a(x)\,\nabla \overline{a}(x)\cdot \eta(x)\,d_xS\right).
\end{equation}
Combining equations (\ref{HipotesisRepresentacionL1SommerfeldL2.eq1.form}) and (\ref{HipotesisRepresentacionL1SommerfeldL2.eq2.form}) along with the $L^2$ Sommerfeld radiation condition (\ref{CondicionRadiacion.Helmholtz.L2Sommerfeld.form}), one obtains
\begin{equation}\label{HipotesisRepresentacionL1SommerfeldL2.eq3.form}
\lim_{R\rightarrow +\infty}\int_{\partial B_R(0)}\left(\left\vert\nabla a(x)\cdot \frac{x}{R}\right\vert^2+\lambda^2\left\vert a(x)\right\vert^2\right)\,d_xS=-2\lambda \Im\left(\int_{S}a(x)\,\nabla \overline{a}(x)\cdot \eta(x)\,d_xS\right).
\end{equation}
Consequently,
\begin{equation}\label{HipotesisRepresentacionL1SommerfeldL2.eq4.form}
\int_{\partial B_R(0)} \vert a(x)\vert^2\,d_xS=O(1)\mbox{ when }R\rightarrow +\infty,
\end{equation}
and Cauchy-Schwarz inequality ensures that
$$\int_{\partial B_R(0)}\vert a(x)\vert\,d_xS=O(R),\ \mbox{ when }R\rightarrow+\infty.$$
In particular, the weak decay property (\ref{HipotesisFormulaGreenHelmholtzL1.form}) holds.

Notice that the $L^2$ version in \cite{ColtonKress2,Nedelec} of the representation formula for the homogeneous equation is a direct consequence of our $L^1$ version in Theorem \ref{FormulaGreenHelmholtz.teo} and the preceding discussion:

\begin{cor}\label{FormulaGreenHelmholtz.cor}
Let $a\in C^2(\Omega,\CC)\cap C^1(\overline{\Omega},\CC)$ be any solution to the complex-valued homogeneous Helmholtz equation in the exterior domain which verifies the $L^2$ Sommerferld radiation condition (\ref{CondicionRadiacion.Helmholtz.L2Sommerfeld.form}). Then,
\begin{equation*}
a(x)=\int_{S}\frac{\partial \Gamma_\lambda(x-y)}{\partial \eta(y)}a(y)\,d_yS-\int_S\Gamma_\lambda(x-y)\frac{\partial a}{\partial \eta}(y)\,d_yS,
\end{equation*}
for every $x\in\Omega$. As a consequence,
$$a=O(\vert x\vert^{-1}), \mbox{ when }\vert x\vert\rightarrow +\infty.$$
\end{cor}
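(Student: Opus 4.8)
The plan is to deduce Corollary~\ref{FormulaGreenHelmholtz.cor} directly from Theorem~\ref{FormulaGreenHelmholtz.teo} together with the implication $(\ref{CondicionRadiacion.Helmholtz.L2Sommerfeld.form}) \Rightarrow (\ref{CondicionRadiacion.Helmholtz.L1Sommerfeld.form})+(\ref{HipotesisFormulaGreenHelmholtzL1.form})$ that has just been established in the surrounding discussion. First I would recall that $a$ is assumed to solve the \emph{homogeneous} complex-valued Helmholtz equation $\Delta a + \lambda^2 a = 0$ in $\Omega$ and to satisfy the $L^2$ Sommerfeld radiation condition (\ref{CondicionRadiacion.Helmholtz.L2Sommerfeld.form}). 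The chain of implications noted right after Definition~\ref{CondicionesRadiacion.defi} gives that $a$ also satisfies the $L^1$ Sommerfeld radiation condition (\ref{CondicionRadiacion.Helmholtz.L1Sommerfeld.form}). Moreover, the Green-formula computation carried out just above the statement of the corollary shows that (\ref{CondicionRadiacion.Helmholtz.L2Sommerfeld.form}) forces the bound (\ref{HipotesisRepresentacionL1SommerfeldL2.eq4.form}), i.e. $\int_{\partial B_R(0)}|a|^2\,d_xS=O(1)$, and then Cauchy--Schwarz yields $\int_{\partial B_R(0)}|a|\,d_xS=O(R)=o(R^2)$, which is exactly the weak decay hypothesis (\ref{HipotesisFormulaGreenHelmholtzL1.form}) required in Theorem~\ref{FormulaGreenHelmholtz.teo}.

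Next I would verify that the remaining hypotheses of Theorem~\ref{FormulaGreenHelmholtz.teo} are trivially met in the homogeneous case: since $\Delta a + \lambda^2 a \equiv 0$, the source term is $O(|x|^{-\rho})$ for \emph{any} $\rho$ (in particular for some $2<\rho<3$) and has compact support (indeed empty support). Applying Theorem~\ref{FormulaGreenHelmholtz.teo} then produces the representation formula (\ref{FormulaGreenHelmholtz.form}), in which the volume integral $-\int_\Omega \Gamma_\lambda(x-y)(\Delta a(y)+\lambda^2 a(y))\,dy$ vanishes identically because its integrand is zero. What survives is precisely the double-layer minus single-layer boundary representation
\[
a(x)=\int_S \frac{\partial \Gamma_\lambda(x-y)}{\partial \eta(y)}a(y)\,d_yS-\int_S \Gamma_\lambda(x-y)\frac{\partial a}{\partial \eta}(y)\,d_yS,\qquad x\in\Omega,
\]
as claimed. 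Finally, the decay statement $a=O(|x|^{-1})$ as $|x|\to+\infty$ follows from the ``compact support'' clause of Theorem~\ref{FormulaGreenHelmholtz.teo} (applied with the empty source), or equivalently can be read off directly from the boundary representation just obtained by estimating $\Gamma_\lambda(x-y)=O(|x|^{-1})$ and $\partial_{\eta(y)}\Gamma_\lambda(x-y)=O(|x|^{-1})$ uniformly for $y\in S$ via (\ref{DerivadaSolucionFundamental.form}), together with the boundedness of $a$ and $\partial a/\partial\eta$ on the compact surface $S$.

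There is essentially no serious obstacle here: the corollary is a packaging of results already in hand, so the ``hard part'' is only the bookkeeping of checking that each hypothesis of Theorem~\ref{FormulaGreenHelmholtz.teo} is implied by the $L^2$ Sommerfeld condition. The one place that needs a moment's care is the Green's-identity argument establishing (\ref{HipotesisRepresentacionL1SommerfeldL2.eq4.form})---but this has already been spelled out in full in the text preceding the corollary, so for the proof it suffices to invoke it. Accordingly I would keep the proof short, simply citing Theorem~\ref{FormulaGreenHelmholtz.teo} and the preceding discussion, and noting that the volume term drops out because $\Delta a+\lambda^2 a=0$.
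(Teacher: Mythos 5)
Your proposal is correct and follows exactly the paper's intended route: the paper explicitly states that the corollary is ``a direct consequence of our $L^1$ version in Theorem~\ref{FormulaGreenHelmholtz.teo} and the preceding discussion,'' which is precisely the combination you spell out (the $L^2\Rightarrow L^1$ plus weak-decay implication, then Theorem~\ref{FormulaGreenHelmholtz.teo} with vanishing volume term). Your extra remarks on the optimal decay are consistent with the theorem's compact-support clause and add nothing problematic.
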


An immediate consequence of the representation formulas in Theorem \ref{FormulaGreenHelmholtz.teo} and Corollary \ref{FormulaGreenHelmholtz.cor} is that a \textit{far field pattern} at infinity exists for each solution to the Helmholtz equation (see \cite{ColtonKress2} for details). The far field pattern of a solution to the Helmholtz equation is a very powerful tool since it provides a description of the asymptotic behavior at infinity. It gives, for instance, easy uniqueness criteria for radiating solutions. A related inverse problem has also been widely studied, as it is interesting to know whether a fixed function over the unit sphere is the far field pattern of some radiating solution to the Helmholtz equation. 

Although most of the literature is only devoted to far field patterns of complex-valued radiating solutions to the homogeneous Helmholtz equation, our problem clearly concerns the inhomogeneous setting. Thus, we revisit the theory of far field patterns and its relation to the general inhomogeneous Helmholtz equation in the particular case of compactly supported inhomogeneities (it would not be hard to extend it to more general inhomogeneous terms suitable decay at infinity). For this, consider any solution $a\in C^2(\Omega,\CC)\cap C^1(\overline{\Omega},\CC)$ to the inhomogeneous Helmholtz equation
$$-(\Delta a+\lambda^2 a)=f,\hspace{0.7cm}x\in \Omega,$$
where $f$ is compactly supported in $\overline{\Omega}$ and $a$ verifies both  the decay condition (\ref{HipotesisFormulaGreenHelmholtzL1.form}) and the $L^1$ Sommerfeld radiation condition (\ref{CondicionRadiacion.Helmholtz.L1Sommerfeld.form}). Then, Theorem \ref{FormulaGreenHelmholtz.teo} leads to
$$a(x)=\int_\Omega\Gamma_\lambda(x-y)f(y)\,dy+\int_S\frac{\partial \Gamma_\lambda(x-y)}{\partial \eta(y)}a(y)\,d_yS-\int_S\Gamma_\lambda(x-y)\frac{\partial a}{\partial \eta}(y)\,d_yS.$$
Consider the compact subset $K:=\mbox{supp}f$ and notice the asymptotic behavior
\begin{align*}
\Gamma_\lambda(x-y)&=\Gamma_\lambda(x)\left\{e^{-i\lambda\frac{x}{\vert x\vert}\cdot y}+O\left(\frac{1}{\vert x\vert}\right)\right\}, \hspace{0.7cm}\mbox{ when }\vert x\vert\rightarrow +\infty,\\
\frac{\partial \Gamma_\lambda(x-y)}{\partial \eta(y)}&=\Gamma_\lambda(x)\left\{\frac{\partial e^{-i\lambda\frac{x}{\vert x\vert}\cdot y}}{\partial\eta(y)}+O\left(\frac{1}{\vert x\vert}\right)\right\}, \hspace{0.4cm}\mbox{ when }\vert x\vert\rightarrow +\infty, 
\end{align*}
where $O\left(\vert x\vert^{-1}\right)$ is uniform in $y\in K\cup S$ in the first formula and uniform in $y\in S$ in the second one. From here we deduce  the asymptotic behavior 
\begin{equation}\label{FarFieldPattern.decomposicion.form}
a(x)=\Gamma_\lambda(x)\left\{a_\infty\left(\frac{x}{\vert x\vert}\right)+O\left(\frac{1}{\vert x\vert}\right)\right\}\mbox{ when }\vert x\vert\rightarrow +\infty,
\end{equation}
where $a_\infty$ is called the \textit{far field pattern} of $a$, and reads as 
$$
a_\infty(\sigma)=\int_\Omega e^{-i\lambda\sigma\cdot y}f(y)\,dy+\int_S \frac{\partial e^{-i\lambda\sigma\cdot y}}{\partial \eta(y)}a(y)\,d_yS-\int_S e^{-i\lambda \sigma\cdot y}\frac{\partial a}{\partial\eta}(y)\,d_yS,
$$
for each point $\sigma\in \partial B_1(0)$. 

It is apparent that $a_\infty$ is uniquely determined from formula (\ref{FarFieldPattern.decomposicion.form}). Hence, we can define the following well-defined linear and one to one map
\begin{equation}\label{FarFieldPattern.aplicacion.form}
\begin{array}{ccc}
 \mathcal{D}_{\infty} & \longrightarrow & C^\infty(\partial B_1(0))\\
 a & \longmapsto & a_\infty,
\end{array}
\end{equation}
where the domain of the \textit{far field pattern mapping} is 
$$\mathcal{D}_{\infty}:=\{a\in C^2(\Omega,\CC)\cap C^1(\overline{\Omega},\CC)\,:\,\Delta a+\lambda^2a\mbox{ has compact support and }(\ref{CondicionRadiacion.Helmholtz.L1Sommerfeld.form})\mbox{ and }(\ref{HipotesisFormulaGreenHelmholtzL1.form})\mbox{ hold}\}.$$
A similar reasoning leads to an explicit formula for the far field pattern of the derivatives of $a$, namely,
\begin{equation}\label{FarFieldPatternDerivadas.form}
(\nabla a)_\infty(\sigma)=i\lambda a_\infty(\sigma)\sigma,\ \forall\sigma\in \partial B_1(0).
\end{equation}
The splitting in (\ref{FarFieldPattern.decomposicion.form}) ensures that
\begin{equation}\label{FarFieldPatternNormaL2.form}
\lim_{R\rightarrow +\infty}\int_{\partial B_R(0)}\vert a(x)\vert^2\,dx=\frac{1}{4\pi}\int_{\partial B_1(0)}\vert a_\infty(\sigma)\vert^2\,d_\sigma S.
\end{equation}
The celebrated \textit{Rellich lemma} \cite[Lemma 2.11]{ColtonKress2} states that the only complex-valued solution $a\in C^2(\Omega,\CC)$ to the exterior homogeneous Helmholtz equation  such that the limit in the left hand side of the preceding formula becomes zero is the zero function identically. Therefore, whenever a solution to the homogeneous Helmholtz equation has a well-defined far field pattern and it vanishes (i.e., $a_\infty\equiv 0$), then $a$ vanishes everywhere. 

The following uniqueness result  is of great interest to deal with Dirichlet and Neumann boundary value problems in the exterior domain. It is an immediate consequence of the Rellich lemma and the discussion leading to Corollary~\ref{FormulaGreenHelmholtz.cor}, and it can be found in \cite[Theorem 2.12]{ColtonKress2}:

\begin{lem}\label{UnicidadHelmholtzNoAcotados.lem}
Consider any solution $a\in C^2(\Omega,\CC)\cap C^1(\overline{\Omega},\CC)$ to the complex-valued homogeneous Helmholtz equation in the exterior domain $\Omega$ fulfilling the $L^2$ Sommerfeld radiation condition (\ref{CondicionRadiacion.Helmholtz.L2Sommerfeld.form}). Then, $a$ verifies the inequality
$$\lambda \Im\left(\int_S a(x)\frac{\partial \overline{a}}{\partial \eta}(x)\,d_xS\right)\leq 0.$$
If the equality holds, then $a$ vanishes everywhere in $\Omega$.
\end{lem}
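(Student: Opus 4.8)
The plan is to read off this uniqueness estimate from the Green-type identity already established in the discussion preceding Corollary~\ref{FormulaGreenHelmholtz.cor}; essentially no new computation is needed, only attention to one sign.

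First I would recall identity \eqref{HipotesisRepresentacionL1SommerfeldL2.eq3.form}, which was obtained by applying Green's first identity on the truncated domain $\Omega_R=B_R(0)\setminus\overline G$ to the homogeneous Helmholtz equation, expanding the integrand of the $L^2$ Sommerfeld radiation condition \eqref{CondicionRadiacion.Helmholtz.L2Sommerfeld.form} as in \eqref{HipotesisRepresentacionL1SommerfeldL2.eq1.form}, and matching the flux through $\partial B_R(0)$ with the flux through $S$ by means of \eqref{HipotesisRepresentacionL1SommerfeldL2.eq2.form}. Since $\nabla\overline a\cdot\eta=\partial\overline a/\partial\eta$, that identity reads
\begin{equation*}
\lim_{R\to+\infty}\int_{\partial B_R(0)}\left(\left\vert\nabla a(x)\cdot\frac{x}{R}\right\vert^2+\lambda^2\vert a(x)\vert^2\right)d_xS=-2\lambda\,\Im\left(\int_S a(x)\,\frac{\partial\overline a}{\partial\eta}(x)\,d_xS\right).
\end{equation*}
The left-hand side is the limit of nonnegative quantities, hence it exists and is nonnegative, so the right-hand side is nonnegative, which is precisely the asserted inequality $\lambda\,\Im\left(\int_S a\,\partial\overline a/\partial\eta\,d_xS\right)\le 0$.

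For the rigidity statement, assume equality holds. Then the nonnegative limit on the left-hand side vanishes, and in particular, since $\lambda\neq 0$,
\begin{equation*}
\lim_{R\to+\infty}\int_{\partial B_R(0)}\vert a(x)\vert^2\,d_xS=0.
\end{equation*}
At this stage I would invoke Rellich's lemma \cite[Lemma 2.11]{ColtonKress2} (recalled right after \eqref{FarFieldPatternNormaL2.form}): a $C^2$ solution of the exterior homogeneous Helmholtz equation whose mean square over the spheres $\partial B_R(0)$ tends to zero vanishes identically. Hence $a\equiv 0$ in $\Omega$, which finishes the proof.

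I do not anticipate a genuine obstacle. The only delicate point is bookkeeping: the orientation of the unit normal on the two components of $\partial\Omega_R$ ($x/R$ on $\partial B_R(0)$ versus $-\eta$ on $S$) and the interchange between $\Im(z\overline w)$ and $\Im(\overline z w)$ when transferring the derivative between $a$ and $\overline a$. All of this is already settled in \eqref{HipotesisRepresentacionL1SommerfeldL2.eq1.form}--\eqref{HipotesisRepresentacionL1SommerfeldL2.eq3.form}, so the lemma follows by inspection, together with a single application of Rellich's lemma.
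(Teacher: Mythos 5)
Your proposal is correct and follows exactly the same route the paper takes: the lemma is read off directly from identity \eqref{HipotesisRepresentacionL1SommerfeldL2.eq3.form}, whose left-hand side is a limit of nonnegative quantities, combined with a single application of Rellich's lemma when that limit vanishes. The paper's own justification is the one-line remark that the lemma is ``an immediate consequence of the Rellich lemma and the discussion leading to Corollary~\ref{FormulaGreenHelmholtz.cor},'' which is precisely what you carried out.
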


Before moving to the Beltrami problem, notice that the preceding results for scalar solutions to Helmholtz equation also work for vector-valued solutions. In this case, the decay property and radiation conditions can be considered componentwise. For instance, given any vector-valued solution $u\in C^2(\Omega,\CC^3)\cap C^1(\overline{\Omega},\CC^3)$ to the Helmholtz equation,
$$-(\Delta u+\lambda^2 u)=F,\hspace{0.7cm}x\in\Omega,$$
where $F$ is compactly supported, then, the decay property and the $L^1$ Sommerfeld radiation conditions read as 
\begin{align*}
\int_{\partial B_R(0)}\left\vert u(x)\right\vert\,dx&=o(R^2),\hspace{0.4cm}\mbox{ when }R\rightarrow +\infty,\\
\int_{\partial B_R(0)}\left\vert\jac u(x)\frac{x}{R}-i\lambda u(x)\right\vert\,dx&=o(R),\hspace{0.5cm} \mbox{ when }R\rightarrow +\infty.
\end{align*}
One can wonder whether there are more natural radiation conditions for vector-valued solutions to Helmholtz equation. The general answer is given in \cite[Theorem 4.13]{ColtonKress} and \cite[Section 5, Theorem 2]{Wilcox}, although we will be mostly interested in the divergence-free case:

\begin{rem}\label{SilverMullerHelmholtz.deSommerfeld.obs}
If the vector-valued solution to the preceding Helmholtz equation verifies the above conditions, then $u$ and its first order partial derivatives enjoy the strong Sommerfeld radiation condition and have well defined far field patterns. Let us write the Helmholtz equation in the following equivalent way
$$\curl(\curl u)-\nabla(\divop u)-\lambda^2u=F\ \mbox{ in }\Omega,$$
and take $\curl$ to obtain
$$\curl(\curl(\curl u))-\lambda^2\curl u=\curl F\ \mbox{ in }\Omega.$$
Now,  far field patterns in both equations give
\begin{align*}
0=F_\infty&=(\curl(\curl u)-\nabla(\divop u)-\lambda^2u)_\infty=i\lambda \sigma\times (\curl u)_\infty-i\lambda(\divop u)_\infty \sigma-\lambda^2 u_\infty,\\
0=(\curl F)_\infty&=\left(\curl(\curl(\curl u))-\lambda^2\curl u\right)_\infty\\
&=-i\lambda^3\sigma\times(\sigma\times (\sigma\times u_\infty))-\lambda^2(\curl u)_\infty=i\lambda^3 \sigma\times u_\infty-\lambda^2(\curl u)_\infty.
\end{align*}
Notice that compactly supported functions, such as $F$ and $\curl F$, have vanishing far fields patterns thanks to (\ref{FarFieldPatternNormaL2.form}). Through the definition of far field patterns we arrive at the following two decompositions
\begin{align*}
\frac{x}{\vert x\vert}\times& \curl u(x)-\divop u(x)\frac{x}{\vert x\vert}+i\lambda u(x)\\
&=\Gamma_\lambda(x)\left\{\left(\frac{x}{\vert x\vert}\times (\curl u)_\infty\left(\frac{x}{\vert x\vert}\right)-(\divop u)_\infty\left(\frac{x}{\vert x\vert}\right)+i\lambda u_\infty\left(\frac{x}{\vert x\vert}\right)\right)+O\left(\frac{1}{\vert x\vert}\right)\right\},\\
\lambda\frac{x}{\vert x\vert}\times & u(x)+i\curl u(x)\\
&=\Gamma_\lambda(x)\left\{\left(\lambda\frac{x}{\vert x\vert}\times u_\infty\left(\frac{x}{\vert x\vert}\right)+i(\curl u)_\infty\right)+O\left(\frac{1}{\vert x\vert}\right)\right\},
\end{align*}
when $\vert x\vert\rightarrow +\infty$. Consequently, the terms associated to the far field patterns vanish and we obtain the radiation conditions
\begin{align}
\sup_{x\in B_R(0)}\left\vert\frac{x}{R}\times\curl u(x)-\divop u(x)\frac{x}{R}+i\lambda u(x)\right\vert&=o\left(\frac{1}{R}\right),\ \mbox{ when }R\rightarrow +\infty,\label{SilverMullerHelmholtz1.divergencianocero.form}\\
\sup_{x\in B_R(0)}\left\vert\lambda\frac{x}{R}\times u(x)+i\curl u(x)\right\vert&=o\left(\frac{1}{R}\right),\ \mbox{ when }R\rightarrow +\infty.\label{SilverMullerHelmholtz2.divergencianocero.form}
\end{align}
When $u$ is a divergence-free solution to the Helmholtz equation (as in our case), the radiation condition are simpler and read
\begin{align}
\sup_{x\in B_R(0)}\left\vert\frac{x}{R}\times\curl u(x)+i\lambda u(x)\right\vert&=o\left(\frac{1}{R}\right),\ \mbox{ when }R\rightarrow +\infty,\label{SilverMullerHemholtz1.form}\\
\sup_{x\in B_R(0)}\left\vert\lambda\frac{x}{R}\times u(x)+i\curl u(x)\right\vert&=o\left(\frac{1}{R}\right),\ \mbox{ when }R\rightarrow +\infty.\label{SilverMullerHemholtz2.form} 
\end{align}
\end{rem}

\subsection{Inhomogeneous Beltrami equation in the exterior domain}
Now, we move to the complex-valued inhomogeneous Beltrami equation. In order to understand where the natural radiation condition (\ref{CondicionRadiacion.Beltrami.L1SilverMullerBeltrami.Intro.form}) comes form, let us extend the arguments in Remark \ref{SilverMullerHelmholtz.deSommerfeld.obs} in the homogeneous case. To this end, we will connect three different systems that will provide an appropriate terminology. The heuristic idea is summarized in Figure \ref{fig:Fig5}.
\begin{figure}[t]
\centering
\includegraphics[scale=0.85]{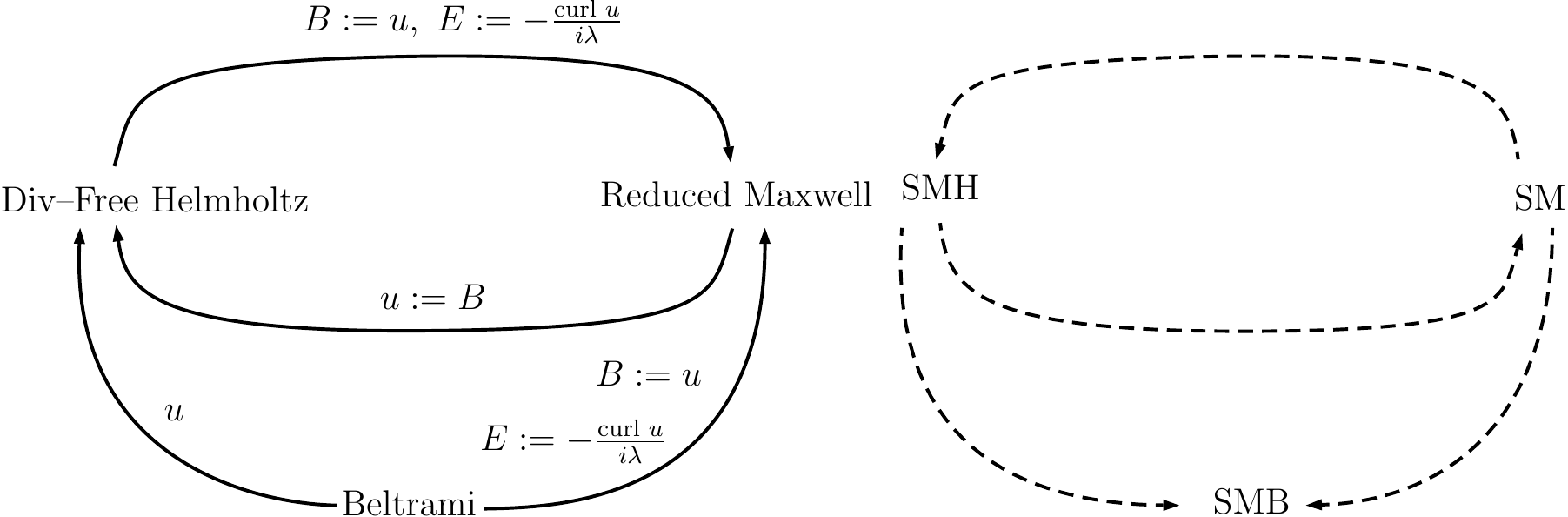}
\caption{Sketch of the connections between the three related models: \textit{divergence-free Helmholtz equation}, \textit{reduced Maxwell system} and \textit{Beltrami equation}. The picture in the left shows the bonds between such models whilst the picture in the right exhibits the associated relations between its natural radiation conditions.}
\label{fig:Fig5}
\end{figure}
Through the relations between the vector fields $u$ and $B$ in the left hand side of such pictures, we find (see \cite[Theorem 6.4]{ColtonKress2} and \cite{Wilcox}) that the divergence-free Helmholtz equation and the reduced Maxwell system \cite[Definition 6.5]{ColtonKress2} are completely equivalent, i.e., 
$$\left\{\begin{array}{ll}
\Delta u+\lambda^2 u=0, & x\in \Omega,\\
\divop u=0, & x\in \Omega.
\end{array}\right.\Longleftrightarrow\left\{\begin{array}{ll}
\curl E-i\lambda B=0, & x\in \Omega,\\
\curl B+i\lambda E=0, & x\in \Omega.
\end{array}\right.$$
In order that the solutions to this system could be represented through the classical \textit{Stratton--Chu formulas} \cite[Theorem 6.6]{ColtonKress2}, the \textit{Silver--M\"{u}ller radiation conditions} (SM)  have to be considered:
\begin{align*}
\sup_{x\in B_R(0)}\left\vert B(x)\times \frac{x}{R}-E(x)\right\vert=o\left(\frac{1}{R}\right), & \ \mbox{ when }R\rightarrow+\infty,\\
\sup_{x\in B_R(0)}\left\vert E(x)\times \frac{x}{R}+B(x)\right\vert=o\left(\frac{1}{R}\right), & \ \mbox{ when }R\rightarrow+\infty.
\end{align*}
Due to our choice of $B$ and $E$, the SM radiation conditions leads to (\ref{SilverMullerHemholtz1.form})--(\ref{SilverMullerHemholtz2.form}) again. Thus, the natural radiation conditions for the divergence-free vector-valued Helmholtz equation are actually a consequence of the SM radiation conditions for the reduced Maxwell system. Therefore, we will call them the \textit{Silver--M\"{u}ller--Helmholtz radiation conditions} (SMH). 

Let us now consider the case of the Beltrami equation
$$\curl u-\lambda u=0, \hspace{0.3cm}x\in\Omega.$$
When $\lambda\neq 0$, then $u$ is a solution to the divergence-free Helmholtz equation, and consequently it also solves the reduced Maxwell system. Therefore, one may want to transfer the SMH or the original SM radiation condition to the Beltrami framework. An easy substitution in (\ref{SilverMullerHemholtz1.form}) and (\ref{SilverMullerHemholtz2.form}) leads to a single radiation condition for Beltrami fields, which we will call it the \textit{Silver--M\"{u}ller--Beltrami  radiation condition} (SMB):
$$
\sup_{x\in B_R(0)}\left\vert i\frac{x}{R}\times u(x)-u(x)\right\vert=o\left(\frac{1}{R}\right),\ \mbox{ when }R\rightarrow+\infty.
$$

It might seem that the only connection between the Beltrami equation and the divergence-free vector-valued Helmholtz equation is the first implication sketched in Figure \ref{fig:Fig5}, but the connection is actually much stronger. The reason is the following. Given any solution $u$ to the Beltrami equation, it is obviously a solution to the divergence-free Helmholtz equation. The point is that, conversely, given any solution $\widehat{u}$ to the divergence-free Helmholtz equation, 
\begin{equation}\label{HelmholtzBeltrami.relacion.eq}
u:=\frac{\curl \widehat{u}+\lambda \widehat{u}}{2\lambda}.
\end{equation}
is a solution to the Beltrami equation, and all the solutions can be constructed this way.

In view of this converse relation, it is natural to wonder about the radiation conditions that one should assume on $\widehat{u}$ in order for $u$ to verify the SMB radiation condition. For this, notice that
$$i\frac{x}{R}\times u(x)-u(x)=\frac{i}{2\lambda}\left(\frac{x}{R}\times \curl \widehat{u}(x)+i\lambda \widehat{u}(x)\right)+\frac{i}{2\lambda}\left(\lambda\frac{x}{R}\times \widehat{u}(x)+i\curl \widehat{u}(x)\right),$$
for every $x\in \partial B_R(0)$. Therefore, the SMB radiation condition on $u$ is recovered form the SMH radiation conditions on $\widehat{u}$, so all the possible links between the three models and its corresponding radiation conditions in Figure \ref{fig:Fig5} follow.

\begin{rem}\label{HelmholtzBeltrami.relacion.obs}
The complex-valued Beltrami fields $u$ satisfying the SMB radiation condition take the form (\ref{HelmholtzBeltrami.relacion.eq}) for some solution $\widehat{u}$ of the divergence-free Helmholtz equation satisfying the SMH radiation conditions.
\end{rem}

As in the Sommerfeld radiation condition, let us consider the following hierarchy of SMB radiation conditions. 

\begin{defi}\label{CondicionesRadiacionBeltrami.defi}
$\,$
\begin{enumerate}
\item $L^1$ Silver--M\"{u}ller--Beltrami 
\begin{equation}\label{CondicionRadiacion.Beltrami.L1SilverMullerBeltrami.form}
\int_{\partial B_R(0)} \left\vert i \frac{x}{R}\times u(x)- u(x)\right\vert\,d_xS=o(R),\ R\rightarrow +\infty.
\end{equation}
\item $L^2$ Silver--M\"{u}ller--Beltrami 
\begin{equation}\label{CondicionRadiacion.Beltrami.L2SilverMullerBeltrami.form}
\int_{\partial B_R(0)} \left\vert i \frac{x}{R}\times u(x)-u(x)\right\vert^2\,d_xS=o(1),\ R\rightarrow +\infty.
\end{equation}
\item ($L^\infty$) Silver--M\"{u}ller--Beltrami 
\begin{equation}\label{CondicionRadiacion.Beltrami.SilverMullerBeltrami.form}
\sup_{x\in \partial B_R(0)}\left\vert i \frac{x}{R}\times u(x)- u(x)\right\vert=o\left(\frac{1}{R}\right),\ R\rightarrow +\infty.
\end{equation}
\end{enumerate}
\end{defi}

The next theorem shows the desired decomposition theorem of Helmholtz--Hodge type under the above $L^1$ decay and radiation hypotheses, that were already mentioned in the Introduction (see (\ref{CondCaidaBeltrami.Intro.form}) and (\ref{CondicionRadiacion.Beltrami.L1SilverMullerBeltrami.Intro.form})):

\begin{theo}\label{HelmholtzHodgeBeltrami.teo}
Let $u\in C^1(\overline{\Omega},\CC^3)$ be any vector field which verifies the $L^1$ SMB condition (\ref{CondicionRadiacion.Beltrami.L1SilverMullerBeltrami.form}) and the following decay property at infinity
\begin{equation}
\int_{\partial B_R(0)}\vert u(x)\vert\,d_xS=o(R^2),\ \mbox{ when }R\rightarrow +\infty\label{CondCaidaBeltrami.form}\\
\end{equation}
Assume that $\divop u,\,\curl u-\lambda u=O(\vert x\vert^{-\rho})$ when $\vert x\vert\rightarrow+\infty$ for some exponent $2<\rho<3$. Then, $u$ can be decomposed as
$$u(x)=-\nabla\phi(x) +\curl A(x)+\lambda A(x),$$
for every $x\in \Omega$, where $\phi$ and $A$ are the scalar and vector fields
\begin{align*}
\phi(x)&=\int_{\Omega}\Gamma_\lambda(x-y)\divop u(y)\,dy+\int_S \Gamma_\lambda(x-y)\eta(y)\cdot u(y)\,d_y S,\\
A(x)&=\int_{\Omega}\Gamma_\lambda(x-y)(\curl u(y)-\lambda u(y))\,dy+\int_S \Gamma_\lambda(x-y)\eta(y)\times u(y)\,d_y S.
\end{align*}
As a consequence,
$$u=O(\vert x\vert^{-(\rho-2)}), \mbox{ when }\vert x\vert\rightarrow +\infty.$$
Indeed, when both $\divop u$ and $\curl u-\lambda u$ are compactly supported, one obtains the optimal decay at infinity, namely,
$$u=O(\vert x\vert^{-1}), \mbox{ when }\vert x\vert\rightarrow +\infty,$$
and $u$ satisfies the Sommerfeld radiation condition (\ref{CondicionRadiacion.Helmholtz.Sommerfeld.form}) componentwise.
\end{theo}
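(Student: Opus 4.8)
The plan is to adapt, to the first-order operator $\curl-\lambda$, the classical proof of the scalar Helmholtz representation formula (Theorem~\ref{FormulaGreenHelmholtz.teo}, cf.~\cite[Theorem~2.4]{ColtonKress2} and~\cite[Theorem~3.1.1]{Nedelec}): instead of Green's second identity I would use a Helmholtz--Hodge representation formula, which involves only one derivative of $u$ and is therefore available for $C^1$ fields. First fix $R_0>0$ with $\overline G\subseteq B_{R_0}(0)$, and for $R>R_0$ set $\Omega_R:=B_R(0)\setminus\overline G$; its outward unit normal equals $\eta$ on $S$ and $y/R$ on $\partial B_R(0)$. On the bounded domain $\Omega_R$ one has, for every $x\in\Omega_R$,
\begin{equation*}
u(x)=-\nabla\phi_R(x)+\curl A_R(x)+\lambda A_R(x),
\end{equation*}
where
\begin{align*}
\phi_R(x)&=\int_{\Omega_R}\Gamma_\lambda(x-y)\divop u(y)\,dy+\int_S\Gamma_\lambda(x-y)\,\eta(y)\cdot u(y)\,d_yS+\int_{\partial B_R(0)}\Gamma_\lambda(x-y)\,\tfrac{y}{R}\cdot u(y)\,d_yS,\\
A_R(x)&=\int_{\Omega_R}\Gamma_\lambda(x-y)(\curl u-\lambda u)(y)\,dy+\int_S\Gamma_\lambda(x-y)\,\eta(y)\times u(y)\,d_yS+\int_{\partial B_R(0)}\Gamma_\lambda(x-y)\,\tfrac{y}{R}\times u(y)\,d_yS.
\end{align*}
This is the truncated-exterior-domain version of the bounded-domain Helmholtz--Hodge formula of~\cite{Kress}; it follows from the elementary $\divop$ and $\curl$ integration-by-parts identities applied to the kernel $\Gamma_\lambda$, and for merely $C^1$ fields one may first reduce to the smooth case by mollification, using the continuity of the potentials with respect to their densities.

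The heart of the proof is to let $R\to+\infty$. The volume integrals over $\Omega_R$, and their $x$-derivatives (obtained by differentiating under the integral sign, the kernels $\Gamma_\lambda$ and $\nabla\Gamma_\lambda$ being weakly singular in $\RR^3$), converge to the corresponding integrals over $\Omega$: indeed $|\Gamma_\lambda(x-y)|$ and $|\nabla_x\Gamma_\lambda(x-y)|$ are $O(|y|^{-1})$ as $|y|\to+\infty$ while $\divop u,\ \curl u-\lambda u=O(|y|^{-\rho})$ with $\rho>2$, so the integrands decay like $|y|^{-1-\rho}$, which is integrable on $\RR^3$; the single layer integrals over $S$ do not depend on $R$. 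The remaining point --- and the main obstacle --- is to show that the contribution of $\partial B_R(0)$, namely
\begin{equation*}
-\nabla_x\!\!\int_{\partial B_R(0)}\!\!\Gamma_\lambda(x-y)\,\tfrac{y}{R}\cdot u(y)\,d_yS+(\curl_x+\lambda)\!\!\int_{\partial B_R(0)}\!\!\Gamma_\lambda(x-y)\,\tfrac{y}{R}\times u(y)\,d_yS,
\end{equation*}
tends to $0$. Using $\nabla_x\Gamma_\lambda(x-y)=\bigl(i\lambda-|x-y|^{-1}\bigr)\Gamma_\lambda(x-y)\tfrac{x-y}{|x-y|}$ together with $\tfrac{x-y}{|x-y|}=-\tfrac{y}{R}+O(R^{-1})$ on $\partial B_R(0)$ (mean value theorem, exactly as in the proof of Proposition~\ref{PotencialesCapaSimple.HelmholtzRadiantes.pro}), one gets $\nabla_x\Gamma_\lambda(x-y)=-i\lambda\Gamma_\lambda(x-y)\tfrac{y}{R}+O(R^{-2})$; substituting this and invoking the identity $\tfrac{y}{R}\times\bigl(\tfrac{y}{R}\times u\bigr)=\tfrac{y}{R}\bigl(\tfrac{y}{R}\cdot u\bigr)-u$, the terms proportional to $\tfrac{y}{R}\bigl(\tfrac{y}{R}\cdot u\bigr)$ cancel between the $-\nabla_x$ and the $\curl_x$ pieces, and the displayed expression becomes
\begin{equation*}
\lambda\int_{\partial B_R(0)}\Gamma_\lambda(x-y)\Bigl(\tfrac{y}{R}\times u(y)+i\,u(y)\Bigr)\,d_yS+O\!\left(R^{-2}\right)\int_{\partial B_R(0)}|u(y)|\,d_yS.
\end{equation*}
Since $\bigl|\tfrac{y}{R}\times u+i\,u\bigr|=\bigl|i\tfrac{y}{R}\times u-u\bigr|$ and $\sup_{y\in\partial B_R(0)}|\Gamma_\lambda(x-y)|=O(R^{-1})$, the first term is $O(R^{-1})\cdot o(R)=o(1)$ by the $L^1$ SMB radiation condition~(\ref{CondicionRadiacion.Beltrami.L1SilverMullerBeltrami.form}), and the second is $O(R^{-2})\cdot o(R^2)=o(1)$ by the $L^1$ decay hypothesis~(\ref{CondCaidaBeltrami.form}). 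Hence the $\partial B_R(0)$ contribution vanishes in the limit and we obtain $u=-\nabla\phi+\curl A+\lambda A$ with $\phi,A$ as in the statement. It should be stressed that the individual summands $\lambda\int_{\partial B_R(0)}\Gamma_\lambda(x-y)\tfrac{y}{R}\times u\,d_yS$ and $\nabla_x\int_{\partial B_R(0)}\Gamma_\lambda(x-y)\tfrac{y}{R}\cdot u\,d_yS$ are in general only $o(R)$, not $o(1)$; it is the exact combination above, paired with the vector identity and with \emph{both} conditions~(\ref{CondCaidaBeltrami.form})--(\ref{CondicionRadiacion.Beltrami.L1SilverMullerBeltrami.form}), that makes the limit work, which is exactly what dictates the choice of these conditions.

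Finally I would read off the decay rates from this representation by means of Theorem~\ref{DecPotencialRiesz}. The single layer potentials over $S$ have compactly supported densities, so they and their first $x$-derivatives are $O(|x|^{-1})$ as $|x|\to+\infty$, using $|\Gamma_\lambda(x)|,|\nabla\Gamma_\lambda(x)|\lesssim|x|^{-1}$ for large $|x|$. For the volume potentials, split $\nabla_x\Gamma_\lambda(x-y)$ into its $i\lambda\Gamma_\lambda(x-y)\tfrac{x-y}{|x-y|}$ part, dominated by the Riesz kernel $R_1$, and its $-|x-y|^{-1}\Gamma_\lambda(x-y)\tfrac{x-y}{|x-y|}$ part, dominated by $R_2$; applying part~(1) of Theorem~\ref{DecPotencialRiesz} with $N=3$ against the $O(|y|^{-\rho})$ densities (admissible for $\alpha\in\{1,2\}$ since $2<\rho<3$), the volume potentials and their first derivatives are $O(|x|^{-(\rho-2)})+O(|x|^{-(\rho-1)})=O(|x|^{-(\rho-2)})$. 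As $\rho-2<1$, this term dominates and $u=O(|x|^{-(\rho-2)})$. When $\divop u$ and $\curl u-\lambda u$ are compactly supported, all densities involved are compactly supported, so part~(2) of Theorem~\ref{DecPotencialRiesz} yields $O(|x|^{-1})$ for every term and its first derivatives, i.e.\ $u=O(|x|^{-1})$; moreover in that case each component of $u$ is a finite combination of $\Gamma_\lambda$-volume and single layer potentials with compactly supported densities and of their first derivatives, each of which satisfies the Sommerfeld radiation condition~(\ref{CondicionRadiacion.Helmholtz.Sommerfeld.form}) by Propositions~\ref{PotencialesCapaSimple.HelmholtzRadiantes.pro} and~\ref{PotencialesVolumen.HelmholtzRadiantes.pro}, whence $u$ satisfies it componentwise.
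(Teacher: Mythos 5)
Your proof is correct and follows essentially the same route as the paper's: both truncate the exterior domain, let $R\to\infty$, and show that the $\partial B_R(0)$ contribution vanishes by combining the asymptotics $\nabla_x\Gamma_\lambda(x-y)=-i\lambda\Gamma_\lambda(x-y)\tfrac{y}{R}+O(R^{-2})$, Lagrange's triple-product identity, and the two $L^1$ conditions~(\ref{CondicionRadiacion.Beltrami.L1SilverMullerBeltrami.form})--(\ref{CondCaidaBeltrami.form}) --- exactly how the paper estimates its term $I_R$ --- and then read off the decay rates from Theorem~\ref{DecPotencialRiesz}. The only presentational difference is that you invoke the bounded-domain Helmholtz--Hodge formula of~\cite{Kress} as the starting point, whereas the paper re-derives it from scratch on $\Omega(x,\varepsilon,R)$ (so that the $\varepsilon\to0$ limit producing $u(x)$ is made explicit); the substantive computation is the same.
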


\begin{proof}
Consider any $x\in \Omega$ and fix any couple of radii $\varepsilon_0,R_0>0$ such that
$$\overline{B}_{\varepsilon_0}(x)\subseteq \Omega\ \mbox{ and }\ \overline{B}_{\varepsilon_0}(x)\cup\overline{G}\subseteq B_{R_0}(0).$$
Define the subdomain $\Omega(x,\varepsilon,R):=\Omega\cap (B_R(0)\setminus\overline{B}_{\varepsilon}(x))$ for $R>R_0$ and $\varepsilon>\varepsilon_0$, as in Figure \ref{fig:Fig1}.

\begin{figure}[t]
\centering
\includegraphics[scale=0.8]{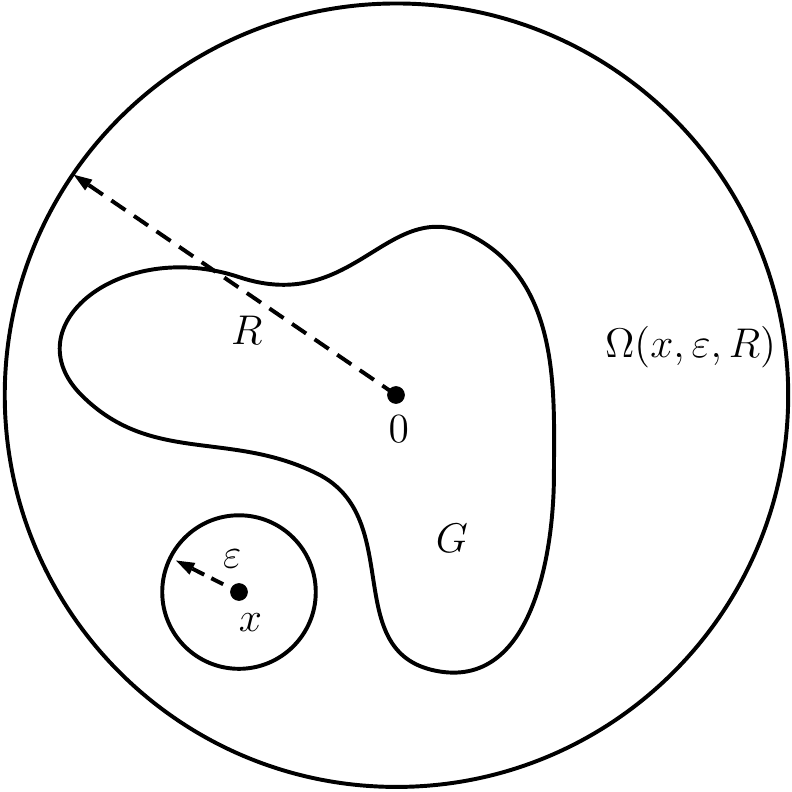}
\caption{Domain $\Omega(x,\varepsilon,R)$.}
\label{fig:Fig1}
\end{figure}

The main difference between the formula of Stokes type for the scalar Helmholtz equation in the preceding paragaph and the formula of Helmholtz--Hodge type for the Beltrami equation here is that the former holds true by virtue of the scalar Green's first formula while the later needs some sort of vector formula of Green type to be derived. 

Let us fix any vector $e\in \CC^3$. Since $\Gamma_\lambda$ solves the scalar homogeneous Helmholtz equation outside the origin, then $\Gamma_\lambda e$ is a solution to the vector-valued homogeneous Helmholtz equation too. Therefore, the following identity holds
\begin{equation*}\label{HHBeltramiEq1}
0=-\int_{\Omega(x,\varepsilon,R)}(\Delta (\Gamma_\lambda(x-y)e)+\lambda^2(\Gamma_\lambda(x-y)e))\cdot u(y)\,dy.
\end{equation*}
As in the classical Helmholtz--Hodge theorem, it is essential to bear the next formula in mind 
$$\curl(\curl)=\nabla(\divop)-\Delta,$$
which allows writing the above identity in the following way
\begin{align}
0=&-\int_{\Omega(x,\varepsilon,R)}\nabla_y\left(\divop_x\left(\Gamma_\lambda(x-y)e\right)\right)\cdot u(y)\,dy +\int_{\Omega(x,\varepsilon,R)}\curl_y\left(\curl_x\left(\Gamma_\lambda(x-y)e\right)\right)\cdot u(y)\,dy \nonumber \\ &+\lambda^2\int_{\Omega(x,\varepsilon,R)}(\Gamma_\lambda(x-y)e)\cdot u(y)\,dy.\label{HHBeltramiEq2}
\end{align}
Hence Equation (\ref{HHBeltramiEq2}) can be written as
\begin{align}
0=&-\int_{\Omega(x,\varepsilon,R)}\divop_y\left(\divop_x\left(\Gamma_\lambda(x-y)e\right)u(y)\right)\,dy +\int_{\Omega(x,\varepsilon,R)}\divop_x\left(\Gamma_\lambda(x-y)e\right)\divop u(y)\,dy\nonumber\\
&+\int_{\Omega(x,\varepsilon,R)}\divop_y\left(\curl_x\left(\Gamma_\lambda(x-y)e\right)\times u(y)\right)\,dy+\int_{\Omega(x,\varepsilon,R)}\curl_x\left(\Gamma_\lambda(x-y)e\right)\cdot \curl u(y)\,dy\nonumber\\
&+\lambda^2\int_{\Omega(x,\varepsilon,R)}\left(\Gamma_\lambda(x-y)e\right)\cdot u(y)\,dy. \label{HHBeltramiEq3}
\end{align}
Now, we can apply the divergence theorem to the first and third terms in (\ref{HHBeltramiEq3}) and standard vector calculus identities to find
\begin{align}
0=&-\int_{\partial \Omega(x,\varepsilon,R)}\left(\nabla_x\Gamma_\lambda(x-y)\nu(y)\cdot u(y)\right)\cdot e\,d_yS +\int_{\Omega(x,\varepsilon,R)}\left(\nabla_x \Gamma_\lambda(x-y)\divop u(y)\right)\cdot e\,dy\nonumber\\
&+\int_{\partial \Omega(x,\varepsilon,R)}\left(\nabla_x \Gamma_\lambda(x-y)\times (\nu(y)\times u(y))\right)\cdot e\,d_yS-\int_{\Omega(x,\varepsilon,R)}\left(\nabla_x \Gamma_\lambda(x-y)\times \curl u(y)\right)\cdot e\,dy\nonumber\\
&+\lambda^2\int_{\Omega(x,\varepsilon,R)}\left(\Gamma_\lambda(x-y)u(y)\right)\cdot e\,dy. \label{HHBeltramiEq5}
\end{align}

Let us now remove the dot product by $e$ (notice that (\ref{HHBeltramiEq5}) holds for any constant vector $e\in \CC^3$) and subtract and add the appropriate terms to obtain the following formula
\begin{align*}
0=&-\int_{\partial \Omega(x,\varepsilon,R)}\nabla_x\Gamma_\lambda(x-y)\nu(y)\cdot u(y)\,d_yS +\int_{\Omega(x,\varepsilon,R)}\nabla_x \Gamma_\lambda(x-y)\divop u(y)\,dy\nonumber\\
&+\int_{\partial \Omega(x,\varepsilon,R)}\nabla_x \Gamma_\lambda(x-y)\times (\nu(y)\times u(y))\,d_yS-\int_{\Omega(x,\varepsilon,R)}\nabla_x \Gamma_\lambda(x-y)\times (\curl u(y)-\lambda u(y))\,dy\nonumber\\
&+\lambda\left(-\int_{\Omega(x,\varepsilon,R)}\nabla_x \Gamma_\lambda(x-y)\times u(y)\,dy+\lambda\int_{\Omega(x,\varepsilon,R)}\Gamma_\lambda(x-y)u(y)\,dy\right).
\end{align*}
We can write the last term in terms of $\curl u-\lambda u$ and $\nu\times u$ using integration by parts:
\begin{align}
0=&-\int_{\partial \Omega(x,\varepsilon,R)}\nabla_x\Gamma_\lambda(x-y)\nu(y)\cdot u(y)\,d_yS +\int_{\Omega(x,\varepsilon,R)}\nabla_x \Gamma_\lambda(x-y)\divop u(y)\,dy \nonumber \\
&+\int_{\partial \Omega(x,\varepsilon,R)}\nabla_x \Gamma_\lambda(x-y)\times (\nu(y)\times u(y))\,d_yS -\int_{\Omega(x,\varepsilon,R)}\nabla_x \Gamma_\lambda(x-y)\times (\curl u(y)-\lambda u(y))\,dy\nonumber\\
&+\lambda\left(-\int_{\Omega(x,\varepsilon,R)} \Gamma_\lambda(x-y)(\curl u(y)-\lambda u(y))\,dy+\int_{\partial\Omega(x,\varepsilon,R)}\Gamma_\lambda(x-y)\nu(y)\times u(y)\,d_yS\right).\label{HHBeltramiEq6}
\end{align}
Let us finally take limits when $\varepsilon\rightarrow 0$ and $R\rightarrow +\infty$ in the preceding identities. We start with the volume integrals, that obviously converges to the integral over the whole exterior domain due to the dominated convergence theorem, the \textit{Hardy--Littewood--Sobolev theorem of fractional integration} (Theorem~\ref{DecPotencialRiesz}) and the hypotheses on $\divop u$ and $\curl u-\lambda u$:
\begin{align*}
\int_{\Omega(x,\varepsilon,R)}\nabla_x \Gamma_\lambda(x-y)\divop u(y)\,dy&\longrightarrow\int_{\Omega}\nabla_x \Gamma_\lambda(x-y)\divop u(y)\,dy,\\
\int_{\Omega(x,\varepsilon,R)}\nabla_x \Gamma_\lambda(x-y)\times (\curl u(y)-\lambda u(y))\,dy & \longrightarrow \int_{\Omega}\nabla_x \Gamma_\lambda(x-y)\times (\curl u(y)-\lambda u(y))\,dy,\\
\int_{\Omega(x,\varepsilon,R)} \Gamma_\lambda(x-y)(\curl u(y)-\lambda u(y))\,dy& \longrightarrow \int_{\Omega} \Gamma_\lambda(x-y)(\curl u(y)-\lambda u(y))\,dy,
\end{align*}
when $\varepsilon\rightarrow 0$ and $R\rightarrow +\infty$. 

Regarding the boundary integrals, it is worth splitting them into the three connected components of the boundary surface of $\Omega(x,\varepsilon,R)$:
$$\partial \Omega(x,\varepsilon,R)=S\cup \partial B_\varepsilon(x)\cup \partial B_R(0).$$
Since the integrals over $S$ are not relevant in the limit $\varepsilon\rightarrow 0$ and $R\rightarrow +\infty$, we focus on the two remaining terms. On the one hand, the boundary terms over the sphere $\partial B_\varepsilon(x)$ can be written as 
\begin{align*}
I_\varepsilon:=&\int_{\partial B_\varepsilon(x)}\nabla_x \Gamma_\lambda(x-y)\frac{y-x}{\varepsilon}\cdot u(y)\,d_yS-\int_{\partial B_\varepsilon(x)}\nabla_x\Gamma_\lambda(x-y)\times\left(\frac{y-x}{\varepsilon}\times u(y)\right)\,d_yS\\
&-\lambda\int_{\partial B_\varepsilon(x)}\Gamma_\lambda(x-y)\frac{y-x}{\varepsilon}\times u(y)\,d_yS.
\end{align*}
Notice that the derivative formula (\ref{DerivadaSolucionFundamental.form}) for $\Gamma_\lambda(x)$ now reads
$$
\nabla_x\Gamma_\lambda(x-y)=\left(i\lambda-\frac{1}{\vert x-y\vert}\right)\frac{e^{i\lambda\vert x-y\vert}}{4\pi\vert x-y\vert}\frac{x-y}{\vert x-y\vert}.
$$

This identity and \textit{Lagrange's formula}
$$
v=(e\cdot v)\,e-e\times(e\times v),
$$
for any unit vector $e$ and any general vector $v$ show that
\begin{align*}
I_\varepsilon:=&-\left(i\lambda-\frac{1}{\varepsilon}\right)\frac{e^{i\lambda\varepsilon}}{4\pi\varepsilon}\int_{\partial B_\varepsilon(x)}u(y)\,d_yS-\lambda\frac{e^{i\lambda\varepsilon}}{4\pi \varepsilon}\int_{\partial B_\varepsilon(x)}\frac{y-x}{\varepsilon}\times u(y)\,d_yS\\
=&i\lambda\frac{e^{i\lambda\varepsilon}}{4\pi\varepsilon}\int_{\partial B_\varepsilon(x)}\left(i\frac{y-x}{\varepsilon}\times u(y)-u(y)\right)\,d_yS+\frac{e^{i\lambda\varepsilon}}{4\pi\varepsilon^2}\int_{\partial B_\varepsilon(x)}u(y)\,d_yS.
\end{align*}
Consequently, the first term converges to zero as $\varepsilon\rightarrow 0$ while the second term converges to $u(x)$ due to the properties of the mean value over spheres of continuous functions.

In addition, the boundary terms over $\partial B_R(0)$ may also be written in a similar way
\begin{align*}
I_R:=&-\int_{\partial B_R(0)}\nabla_x \Gamma_\lambda(x-y)\frac{y}{R}\cdot u(y)\,d_yS+\int_{\partial B_R(0)}\nabla_x\Gamma_\lambda(x-y)\times\left(\frac{y}{R}\times u(y)\right)\,d_yS\\
&+\lambda\int_{\partial B_R(0)}\Gamma_\lambda(x-y)\frac{y}{R}\times u(y)\,d_yS\\
=&\int_{\partial B_R(0)}\left(i\lambda-\frac{1}{\vert x-y\vert}\right)\frac{e^{i\lambda\vert x-y\vert}}{4\pi\vert x-y\vert}\frac{y-x}{\vert y-x\vert}\frac{y}{R}\cdot u(y)\,d_yS\\
&-\int_{\partial B_R(0)}\left(i\lambda-\frac{1}{\vert x-y\vert}\right)\frac{e^{i\lambda \vert x-y\vert}}{4\pi \vert x-y\vert}\frac{y-x}{\vert y-x\vert}\times\left(\frac{y}{R}\times u(y)\right)\,d_yS\\
&+\lambda\int_{\partial B_R(0)}\frac{e^{i\lambda \vert x-y\vert}}{4\pi\vert x-y\vert}\frac{y}{R}\times u(y)\,d_yS.
\end{align*}
This time, the reasoning is slightly different because Lagrange's formula for the triple vector product cannot be directly applied since $B_R(0)$ is not centered at $x$. See Remark \ref{HelmholtzHodgeBeltrami.obs} below for the behavior of this boundary integrals if we had defined $\Omega(x,\varepsilon,R)=\Omega\cap B_R(x)\cap (\RR^3\setminus \overline{B}_\varepsilon(x))$ instead of $\Omega(x,\varepsilon,R)=\Omega\cap B_R(0)\cap (\RR^3\setminus \overline{B}_\varepsilon(x))$. Let us add and subtract the appropriate terms in order to obtain a more suggestive equation where Lagrange's formula for the triple vector product can be applied
\begin{align*}
I_R:=&-i\lambda\int_{\partial B_R(0)}\frac{e^{i\lambda\vert x-y\vert}}{4\pi\vert x-y\vert}\left(i\frac{y}{R}\times u(y)-u(y)\right)\,d_yS-\int_{\partial B_R(0)}\frac{e^{i\lambda \vert x-y\vert}}{4\pi \vert x-y\vert^2} u(y)\,d_yS\\
&+\int_{\partial B_R(0)}\left(i\lambda-\frac{1}{\vert x-y\vert}\right)\frac{e^{i\lambda\vert x-y\vert}}{4\pi\vert x-y\vert}\left(\frac{y-x}{\vert y-x\vert}-\frac{y}{R}\right)\frac{y}{R}\cdot u(y)\,d_yS\\
&-\int_{\partial B_R(0)}\left(i\lambda-\frac{1}{\vert x-y\vert}\right)\frac{e^{i\lambda \vert x-y\vert}}{4\pi \vert x-y\vert}\left(\frac{y-x}{\vert y-x\vert}-\frac{y}{R}\right)\times\left(\frac{y}{R}\times u(y)\right)\,d_yS.
\end{align*}
Then, the same argument as in (\ref{PotencialesCapaSimple.HelmholtzRadiantes.ValorMedio.ineq}) leads to the following bound of the norm of $I_R$ for $R>\vert x\vert$
\begin{align}
\vert I_R\vert\leq&\frac{\vert\lambda\vert}{4\pi(R-\vert x\vert)}\int_{\partial B_R(0)}\left\vert i\frac{y}{R}\times u(y)-u(y)\right\vert\,d_yS\label{HHBeltramiEq7}\\
&+\frac{1}{4\pi (R-\vert x\vert)^2}\int_{\partial B_R(0)}\vert u(y)\vert\,d_yS\nonumber+\frac{2C\vert x\vert}{4\pi(R-\vert x\vert)^2}\int_{\partial B_R(0)}\vert u(y)\vert\,d_yS.\nonumber
\end{align}
Thereby, $I_R\rightarrow 0$ when $R\rightarrow +\infty$, thanks to the $L^1$ SMB radiation condition (\ref{CondicionRadiacion.Beltrami.L1SilverMullerBeltrami.form}) and the $L^1$ decay property (\ref{CondCaidaBeltrami.form}).

Now that we then have the representation formula in the statement of the theorem, the asymptotic behavior at infinity follows from Theorem \ref{DecPotencialRiesz} and the componentwise Sommerfeld radiation condition in the compactly supported case is a direct consequence of Propositions \ref{PotencialesCapaSimple.HelmholtzRadiantes.pro} and \ref{PotencialesVolumen.HelmholtzRadiantes.pro}.
\end{proof}

\begin{rem}\label{HelmholtzHodgeBeltrami.obs}
Consider $\Omega(x,\varepsilon,R)=\Omega\cap B_R(x)\cap (\RR^3\setminus \overline{B}_\varepsilon(x))$ instead of $\Omega(x,\varepsilon,R)=\Omega\cap B_R(0)\cap (\RR^3\setminus \overline{B}_\varepsilon(x))$ in Eq. (\ref{HHBeltramiEq6}). We can argue in the same way both for the boundary terms over $\partial B_\varepsilon(x)$ and for those over $\partial B_R(x)$. Then, the former has already been studied in the above proof and the later reads
\begin{align}
I_R:=&\left(i\lambda-\frac{1}{R}\right)\frac{e^{i\lambda R}}{4\pi R}\int_{\partial B_R(x)}u(y)\,d_yS+\lambda\frac{e^{i\lambda R}}{4\pi R}\int_{\partial B_R(x)}\frac{y-x}{R}\times u(y)\,d_yS\label{HHBeltramiEq8}\\\
=&-i\lambda\frac{e^{i\lambda R}}{4\pi R}\int_{\partial B_R(x)}\left(i\frac{y-x}{\varepsilon}\times u(y)-u(y)\right)\,d_yS-\frac{e^{i\lambda R}}{4\pi R^2}\int_{\partial B_R(x)}u(y)\,d_yS.\nonumber
\end{align}
Therefore, the same representation theorem might have been obtained from the following radiation and decay conditions
\begin{align*}
\int_{\partial B_R(x)}\left(i\frac{y-x}{\varepsilon}\times u(y)-u(y)\right)\,d_yS&=o(R),\hspace{0.5cm}\mbox{ when }R\rightarrow +\infty,\\
\int_{\partial B_R(x)}u(y)\,d_yS&=o(R^2), \hspace{0.35cm}\mbox{ when }R\rightarrow +\infty,
\end{align*}
for every $x\in \Omega$. The hypotheses are stronger than (\ref{CondicionRadiacion.Beltrami.L1SilverMullerBeltrami.form}) and (\ref{CondCaidaBeltrami.form}) in the sense that they have to be assumed on every $x\in\Omega$. However, they are weaker in the sense that norms can be removed here. Therefore, one might take advantage of certain geometric cancellations of our vector fields  to ensure these conditions.

An obvious but interesting feature of the above boundary terms is that in both cases, when $\Omega(x,\varepsilon,R)=\Omega\cap B_R(0)\cap (\RR^3\setminus \overline{B}_\varepsilon(x))$ (\ref{HHBeltramiEq7}) and $\Omega(x,\varepsilon,R)=\Omega\cap B_R(x)\cap (\RR^3\setminus\overline{B}_\varepsilon(x))$ (\ref{HHBeltramiEq8}),  the harmonic case $\lambda=0$ does not need to prescribe any radiation condition at infinity, as it is the case in the classical Helmholtz--Hodge theorem and in \cite{Neudert,vonWahl}.
\end{rem}

\begin{rem}\label{HipotesisRepresentacionL1SilverMullerBeltramiL2.obs}
Analogously to the case of the Helmholtz equation, the $L^2$ SMB radiation condition imply both the $L^1$ SMB radiation condition and the weak decay property in $L^1$: $(\ref{CondicionRadiacion.Beltrami.L2SilverMullerBeltrami.form})\Rightarrow (\ref{CondicionRadiacion.Beltrami.L1SilverMullerBeltrami.form})+(\ref{CondCaidaBeltrami.form})$. Indeed, let $u\in C^1(\overline{\Omega},\CC^3)$  be any solution to the complex-valued homogeneous Beltrami equation in the exterior domain which satisfies the $L^2$ SMB radiation condition (\ref{CondicionRadiacion.Beltrami.L2SilverMullerBeltrami.form}). Let us compute the square in the radiation condition as follows
\begin{align}\
\left\vert i \frac{x}{R}\times u(x)-u(x)\right\vert^2&=\left\vert \frac{x}{R}\times u(x)\right\vert^2+\left\vert u(x)\right\vert^2-2\Re\left(i u(x)\cdot\left(\frac{x}{R}\times\overline{u}(x)\right)\right)\nonumber\\
&=\left\vert \frac{x}{R}\times u(x)\right\vert^2+\left\vert u(x)\right\vert^2+2\Im\left(u(x)\cdot\left(\frac{x}{R}\times\overline{u}(x)\right)\right)\label{HipotesisRepresentacionL1SilverMullerBeltramiL2.eq1.form},
\end{align}
for any $x\in\partial B_R(0)$. For any positive radius such that $\overline{G}\subseteq B_{R_0}(0)$, we define the subdomains $\Omega_R:=B_R(0)\setminus\overline{G}$, for each $R>R_0$. Elementary computations involving the Beltrami equation leads to
$$
-\lambda\int_{\Omega_R}\vert u(x)\vert^2\,dx=\int_{\Omega_R} u(x)\cdot\curl\overline{u}(x)\,dx=\int_{\Omega_R}\divop(\overline{u}\times u)\,dx+\int_{\Omega_R}\overline{u}(x)\cdot\curl u(x)\,dy.
$$
Let us compute the mean value of the two preceding equalities and obtain, thanks to the divergence theorem,
\begin{multline*}
-\lambda\int_{\Omega_R}\vert u(x)\vert^2\,dx=\Re\left(\int_{\Omega_R} \overline{u}(x)\cdot \curl u(x)\,dx\right)\\
+\frac{1}{2}\int_{\partial B_R(0)}(\overline{u}(x)\times u(x))\cdot \frac{x}{R}\,d_xS-\frac{1}{2}\int_S (\overline{u}(x)\times u(x))\cdot \eta(y)\,d_yS.
\end{multline*}
Taking imaginary parts in the above equation leads to
\begin{equation}\label{HipotesisRepresentacionL1SilverMullerBeltramiL2.eq2.form}
\Im\left(\int_{\partial B_R(0)}(\overline{u}(x)\times u(x))\cdot \frac{x}{R}\,d_xS\right)=\Im\left(\int_S (\overline{u}(x)\times u(x))\cdot \eta(x)\,d_xS\right),
\end{equation}
for each $R>R_0$. Finally, (\ref{HipotesisRepresentacionL1SilverMullerBeltramiL2.eq1.form}),  (\ref{HipotesisRepresentacionL1SilverMullerBeltramiL2.eq2.form}) along with the $L^2$ SMB radiation condition (\ref{CondicionRadiacion.Beltrami.L2SilverMullerBeltrami.form}) lead to
\begin{equation}\label{HipotesisRepresentacionL1SilverMullerBeltramiL2.eq3.form}
\lim_{R\rightarrow +\infty}\int_{\partial B_R(0)}\left(\left\vert \frac{x}{R}\times u(x)\right\vert^2+\left\vert u(x)\right\vert^2\right)\,dx=2\Im\left(\int_S \overline{u}(x)\cdot\left(\eta(x)\times u(x)\right)\,d_xS\right).
\end{equation}
As a consequence,
$$\int_{\partial B_R(0)}\vert u(x)\vert^2\,dx=O(1), \mbox{ when }R\rightarrow +\infty,$$
and Cauchy-Schwarz inequality leads to the $L^1$ decay property
$$\int_{\partial B_R(0)}\vert u(x)\vert\,dx=O(R), \mbox{ when }R\rightarrow +\infty.$$
In particular, one gets (\ref{CondCaidaBeltrami.form}).
\end{rem}

This remark is useful because in order to check the hypotheses in Theorem \ref{HelmholtzHodgeBeltrami.teo} it is sometimes simpler to check that the $L^2$ SMB radiation condition holds. Furthermore, it can be combined with the Rellich lemma \cite[Lemma 2.11]{ColtonKress2} to obtain a uniqueness result, which is similar to that for the reduced Maxwell system in \cite[Theorem 6.10]{ColtonKress2}:

\begin{lem}\label{UnicidadBeltramiNoAcotados.lem}
Consider any solution $u\in C^1(\overline{\Omega},\CC^3)$ to the complex-valued homogeneous Beltrami equation in the exterior domain satisfying the $L^2$ SMB radiation condition (\ref{CondicionRadiacion.Beltrami.L2SilverMullerBeltrami.form}). Then, $u$ verifies the inequality
$$\Im\left(\int_S \overline{u}(x)\cdot (\eta(x)\times u(x))\,d_xS\right)\geq 0.$$
If the equality holds, then $u$ vanishes everywhere in $\Omega$.
\end{lem}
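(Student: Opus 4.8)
The plan is to read off both assertions from the identity (\ref{HipotesisRepresentacionL1SilverMullerBeltramiL2.eq3.form}) already obtained in Remark \ref{HipotesisRepresentacionL1SilverMullerBeltramiL2.obs}, together with one application of Rellich's lemma.

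First I would invoke Remark \ref{HipotesisRepresentacionL1SilverMullerBeltramiL2.obs}: since $u$ satisfies the $L^2$ SMB radiation condition (\ref{CondicionRadiacion.Beltrami.L2SilverMullerBeltrami.form}), the quantity
$$L:=\lim_{R\to+\infty}\int_{\partial B_R(0)}\left(\left|\tfrac{x}{R}\times u(x)\right|^2+|u(x)|^2\right)\,d_xS$$
is a well-defined limit and equals $2\,\Im\!\left(\int_S\overline{u}(x)\cdot(\eta(x)\times u(x))\,d_xS\right)$. As the integrand defining $L$ is pointwise nonnegative, one has $L\ge0$, and dividing by $2$ yields at once the desired inequality
$$\Im\!\left(\int_S\overline{u}(x)\cdot(\eta(x)\times u(x))\,d_xS\right)\ge0.$$

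For the equality case, suppose this imaginary part vanishes. Then $L=0$, so in particular $\int_{\partial B_R(0)}|u(x)|^2\,d_xS\to0$ as $R\to+\infty$, being dominated by the integral defining $L$. Next I would upgrade the regularity of $u$: from $\curl u=\lambda u$ one obtains, in the sense of distributions, $\lambda\,\divop u=\divop\curl u=0$, hence $\divop u=0$ and therefore $-\Delta u=\curl\curl u-\nabla\divop u=\lambda\,\curl u=\lambda^2 u$ componentwise; thus each component $u_j$ is a distributional, hence (by interior elliptic regularity) a $C^\infty(\Omega)$, solution of the scalar homogeneous Helmholtz equation $\Delta u_j+\lambda^2 u_j=0$ in $\Omega$. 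Since $\int_{\partial B_R(0)}|u_j(x)|^2\,d_xS\le\int_{\partial B_R(0)}|u(x)|^2\,d_xS\to0$, Rellich's lemma \cite[Lemma 2.11]{ColtonKress2} applies to each $u_j$ and forces $u_j\equiv0$ in $\Omega$. Hence $u\equiv0$ in $\Omega$, which completes the argument.

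I do not expect a genuine obstacle here: the analytic substance has already been carried out in Remark \ref{HipotesisRepresentacionL1SilverMullerBeltramiL2.obs} (whose proof parallels the Helmholtz discussion around (\ref{HipotesisRepresentacionL1SommerfeldL2.eq3.form})), and what remains is a standard invocation of Rellich's lemma. The only step that deserves a line of justification is the regularity upgrade from the assumed $u\in C^1(\overline{\Omega},\CC^3)$ to $u\in C^2(\Omega,\CC^3)$, which is what licenses the use of Rellich's lemma in the form recalled earlier in this section; this is immediate from interior elliptic regularity for the overdetermined div--curl system $\curl u=\lambda u$, $\divop u=0$ satisfied by $u$.
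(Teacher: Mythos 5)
Your proof is correct and follows essentially the same route the paper indicates for this lemma: read the sign of the boundary integral off from the limit identity (\ref{HipotesisRepresentacionL1SilverMullerBeltramiL2.eq3.form}) established in Remark \ref{HipotesisRepresentacionL1SilverMullerBeltramiL2.obs}, and in the equality case conclude $\int_{\partial B_R(0)}|u|^2\,d_xS\to 0$ and apply Rellich's lemma componentwise after noting that each component of $u$ is a smooth solution of the scalar Helmholtz equation. The regularity upgrade you spell out (from $C^1$ to $C^\infty$ via $\divop u=0$, $\Delta u+\lambda^2u=0$ and interior elliptic regularity) is exactly the point left implicit in the paper's one-line reference, and it is handled correctly.
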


To conclude, let us state the existence result for the complex-valued homogeneous Beltrami equation that will be needed in the modified Grad-Rubin iterative scheme in Section \ref{Esquema.Iterativo.Seccion}. Since this iterative method only involves compactly supported inhomogeneities, we will focus on this case although it is easy to extend it to general inhomogeneous terms with an appropriate fall off at infinity. Hereafter we will denote by $\mathfrak{X}^{k,\alpha}(S)\equiv \mathfrak{X}^{k,\alpha}(S,\RR^3)$ the real vector space of all tangent vector fields on $S$ of regularity $C^{k,\alpha}$, i.e., 
$$\mathfrak{X}^{k,\alpha}(S):=\{\xi\in C^{k,\alpha}(S,\RR^3):\,\xi\cdot \eta=0\mbox{ on }S\}.$$
Its complex counterpart will be denoted by $\mathfrak{X}^{k,\alpha}(S,\CC^3)$.

\begin{theo}\label{BeltramiNoHomog.teo}
Let  $0\neq\lambda\in\RR$ be any constant that is not a Dirichlet eigenvalue of the Laplace operator in the interior domain, $w\in C^{k,\alpha}_c(\overline{\Omega},\CC^3)$ and $g\in C^{k+1,\alpha}(S,\CC)$ such that $\divop w\in C^{k,\alpha}(\overline{\Omega},\CC)$ and the following compatibility condition 
\begin{equation}\label{BeltramiNoHomog.Condiciongyw.form}
\int_S \left(\lambda g+w\cdot \eta\right)\,dS=0
\end{equation}
is satisfied. Consider any solution $\xi\in \mathfrak{X}^{k+1,\alpha}(S,\CC^3)$ to the boundary integral equation
\begin{equation}\label{EcIntegral.DatoFrontera.Beltrami.form}
\left(\frac{1}{2}I-T_\lambda\right)\xi=\mu,\ x\in S,
\end{equation}
where $T_\lambda\xi$ and $\mu$ are defined by
\begin{align}
(T_\lambda\xi)(x)&=\int_S \eta(x)\times\left(\nabla_x \Gamma_\lambda(x-y)\times\xi(y)\right)\,d_yS+\lambda \int_S \Gamma_\lambda(x-y)\eta(x)\times\xi(y)\,d_yS,\label{EcIntegral.DatoFrontera.Beltrami.T.form}\\
\mu(x)&=\frac{1}{\lambda}\int_{\Omega}\eta(x)\times\nabla_x \Gamma_\lambda(x-y)\divop w(y)\,dy-\int_S \eta(x)\times\nabla_x \Gamma_\lambda(x-y)g(y)\,d_yS\nonumber\\
&+\int_{\Omega}\eta(x)\times(\nabla_x \Gamma_\lambda(x-y)\times w(y))\,dy+\lambda\int_{S}\Gamma_\lambda(x-y)\eta(x)\times w(y)\,d_yS.\label{EcIntegral.DatoFrontera.Beltrami.mu.form}
\end{align}
Define the complex-valued vector field
\begin{equation}\label{BeltramiNoHomog.DescomposicionSolucion.form}
u(x):=-\nabla\phi(x) +\curl A(x)+\lambda A(x),\ x\in \Omega,
\end{equation}
where $\phi$ and $A$ stand for the scalar and vector fields
\begin{align}
\phi(x)&=-\frac{1}{\lambda}\int_{\Omega}\Gamma_\lambda(x-y)\divop w(y)\,dy+\int_S \Gamma_\lambda(x-y)g(y)\,d_y S,\label{BeltramiNoHomog.Potencialphi.form}\\
A(x)&=\int_{\Omega}\Gamma_\lambda(x-y)w(y)\,dy+\int_S \Gamma_\lambda(x-y)\xi(y)\,d_y.\label{BeltramiNoHomog.PotencialA.form}
\end{align}
Then, $u$ is a complex-valued solution to the exterior NIB problem
\begin{equation}\label{BeltramiNoHomog.eq}
\left\{\begin{array}{l}
\curl u-\lambda u=w, \hspace{0.5cm} x\in\Omega,\\
u\cdot\eta=g, \hspace{1.6cm} x\in \Omega,\\
+\ L^1\mbox{ SMB radiation condition }(\ref{CondicionRadiacion.Beltrami.L1SilverMullerBeltrami.form}),\\
+\ L^1\mbox{ decay property }(\ref{CondCaidaBeltrami.form}).
\end{array}\right.
\end{equation}
Furthermore, the decay and radiation conditions are stronger since $u$ actually behaves as $O\left(\vert x\vert^{-1}\right)$ at infinity and verifies the Sommerfeld radiation condition (\ref{CondicionRadiacion.Helmholtz.Sommerfeld.form}) componentwise. 
\end{theo}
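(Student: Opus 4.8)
The plan is to treat the theorem as a verification: $u$ is assembled out of generalized volume and single layer potentials of the kernel $\Gamma_\lambda$, in precisely the shape dictated by the Helmholtz--Hodge decomposition of Theorem \ref{HelmholtzHodgeBeltrami.teo}, and one must check that it lies in $C^{k+1,\alpha}(\overline\Omega,\CC^3)$, solves the div--curl system $\curl u-\lambda u=w$, attains $u\cdot\eta=g$, and enjoys the stated decay and radiation. For the regularity I would first invoke the high order Hölder estimates for these potentials proved in Section \ref{Teoria.Potencial.Tecnicas.Seccion}: the generalized volume potential $\mathcal N_\lambda$ gains two derivatives and the generalized single layer potential $\mathcal S_\lambda$ gains one derivative at the boundary (it is here that the $C^{k+5}$ hypothesis on $S$ enters, since high order derivatives of $\eta$ occur). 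Given $w,\divop w\in C^{k,\alpha}$, $g\in C^{k+1,\alpha}(S)$ and $\xi\in\mathfrak X^{k+1,\alpha}(S,\CC^3)$, this places $\phi$ and $A$ in $C^{k+2,\alpha}$, hence $u=-\nabla\phi+\curl A+\lambda A$ in $C^{k+1,\alpha}(\overline\Omega,\CC^3)$.

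For the differential equation I would apply the identity $\curl\curl=\nabla\divop-\Delta$ to the $A$--part to obtain $\curl u-\lambda u=\nabla(\divop A+\lambda\phi)-(\Delta A+\lambda^2 A)$. By Propositions \ref{PotencialesCapaSimple.HelmholtzRadiantes.pro} and \ref{PotencialesVolumen.HelmholtzRadiantes.pro} the single layer parts solve the homogeneous Helmholtz equation off $S$ while $\mathcal N_\lambda$ inverts $-(\Delta+\lambda^2)$, so $(\Delta+\lambda^2)A=-w$ in $\Omega$. To kill the gradient term one integrates by parts twice: the divergence theorem applied to the compactly supported volume potentials, and the divergence theorem on the \emph{closed} surface $S$ applied to the single layer of the tangent field $\xi$, together give $\divop A+\lambda\phi=\mathcal S_\lambda\big(\eta\cdot w+\divop_S\xi+\lambda g\big)$. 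The crucial point is that the boundary integral equation \eqref{EcIntegral.DatoFrontera.Beltrami.form}, combined with the compatibility condition \eqref{BeltramiNoHomog.Condiciongyw.form} and the hypothesis that $\lambda$ is not a Dirichlet eigenvalue of the interior Laplacian, forces this density to vanish identically on $S$ (most easily seen by taking the surface divergence of \eqref{EcIntegral.DatoFrontera.Beltrami.form}, which gives a second-kind equation for $\divop_S\xi+\eta\cdot w+\lambda g$ with trivial kernel under the non-resonance hypothesis). Hence $\divop A+\lambda\phi\equiv0$ and $\curl u-\lambda u=w$.

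For the Neumann datum the jump relations come into play. Across $S$ the only discontinuous ingredient is the tangential trace $\eta\times\curl\mathcal S_\lambda\xi$, which jumps by $\tfrac12\xi$ (the Beltrami counterpart of the magnetic dipole jump in Maxwell theory), every other piece being regular enough to have a continuous trace. Writing out $\eta\times u$ on $S$ from the side of $\Omega$ and substituting \eqref{EcIntegral.DatoFrontera.Beltrami.T.form}--\eqref{EcIntegral.DatoFrontera.Beltrami.mu.form}, one recognizes that \eqref{EcIntegral.DatoFrontera.Beltrami.form} is exactly the self-consistency identity $\eta\times u|_S=\xi$; given this, $u$ has the precise form of the Helmholtz--Hodge representation of Theorem \ref{HelmholtzHodgeBeltrami.teo} with vector potential $A$, so matching the scalar potential $\phi$ (which contains $\mathcal S_\lambda g$) against $\mathcal S_\lambda(\eta\cdot u)+\mathcal N_\lambda(\divop u)$ and using $\divop u=-\tfrac1\lambda\divop w$ together with injectivity of $\mathcal S_\lambda$ on $S$ yields $u\cdot\eta=g$. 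Finally, since $w$ and $\divop w$ are compactly supported, Propositions \ref{PotencialesCapaSimple.HelmholtzRadiantes.pro} and \ref{PotencialesVolumen.HelmholtzRadiantes.pro} and the Hardy--Littlewood--Sobolev bound (Theorem \ref{DecPotencialRiesz}) show that $\phi$, $A$ and their derivatives up to second order are $O(|x|^{-1})$ and satisfy the componentwise Sommerfeld condition \eqref{CondicionRadiacion.Helmholtz.Sommerfeld.form}; hence so does $u$, which in particular gives the $L^1$ decay \eqref{CondCaidaBeltrami.form} and, through the substitution carried out before Definition \ref{CondicionesRadiacionBeltrami.defi}, the $L^1$ SMB radiation condition \eqref{CondicionRadiacion.Beltrami.L1SilverMullerBeltrami.form}.

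The step I expect to be the main obstacle is the tangential jump analysis behind the boundary condition, together with the matching regularity bookkeeping: the jump relation and the mapping properties of $\eta\times\curl\mathcal S_\lambda$ must be established \emph{at the level of $C^{k+1,\alpha}$ tangent fields} and for the \emph{inhomogeneous} kernel $\Gamma_\lambda$, which is precisely the content of the high order potential-theoretic estimates for singular integral operators relegated to Section \ref{Teoria.Potencial.Tecnicas.Seccion} and is considerably more delicate than the classical $\lambda=0$ case; once those estimates are available, the manipulations in the remaining steps are essentially formal.
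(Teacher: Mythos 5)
Your overall skeleton is correct---$u$ is built from volume and single layer potentials, one verifies the differential equation via $\curl\curl = \nabla\divop-\Delta$, and the boundary integral equation encodes $\eta\times u_+=\xi$---but two of your key reductions are not the paper's argument, and neither is carried out at a level of detail sufficient to tell whether it actually works.

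The first is your claim that ``taking the surface divergence of \eqref{EcIntegral.DatoFrontera.Beltrami.form} gives a second-kind equation for $\divop_S\xi+\eta\cdot w+\lambda g$ with trivial kernel''. You do not show this, and in particular you do not say where the compatibility condition \eqref{BeltramiNoHomog.Condiciongyw.form} would enter to force the right-hand side of your putative scalar equation to vanish (injectivity alone would only give \emph{a} value for the density, not zero). The paper takes a genuinely different route: it defines the auxiliary scalar $a:=\divop A+\lambda\phi$, observes that $a$ is a single layer potential with density $\lambda g+w\cdot\eta+\divop_S\xi$ and solves the homogeneous Helmholtz equation on each side of $S$, and then kills $a$ in three stages. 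First, the jump relations together with \eqref{EcIntegral.DatoFrontera.Beltrami.form} give $\eta\times u_-=0$, and an energy integration over $G$ applied to $v:=\lambda u+\nabla a$ (which is a strong Beltrami field in $G$ by construction) forces $v\equiv 0$, hence $a$ is constant on $S$. Second, the exterior uniqueness Lemma \ref{UnicidadHelmholtzNoAcotados.lem} combined with the compatibility condition \eqref{BeltramiNoHomog.Condiciongyw.form} (which is precisely what kills the term $a_0\int_S(\lambda g+w\cdot\eta+\divop_S\xi)\,dS$) yields $a\equiv 0$ in $\Omega$. Third, the non-resonance hypothesis on $\lambda$ yields $a\equiv 0$ in $G$, and only then does the normal-derivative jump give $\lambda g+w\cdot\eta+\divop_S\xi\equiv 0$ on $S$. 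Each of the three hypotheses you listed is used in exactly one of these stages; your parenthetical shortcut collapses them into one step without verification.

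The second gap is in your derivation of the Neumann datum. You propose to compare the given $\phi$ with the scalar potential produced by applying Theorem \ref{HelmholtzHodgeBeltrami.teo} to $u$ and invoke injectivity of $\mathcal S_\lambda$. But the Helmholtz--Hodge decomposition of $u$ is not unique, so there is no reason a priori that the scalar potential appearing in that representation coincides with the $\phi$ used to define $u$; ``matching'' the two is not a licit operation. The paper's route is more direct: once the density vanishes, one takes the normal trace of the Beltrami equation on $S$, uses the surface-calculus identity $\eta\cdot(\curl u)_+=-\divop_S(\eta\times u_+)=-\divop_S\xi$ (which requires the relation $\eta\times u_+=\xi$ already established), and compares $\lambda\,\eta\cdot u_++w\cdot\eta+\divop_S\xi=0$ with the vanished density $\lambda g+w\cdot\eta+\divop_S\xi=0$ to read off $\eta\cdot u_+=g$. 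If you want to make your proof airtight, you should replace both shortcuts with these arguments (or fill in the details that would make your versions work).
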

\begin{proof}
Since the divergence of any solution $u$ can be recovered from the equation through the identity $\divop u=-\frac{1}{\lambda}\divop w$, then one arrives at the next expression for the candidate to be a solution to (\ref{BeltramiNoHomog.eq})
$$u(x)=-\nabla\phi(x) +\curl A(x)+\lambda A(x),$$
where $\phi$ and $A$ are defined as follows
\begin{align*}
\phi(x)&=-\frac{1}{\lambda}\int_{\Omega}\Gamma_\lambda(x-y)\divop w(y)\,dy+\int_S \Gamma_\lambda(x-y)g(y)\,d_y S,\\
A(x)&=\int_{\Omega}\Gamma_\lambda(x-y)w(y)\,dy+\int_S \Gamma_\lambda(x-y)\eta(y)\times u_+(y)\,d_y S.
\end{align*}
Consider $\xi:=\eta\times u_+$, where $u_\pm$ denotes the limits of $u$ at $S$ from $\Omega$ and $G$ respectively. In order to obtain a more manageable formula for $\xi$, one can use the well known \textit{jump relations} for the derivatives of a single layer potential associated with the fundamental solution to the Helmholtz equation, $\Gamma_\lambda(x)$ (see e.g.~\cite{ColtonKress}). This formulas lead to the following identity
\begin{align}
u_\pm(x)&=\frac{1}{\lambda}\int_{\Omega}\nabla_x \Gamma_\lambda(x-y)\divop w(y)\,dy-\mbox{PV}\int_S \nabla_x \Gamma_\lambda(x-y)g(y)\,d_yS\label{Limites.u.form}\\
&+\int_{\Omega}\nabla_x \Gamma_\lambda(x-y)\times w(y)\,dy+\mbox{PV}\int_S \nabla_x \Gamma_\lambda(x-y)\times\xi(y)\,d_yS\nonumber\\
&+\lambda\int_{\Omega}\Gamma_\lambda(x-y)w(y)\,dy+\lambda \int_S \Gamma_\lambda(x-y)\xi(y)\,d_yS\nonumber \pm\frac{1}{2}\eta(x)g(x)\mp\frac{1}{2}\eta(x)\times \xi(x),\nonumber
\end{align}
where $\mbox{PV}$ stands for the Cauchy principal value integral. It is clear that the terms in the last line are actually $\pm\frac{1}{2}u_\pm(x)$. Consequently, one can take cross products by $\eta(x)$ and arrive at the boundary integral equation in (\ref{EcIntegral.DatoFrontera.Beltrami.form}) for the tangential component $\xi$. There, we have intentionally avoided the $\mbox{PV}$ signs because the $\eta(x)$ factor in such integrals provides certain geometrical cancellations (see Section \ref{Teoria.Potencial.Tecnicas.Seccion}) leading to absolutely convergent integrals.

Now, let us show that the field $u$ thus defined is a solution to (\ref{BeltramiNoHomog.eq}) as long as $\xi$ solves the boundary integral equation (\ref{EcIntegral.DatoFrontera.Beltrami.form}). We will prove later that $\xi$ is unique and, consequently, (\ref{BeltramiNoHomog.eq}) is uniquely solvable. First, let us obtain some PDEs for the potentials $\phi$ and $A$ both in the interior and the exterior domain. Since volume and single layer potentials are indeed  complex-valued solutions to such PDEs, we have
\begin{equation}\label{BeltramiNoHomog.EcuacionesAyPhi.eq}
\Delta \phi+\lambda^2\phi=\left\{\begin{array}{ll}\frac{1}{\lambda}\divop w, & x\in \Omega\\
0, & x\in G\end{array}\right.\hspace{1cm}\Delta A+\lambda^2A=\left\{\begin{array}{ll}-w, & x\in \Omega\\
0, & x\in G\end{array}\right.
\end{equation}
Therefore,
\begin{align*}
\curl u-\lambda u&=\nabla(\divop A)-\Delta A+\lambda\curl A+\lambda \nabla \phi-\lambda\curl A-\lambda^2 A\\
&=-(\Delta A+\lambda^2A)+\nabla\underbrace{\left(\divop A+\lambda\phi\right)}_{a}.
\end{align*}
A direct substitution of (\ref{BeltramiNoHomog.EcuacionesAyPhi.eq}) into the previous formula leads to the following PDE for $u$ at any side of the boundary surface $S$:
\begin{equation}\label{BeltramiNoHomog.Ecuacion.u.eq}
\curl u-\lambda u=\left\{\begin{array}{ll} w+\nabla a, & x\in \Omega ,\\
\nabla a, & x\in G.
\end{array}\right.
\end{equation}
In order to show that $u$ solves (\ref{BeltramiNoHomog.eq}), it remains to check that $\nabla a$ is  zero in the exterior domain and $u$ satisfies the boundary condition $u_+\cdot\eta=g$ (the decay and radiation conditions will be studied later). To this end, it might be useful to find first a PDE for $a$. The same reasoning as above shows that $a$ solves the homogeneous Helmholtz equation, specifically
\begin{align}
\Delta a+\lambda^2 a&=\divop(\Delta A)+\lambda\Delta\phi+\lambda^2 \divop A+\lambda^3\phi\nonumber\\
&=\divop(\Delta A+\lambda^2 A)+\lambda(\Delta\phi+\lambda^2\phi)=0.\label{BeltramiNoHomog.Ecuacion.a.eq}
\end{align}

Let us show first the jump relations for the scalar potential $a$. Straightforward computations on the explicit formulas for $\phi$ and $A$ involving the divergence theorem lead to
\begin{align*}
a(x)&=\divop A(x)+\lambda\phi(x)\\
&=\int_{\Omega}\nabla_x \Gamma_\lambda(x-y)\cdot w(y)\,dy+\int_S\nabla_x \Gamma_\lambda(x-y)\cdot \xi(y)\,d_yS\\
&\hspace{0.5cm}-\int_{\Omega}\Gamma_\lambda(x-y)\divop w(y)\,dy+\lambda\int_S \Gamma_\lambda(x-y)g(y)\,d_y S\\
&=-\int_{\Omega}\divop_y(\Gamma_\lambda(x-y)w(y))\,dy+\int_S\nabla_x \Gamma_\lambda(x-y)\cdot \xi(y)\,d_yS+\lambda\int_S \Gamma_\lambda(x-y)g(y)\,d_y S\\
&=\int_{S}\Gamma_\lambda(x-y)(\lambda g(y)+w(y)\cdot \eta(y))\,d_y S+\int_S \nabla_x \Gamma_\lambda(x-y)\cdot \xi(y)\,d_yS.
\end{align*}
Finally, notice that $\nabla_x\Gamma_\lambda(x-y)\cdot \xi(y)=-(\nabla_S)_y\left[\Gamma_\lambda(x-y)\right]\cdot \xi(y)$ for every $y\in S$ because of $\xi$ being a tangent vector field along $S$. Hence, the integration by parts formula over $S$ (see Appendix \ref{Appendix.A}) yields the next simpler expression for $a$:
$$a(x)=\int_{S}\Gamma_\lambda(x-y)\left(\lambda g(y)+w(y)\cdot \eta(y)+\divop_S\xi(y)\right)\,d_yS,$$
i.e., $a$ is just a new single layer potential. As such, the first and second jumps relations read
\begin{equation}\label{salto.a.form}
a_+-a_-\equiv 0,\hspace{0.5cm}\left(\frac{\partial a}{\partial\eta}\right)_+-\left(\frac{\partial a}{\partial\eta}\right)_-\equiv-\left(\lambda g+w\cdot \eta+\divop_S\xi\right),
\end{equation}
on the surface $S$. In particular, $a$ is continuous across $S$ but its normal derivative exhibits a jump discontinuity with height $\lambda g+w\cdot \eta+\divop_S \xi$. The same kind of reasoning yields the jump relation for $u$
\begin{equation}\label{salto.u.form}
u_+-u_-=g\,\eta-\eta\times \xi,\ x\in S.
\end{equation}
Consequently, the boundary integral equation (\ref{EcIntegral.DatoFrontera.Beltrami.form}) along with the jump relation (\ref{salto.u.form}) ensure that
\begin{equation}\label{componentes.tangenciales.u.form}
\eta\times u_+=\xi,\hspace{0.25cm}\eta\times u_-=0,
\end{equation}
on $S$. Regarding $a$, let us show that it is indeed constant on $S$ and to this end, define the next vector field in the interior domain $G$:
$$v:=\lambda u+\nabla a, \hspace{0.25cm} x\in G.$$
Notice that $v$ is a strong Beltrami field with factor $\lambda$ by virtue of (\ref{BeltramiNoHomog.Ecuacion.u.eq}). Then, one can repeat the same kind of uniqueness criterion as in Lemma \ref{UnicidadBeltramiNoAcotados.lem} in the simpler bounded setting, specifically
$$\lambda\int_{G}\vert v\vert^2\,dx=\int_G \overline{v}\cdot \curl v\,dx=\int_G\divop(v\times \overline{v})\,dx=\int_S(\eta\times v)\cdot \overline{v}\,dS.$$
Now, notice that we can substitute both $v$ and $\overline{v}$ in the above formula with its tangential parts thanks to the presence of a cross product by the unit normal vector field $\eta$ and
$$-\eta\times(\eta\times v)=-\lambda\eta\times(\eta\times u_-)+\nabla_S a=\nabla_S a,$$
by virtue of (\ref{componentes.tangenciales.u.form}). Thereby, the integration by parts formula in Appendix \ref{Appendix.A} leads again to
$$\lambda\int_G\vert v\vert^2\,dx=\int_S\left(\eta\times \nabla_S a\right)\cdot \nabla\overline{a}\,dS=-\int_S a\,\overline{\curl_S\left(\nabla_S a\right)}\,dS=0,$$
where the well know formula $\curl_S\nabla_S=0$ (see Proposition \ref{GradienteDivergenciaRotacional.S.Propiedades.pro}) has been used in the last step. Consequently, $v$ vanishes everywhere in $G$ and, in particular, $\nabla_S a\equiv 0$, i.e., $a_{\pm}\equiv a_0=\mbox{const}$ on $S$.

We will next prove that $a$ vanishes everywhere in the exterior domain $\Omega$ using the uniqueness result in Lemma \ref{UnicidadHelmholtzNoAcotados.lem}. Notice that since $a$ can be written as a sum of volume and single layer potentials with compactly supported densities together with its first order partial derivatives, then $a$ satisfies the Sommerfeld radiation condition (\ref{CondicionRadiacion.Helmholtz.Sommerfeld.form}) due to Propositions \ref{PotencialesCapaSimple.HelmholtzRadiantes.pro} and \ref{PotencialesVolumen.HelmholtzRadiantes.pro}. Consequently, this lemma can be applied.  We therefore want to show that
\begin{equation}\label{eqnew}
\Im\left(\int_S a_+\left(\frac{\partial \overline{a}}{\partial\eta}\right)_+\,dS\right)=0\,.
\end{equation}
To derive~\eqref{eqnew}, let us first pass from the exterior trace values to the interior ones thanks to the jump relations (\ref{salto.a.form})
$$\int_S a_+\left(\frac{\partial \overline{a}}{\partial\eta}\right)_+\,dS=I+II,$$
where both terms read
\begin{align*}
I&:=-a_0\int_S\left(\lambda g+w\cdot \eta+\divop_S\xi\right)\,dS,\\
II&:=\int_S a_-\left(\frac{\partial \overline{a}}{\partial\eta}\right)_-\,dS. 
\end{align*}
On the one hand, $I$ becomes zero because of the divergence theorem over surfaces and the compatibility condition (\ref{BeltramiNoHomog.Condiciongyw.form}) in the hypothesis. On the other hand, integrate by parts in $II$ to arrive at
$$II:=\int_S\divop\left(a\nabla\overline{a}\right)\,dS=\int_G\vert\nabla a\vert^2\,dx+\int a\Delta\overline{a}\,dx=\int_G\vert \nabla a\vert^2\,dx-\lambda^2\int_G\vert a\vert^2\,dx,$$
where the Helmholtz equation (\ref{BeltramiNoHomog.Ecuacion.a.eq}) has being used. Therefore, one arrives at
$$\Im\left(\int_S a_+\left(\frac{\partial \overline{a}}{\partial \eta}\right)_+\,dS\right)=\Im\left(\int_G\vert \nabla a\vert^2\,dx-\lambda^2\int_G\vert a\vert^2\,dx\right)=0,$$
and consequently $a=0$ in $\Omega$ and $u$ solves the inhomogeneous Beltrami equation. 

Before proving the boundary condition and the decay and radiation properties, let us show that $a$ also vanishes in the interior domain. On the one hand, $a$ solves the homogeneous Helmholtz equation in such domain and it also satisfies the interior homogeneous Dirichlet conditions in $S$ since $a_-=a_+$ on $S$ and $a=0$ in $\Omega$. Moreover, $\lambda$ is prevented from being a Dirichlet eigenvalue of the Laplacian in the interior domain, so $a$ also vanishes in $G$. In particular, the jumps relations (\ref{salto.a.form}) yields
\begin{equation}\label{Ligadura.g.w.xi.form1}
\lambda g+w\cdot \eta+\divop_S\xi\equiv 0.
\end{equation}
Furthermore, since $u$ is now a solution to the next inhomogeneous Beltrami equation,
$$\curl u-\lambda u=w, \hspace{0.25cm}x\in\Omega,$$
taking trace values at $S$ one gets
$$\eta\cdot(\curl u)_+-\lambda\eta\cdot u_+=w\cdot \eta.$$
Now, one can write the first term in an intrinsic way through the properties in Proposition \ref{GradienteDivergenciaRotacional.S.Propiedades.pro}, specifically
$$\eta\cdot (\curl u)_+=-\divop_S(\eta\times u_+)=-\divop_S\xi,$$
and, consequently, the above formula can be restated as
\begin{equation}\label{Ligadura.g.w.xi.form2}
\eta\times u_++w\cdot \eta+\divop_S\xi\equiv 0.
\end{equation}
Then, comparing (\ref{Ligadura.g.w.xi.form1}) and (\ref{Ligadura.g.w.xi.form2}) entails the boundary condition $\eta\times u_+=g$.

Finally, let us show the decay and radiation conditions on $u$. First, since
$$\Gamma_\lambda(x),\nabla\Gamma_\lambda(x)=O\left(\vert x\vert^{-1}\right), \ \mbox{ when }\vert x\vert\rightarrow +\infty,$$
and $w$ has compact support, then $u$ enjoys the optimal decay $u=O\left(\vert x\vert^{-1}\right)$ when $\vert x\vert\rightarrow +\infty$ according to Theorem \ref{DecPotencialRiesz}. Second, as $u$ is again a sum of single and volume layer potential associated with the Helmholtz equation along with some partial derivatives, then $u$ satisfies Sommerfeld radiation condition componentwise thanks to Propositions \ref{PotencialesCapaSimple.HelmholtzRadiantes.pro} and \ref{PotencialesVolumen.HelmholtzRadiantes.pro}. Therefore, one can show that $u$ verifies SMH conditions (\ref{SilverMullerHemholtz1.form}) and (\ref{SilverMullerHemholtz2.form}) thanks to Remark \ref{SilverMullerHelmholtz.deSommerfeld.obs}. Since $\curl u-\lambda u=w$ and $w$ is compactly supported, then $u$ actually satisfies SMB radiation condition (\ref{CondicionRadiacion.Beltrami.SilverMullerBeltrami.form}) and this finishes the proof.
\end{proof}

\subsection{Well-posedness of the boundary integral equation}

One should also notice that, in addition to the uniqueness result proved in Theorem~\ref{BeltramiNoHomog.teo}, we will also need a study of the regularity of the solution, which is obviously in $C^1(\overline{\Omega},\CC^3)$ by the decomposition (\ref{BeltramiNoHomog.DescomposicionSolucion.form}). We will prove in this next subsection that the regularity assumptions on the data $w$ and $g$ actually leads to $C^{k+1,\alpha}(\Omega,\CC^3)$ regularity on $u$, estimating its $C^{k+1,\alpha}(\Omega,\CC^3)$ norm in terms of the natural norms of the data $w$ and $g$. Some necessary potential theoretic estimates have been relegated to Section \ref{Teoria.Potencial.Tecnicas.Seccion} to streamline the exposition. 

Let us start by studying the well-posedness of (\ref{EcIntegral.DatoFrontera.Beltrami.form}) using the Riesz--Fredholm theory for compact operators, which follows easily from our previous estimates:

\begin{pro}\label{BeltramiNoHomog.Compacidad.EcuacionIntegral.pro}
The linear operator
$
T_\lambda:\mathfrak{X}^{k+1,\alpha}(S)\longrightarrow \mathfrak{X}^{k+1,\alpha}(S)
$
 is compact.
 \end{pro}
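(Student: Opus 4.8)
The plan is to show that $T_\lambda$ maps $\mathfrak{X}^{k+1,\alpha}(S)$ continuously into the strictly smaller space $\mathfrak{X}^{k+2,\alpha}(S)$, and then invoke the compact embedding $\mathfrak{X}^{k+2,\alpha}(S)\hookrightarrow\mathfrak{X}^{k+1,\alpha}(S)$ (an Arzel\`a--Ascoli type statement on the compact surface $S$) to conclude. First I would recall the explicit form of the operator,
\[
(T_\lambda\xi)(x)=\int_S \eta(x)\times\left(\nabla_x\Gamma_\lambda(x-y)\times\xi(y)\right)\,d_yS+\lambda\int_S\Gamma_\lambda(x-y)\,\eta(x)\times\xi(y)\,d_yS,
\]
and observe that the second summand involves the single layer potential with kernel $\Gamma_\lambda$, which is a weakly singular kernel of order $|x-y|^{-1}$; the potential-theoretic estimates relegated to Section~\ref{Teoria.Potencial.Tecnicas.Seccion} (in the spirit of Theorem~\ref{Potencial.capasimple.regularidad.frontera.teo}) give that this term gains one order of H\"older regularity on $S$, hence maps $C^{k+1,\alpha}(S)$ into $C^{k+2,\alpha}(S)$. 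The first summand is the genuinely singular part: although $\nabla_x\Gamma_\lambda(x-y)$ behaves like $|x-y|^{-2}$, the extra factor $\eta(x)$ produces the geometric cancellation already exploited in Theorem~\ref{BeltramiNoHomog.teo} (the term $\eta(x)\cdot(x-y)$ vanishes to first order on $S$ since $S$ is $C^{k+5}$), so that the effective kernel is again only weakly singular of order $|x-y|^{-1}$ and the same gain-of-regularity estimates apply.

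The key steps, in order, are: (i) split $\Gamma_\lambda(x-y)=\frac{1}{4\pi|x-y|}+\left(\Gamma_\lambda(x-y)-\frac{1}{4\pi|x-y|}\right)$ and note the difference is a $C^\infty$ kernel near the diagonal (indeed bounded together with all derivatives), so it contributes a smoothing operator and only the Newtonian part $\frac{1}{4\pi|x-y|}$ needs care; (ii) for the single-layer term, apply the high-order H\"older estimates for single layer potentials restricted to $S$ to get the $+1$ gain; (iii) for the $\eta(x)\times(\nabla_x\Gamma_\lambda\times\xi)$ term, expand using the Lagrange identity $\eta(x)\times(v\times\xi)=(\eta(x)\cdot\xi)v-(\eta(x)\cdot v)\xi$ with $v=\nabla_x\Gamma_\lambda(x-y)$, write $\nabla_x\Gamma_\lambda(x-y)=\left(i\lambda-\frac{1}{|x-y|}\right)\Gamma_\lambda(x-y)\frac{x-y}{|x-y|}$ from~\eqref{DerivadaSolucionFundamental.form}, and use $\eta(x)\cdot(x-y)=O(|x-y|^2)$ on $S$ to see that the $|x-y|^{-2}$ singularity is tamed to $|x-y|^{-1}$ in the worst term $(\eta(x)\cdot v)\xi$, while $(\eta(x)\cdot\xi)v$ is handled by writing $\eta(x)\cdot\xi(x)=0$ and $\eta(x)\cdot\xi(y)=(\eta(x)-\eta(y))\cdot\xi(y)=O(|x-y|)$ which again kills one power; (iv) verify that $T_\lambda\xi$ is in fact tangent to $S$, i.e.\ $\eta(x)\cdot(T_\lambda\xi)(x)=0$, which is immediate since every term carries an explicit cross product by $\eta(x)$; (v) conclude compactness from the continuous inclusion into $\mathfrak{X}^{k+2,\alpha}(S)$ and the compactness of $C^{k+2,\alpha}(S)\hookrightarrow C^{k+1,\alpha}(S)$.

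The main obstacle I expect is step (iii): controlling \emph{all} the H\"older seminorms up to order $k+1$ of the singular part after the geometric cancellation, since differentiating the kernel $\eta(x)\cdot\frac{x-y}{|x-y|^3}$ in $x$ reproduces $|x-y|^{-2}$-type singularities that must be re-tamed at each order using derivatives of $\eta$ (this is exactly why the boundary must be taken $C^{k+5}$ rather than merely $C^{k+1,\alpha}$), and the bookkeeping of which derivatives land on $\eta(x)$, on $\frac{x-y}{|x-y|}$, and on $\Gamma_\lambda$ must be organized carefully so that at every stage one still has a weakly singular kernel amenable to the estimates of Section~\ref{Teoria.Potencial.Tecnicas.Seccion}. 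Once those high-order estimates are granted, the compactness statement itself is routine.
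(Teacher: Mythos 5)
Your overall strategy---factor $T_\lambda$ through a compact H\"older embedding by establishing a one-order gain of regularity on~$S$---is correct in spirit and matches the paper's strategy, but you aim the gain-of-regularity estimate one level too high, and this creates a genuine mismatch with the boundary regularity that the paper assumes. You propose to show $T_\lambda:\mathfrak{X}^{k+1,\alpha}(S)\to\mathfrak{X}^{k+2,\alpha}(S)$ is bounded and then compose with the compact inclusion $\mathfrak{X}^{k+2,\alpha}(S)\hookrightarrow\mathfrak{X}^{k+1,\alpha}(S)$. The paper instead composes in the opposite order: it uses the bound $T_\lambda:\mathfrak{X}^{k,\alpha}(S)\to\mathfrak{X}^{k+1,\alpha}(S)$ from Theorem~\ref{Potencial.capasimple.regularidad.frontera.teo} and composes with the compact inclusion $\mathfrak{X}^{k+1,\alpha}(S)\hookrightarrow\mathfrak{X}^{k,\alpha}(S)$ \emph{before} applying $T_\lambda$. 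Both factorizations yield compactness (a bounded operator composed on either side with a compact one is compact), but the estimate you need---$C^{k+1,\alpha}(S)\to C^{k+2,\alpha}(S)$---is Theorem~\ref{Potencial.capasimple.regularidad.frontera.teo} with $k$ replaced by $k+1$, which requires a $C^{k+6}$ boundary rather than the $C^{k+5}$ boundary assumed in~(\ref{GSigmaMu.hipot}). The remark following Theorem~\ref{Potencial.capasimple.regularidad.frontera.teo} makes clear that the $C^{k+5}$ regularity is essentially used; you even flag this yourself in your last paragraph (``this is exactly why the boundary must be taken $C^{k+5}$''). Thus, as stated under the paper's hypotheses, step (ii) of your plan is not available, and you should instead prove the $C^{k,\alpha}\to C^{k+1,\alpha}$ bound and feed the compact inclusion in first.

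A secondary inaccuracy, harmless for the structure but worth correcting: in step~(i) you claim the remainder $R_\lambda(z):=\Gamma_\lambda(z)-\Gamma_0(z)$ is a $C^\infty$ kernel near the diagonal ``bounded together with all derivatives.'' This is false. As the paper records in~(\ref{PhiPsi.SolucionFundamental.Cotas.Derivadas.Rlambda.form}), one only has $|D^\gamma R_\lambda(z)|\leq C\big(1+|z|^{-|\gamma|}\big)$: the remainder itself is bounded (because $\psi_\lambda(r)=\tfrac{e^{i\lambda r}-1}{4\pi r}$ is bounded near $r=0$), but $R_\lambda$ fails to be $C^1$ at the origin since the Taylor expansion contains an $|z|$ term, and each additional derivative in~$z$ brings a factor of $|z|^{-1}$. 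What saves the argument is that $R_\lambda$ is \emph{one order less singular} than $\Gamma_0$, not that it is smooth; this is why the paper must treat it as a weakly singular kernel via Corollary~\ref{NucleoDebSingular.Derivadas.Holder.cor2} rather than absorbing it as a smoothing contribution.
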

\begin{proof}
The gain of regularity proved in Theorem \ref{Potencial.capasimple.regularidad.frontera.teo} implies that $T_\lambda$ defines a continuous linear operator 
$$
T_\lambda:\mathfrak{X}^{k,\alpha}(S)\longrightarrow \mathfrak{X}^{k+1,\alpha}(S).
$$
Since the embedding $\mathfrak{X}^{k+1,\alpha}(S)\hookrightarrow \mathfrak{X}^{k,\alpha}(S)$ is compact by the Ascoli--Arzel\`a theorem, the proposition follows.
\end{proof}

The proposition ensures that it is possible to apply Riesz--Fredholm theory to the operator $\frac{1}{2}I-T_\lambda$. In particular, $\frac{1}{2}I-T_\lambda$ is one to one if, and only if, it is onto, i.e.,
$$
\Ker\left(\frac{1}{2}I-T_\lambda\right)=0\Longleftrightarrow \Img\left(\frac{1}{2}I-T_\lambda\right)=\mathfrak{X}^{k+1,\alpha}(S).
$$
As it is hard to show explicitly that such operator is onto, let us equivalently show that it is one to one. This will be easier thanks to the uniqueness Lemma \ref{UnicidadBeltramiNoAcotados.lem} for the Beltrami equation and the existence Theorem \ref{BeltramiNoHomog.teo}.
\begin{pro}\label{BeltramiNoHomog.Unicidad.EcuacionIntegral.pro}
The bounded linear operator $\frac{1}{2}I-T_\lambda$ on $\mathfrak{X}^{k+1,\alpha}(S)$ is one to one and onto. Consequently, the boundary integral equation (\ref{EcIntegral.DatoFrontera.Beltrami.form}) has a unique solution  $\xi\in \mathfrak{X}^{k+1,\alpha}(S)$ for any $\mu\in \mathfrak{X}^{k+1,\alpha}(S)$.
\end{pro}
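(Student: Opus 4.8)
The plan is to apply the Riesz--Fredholm alternative. By Proposition~\ref{BeltramiNoHomog.Compacidad.EcuacionIntegral.pro} the operator $T_\lambda$ is compact on $\mathfrak{X}^{k+1,\alpha}(S)$, so $\frac{1}{2}I-T_\lambda$ is one to one if and only if it is onto; hence it is enough to prove $\Ker(\frac{1}{2}I-T_\lambda)=0$. So I would fix $\xi\in\mathfrak{X}^{k+1,\alpha}(S,\CC^3)$ with $(\frac{1}{2}I-T_\lambda)\xi=0$ and show that $\xi=0$. The key observation is that $\xi$ then solves the boundary integral equation~\eqref{EcIntegral.DatoFrontera.Beltrami.form} associated with the trivial data $w=0$, $g=0$: the compatibility condition~\eqref{BeltramiNoHomog.Condiciongyw.form} holds trivially and the right-hand side~\eqref{EcIntegral.DatoFrontera.Beltrami.mu.form} is then $\mu=0$. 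Hence Theorem~\ref{BeltramiNoHomog.teo} produces the field $u=\curl A+\lambda A$ with $A(x)=\int_S\Gamma_\lambda(x-y)\xi(y)\,d_yS$ (here $\phi\equiv0$ because $g=0$).

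Next I would read off from the proof of Theorem~\ref{BeltramiNoHomog.teo}, specialized to $w=g=0$, all the facts it establishes: $u\in C^1(\overline\Omega,\CC^3)$ solves $\curl u=\lambda u$ in $\Omega$ and satisfies the Sommerfeld radiation condition componentwise (hence the $L^2$ SMB condition~\eqref{CondicionRadiacion.Beltrami.L2SilverMullerBeltrami.form}); the scalar potential $a=\divop A$ vanishes identically (this is the step that uses that $\lambda$ is not a Dirichlet eigenvalue of the interior problem); the boundary condition reads $u_+\cdot\eta=g=0$ on $S$; the identities~\eqref{componentes.tangenciales.u.form} give $\eta\times u_+=\xi$ and $\eta\times u_-=0$; and~\eqref{Ligadura.g.w.xi.form1} reduces to $\divop_S\xi=0$. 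Since $u_+\cdot\eta=0$, the trace $u_+$ is tangent to $S$, so $u_+=-\eta\times(\eta\times u_+)=-\eta\times\xi$; and $u\equiv0$ in $G$, because its interior trace vanishes while $\curl u=\lambda u$ and $\divop u=0$ force $\Delta u+\lambda^2u=0$ there with non-resonant Dirichlet data.

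The crux is to prove that $u\equiv0$ in $\Omega$ as well; by the uniqueness Lemma~\ref{UnicidadBeltramiNoAcotados.lem} this is equivalent to the identity $\Im\big(\int_S\overline u\cdot(\eta\times u)\,d_xS\big)=0$, which is \emph{not} automatic for a general tangent boundary field and is where the structure of the problem enters. Since $\partial G$ is diffeomorphic to a sphere and $\divop_S\xi=0$, a Hodge-theoretic argument on $S$ (solving $\Delta_S\psi=\curl_S\xi$) yields a scalar $\psi\in C^{k+2,\alpha}(S,\CC)$ with $\xi=\eta\times\nabla_S\psi$, so that $u_+=-\eta\times\xi=\nabla_S\psi$. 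Writing $\psi=\psi_1+i\psi_2$, a direct computation gives $\overline{u_+}\cdot(\eta\times u_+)=-2i\,\eta\cdot(\nabla_S\psi_1\times\nabla_S\psi_2)$, and therefore
\[
\Im\Big(\int_S\overline{u_+}\cdot(\eta\times u_+)\,d_xS\Big)=-2\int_S\eta\cdot(\nabla_S\psi_1\times\nabla_S\psi_2)\,d_xS=-2\int_S d\psi_1\wedge d\psi_2=-2\int_S d(\psi_1\,d\psi_2)=0,
\]
the last equality by Stokes' theorem on the closed surface $S$. Thus equality holds in Lemma~\ref{UnicidadBeltramiNoAcotados.lem}, whence $u\equiv0$ in $\Omega$, in particular $u_+=0$, and since $\xi=\eta\times u_+$ we conclude $\xi=0$. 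Therefore $\frac{1}{2}I-T_\lambda$ is injective, hence bijective by Riesz--Fredholm, and~\eqref{EcIntegral.DatoFrontera.Beltrami.form} is uniquely solvable for every $\mu\in\mathfrak{X}^{k+1,\alpha}(S)$.

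The main obstacle, as indicated, is the vanishing of the imaginary part of that exterior boundary integral: a priori Lemma~\ref{UnicidadBeltramiNoAcotados.lem} only gives $\Im\big(\int_S\overline u\cdot(\eta\times u)\big)\ge0$, and for a radiating Beltrami field with merely $u_+\cdot\eta=0$ on $S$ this quantity need not be zero. What makes it vanish is the combination of the constraint $\divop_S\xi=0$ --- itself a nontrivial consequence of the interior Dirichlet problem being non-resonant, obtained through the chain $a\equiv0$ --- with the topology of $\partial G$, which turns $u_+$ into an exact surface gradient and lets Stokes' theorem close the argument. Minor points to handle carefully are the sign conventions in the jump relations and in Lemma~\ref{UnicidadBeltramiNoAcotados.lem}, the regularity $\psi\in C^{k+2,\alpha}(S)$ obtained from $\xi\in C^{k+1,\alpha}(S)$ by Schauder estimates for $\Delta_S$, and checking that $u\in C^1(\overline\Omega,\CC^3)$ so that the lemma applies.
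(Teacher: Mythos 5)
Your proposal is correct and follows essentially the same line as the paper: reduce via Riesz--Fredholm to showing injectivity, build the homogeneous radiating field $u=\curl A+\lambda A$ from a kernel element $\xi$, use the topology of $S$ (homeomorphic to a sphere) to express the tangential trace as a surface gradient $u_+=\nabla_S\psi$, and then show $\Im\big(\int_S\overline u\cdot(\eta\times u)\,dS\big)=0$ so Lemma~\ref{UnicidadBeltramiNoAcotados.lem} forces $u\equiv0$ and hence $\xi=0$. The only cosmetic differences are that you derive the surface constraint as $\divop_S\xi=0$ from~\eqref{Ligadura.g.w.xi.form1} and solve the Hodge problem on $S$, whereas the paper obtains the equivalent fact $\curl_S u_+=0$ directly from the Beltrami equation and applies Poincar\'e's lemma, and your final vanishing argument splits $\psi$ into real and imaginary parts and invokes Stokes while the paper integrates by parts and uses $\curl_S\nabla_S\psi=0$.
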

\begin{proof}
According to the preceding argument, we only have to show that $\Ker\left(\frac{1}{2}I-T_\lambda\right)=0$. To this end, let us consider an arbitrary $\xi\in \Ker\left(\frac{1}{2}I-T_\lambda\right)$ and show that $\xi\equiv 0$. By definition, $\xi\in \mathfrak{X}^{k+1,\alpha}(S)$ solves the boundary integral equation
$$\frac{1}{2}\xi-T_\lambda\xi=0,\ \mbox{ on }S.$$
Define $u(x):=\curl A(x)+\lambda A(x)$, where $A$ is the vector potential
$$A(x):=\int_S \Gamma_\lambda(x-y)\xi(y)\,d_yS.$$
Thus, Theorem \ref{BeltramiNoHomog.teo} for $w\equiv 0$ and $g\equiv 0$ leads to a solution $u\in C^1(\overline{\Omega},\CC^3)$ to the homogeneous Beltrami equation in $\Omega$
$$\left\{\begin{array}{ll}
\curl u=\lambda u, & x\in \Omega,\\
\eta\cdot u_+=g=0, & x\in S,
\end{array}\right.$$
that satisfies the Dirichlet boundary condition $\eta\times u_+=\xi$ on $S$ and the SMB radiation condition. 

We would like to show that this boundary value problem has a unique solution, but this does not follow directly from Lemma \ref{UnicidadBeltramiNoAcotados.lem}. However, since $\eta \cdot u=0$ on $S$, then  $u_+=-\eta\times(\eta\times u_+)$ on $S$ and we have the following relation between the $\curl$ operator on $S$,  $\curl_S$, and the $\curl$ operator on $\RR^3$ (see Proposition \ref{GradienteDivergenciaRotacional.S.Propiedades.pro} in Appendix \ref{Appendix.A}):
\begin{align*}
\curl_S u_+&=\curl_S\left(-\eta\times(\eta\times u_+)\right)=\eta\cdot \curl u_+ =\lambda\,\eta\cdot u_+=0.
\end{align*}
As $S$ is homeomorphic to a sphere, Poincar\'e's lemma shows that $u$ has an associated potential $\psi\in C^2(S)$ on the surface, i.e., $u=\nabla_S \psi$ on $S$, where $\nabla_S$ stands for the Riemannian connection on the surface $S$ (see Appendix \ref{Appendix.A}). Consequently, 
\begin{align*}
\Im\left(\int_S\overline{u}\cdot\left(\eta\times u\right)\,dS\right)&=\Im\left(\int_S\overline{\nabla_S \psi}\cdot\left(\eta\times \nabla_S \psi\right)\,dS\right)=-\Im\left(\int_S \overline{\curl_S\left(\nabla_S \psi\right)}\psi\,dS\right)=0.
\end{align*}
The identity follows from an integration by parts on $S$ and the classical property $\curl_S(\nabla_S \psi)=0$. Therefore, Lemma \ref{UnicidadBeltramiNoAcotados.lem} yields the desired result.
\end{proof}

\begin{rem}\label{ExitenciaUnicidadEcIntegral.obs}
The importance of the above result lies on the following facts.
\begin{enumerate}
\item First, the existence part of the above result ensures that it is possible to choose some $\xi$ solving (\ref{EcIntegral.DatoFrontera.Beltrami.form}). Obviously, it is essential to rigurously establish the existence Theorem \ref{BeltramiNoHomog.teo}.
\item Second, the uniqueness result shows that since $\xi$ can uniquely be chosen, then (\ref{BeltramiNoHomog.eq}) has a unique solution too.
\item Finally, it provides a very useful estimate for the subsequent result. Since $\frac{1}{2}I-T_\lambda$ is linear, continuous and bijective, then $\left(\frac{1}{2}I-T_\lambda\right)^{-1}$ is continuous by virtue of the Banach isomorphism theorem. Consequently, there exists a positive constant $c$ (which depends on $G$ and $\lambda$) such that
\begin{equation}\label{AplicacionAbierta.ineq}
c\Vert \xi\Vert_{C^{k+1,\alpha}(S)}\leq \left\Vert\left(\frac{1}{2}I-T_\lambda\right)\xi\right\Vert_{C^{k+1,\alpha}(S)},
\end{equation}
for any $\xi\in \mathfrak{X}^{k+1,\alpha}(S)$.
\end{enumerate}
\end{rem}
We conclude by proving the following regularity result for the solution $u$ of (\ref{BeltramiNoHomog.eq}) according to Theorem \ref{BeltramiNoHomog.teo}. It is an immediate consequence of the decomposition (\ref{BeltramiNoHomog.DescomposicionSolucion.form}), the estimates for the volume and single layer potentials in Section \ref{Teoria.Potencial.Tecnicas.Seccion} (Lemmas \ref{Potencial.volumetrico.regularidad.teo} and \ref{Potencial.capasimple.regularidad.teo}) and the estimate (\ref{AplicacionAbierta.ineq}).
\begin{cor}\label{BeltramiNoHomog.EstimacionSchauder.cor}
Assume that the hypothesis in Theorem \ref{BeltramiNoHomog.teo} are satisfied, fix any $R>0$ such that $\overline{G}\subseteq B_R(0)$ and assume that the closure of $\Omega_R:=B_R(0)\setminus \overline{G}$ contains the support of $w$. Then, there exists some nonnegative constant $C_0=C_0(k,\alpha,G,R,\lambda)$ such that the next estimate 
\begin{equation}\label{EstimacionSchauder.u.ineq}
\Vert u\Vert_{C^{k+1,\alpha}(\Omega)}\leq C_0\left\{\Vert w\Vert_{C^{k,\alpha}(\Omega)}+\Vert\divop w\Vert_{C^{k,\alpha}(\Omega)}+\Vert g\Vert_{C^{k+1,\alpha}(S)}\right\}.
\end{equation}
holds. In particular, not only does $u$ belong to $C^1(\overline{\Omega},\CC^3)$, but also to $C^{k+1,\alpha}(\overline{\Omega},\CC^3)$.
\end{cor}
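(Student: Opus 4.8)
The plan is to obtain~(\ref{EstimacionSchauder.u.ineq}) simply by reading off the regularity of each term in the explicit Helmholtz--Hodge decomposition supplied by Theorem~\ref{BeltramiNoHomog.teo}, namely $u=-\nabla\phi+\curl A+\lambda A$ with
$$
\phi=-\frac{1}{\lambda}\,\mathcal N_\lambda(\divop w)+\mathcal S_\lambda g,\qquad A=\mathcal N_\lambda w+\mathcal S_\lambda\xi,
$$
where $\mathcal N_\lambda$ and $\mathcal S_\lambda$ are the generalized volume and single layer potentials with kernel $\Gamma_\lambda$. Since $u$ involves only first derivatives of $\phi$ and $A$ (and $\lambda A$ itself, which is one order smoother), it is enough to know that $\mathcal N_\lambda$ gains two derivatives and $\mathcal S_\lambda$ one derivative in the H\"older scale. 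First I would apply the volume potential estimate (Lemma~\ref{Potencial.volumetrico.regularidad.teo}) to get $\|\mathcal N_\lambda(\divop w)\|_{C^{k+2,\alpha}(\Omega)}\le C\|\divop w\|_{C^{k,\alpha}(\Omega)}$ and $\|\mathcal N_\lambda w\|_{C^{k+2,\alpha}(\Omega)}\le C\|w\|_{C^{k,\alpha}(\Omega)}$ — here the compact support hypothesis on $w$ together with the decay $\Gamma_\lambda,\nabla\Gamma_\lambda=O(|x|^{-1})$ is what keeps these norms over the unbounded domain $\Omega$ finite — and then the single layer potential estimate (Lemma~\ref{Potencial.capasimple.regularidad.teo}) to get $\|\mathcal S_\lambda g\|_{C^{k+2,\alpha}(\Omega)}\le C\|g\|_{C^{k+1,\alpha}(S)}$ and $\|\mathcal S_\lambda\xi\|_{C^{k+2,\alpha}(\Omega)}\le C\|\xi\|_{C^{k+1,\alpha}(S)}$. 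Differentiating once and summing yields
$$
\|u\|_{C^{k+1,\alpha}(\Omega)}\le C\bigl(\|w\|_{C^{k,\alpha}(\Omega)}+\|\divop w\|_{C^{k,\alpha}(\Omega)}+\|g\|_{C^{k+1,\alpha}(S)}+\|\xi\|_{C^{k+1,\alpha}(S)}\bigr).
$$

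The next step is to absorb $\|\xi\|_{C^{k+1,\alpha}(S)}$ into the data, since $\xi$ is not prescribed but obtained by solving the boundary integral equation~(\ref{EcIntegral.DatoFrontera.Beltrami.form}). For this I would use the lower bound~(\ref{AplicacionAbierta.ineq}), a consequence of the bijectivity of $\tfrac12 I-T_\lambda$ established in Proposition~\ref{BeltramiNoHomog.Unicidad.EcuacionIntegral.pro} together with the Banach isomorphism theorem, which gives $\|\xi\|_{C^{k+1,\alpha}(S)}\le c^{-1}\|\mu\|_{C^{k+1,\alpha}(S)}$, and then estimate $\mu$ term by term from its explicit expression~(\ref{EcIntegral.DatoFrontera.Beltrami.mu.form}). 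Its four summands are (after crossing with $\eta$) the boundary traces of $\nabla\mathcal N_\lambda(\divop w)$ and of $\curl\mathcal N_\lambda w$, the principal-value boundary integral $\eta\times\nabla\mathcal S_\lambda g$ on $S$, and the single layer potential $\mathcal S_\lambda w$ restricted to $S$. Bounding the first, second and fourth by $\|\divop w\|_{C^{k,\alpha}(\Omega)}$, $\|w\|_{C^{k,\alpha}(\Omega)}$ and $\|w\|_{C^{k,\alpha}(\Omega)}$ uses the trace parts of the same potential estimates, while the third uses the gain of regularity for the singular boundary operator from Theorem~\ref{Potencial.capasimple.regularidad.frontera.teo} — the very estimate that makes $T_\lambda:\mathfrak X^{k,\alpha}(S)\to\mathfrak X^{k+1,\alpha}(S)$ bounded in Proposition~\ref{BeltramiNoHomog.Compacidad.EcuacionIntegral.pro} — plus the smoothness of $\eta$ (ensured by the $C^{k+5}$ hypothesis on $S$) to keep the cross products in $C^{k+1,\alpha}(S)$. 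Combining this with the previous display gives~(\ref{EstimacionSchauder.u.ineq}), and finiteness of the right-hand side yields $u\in C^{k+1,\alpha}(\overline\Omega,\CC^3)$.

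I expect no essential difficulty beyond bookkeeping: the genuine analytic content — H\"older continuity up to the boundary of high-order derivatives of volume and single layer potentials built from the \emph{inhomogeneous} kernel $\Gamma_\lambda$, and the mapping properties of the principal-value operator $\eta\times\nabla\mathcal S_\lambda$ — is entirely contained in the results of Section~\ref{Teoria.Potencial.Tecnicas.Seccion} that this corollary is entitled to quote, so once those are in place the argument is indeed ``immediate''. The only point requiring a little care is that the $C^{k+1,\alpha}(\Omega)$ norm is taken over the unbounded exterior domain: this is controlled because the compact support of $w$ confines the volume potentials and because all derivatives of $\Gamma_\lambda$ decay at least like $|x|^{-1}$, so the relevant H\"older seminorms are attained on a bounded neighbourhood of $S$ and remain finite — a fact already incorporated into the statements of the potential estimates being cited, which also makes transparent that the constant $C_0$ depends only on $k,\alpha,G,R$ and $\lambda$.
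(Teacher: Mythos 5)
Your proof is correct and follows exactly the route the paper sketches: read off the $C^{k+2,\alpha}$ regularity of $\phi$ and $A$ from the potential estimates (Theorems~\ref{Potencial.volumetrico.regularidad.teo} and \ref{Potencial.capasimple.regularidad.teo}) applied to the Helmholtz--Hodge decomposition~(\ref{BeltramiNoHomog.DescomposicionSolucion.form}), then eliminate the unknown tangential density $\xi$ via the Banach-inverse bound~(\ref{AplicacionAbierta.ineq}) and the explicit formula~(\ref{EcIntegral.DatoFrontera.Beltrami.mu.form}) for $\mu$, using Theorem~\ref{Potencial.capasimple.regularidad.frontera.teo} for the genuinely singular boundary term. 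The paper simply declares the corollary ``an immediate consequence'' of precisely these three ingredients, so your argument fills in the intended bookkeeping.
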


\subsection{Optimal fall-off in exterior domains}
Before passing to the next section, it is worth discussing the differences between the optimal fall-off $\vert x\vert^{-1}$ of the solutions to inhomogeneous Beltrami equation and that of the solutions of the div-curl problem. First, it is well known that the exterior Neumann boundary value problem associated with the div-curl system
\begin{equation}\label{DivCurl.Neumann.Problem.eq}
\left\{\begin{array}{ll}
\curl u=w, & x\in \Omega,\\
\divop u=f, & x\in \Omega,\\
u\cdot \eta=g, & x\in S,\\
u=O(\vert x\vert^{1-\rho}), & x\in \Omega,
\end{array}\right.
\end{equation}
where $w,f=O(\vert x\vert^{-\rho})$ and $\rho\in (1,3)$, is uniquely solvable when appropriate regularity spaces are considered (see \cite{Kaiser,Neudert}) and $w$ has zero flux in the exterior domain. Moreover, the solution inherits the optimal fall-off $\vert x\vert^{-2}$ when $w$ and $f$ are assumed to have compact support. In particular, any harmonic field ($w=0,f=0$) so obtained decays at infinity as $\vert x\vert^{-2}$. Such result is an easy consequence of the Helmholtz--Hodge representation formula in \cite[Theorem 4.1]{Neudert} and the natural fall-off of the fundamental solution of the Laplace equation, $\Gamma_0(x)$.

In our case, the exterior Neumann boundary value problem associated with the inhomogeneous Beltrami equation (\ref{BeltramiNoHomog.eq}) has an associated representation formula of Helmholtz--Hodge type (\ref{BeltramiNoHomog.DescomposicionSolucion.form}) that transfers the ``optimal fall-off'' $\vert x\vert^{-1}$ to the solution in Theorem \ref{BeltramiNoHomog.teo} when $w$ is assumed to have compact support. Let us show that it is indeed the optimal decay rate. To this end, assume that $u$ solves the equation
$$\curl u-\lambda u=w, \hspace{0.25cm}x\in \Omega,$$
(not necessarily fulfilling neither (\ref{CondCaidaBeltrami.form}) nor (\ref{CondicionRadiacion.Beltrami.L1SilverMullerBeltrami.form})) for some divergence-free vector field $w$. Then, the solution $u$ is divergence-free too. Hence, taking $\curl$ in the inhomogeneous Beltrami equation, we are led to the vector-valued Helmholtz equation
$$-(\Delta u+\lambda^2 u)=\lambda w+\curl w, \hspace{0.25cm} x\in \Omega.$$
Consider $K:=\supp w\subseteq \overline{\Omega}$ and notice that $\lambda w+\curl w$ is also compactly supported in $K$. Imagine that $u$ decayed as $\vert x\vert^{-(1+\varepsilon)}$ for some small $\varepsilon>0$. Hence, a straightforward computation leads to
$$\lim_{R\rightarrow +\infty}\int_{\partial B_R(0)}\vert u(x)\vert^2=0.$$
Consequently, Rellich's Lemma  \cite[Lemma 2.11]{ColtonKress2} would show that $u$ vanishes outside some sufficiently large ball centered at the origin and containing $K$. Then, the unique continuation principle of the Helmholtz equation allow proving that $u$ is also compactly supported in $K$ (see \cite{Littman} for the study of such property in many other linear PDEs with constants coefficients). In particular, $g$ would vanish outside $K\cap S$. In an equivalent way, the next result holds.

\begin{cor}\label{BeltramiNoHomog.CaidaOptima.cor}
Let $u\in C^{k+1,\alpha}(\overline{\Omega},\RR^3)$ be a solution to
$$\curl u-\lambda u=w,\hspace{0.25cm}x\in \Omega,$$
for a divergence-free compactly supported $w$ and some $\lambda\in\RR\setminus\{0\}$. If $u$ is transverse to $S$ at some point outside the support of $w$, then $u$ cannot decay faster than $\vert x\vert^{-1}$ at infinity, i.e., there exists no $\varepsilon>0$ such that
$$u=O(\vert x\vert^{-(1+\varepsilon)}), \hspace{0.25cm}\mbox{ when }\vert x\vert\rightarrow +\infty.$$
\end{cor}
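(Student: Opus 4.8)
The plan is a contradiction argument: assuming a decay rate strictly faster than $|x|^{-1}$, I would reduce the inhomogeneous Beltrami equation to a homogeneous vector Helmholtz equation near infinity and then combine Rellich's lemma with unique continuation to force $u$ to vanish all the way down to $S$, contradicting transversality.

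First I would record the two algebraic consequences of $\curl u-\lambda u=w$ that make the reduction possible. Taking the divergence and using $\divop w=0$ gives $\divop u=0$ (this is where $\lambda\neq0$ enters). Taking the curl, using the identity $\curl\curl u=\nabla\divop u-\Delta u=-\Delta u$ together with $\curl u=w+\lambda u$, yields
$$-(\Delta u+\lambda^2 u)=\lambda w+\curl w\quad\text{in }\Omega,$$
whose right-hand side is supported in the compact set $K:=\supp w$. Hence each Cartesian component $u_j$ solves the homogeneous scalar Helmholtz equation $\Delta u_j+\lambda^2 u_j=0$ on $\Omega\setminus K$, and in particular on $\RR^3\setminus\overline{B_{R_0}(0)}$ for any $R_0$ with $\overline G\cup K\subseteq B_{R_0}(0)$.

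Next, suppose for contradiction that $u=O(|x|^{-(1+\varepsilon)})$ for some $\varepsilon>0$. Since the area of $\partial B_R(0)$ grows like $R^2$, this gives $\int_{\partial B_R(0)}|u(x)|^2\,d_xS=O(R^{-2\varepsilon})\to0$ as $R\to+\infty$. Applying Rellich's lemma \cite[Lemma 2.11]{ColtonKress2} componentwise in the exterior of $B_{R_0}(0)$ forces $u\equiv0$ on $\RR^3\setminus\overline{B_{R_0}(0)}$. Then, since $u$ solves the homogeneous Helmholtz equation (equivalently, the homogeneous Beltrami equation $\curl u=\lambda u$) on the open set $\Omega\setminus K$ and vanishes on the nonempty open subset $\RR^3\setminus\overline{B_{R_0}(0)}$ thereof, the unique continuation principle for the Helmholtz operator (see \cite{Littman}) propagates the vanishing along $\Omega\setminus K$: $u$ vanishes on the connected component of $\Omega\setminus K$ containing that neighborhood of infinity. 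In particular $u$ is compactly supported in $\overline K$ and, because $u\in C^1(\overline\Omega,\RR^3)$, both $u$ and $u\cdot\eta$ vanish at every point $p\in S\setminus K$ on the boundary of that component. This contradicts the assumed transversality of $u$ to $S$, i.e. $u(p)\cdot\eta(p)\neq0$, at some point $p$ outside $\supp w$.

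The main obstacle is this last step — turning ``$u$ vanishes near infinity'' into ``$u$ vanishes near the transversality point $p\in S$''. It rests on two ingredients: the unique continuation property for second-order elliptic equations with constant coefficients (standard; for Helmholtz it already follows from real-analyticity of solutions), and a connectivity statement, namely that $p$ can be joined to a neighborhood of infinity within $\Omega\setminus\supp w$. The latter is automatic when $\supp w$ does not separate $\Omega$; in general one restricts the conclusion to transversality points lying on the boundary of the unbounded component of $\Omega\setminus\supp w$, and this is the only place where the geometry of $\supp w$ relative to $S$ must be handled with a little care.
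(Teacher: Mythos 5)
Your proof is correct and follows essentially the same route as the paper's: reduce to the vector Helmholtz equation by taking divergence and curl, apply Rellich's lemma under the assumed super-optimal decay, use unique continuation to propagate vanishing, and invoke continuity up to $S$ to contradict transversality. Your closing remark about connectivity---that the transversality point $p$ must lie on the boundary of the unbounded component of $\Omega\setminus\supp w$ for unique continuation to reach it---is a genuine precision the paper glosses over when it asserts $u$ is supported in $K$; it does not affect the intended applications (where $\supp w$ sits in a stream tube), but it is worth flagging.
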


The above Corollary can be interpreted in two different ways. First, it establishes the optimal fall-off of a ``transverse'' strong Beltrami field ($w=0$). Second, it also deals with some kind of ``transverse'' generalized Beltrami fields in exterior domains ($w=\varphi u$) that will be of a great interest in our work. We restrict to the second result since it contains the first one as a particular case.

\begin{cor}\label{BeltramiNoHomog.CaidaOptima.GeneralizedBeltramis.cor}
Let $u\in C^{k+1,\alpha}(\overline{\Omega},\RR^3)$ be a generalized Beltrami field, i.e.,
$$\left\{\begin{array}{ll}
\curl u-f u=0,\ & x\in \Omega,\\
\divop u=0, & x\in\Omega,
\end{array}\right.$$
whose proportionality factor is a compactly supported perturbation of a constant proportionality factor $\lambda\in\RR\setminus\{0\}$, i.e., $f=\lambda+\varphi$ for some $\varphi\in C^{k,\alpha}_c(\overline{\Omega})$. If $u$ is transverse to $S$ at some point outside the support of the perturbation $\varphi$, then $u$ cannot decay faster than $\vert x\vert^{-1}$ at infinity, i.e., there exists no $\varepsilon>0$ such that
$$u=O(\vert x\vert^{-(1+\varepsilon)}),\hspace{0.25cm}\mbox{ when }\vert x\vert\rightarrow +\infty.$$
\end{cor}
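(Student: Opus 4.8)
The plan is to reduce the statement directly to Corollary~\ref{BeltramiNoHomog.CaidaOptima.cor}, by viewing $u$ as a solution of an inhomogeneous Beltrami equation with a divergence-free, compactly supported right-hand side. Concretely, I would rewrite the generalized Beltrami equation as
\[
\curl u-\lambda u=(f-\lambda)\,u=\varphi\,u=:w,\qquad x\in\Omega,
\]
so that $w$ is supported in $\supp\varphi$ (hence compactly supported) and of class $C^{k,\alpha}$, the latter being immediate from $u\in C^{k+1,\alpha}(\overline{\Omega},\RR^3)$ and $\varphi\in C^{k,\alpha}_c(\overline{\Omega})$.

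The one point that requires an argument is that $w$ is divergence-free, and this is precisely where the incompressibility of $u$ enters. Taking the divergence of $\curl u=fu$ and using $\divop\circ\curl\equiv 0$ together with $\divop u=0$ yields $\nabla f\cdot u=0$, that is $\nabla\varphi\cdot u=0$ since $\lambda$ is constant; in other words $f$ (equivalently $\varphi$) is a first integral of $u$. Consequently $\divop w=\divop(\varphi u)=\nabla\varphi\cdot u+\varphi\,\divop u=0$ on $\Omega$.

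To conclude, note that $\supp w\subseteq\supp\varphi$, so any point of $S$ at which $u$ is transverse to the boundary and which lies outside $\supp\varphi$ lies a fortiori outside $\supp w$. Hence all the hypotheses of Corollary~\ref{BeltramiNoHomog.CaidaOptima.cor} are satisfied by $u$ and $w$, and that corollary immediately gives the claimed nonexistence of an $\varepsilon>0$ with $u=O(|x|^{-(1+\varepsilon)})$ at infinity. I do not expect any genuine obstacle here: the whole content is the observation that $\divop u=0$ forces the proportionality factor to be advected by $u$, which is exactly what makes $w=\varphi u$ divergence-free; the optimal-decay obstruction itself is then inherited verbatim from the preceding corollary, whose proof rests in turn on Rellich's lemma and unique continuation for the Helmholtz equation.
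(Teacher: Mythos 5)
Your proof is correct and follows exactly the paper's (implicit) derivation: the paper itself presents this corollary as the ``$w=\varphi u$'' instance of Corollary~\ref{BeltramiNoHomog.CaidaOptima.cor}, and the only thing that must be checked is precisely what you check, namely that $w=\varphi u$ is compactly supported, regular enough, and divergence-free, the last being a consequence of $\nabla f\cdot u=0$ (which follows from $\divop(\curl u)=0$ and $\divop u=0$) together with $\divop u=0$. You spell out the verification and the containment $\supp w\subseteq\supp\varphi$ a bit more explicitly than the paper does, which is appropriate, and there is no gap.
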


\begin{rem}\label{BeltramiNoHomog.CaidaOptima.RelacionNadirashvili.rem}
In particular, the above result leads to the natural counterpart for exterior domain of the Liouville theorem in \cite{CC,Nadirashvili} about the fall-off of entire generalized Beltrami fields. Such theorem states that there is no globally defined generalized Beltrami field decaying faster than $\vert x\vert^{-1}$ at infinity. As many others Liouville type results, it strongly depends on the solution being defined in the whole $\RR^3$. In our case we remove this hypothesis but, in return, we need to argue with generalized Beltrami fields with constant proportionality factor outside a compact set enjoying some trasversality condition on the boundary surface of the exterior domain.
\end{rem}

\section{An iterative scheme for strong Beltrami fields}\label{Esquema.Iterativo.Seccion}

Our objective in this section is to set the iterative scheme that we will use to establish the partial stability of strong Beltrami fields that will yield the existence of almost global Beltrami fields with a non-constant factor and complex vortex structures.

\subsection{Further notation and preliminaries} We devote a few lines to introduce some notation that will be in continuous use in the rest of the paper. Although most of the results are classical  \cite{Gilbarg}, others are inspired in \cite{Kaiser}, where they have been used in the electromagnetic framework.  

On the differentiable surface~$S$, we will consider local charts of the same regularity as $S$ (that is, maps $\mu$ covering open subsets $\Sigma\subseteq S$ of the form
$$\begin{array}{cccc}\mu: & D & \longrightarrow & \RR^3\\
 & s & \longmapsto & \mu(s),
\end{array}$$ 
where $\mu(D)=\Sigma$ and $D$ is a disk in the plane). Without any loss of generality, we will assume $\mu$ to be a local parametrization up to the boundary so that $\mu$ can be homeomorphically extended to the closure $\overline{D}$, $\overline{\Sigma}=\mu(\overline{D})$. 

We will also consider the corresponding $C^k$ and $C^{k,\alpha}$ spaces of functions defined on a coordinate neighborhood $\Sigma$ of $S$ provided with a local chart $\mu$. Up to the degree of smoothness of the surface, by compactness they are known to be independent of the choice of the chart, so one can write
\begin{align*}
 C^k(\Sigma)&:=\{f\in C^k(\Sigma):\,f\circ \mu\in  C^k(D)\},\\
C^{k,\alpha}(\Sigma)&:=\{f\in  C^k(\Sigma):f\circ \mu\in C^{k,\alpha}(D)\}
\end{align*}
and similarly for spaces on $\overline{\Sigma}$.
These spaces can be respectively endowed with the complete norms
$$
\Vert f\Vert_{ C^k(\Sigma)}:=\Vert f\circ\mu\Vert_{ C^k(D)},\qquad \Vert f\Vert_{C^{k,\alpha}(\Sigma)}:=\Vert f\circ\mu\Vert_{C^{k,\alpha}(D)}.
$$

Let us consider a $C^{k,\alpha}$ surface. An useful result is \textit{Calder\'on's extension theorem} for $C^{k,\alpha}$ functions, see e.g.\ \cite[Lemma 6.37]{Gilbarg}:

\begin{pro}\label{ExtensionHolder.pro}
Let $O\subseteq\RR^3$ be a $C^{k,\alpha}$ domain with bounded boundary $\partial O$, and let $O'$  be any open subset such that $\overline{O}\subseteq O'$. Then, there exists a linear operator
$$
\begin{array}{cccc}
\mathcal{P}: & C^{k,\alpha}(\overline{O}) & \longrightarrow & C^{k,\alpha}(\overline{O'})\\
 & f & \longmapsto & P(f)\equiv\overline{f},
\end{array}
$$
such that
\begin{enumerate}
\item $\mathcal{P}$ is an extension operator, i.e.,
$\left.P(f)\right\vert_{O}=f,\ \forall f\in C^{k,\alpha}(\overline{O}).$
\item The support of $\mathcal{P}(f)$ is contained in the open subset $O'$ for evey $f\in C^{k,\alpha}(\overline{\Omega})$.
\item $\mathcal{P}$ is continuous in the $C^{k,\alpha}$ topology, i.e.,
$$
\Vert \mathcal{P}(f)\Vert_{C^{k,\alpha}(O')}\leq C_\mathcal{P}\Vert f\Vert_{C^{k,\alpha}(O)},\ \forall f\in C^{k,\lambda}(\overline{O}).
$$
\item $\mathcal{P}$ is also continuous in the $ C^{m}$ topology for any $0\leq m\leq k$, i.e.,
$$
\Vert \mathcal{P}(f)\Vert_{ C^{m}(O')}\leq C_\mathcal{P}\Vert f\Vert_{ C^{m}(O)},\ \forall f\in C^{k,\alpha}(\overline{O}).
$$
\end{enumerate}
In the above inequalities, $C_\mathcal{P}$ stands for a constant which depends on $k,O$ and $O'$.
\end{pro}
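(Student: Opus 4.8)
I would establish this by the classical \emph{localize--flatten--reflect} construction of a H\"older extension operator; cf.\ \cite[Lemma 6.37]{Gilbarg}. The plan has four steps, of which only the third requires genuine attention. First, \textbf{localization}: since $\partial O$ is compact, I would cover a neighbourhood of $\partial O$ by finitely many bounded open sets $U_1,\dots,U_N\subseteq O'$ in each of which, after an affine change of coordinates, $O$ coincides with the epigraph $\{x_3>\psi_i(x')\}$ of a function $\psi_i\in C^{k,\alpha}$ of the first two variables $x'=(x_1,x_2)$, and then take smooth cutoffs $\chi_1,\dots,\chi_N$ with $\supp\chi_i\subseteq U_i$, $\sum_{i=1}^N\chi_i\equiv 1$ near $\partial O$, and $\chi_0:=1-\sum_{i=1}^N\chi_i$. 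Writing $f=\sum_{i=0}^N\chi_i f$ on $\overline O$, the term $\chi_0 f$ vanishes identically near $\partial O$, so its extension by zero lies in $C^{k,\alpha}(\RR^3)$ with the obvious norm bound; it therefore suffices to extend each $f_i:=\chi_i f$ with $1\le i\le N$, a $C^{k,\alpha}$ function with compact support in $U_i$.

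Second, \textbf{flattening}: the map $\Phi_i(x',x_3)=(x',x_3-\psi_i(x'))$ is a $C^{k,\alpha}$ diffeomorphism of $U_i$ with $C^{k,\alpha}$ inverse, carrying $O\cap U_i$ into the half-space $\{y_3>0\}$, so $g_i:=f_i\circ\Phi_i^{-1}$ is $C^{k,\alpha}$ on $\{y_3\ge 0\}$ with compact support (composition with a $C^{k,\alpha}$ diffeomorphism preserves $C^{k,\alpha}$, by the chain rule together with the multiplicative H\"older estimates of \cite[Ch.~6]{Gilbarg}). Third, \textbf{reflection}: I would extend $g_i$ across $\{y_3=0\}$ by a higher-order Hestenes reflection
\begin{equation*}
\widetilde g_i(y',y_3):=\begin{cases} g_i(y',y_3), & y_3\ge 0,\\ \displaystyle\sum_{j=1}^{k+1} c_j\, g_i\!\left(y',-\tfrac{y_3}{j}\right), & y_3<0, \end{cases}
\end{equation*}
where $c_1,\dots,c_{k+1}$ solve the Vandermonde system $\sum_{j=1}^{k+1}c_j(-1/j)^m=1$ for $m=0,\dots,k$ (uniquely, as the nodes $-1/j$ are distinct). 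This choice makes every partial derivative of order $\le k$ agree from both sides along $\{y_3=0\}$, so $\widetilde g_i\in C^k(\RR^3)$; and for $\vert\gamma\vert=k$ the derivative $D^\gamma\widetilde g_i$ is $\alpha$-H\"older on each closed half-space (on $\{y_3\le 0\}$ because $y_3\mapsto -y_3/j$ is a contraction into $\{y_3\ge 0\}$) and continuous across the hyperplane, hence $\alpha$-H\"older on $\RR^3$ by splitting any segment at its crossing point. Thus $\widetilde g_i\in C^{k,\alpha}(\RR^3)$ with $\Vert\widetilde g_i\Vert_{C^{k,\alpha}}\le C\Vert g_i\Vert_{C^{k,\alpha}}$, and the same explicit formula gives $\Vert\widetilde g_i\Vert_{C^m}\le C\Vert g_i\Vert_{C^m}$ for all $0\le m\le k$.

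Fourth, \textbf{patching}: transporting back, $\widetilde g_i\circ\Phi_i$ extends $f_i$ to a neighbourhood of $\overline{O\cap U_i}$ with support in $U_i\subseteq O'$ and the expected norm bounds. I would then fix $\zeta\in C^\infty(\RR^3)$ with $\zeta\equiv 1$ near $\overline O$, $\supp\zeta\subseteq O'$ and bounded derivatives, and set
\begin{equation*}
\mathcal P(f):=\zeta\cdot\Big(\overline{\chi_0 f}+\sum_{i=1}^N \widetilde g_i\circ\Phi_i\Big),
\end{equation*}
which is linear, satisfies $\mathcal P(f)\vert_O=\sum_{i=0}^N\chi_i f=f$ (since $\zeta\equiv1$ near $\overline O$), has support in $O'$, and obeys (3)--(4) by chaining the bounds of the three previous steps, with all constants absorbed into a single $C_{\mathcal P}=C_{\mathcal P}(k,O,O')$.

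The main obstacle I expect lies entirely in the third step: showing that the reflected function is genuinely of class $C^{k,\alpha}$ and not merely $C^k$, i.e.\ that the top-order derivatives remain H\"older continuous across the flattened interface. This is handled by the contraction property of the reflection nodes together with the one-sided H\"older seminorms of $g_i$, but it is the one point where the estimate does not follow by purely formal manipulation. Everything else---the Vandermonde solvability, the chain-rule estimates for composition with $C^{k,\alpha}$ charts, and the partition-of-unity bookkeeping---is routine, and in any case the full argument is carried out in \cite[Lemma 6.37]{Gilbarg}.
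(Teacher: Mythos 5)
Your proof is correct and reproduces the standard Hestenes--Calder\'on extension construction (localize, flatten, reflect, patch), which is exactly the argument behind \cite[Lemma 6.37]{Gilbarg} --- the only reference the paper gives for this proposition, offering no proof of its own. The single point left implicit is that the higher-order reflection with nodes $-1/j$ dilates the support in the normal direction by a factor up to $k+1$, so the cutoffs $\chi_i$ must be taken with supports compactly contained well inside the $U_i$ to keep each reflected extension supported in $U_i$; this is a routine adjustment already built into the cited treatment.
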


To describe the stream lines and tubes associated with a velocity field $u\in C^{k+1,\alpha}(\overline{\Omega},\RR^3)$ in presence of a boundary surface which $u$ is not tangent to, it is convenient to consider an extension of the field to trivially obtain the following characterization from the classical Picard--Lindel\"{o}f theorem for ODEs on H\"older spaces:

\begin{pro}\label{FlujoHolder.pro}
Let $O\subseteq \RR^3$ be a $C^{k+1,\alpha}$ bounded domain, where $k\geq 0$ and $0<\alpha\leq 1$. Consider any vector field $u\in C^{k+1,\alpha}(\overline{O},\RR^3)$, its associated extension $\overline{u}=\mathcal{P}(u)\in C^{k+1,\alpha}(\RR^3,\RR^3)$ according to Proposition \ref{ExtensionHolder.pro}, any point $x_0\in \RR^3$ and an initial time $t_0\in\RR$. Consider the associated characteristic system
\begin{equation}\label{PVIFlujo.eq}
\left\{\begin{array}{ll}
\displaystyle\frac{dX}{dt}=\overline{u}(X), & t\in \RR,\\
\displaystyle X(t_0)=x_0. & 
\end{array}\right.
\end{equation}
Then, such problem is uniquely and globally (in time) solvable, its solution will be denoted $X(t;t_0,x_0)$, $X(t;t_0,\cdot)$ is a $C^{k+1}$ global diffeomorphism of the Euclidean space for every $t,t_0\in\RR$ and its inverse is $X(t_0;t,\cdot)$. The solutions to these problems are the stream lines of the extended velocity field $\overline{u}$.

Consider any $x_0\in \overline{O}$ and let $T(x_0)\geq 0$ be the greatest time for which the stream line $X(t;0,x_0),\ t>0$ remains inside the open subset $O$, i.e.,
$$T(x_0):=\sup\{T>0:\,X(t;0,x_0)\in O\ \forall\,t\in(0,T)\}.$$
Then, $X(t;0,x_0),\ 0<t<T(x_0)$ is a stream line of $u$, or equivalently, it solves the ODE
\begin{equation*}
\left\{\begin{array}{ll}
\displaystyle\frac{dX}{dt}=u(X), & 0<t<T(x_0),\\
\displaystyle X(0)=x_0. & 
\end{array}\right.
\end{equation*}
Notice that when $X(t;0,x_0)\notin \overline{O},\ \forall\,t\in(0,T)$ for some $T>0$, then $T(x_0)=0$, i.e., the corresponding stream line of $\overline{u}$ does not originally enter the region $O$.
\end{pro}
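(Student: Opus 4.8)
The plan is to derive everything from Calder\'on's extension (Proposition~\ref{ExtensionHolder.pro}), the Picard--Lindel\"of theorem, and the classical theorem on differentiable dependence of ODE solutions on their initial data; the only point requiring care is the $C^{k+1}$ regularity of the flow map.

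First I would apply Proposition~\ref{ExtensionHolder.pro} with $O'$ a bounded open set containing $\overline O$ to obtain $\overline u=\mathcal P(u)\in C^{k+1,\alpha}(\RR^3,\RR^3)$ with $\supp\overline u\subseteq O'$. In particular $\overline u$ is globally bounded, $\|\overline u\|_{L^\infty}\le M$ say, and globally Lipschitz, since $D\overline u\in C^{k,\alpha}\subseteq C^0$ is bounded with compact support. The Picard--Lindel\"of theorem then gives, for each $(t_0,x_0)$, a unique maximal solution of~\eqref{PVIFlujo.eq}, and the a priori bound $|X(t;t_0,x_0)-x_0|\le M|t-t_0|$ rules out finite-time blow-up, so the solution is global in time. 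Uniqueness yields the semigroup identity $X(t;s,X(s;t_0,x_0))=X(t;t_0,x_0)$ for all $s,t,t_0\in\RR$; specializing to $s=t_0$, and exchanging the roles of $t$ and $t_0$, shows that $X(t;t_0,\cdot)$ and $X(t_0;t,\cdot)$ are mutually inverse bijections of $\RR^3$.

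Next I would address the regularity. Formally differentiating~\eqref{PVIFlujo.eq} in $x_0$ leads to the variational equation $\dot J=D\overline u(X(t;t_0,\cdot))\,J$ with $J(t_0)=\mathrm{Id}$ for the Jacobian $J(t)=D_{x_0}X(t;t_0,\cdot)$; since the coefficient $D\overline u\circ X$ is $C^{k,\alpha}$ along trajectories, the classical bootstrap argument for smooth dependence on initial conditions (induction on the order, augmenting the system by its successive variational equations) shows that $X(t;t_0,\cdot)\in C^{k+1}(\RR^3,\RR^3)$, with all derivatives up to order $k+1$ jointly continuous in $(t,t_0,x_0)$. Applying the same argument to the inverse flow $X(t_0;t,\cdot)$, which solves a problem of the same type run backwards in time, gives that it too is $C^{k+1}$, so $X(t;t_0,\cdot)$ is a $C^{k+1}$ global diffeomorphism with the stated inverse.

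Finally, for the stream lines of $u$ itself: given $x_0\in\overline O$ and $T(x_0)$ as defined, the curve $t\mapsto X(t;0,x_0)$ lies in $O$ for every $t\in(0,T(x_0))$ by construction, and on $O$ one has $\overline u\equiv u$; hence this curve solves $\dot X=u(X)$ on $(0,T(x_0))$ with $X(0)=x_0$, which is precisely the asserted stream line. The degenerate case $T(x_0)=0$ occurs exactly when the $\overline u$-trajectory through $x_0$ fails to enter $O$ for small positive times (e.g.\ $x_0\in S$ with $\overline u(x_0)\cdot\eta(x_0)\le 0$, or $x_0\notin\overline O$), in which case there is no nontrivial forward stream line of $u$ through $x_0$. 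I expect the main obstacle to be the bookkeeping of the $C^{k+1}$ dependence of the flow on initial data: this is classical but hinges on $\mathcal P$ producing a \emph{globally} defined $C^{k+1,\alpha}$ field, so that the variational equation has globally defined $C^{k,\alpha}$ coefficients and the iterated differentiation closes up; everything else is routine.
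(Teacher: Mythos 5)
Your proposal is correct and follows exactly the route the paper intends: the paper states the proposition without a separate proof block, simply noting that it is obtained ``trivially \ldots from the classical Picard--Lindel\"of theorem'' after applying Calder\'on's extension, and it invokes Peano's differentiability theorem for the $C^{k+1}$ regularity of the flow (see the proof of Proposition~\ref{TuboFlujo.parametr.pro}). Your expansion of that argument --- compactly supported $C^{k+1,\alpha}$ extension $\Rightarrow$ global Lipschitz bound $\Rightarrow$ global flow, semigroup identity for the inverse, variational/bootstrap argument for $C^{k+1}$ dependence on initial data, and restriction to $O$ for the stream lines of $u$ --- is exactly the classical chain the authors have in mind, and no step is missing or flawed.
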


We will also consider stream tubes of a velocity field which emanate from an open subset of the surface $S$. Consider any vector field $u\in C^{k+1,\alpha}(\overline{\Omega},\RR^3)$, $\overline{u}=\mathcal{P}(u)\in C^{k+1,\alpha}(\RR^3,\RR^3)$ its extension according to Calder\'on's extension theorem, $X(t;t_0,x_0)$ its associated flux mapping through Proposition \ref{FlujoHolder.pro} and an open subset $\Sigma\subseteq S$ together with a local chart $\mu:D\longrightarrow S$. The stream tube of $u$ which emanates from $\Sigma$ is the collection of all stream lines of $u$ radiating from the points in the open subset $\Sigma$, i.e.,
$$
\mathcal{T}(\Sigma,u):=\{X(t;0,\mu(s)):\,s\in D,\ 0<t<T(\mu(s))\}.
$$
It is also useful to consider bounded stream lines with ``height'' $T>0$
$$
\mathcal{T}(\Sigma,u,T):=\{X(t;0,\mu(s)):\,s\in D,\ 0<t<\min\{T,T(\mu(s))\}\}.
$$
\begin{figure}[t]
\centering
\includegraphics[scale=0.8]{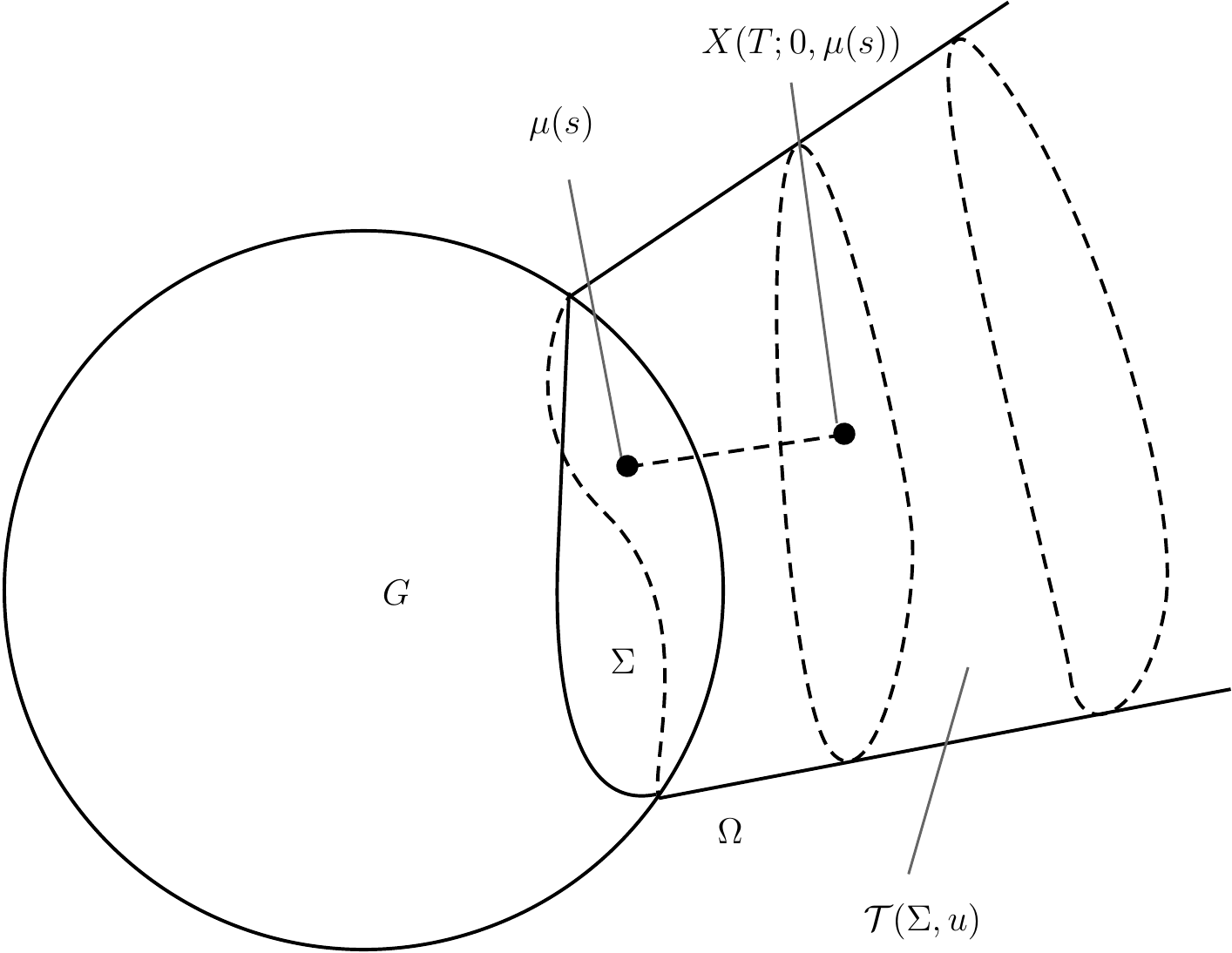}
\caption{Stream lines and tubes of the velocity field $u$.}
\label{fig:Fig01}
\end{figure}
Notice that in order for a stream line of $u$ to be well defined, it is necessary that the velocity field points towards the exterior domain. The same condition leads to well defined stream tubes emanating from $\Sigma$. The regularity in the preceding result follows from Peano's differentiability theorem. The same regularity result may be used in order to derive the regularity in the stream tubes parametrization.

\begin{pro}\label{TuboFlujo.parametr.pro}
Consider $G,\Sigma,$ and $\mu$ verying the hypothesis (\ref{GSigmaMu.hipot}), $u\in C^{k+1,\alpha}(\overline{\Omega},\RR^3)$ be a velocity field in the exterior domain, and assume that the vector field $u$ points towards the exterior domain at any point of $\Sigma$, i.e., there exits a positive $\rho_0>0$ such that $u\cdot \eta\geq \rho_0$ on $\Sigma$. Then, a well defined stream line of $u$ emanates from each point of $\Sigma$ and they smoothly foliate the whole stream tube $\mathcal{T}(\Sigma,u)$. To make this statement more precise, let us define
$$\mathcal{D}(\Sigma,u):=\{(t,s):\,s\in D,\ 0<t<T(\mu(s))\},$$
and the mapping
$$\begin{array}{cccl}
\phi: & \mathcal{D}(\Sigma,u) & \longrightarrow & \mathcal{T}(\Sigma,u)\\
 & (t,s) & \longmapsto & \phi(t,s):=X(t;0,\mu(s)).
\end{array}$$
Then,
\begin{enumerate}
\item $T(\mu(s))>0$, for each $s\in D$.
\item $\phi$ is bijective.
\item $\phi$ is a $C^{k+1}$ diffeomorphism.
\item $\jac(\phi)$ and $\jac(\phi)^{-1}$ belongs to $C^{k,\alpha}$ locally in $t$, i.e., there exists a function $\kappa:\RR_0^+\times\RR_0^+\longrightarrow\RR_0^+$ which is increasing with respect to each variable, such that if one defines
$$\mathcal{D}(\Sigma,u,T):=\left\{(t,s):\,s\in D,\ 0<t<\min\{T,T(\mu(s))\}\right\}$$
and the mapping
$$\phi_{\vert \mathcal{D}(\Sigma,u,T)}:\mathcal{D}(\Sigma,u,T)\longrightarrow \mathcal{T}(\Sigma,u,T),$$
then, 
$$\Vert \jac(\phi)\Vert_{C^{k,\alpha}(\overline{\mathcal{D}(\Sigma,\mu,T))}},\Vert \jac(\phi)^{-1}\Vert_{C^{k,\alpha}(\overline{\mathcal{T}(\Sigma,\mu,T))}}\leq \kappa\left(\Vert u\Vert_{C^{k+1,\alpha}(\overline{\Omega})},T\right),$$
for every positive number $T$.
\end{enumerate}
\end{pro}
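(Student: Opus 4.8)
The plan is to transfer every assertion to the flow of the Calder\'on extension $\overline u=\mathcal P(u)\in C^{k+1,\alpha}(\RR^3,\RR^3)$ of Proposition~\ref{ExtensionHolder.pro}, recalling that $\overline u=u$ on $\overline\Omega$, that $\|\overline u\|_{C^{k+1,\alpha}(\RR^3)}\le C_{\mathcal P}\|u\|_{C^{k+1,\alpha}(\overline\Omega)}=:\Lambda$, and that its flow $X(t;t_0,x_0)$, with all the properties stated in Proposition~\ref{FlujoHolder.pro}, depends on $(t,t_0,x_0)$ jointly in a $C^{k+1}$ fashion by differentiable dependence of ODE solutions on time and data (the ``Peano differentiability'' input invoked above). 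Item~(1) is then immediate: since $\overline u(\mu(s))=u(\mu(s))$ and $u(\mu(s))\cdot\eta(\mu(s))\ge\rho_0>0$, the curve $t\mapsto X(t;0,\mu(s))$ leaves $\overline G$ and enters the open set $\Omega$ for all small $t>0$, so $T(\mu(s))>0$; by compactness of $\overline D$ and continuity of $u\cdot\eta$ this lower bound can be taken uniform in $s$, which is convenient for item~(4).

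For item~(2), surjectivity of $\phi$ onto $\mathcal T(\Sigma,u)$ holds by the very definition of the latter, so only injectivity needs an argument. Suppose $X(t_1;0,\mu(s_1))=X(t_2;0,\mu(s_2))$ with $0<t_1\le t_2$. The group law of the flow together with the autonomy of $\overline u$ gives $\mu(s_1)=X(t_2-t_1;0,\mu(s_2))$; if $t_1<t_2$ then $0<t_2-t_1<t_2<T(\mu(s_2))$, so the right-hand side lies in the open set $\Omega$, contradicting $\mu(s_1)\in S=\partial\Omega$. Hence $t_1=t_2$, and then injectivity of the diffeomorphism $X(t_1;0,\cdot)$ and of the chart $\mu$ forces $s_1=s_2$.

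For item~(3), the joint $C^{k+1}$ regularity of $\phi(t,s)=X(t;0,\mu(s))$ follows from the joint regularity of the flow and $\mu\in C^{k+5}$, and the stream lines $\{s=\mathrm{const}\}$ then foliate the tube smoothly. To see that $\phi$ is an immersion, observe that the three columns of $\jac\phi$, namely $\partial_t\phi=\overline u(\phi)$ and $\partial_{s_1}\phi,\partial_{s_2}\phi$, all solve along the stream line the same linear variational system $\dot V=D\overline u(\phi(t,s))\,V$; Liouville's formula therefore yields $\det\jac\phi(t,s)=\det\jac\phi(0,s)\,\exp\!\big(\int_0^t(\divop\overline u)(\phi(\tau,s))\,d\tau\big)$, and $\det\jac\phi(0,s)=\det\big[\,u(\mu(s))\mid\partial_{s_1}\mu(s)\mid\partial_{s_2}\mu(s)\,\big]\neq0$ exactly because $u(\mu(s))$ has nonzero normal component. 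Thus $\jac\phi$ is invertible throughout $\overline{\mathcal D(\Sigma,u)}$; since moreover $\phi$ is injective, continuous and defined on the open subset $\mathcal D(\Sigma,u)\subseteq\RR^3$ (the exit time $T\circ\mu$ being lower semicontinuous), invariance of domain shows that $\mathcal T(\Sigma,u)=\phi(\mathcal D(\Sigma,u))$ is open and $\phi$ a homeomorphism onto it, and the inverse function theorem upgrades $\phi^{-1}$ to class $C^{k+1}$.

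Item~(4) is where the real work lies. The plan is to write down the hierarchy of variational equations for the spatial derivatives $\partial_{x_0}^{\beta}X(t;0,x_0)$ with $|\beta|\le k+1$: each one satisfies a linear ODE driven by a universal polynomial in the lower-order derivatives of $X$ and in the derivatives of $\overline u$ up to order $|\beta|$ evaluated along the flow. An induction on $|\beta|$ based on Gr\"onwall's inequality then bounds the $C^0$ norms of these derivatives on $t\in[0,T]$ --- and, in the top case $|\beta|=k+1$, their $C^{0,\alpha}$ seminorms, using that $X(t;0,\cdot)$ is Lipschitz with constant at most $e^{\Lambda t}$ so that $D^{k+1}\overline u\circ X$ retains an $\alpha$-H\"older modulus --- by functions increasing in $\Lambda$ and $T$, the $T$-dependence entering through factors $e^{c\Lambda T}$. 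Composing with $\mu$ and with $\overline u$ via the standard H\"older composition estimates then controls $\|\jac\phi\|_{C^{k,\alpha}(\overline{\mathcal D(\Sigma,u,T)})}$. For $\jac(\phi)^{-1}$ one uses Cramer's rule together with the lower bound $|\det\jac\phi(t,s)|\ge c_0\,e^{-c\Lambda T}>0$ on $\mathcal D(\Sigma,u,T)$, which comes from the above Liouville formula and the uniform transversality $u\cdot\eta\ge\rho_0$; the division estimate in H\"older spaces then makes $(\jac\phi)^{-1}$ of class $C^{k,\alpha}$ with controlled norm, and the standard bound for the inverse of a H\"older diffeomorphism whose Jacobian is bounded below gives $\|\jac(\phi^{-1})\|_{C^{k,\alpha}}$ on the truncated tube. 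The \textbf{main obstacle} is precisely this last item: keeping every constant in the variational hierarchy, in the H\"older composition/division estimates, and in the determinant lower bound tracked as an explicit function of $\|u\|_{C^{k+1,\alpha}(\overline\Omega)}$ and $T$ that is monotone in each argument.
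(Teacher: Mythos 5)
Your argument coincides with the paper's proof in essentially every step: Calder\'on extension to globalize the flow, Peano differentiability for the joint $C^{k+1}$ regularity of $\phi$, the variational system along stream lines and the Jacobi--Liouville formula for the nondegeneracy of $\jac\phi$, the inverse function theorem for item~(3), and a recursive Gr\"onwall scheme for the $C^{k,\alpha}$ bounds on $\jac\phi$ together with Cramer's rule plus the Liouville determinant lower bound for $\jac(\phi)^{-1}$. Your injectivity argument via the group law of the flow is in fact a cleaner rendering of the paper's more informal ``streamlines cannot touch and cannot close up''. The one shortcoming is item~(4): you correctly identify every ingredient but explicitly leave the estimates as a roadmap, whereas the paper carries them out in full --- writing the chain-rule expansions for $D^\gamma(\partial_t\phi_i)$ and $D^\gamma(\partial_{s_j}\phi_i)$, closing the $C^0$ bounds at each order by Gr\"onwall, and then obtaining the top-order $\alpha$-H\"older seminorm by decomposing $D^\gamma x_i^j(t,s_1)-D^\gamma x_i^j(t,s_2)$ into four pieces ($I$ through $IV$), the last of which is estimated by playing the $\alpha$-H\"older modulus of $D^{k+1}u$ against the Lipschitz bound on $\phi(\cdot,s)$. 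There is no conceptual gap, but the proof of item~(4) is not actually produced.
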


\begin{proof}
We sketch the proof of this result for the reader's convenience (see \cite[Lemma 5.1]{Kaiser} for $k=0$ and in~\cite[Proposici\'on 2.1.7]{Poyato} for arbitrary $k$).
The first assertion is apparent: since $u$ points outwards at any point in $\Sigma$, then the stream line of $u$ arising from $\mu(s)$ points towards $\Omega$ at $t=0$. Hence, a small piece of such stream line must stay in $\Omega$. Regarding the second assertion, $\phi$ is clearly onto by virtue of the definition of $\mathcal{T}(\Sigma,u)$. To check that $\phi$ is one to one, note that different stream lines cannot touch because of the uniqueness part in Proposition \ref{FlujoHolder.pro}, and that the streamlines of $u$ emerging from $\Sigma$ cannot be closed loops because $u$ points outwards at $\Sigma$.

The $C^{k+1}$ regularity of $\phi$ is clear because so is $X(t;t_0,x_0)$ by \textit{Peano's differentiability theorem} as stated in Proposition \ref{FlujoHolder.pro}. Let us show that its Jacobian matrix is regular at any point in $\mathcal{D}(\Sigma,u)$ to obtain the same regularity of $\phi^{-1}$ through the inverse mapping theorem. This matrix takes the form
$$\jac(\phi)(t,s)=\left(\left.\frac{\partial \phi}{\partial t}(t,s)\right\vert\left.\frac{\partial\phi}{\partial s_1}(t,s)\right\vert\left.\frac{\partial\phi}{\partial s_2}(t,s)\right.\right).$$
For fixed $s\in D$, each column vector is a solution to the linear ODE
$$\dot{x}(t)=\jac(u)(\phi(t,s))x(t).$$
Thus, $\jac(\phi)(\cdot,s)$ is a solution matrix to such linear ODE, whose determinant at $t=0$ equals
\begin{equation}\label{FlujoHolder.cota.determinante}
\det(\jac(\phi)(0,s))=\left\vert\frac{\partial \mu}{\partial s_1}(s)\times \frac{\partial\mu}{\partial s_2}(s)\right\vert u(\mu(s))\cdot\eta(\mu(s))\geq \rho_1\rho_0>0.
\end{equation}
Here $\rho_1$ stands for any positive uniform lower bound of the first factor. Thus, $\jac(\phi)(t,s)$ is regular for all $t$ by the \textit{Jacobi--Liouville formula}. In particular, the derivatives of $\jac(\phi)$ and $\jac(\phi)^{-1}$ up to order $k$ can be continuously extended to $\overline{\mathcal{D}(\Sigma,u,T)}$ by the analogous properties of $u$ and $\mu$. 

Let us finally recursively show that all of them are bounded and the $k$-th order ones are $\alpha$-H\"{o}lder continuous indeed. First, notice that
\begin{align*}
\left\vert\frac{\partial \phi}{\partial t}(t,s)\right\vert&\leq \Vert u\Vert_{C^0(\Omega)},\\
\left\vert \frac{\partial \phi}{\partial s_i}(t,s)\right\vert&\leq \left\vert\frac{\partial \mu}{\partial s_i}(s)\right\vert+\int_0^t\Vert \jac(u)\Vert_{C^0(\Omega)}\left\vert\frac{\partial \phi}{\partial s_i}(\tau,s)\right\vert\,d\tau,
\end{align*}
for every $(t,s)\in\mathcal{D}(\Sigma,u,T)$. As a consequence, Gronwall's lemma amounts to the upper bound
$$\Vert \jac(\phi)\Vert_{C^0(\mathcal{D}(\Sigma,u,T))}\leq \Vert u\Vert_{C^0(\Omega)}+\Vert \mu\Vert_{C^1(D,\RR^3)}e^{T\Vert \jac(u)\Vert_{C^0(\Omega)}}\leq \kappa(\Vert u\Vert_{C^{k+1,\alpha}(\Omega)},T),$$
for some function $\kappa$ which is separately increasing. 

Assume now that the analogous estimate
\begin{equation}\label{FlujoHolder.estim.Bm}
\Vert \jac(\phi)\Vert_{C^m(\mathcal{D}(\Sigma,u,T))}\leq \kappa(\Vert u\Vert_{C^{k+1,\alpha}(\Omega)},T),
\end{equation}
holds true for some $n$ such that $0<n\leq k$ and all $m$ with $0\leq m<n$ and let us prove it for $m=n$. Fix any multi-index $\gamma$ such that $\vert\gamma\vert=n$ and take derivatives on the characteristic system (\ref{PVIFlujo.eq}) to arrive at
$$D^\gamma\left(\frac{\partial\phi_i}{\partial t}\right)=D^\gamma(u_i(\phi(t,s)))=\gamma!\sum_{(l,\beta,\delta)\in \mathcal{D}(\gamma)}(D^\delta u_i)(\phi(t,s))\prod_{r=1}^l\frac{1}{\delta_r!}\left(\frac{1}{\beta_r!}D^{\beta_r}\phi(t,s)\right)^{\delta_r}.$$
The above formula is nothing but a chain rule for high order partial derivatives of a composition function. Here, $\mathcal{D}(\gamma)$ stands for the set of all the possible decompositions of $\gamma$
$$\gamma=\sum_{r=1}^l\vert \delta_r\vert \beta_r,$$
where $\delta_r,\beta_r$ are multi-indices, $\delta:=\sum_{r=1}^l\delta_r$ and for every $r=1,\ldots,l-1$ there exists some $i_r\in\{1,2,3\}$ such that
$(\beta_r)_i=(\beta_{r+1})_i$ for every $i\neq i_r$ and $(\beta_r)_{i_r}<(\beta_{r+1})_{i_r}.$
Similarly
$$\frac{\partial}{\partial t}D^\gamma\left(\frac{\partial \phi_i}{\partial s_j}\right)=\sum_{q=1}^l\sum_{\rho\leq \gamma}\sum_{(l,\beta,\delta)\in\mathcal{D}(\rho)}\binom{\gamma}{\rho}\rho!\left(D^\delta\frac{\partial u_i}{\partial x_q}\right)(\phi(t,s))\prod_{r=1}^l\frac{1}{\delta_r!}\left(\frac{1}{\beta_r!}D^{\beta_r}\phi(t,s)\right)^{\delta_r}D^{\gamma-\rho}\frac{\partial\phi_k}{\partial s_j}(t,s).$$
Notice that the first derivative formula only involves derivatives of $\phi(t,s)$ and $u$ up to order $n$. Regarding the second formula, the only term involving a derivative of $\phi(t,s)$ of order $n+1$ is the associated with the multi-index $\rho=0$. Hence, the next estimates hold true by virtue of (\ref{FlujoHolder.estim.Bm})
\begin{align*}
\left\vert D^\gamma\left(\frac{\partial\phi_i}{\partial t}\right)(t,s)\right\vert&\leq \kappa(\Vert u\Vert_{C^{k+1,\alpha}(\Omega)},T),\\
\left\vert D^\gamma\left(\frac{\partial\phi_i}{\partial s_j}\right)(t,s)\right\vert&\leq \kappa(\Vert u\Vert_{C^{k+1,\alpha}(\Omega)},T)\sum_{q=1}^3\left(1+\int_0^t\left\vert D^\gamma\left(\frac{\partial \phi_q}{\partial s_j}\right)(\tau,s)\right\vert\,d\tau\right),
\end{align*}
for every $(t,s)\in \mathcal{D}(\Sigma,u,T)$. Again, Gronwall's lemma shows that (\ref{FlujoHolder.estim.Bm}) holds true when $m=n$.

Finally, let us obtain the aforementioned $\alpha$-H\"{o}lder estimate of the higher order derivatives of $\jac(\phi)$. To this end, take any column vector $x^j(t,s)$ of the Jacobian matrix $\jac(\phi)(t,s)$ and note that when $\gamma=(\gamma_1,\gamma_2,\gamma_3)$ is a multi-index of the highest order $k$, then all the preceding derivative formulas can be added up to obtain the PDE
\begin{align}\label{FlujoHolder.derivadak}
\begin{split}
\frac{\partial}{\partial t}&D^\gamma x_i^j(t,s)=\sum_{q=1}^3A_{q}^{i,j}(\gamma)\frac{\partial u_i}{\partial x_q}(\phi(t,s))D^\gamma x_q^j(t,s)+F_i(t,s)\\
&+\sum_{\beta\in \Gamma_\gamma}\sum_{(j_1,\ldots,j_{k+1})\in J_\gamma}\sum_{(i_1,\ldots,i_{k+1})\in I_\gamma}B_{i_1,\ldots,i_{k+1}}^{j_1,\ldots,j_{k+1}}(\beta)(D^\beta u_i)(\phi(t,s))x_{i_1}^{j_1}(\phi(t,s))\cdots x_{i_{k+1}}^{j_{k+1}}(\phi(t,s)).
\end{split}
\end{align}
Here $A_q^{i,j}(\gamma)$ and $B_{i_1,\ldots,i_{k+1}}^{j_1,\ldots,j_{k+1}}(\beta)$ denote nonnegative constant coefficients and $F_i(t,s)$ consists of finitely many sums and products of both derivatives of $u$ up to order $k$ and derivatives of $\phi$ up to order $k$. Furthermore, $\Gamma_\gamma$ is a set of $3$-multi-indices with order $k+1$ depending on $\gamma$ and $I_\gamma,J_\gamma$ are sets of $(k+1)$-multi-indices also depending on $\gamma$. 

Let us first prove the $\alpha$-H\"{o}lder continuity in the variable $s$ using the integral version of the above equation. Specifically, take $s_1,s_2\in D$, $t\in (0,T)$ and notice that
$$D^\gamma x_i^j(t,s_1)-D^\gamma x_i^j(t,s_2)=I+II+III+IV,$$
where
\begin{align*}
I&:=D^\gamma x_i^j(0,s_1)-D^\gamma x_i^j(0,s_2),\\
II&:=\int_0^t (F_i(\tau,s_1)-F_i(\tau,s_2))\,d\tau,\\
III&:=\sum_{q=1}^3A_q^{i,j}(\gamma)\int_0^t\left(\frac{\partial u_i}{\partial x_q}(\phi(\tau,s_1))D^\gamma x_q^j(\tau,s_1)-\frac{\partial u_i}{\partial x_q}(\phi(\tau,s_2))D^\gamma x^j_q(\tau,s_2)\right)\,d\tau,\\
IV&:=\sum\limits_{\substack{\beta\in \Gamma_\gamma\\(i_1,\ldots,i_{k+1})\in I_\gamma\\(j_1,\ldots,j_{k+1})\in J_\gamma}}B_{i_1,\ldots,i_{k+1}}^{j_1,\ldots,j_{k+1}}(\beta)\int_0^t\left.(D^\beta u_i)(\phi(\tau,s))x_{i_1}^{j_1}(\phi(\tau,s))\cdots x_{i_{k+1}}^{j_{k+1}}(\phi(\tau,s))\right\vert_{s_1}^{s_2}\,d\tau.
\end{align*}

Regarding the terms $I,II$, one can easily see that
$$I\leq \kappa\left(\Vert u\Vert_{C^{k+1,\alpha}(\Omega)},T \right)\vert s_1-s_2\vert^\alpha,\hspace{0.5cm} II\leq T\kappa(\Vert u\Vert_{C^{k+1,\alpha}(\Omega)},T)\vert s_1-s_2\vert^\alpha.$$
In the first case, the estimate obviously follows from the regularity of $\mu$ in the particular case when $D^\gamma x_i^j$ involves no derivative of $\phi(t,s)$ with respect to $t$. A straightforward recursive argument on the order of the derivatives with respect to $t$ yields the general assertion. The second case is obvious by the definition of $F_i(t,s)$ and the mean value theorem. 
Furthermore, adding and subtracting crossed terms in $III$, it is clear that it can be bounded by the mean value theorem as
$$III\leq \kappa(\Vert u\Vert_{C^{k+1,\alpha}(\Omega)},T)\vert s_1-s_2\vert^\alpha+\kappa\left(\Vert u\Vert_{C^{k+1,\alpha}(\Omega)},T\right)\sum_{q=1}^3\int_0^t\vert D^\gamma x_q^j(\tau,s_1)-D^\gamma x_q^j(\tau,s_2)\vert\,d\tau.$$

So far, only low order derivatives of $u$ have being involved, and therefore the mean value theorem has sufficed to obtain Lipschitz conditions of such derivatives (terms $I,II$ and $III$). In contrast, $IV$ contains the derivatives of $u$ of the highest order, $k+1$. Since they cannot be handled again by the mean value theorem, then the $\alpha$-H\"{o}lder continuity of $D^{k+1}u$ must be used. By appropriately adding and subtracting crossed terms, using the above-mentioned H\"{o}lder continuity of $D^\beta u_i$ on the first factor and the mean value theorem on the second one, one easily obtains the upper bound
\begin{multline*}
IV\leq \sum_{\beta\in \Gamma_\gamma}\sum_{(j_1,\ldots,j_{k+1})\in J_\gamma}\sum_{(i_1,\ldots,i_{k+1})\in I_\gamma}B_{i_1,\ldots,i_{k+1}}^{j_1,\ldots,j_{k+1}}(\beta)\\
\times T\left(\kappa(\Vert u\Vert_{C^{k+1,\alpha}(\Omega},T)^{\alpha+k+1}[D^\beta u_i]_{\alpha,\Omega}\vert s_1-s_2\vert^\alpha+\kappa(\Vert u\Vert_{C^{k+1,\alpha}(\Omega},T)^{k+2}\Vert D^\beta u_i\Vert_{C^0(\Omega)}\vert s_1-s_2\vert\right).
\end{multline*}

To conclude, let us combine all the above estimates and use Gronwall's lemma to arrive at
$$\vert D^\gamma x_i^j(t,s_1)-D^\gamma x_i^j(t,s_2)\vert\leq \kappa(\Vert u\Vert_{C^{k+1,\alpha}(\Omega},T)\vert s_1-s_2\vert^\alpha,$$
for an appropriately function $\kappa$. Regarding the $\alpha$-H\"{o}lder condition in the variable $t$, one only needs to note that $\partial_t D^\gamma x_i^j(t,s)$ is uniformly bounded by virtue of (\ref{FlujoHolder.derivadak}).

Finally, note that
$$\jac(\phi)^{-1}=\frac{1}{\det(\jac(\phi))}\left(\left.\frac{\partial\phi}{\partial s_1}\times\frac{\partial\phi}{\partial s_2}\,\right\vert\left.\frac{\partial\phi}{\partial s_2}\times\frac{\partial\phi}{\partial t}\,\right\vert\left.\frac{\partial\phi}{\partial t}\times\frac{\partial\phi}{\partial s_1}\right.\right)^\intercal,$$
and that the Jacobi--Liouville formula along with the lower bound in (\ref{FlujoHolder.cota.determinante}) yield a uniform lower bound for the Jacobian determinant:
$$\det(\jac(\phi)(t,s))=\det(\jac(\phi)(0,s))\exp\left(\int_0^t\mbox{Tr}(\jac(u)(\phi(\tau,s)))\,d\tau\right)\geq \rho_0\rho_1\exp\left(-3T\Vert \jac(u)\Vert_{C^0(\Omega)}\right).$$
Hence, the $C^{k+1,\alpha}(\mathcal{D}(\Sigma,u,T))$ estimate for $\jac(\phi)^{-1}$ easily follows from that of $\jac(\phi)$.
\end{proof}

The analysis in  the next sections requires stream tubes of $u$ that are bounded and have both ends on~$S$. These structures were considered (although its existence was not proved) in \cite{Kaiser}. In our setting, we will say that the stream tube of $u$ arising from $\Sigma$ is a $(\rho_0,T,\delta)$-stream tube of $u$ when the previous two conditions hold, i.e.,
\begin{itemize}
\item $u\cdot\eta\geq \rho_0$ on $\Sigma$.
\item For every $s\in D$ there exists two associated positive numbers $0<T_0(s),T_\delta(s)<\frac{T}{2}$ such that $X(T_0(s);0,\mu(s))\in S$ and $X(T_\delta(s);0,\mu(s))\in S_\delta$.
\end{itemize}
Here $\rho_0,T,\delta$ are positive constants which measure the initial angle of the streams lines over $\Sigma$, the time at which the whole tube has returned to the surface and the depth that the stream lines achieve into the interior domain $G$, while $S_\delta$ stands for the boundary of the subdomain of $G$ made of the points in $G$ at distance at least $\delta$ from $S$, i.e., $G_\delta:=\{x\in G:\,\mbox{dist}(x,S)>\delta\}$ (see Figure \ref{fig:Fig02}.)
\begin{figure}[t]
\centering
\includegraphics[scale=0.6]{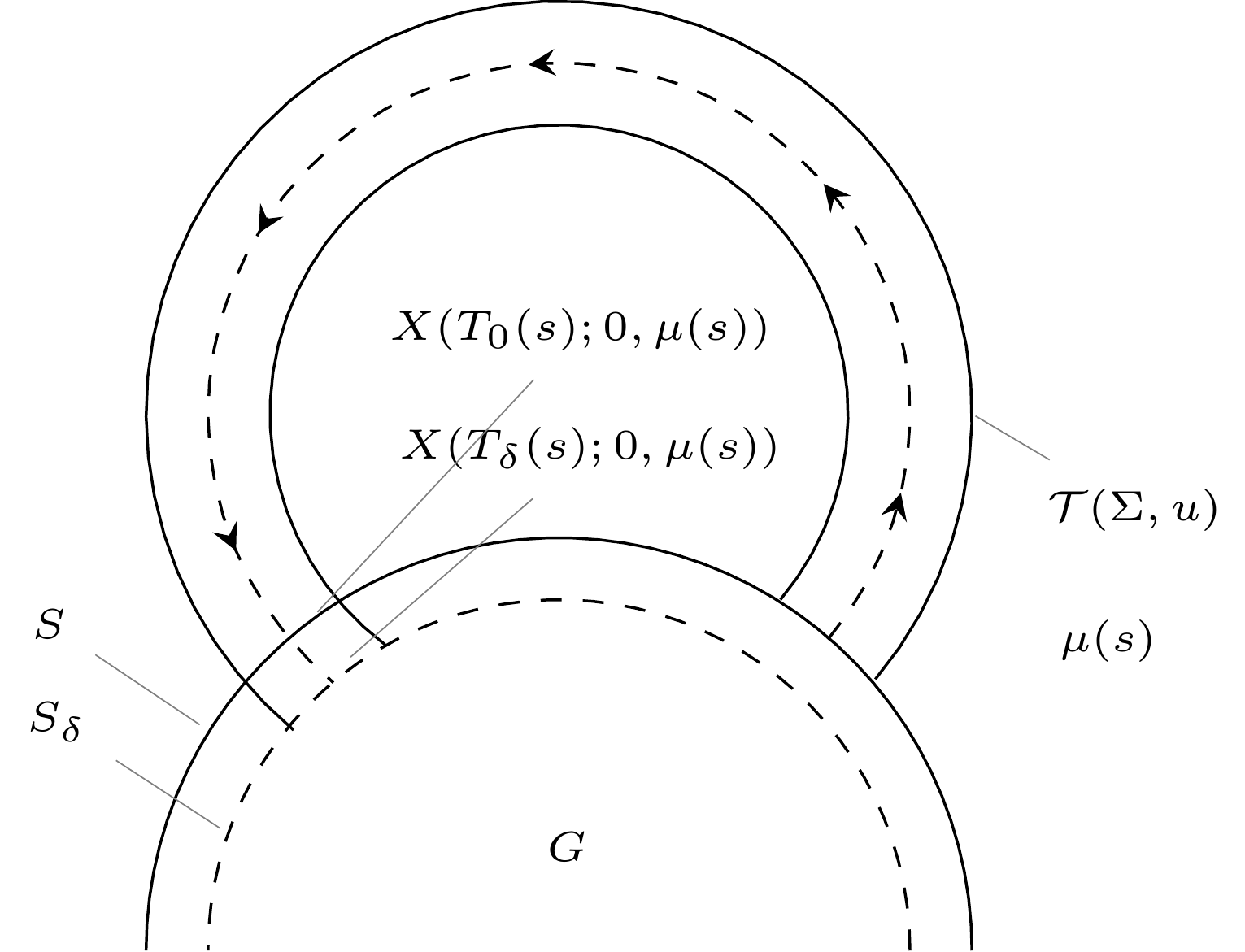}
\caption{$(\rho_0,T,\delta)$-stream tube of $u$.}
\label{fig:Fig02}
\end{figure}

Since a stream tube consists of integral curves, it is elementary that the diameter of a $(\rho_0,T,\delta)$-stream tube is bounded in terms of the sup norm of the vector field, the flow time~$T$ and the diameter at time~0 as
\begin{equation}\label{diametro.tuboflujo.form}
\mbox{diam}(\mathcal{T}(\Sigma,u))\leq T\Vert u\Vert_{ C^0(\Omega)}+\mbox{diam}(\Sigma).
\end{equation}
(A detailed proof of this can be found in \cite[Lemma 4.6]{Kaiser}).
In a similar way, \cite[Lemma 4.7]{Kaiser} provides a criterion to obtain  ``almost'' $(\rho_0,T,\delta)$-stream tubes for velocity fields which are ``close enough'' to any other given velocity field enjoying this kind of stream tubes. This merely asserts that, as is well known, a $C^0$-small perturbation of the initial vector field will not prevent the integral curves of the perturbed field from intersecting a surface to which the initial flow was transverse. This can be quantitatively written as follows:

\begin{lem}\label{TuboFlujoRecurr.Perturb.lem}
Let $G,\Sigma,\mu$ verify (\ref{GSigmaMu.hipot}) and consider $u_1,u_2\in C^{k+1,\alpha}(\overline{\Omega},\RR^3)$. Define $\mathcal{T}_i:=\mathcal{T}(\Sigma,u_i)$ its stream tubes emanating from $\Sigma$ and assume that $\mathcal{T}_1$ is a $(\rho_0,T,\delta)$-stream tube of $u_1$ and
\begin{enumerate}
\item $u_1\cdot \eta=u_2\cdot \eta$ on $\Sigma$.
\item $u_1$ and $u_2$ are ``close enough'' in $ C^0(\Omega)$ norm. Specifically, assume
$$\Vert u_1-u_2\Vert_{ C^0(\Omega)}< 2\frac{(1-\theta)\delta}{C_\mathcal{P}T}e^{-\frac{1}{2}C_\mathcal{P}T\Vert u_1\Vert_{ C^1(\Omega)}},$$
for some $0<\theta<1$.
\end{enumerate}
Then, $\mathcal{T}_2$ is also a $(\rho_0,T,\theta\delta)$-stream tube of $u_2$.
\end{lem}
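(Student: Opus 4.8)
The plan is to reduce everything to a statement about integral curves of the extended vector field $\overline{u}_2=\mathcal{P}(u_2)$ and to compare it with the integral curves of $\overline{u}_1=\mathcal{P}(u_1)$ using a Gronwall estimate. First I would fix a point $s\in D$ and consider the two characteristic trajectories $X_i(t):=X(t;0,\mu(s))$ associated with $\overline{u}_i$, $i=1,2$, starting from the same point $\mu(s)\in\Sigma$. Since by hypothesis (1) $u_1\cdot\eta=u_2\cdot\eta\ge\rho_0$ on $\Sigma$, both trajectories point into $\Omega$ at $t=0$, so Proposition~\ref{TuboFlujo.parametr.pro} applies to $u_2$ as well and $T(\mu(s))>0$; this already gives the condition ``$u_2\cdot\eta\ge\rho_0$ on $\Sigma$'' in the definition of a $(\rho_0,T,\theta\delta)$-stream tube. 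The core of the argument is then to control $|X_1(t)-X_2(t)|$ for $0\le t\le T$ and to deduce that the trajectory $X_2$ must still cross both $S$ and $S_{\theta\delta}$ before time $T/2$, because $X_1$ crosses $S$ and $S_\delta$ before time $T/2$ and the two trajectories stay within distance $(1-\theta)\delta$ of each other.

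For the Gronwall step I would write $X_1(t)-X_2(t)=\int_0^t\big(\overline{u}_1(X_1(\tau))-\overline{u}_2(X_2(\tau))\big)\,d\tau$ and split the integrand as $\big(\overline{u}_1(X_1(\tau))-\overline{u}_1(X_2(\tau))\big)+\big(\overline{u}_1(X_2(\tau))-\overline{u}_2(X_2(\tau))\big)$. The first difference is bounded by $\|\overline{u}_1\|_{C^1(\RR^3)}\,|X_1(\tau)-X_2(\tau)|$, and by Proposition~\ref{ExtensionHolder.pro}(4) (continuity of $\mathcal{P}$ in the $C^1$ topology) we have $\|\overline{u}_1\|_{C^1(\RR^3)}\le C_\mathcal{P}\|u_1\|_{C^1(\Omega)}$. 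The second difference is bounded by $\|\overline{u}_1-\overline{u}_2\|_{C^0(\RR^3)}\le C_\mathcal{P}\|u_1-u_2\|_{C^0(\Omega)}$, again by Proposition~\ref{ExtensionHolder.pro}(4). Applying Gronwall's lemma yields, for $0\le t\le T$,
\[
|X_1(t)-X_2(t)|\le C_\mathcal{P}\,\|u_1-u_2\|_{C^0(\Omega)}\,\frac{e^{C_\mathcal{P}\|u_1\|_{C^1(\Omega)}\,t}-1}{C_\mathcal{P}\|u_1\|_{C^1(\Omega)}}\le C_\mathcal{P}\,T\,\|u_1-u_2\|_{C^0(\Omega)}\,e^{\frac{1}{2}C_\mathcal{P}T\|u_1\|_{C^1(\Omega)}} ,
\]
where in the last inequality I used $t\le T$ together with $e^{x}-1\le x e^{x/2}\cdot(\text{something})$; more simply one bounds $t\le T$ and uses monotonicity after splitting $[0,T]=[0,T/2]\cup[T/2,T]$, which suffices because all the crossings of interest happen before time $T/2$. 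By hypothesis (2), the right-hand side is $<2(1-\theta)\delta$; in fact it is strictly less than $(1-\theta)\delta$ once the factor-of-two slack in (2) is used to absorb the crude bound $e^{x}-1\le 2\big(e^{x/2}\big)^{?}$ — I would state the elementary inequality precisely here. The upshot is $|X_1(t)-X_2(t)|<(1-\theta)\delta$ for all $t\in[0,T]$ (or at least on $[0,T/2]$, which is all we need).

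It remains to extract the two required return times for $X_2$. Since $\mathcal{T}_1$ is a $(\rho_0,T,\delta)$-stream tube of $u_1$, there are times $0<T_0^{(1)}(s),T_\delta^{(1)}(s)<T/2$ with $X_1(T_0^{(1)}(s))\in S$ and $X_1(T_\delta^{(1)}(s))\in S_\delta$, i.e. $X_1(T_\delta^{(1)}(s))\in G_\delta$, so $\mathrm{dist}(X_1(T_\delta^{(1)}(s)),S)>\delta$. By the distance estimate just proved, $\mathrm{dist}(X_2(T_\delta^{(1)}(s)),S)>\delta-(1-\theta)\delta=\theta\delta$, hence $X_2(T_\delta^{(1)}(s))\in G_{\theta\delta}$. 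On the other hand $X_2(0)=\mu(s)\in\Sigma\subseteq S$ lies on the boundary of $\Omega$ and, arguing as in Proposition~\ref{TuboFlujo.parametr.pro}, $X_2$ leaves $\overline{G}$ immediately after $t=0$ and then must re-enter $G$ before reaching the depth $\theta\delta$; by continuity of $t\mapsto\mathrm{dist}(X_2(t),S)$ on the connected interval $[0,T_\delta^{(1)}(s)]$ there is a first time $T_0^{(2)}(s)\in(0,T_\delta^{(1)}(s))\subseteq(0,T/2)$ at which $X_2(T_0^{(2)}(s))\in S$, and then, continuing until $t=T_\delta^{(1)}(s)$ where the distance exceeds $\theta\delta$, an intermediate-value argument gives a time $T_{\theta\delta}^{(2)}(s)\in(T_0^{(2)}(s),T_\delta^{(1)}(s))\subseteq(0,T/2)$ with $X_2(T_{\theta\delta}^{(2)}(s))\in S_{\theta\delta}$. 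This verifies both bullet points in the definition, so $\mathcal{T}_2$ is a $(\rho_0,T,\theta\delta)$-stream tube of $u_2$, completing the proof.

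The main obstacle is purely bookkeeping: one has to be careful that the two return times obtained for $X_2$ are genuinely ordered ($X_2$ hits $S$ before it reaches the slab $G_{\theta\delta}$) and that both are below $T/2$, which is why it is cleanest to locate them inside the interval $[0,T_\delta^{(1)}(s)]$ where the $u_1$-trajectory already does its excursion, rather than trying to track $X_2$ independently. Everything else — the Gronwall comparison, the reduction to the extended field, the intermediate-value arguments for surface crossings — is routine, and the only quantitative input is the constant in hypothesis (2), which is designed exactly so that the distance between corresponding points of $\mathcal{T}_1$ and $\mathcal{T}_2$ never exceeds $(1-\theta)\delta$.
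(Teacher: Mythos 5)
Your approach is the natural one and is essentially correct; note that the paper does not prove this lemma at all (it simply cites \cite[Lemma 4.7]{Kaiser}), so there is no ``paper proof'' to compare against, but what you have written is precisely the quantitative Gronwall/intermediate-value argument that the reference is alluding to and that the constant in hypothesis~(2) is designed for. One piece of bookkeeping in the displayed chain is off and should be repaired. The second inequality in your display,
\begin{equation*}
\frac{e^{C_\mathcal{P}\|u_1\|_{C^1}\,t}-1}{C_\mathcal{P}\|u_1\|_{C^1}}\;\le\; T\,e^{\tfrac12 C_\mathcal{P}T\|u_1\|_{C^1}}\qquad\text{for }0\le t\le T,
\end{equation*}
is false at $t=T$ (setting $x=C_\mathcal{P}T\|u_1\|_{C^1}$, it would require $(e^x-1)/x\le e^{x/2}$, i.e.\ $\sinh(x/2)\le x/2$, which fails for all $x>0$). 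You yourself observe in the next sentence that one should restrict to $[0,T/2]$, and that is exactly how the constant works out: for $0\le t\le T/2$, using $e^y-1\le ye^y$ gives
\begin{equation*}
|X_1(t)-X_2(t)|\le C_\mathcal{P}\|u_1-u_2\|_{C^0(\Omega)}\cdot\frac{T}{2}\cdot e^{\tfrac12 C_\mathcal{P}T\|u_1\|_{C^1(\Omega)}},
\end{equation*}
and hypothesis~(2) then yields $|X_1(t)-X_2(t)|<(1-\theta)\delta$ with no leftover slack to absorb. So the ``factor-of-two'' in~(2) compensates the factor $T/2$, not the inequality $e^x-1\le xe^x$; the displayed line and the accompanying discussion should be rewritten with $T/2$ in place of $T$ throughout. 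A second, smaller slip: since $X_1(T_\delta^{(1)}(s))\in S_\delta=\partial G_\delta$, one has $\mathrm{dist}(X_1(T_\delta^{(1)}(s)),S)=\delta$, not $>\delta$; the conclusion $\mathrm{dist}(X_2(T_\delta^{(1)}(s)),S)>\theta\delta$ still follows because the Gronwall bound is a strict inequality. Everything else --- the reduction to the extended fields via Proposition~\ref{ExtensionHolder.pro}, the observation that hypothesis~(1) gives the transversality condition for $u_2$ on $\Sigma$ for free, and the intermediate-value argument locating the two return times of $X_2$ inside $(0,T_\delta^{(1)}(s))\subset(0,T/2)$ --- is correct and complete.
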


\subsection{Iterative scheme} In this section we discuss the iterative scheme that is typically used in the literature to obtain nonlinear force-free fields in the magnetohydrodynamical setting as small perturbations of harmonic fields: the \textit{Grad--Rubin iterative method} (see, the review \cite{WiegelmannSakurai}). It goes back to 1958, when it was originally proposed by H. Grad and H. Rubin in connection with applications to plasma physics.  A numerical implementation in the context of coronal magnetic fields is introduced in \cite{Amari2}, where the Grad--Rubin method was obtained through the decomposition of the Beltrami equation with small proportionality factor $f$ into a hyperbolic part, which transports the proportionality factor $f$ along the magnetic field lines, and an elliptic one, to correct the magnetic field step by step using Ampere's law. 

From a mathematical point of view, this method was used in \cite{Bineau} to obtain small perturbations of harmonic fields in bounded domains, leading to generalized Beltrami fields with small non-constant proportionality factors. 

Let us assume that $G,\Sigma,\mu$ satisfies the hypotheses (\ref{GSigmaMu.hipot}) and consider any initial harmonic vector field $u_0$ in the exterior domain $\Omega$, i.e., a solution to the equation
\begin{equation}\label{CampoArmonico.form}
\left\{\begin{array}{ll}
\curl u_0=0, & x\in \Omega,\\
\divop u_0=0, & x\in \Omega,\\
\vert u_0(x)\vert\leq \frac{C}{\vert x\vert^{\rho-1}}, & x\in\Omega,
\end{array}\right.
\end{equation}
where $\rho\in(1,3)$ is a parameter which controls the decay at infinity of the vector field $u_0$. Dynamically, we will assume that $u_0$ has a tube that starts and ends on the boundary, i.e., that the field $u_0$ has a $(\rho_0,T,\delta)$-stream tube.

We want to known whether there exists a Beltrami field $u$ with ``small'' proportionality factor $f$ such that $u$ (resp.\ $f$) is close to $u_0$ (resp. $0$) and satisfies the exterior Neumann problem
\begin{equation}\label{BeltramiPerturbacionArmonico.form}
\left\{\begin{array}{ll}
\curl u=fu, & x\in \Omega,\\
\divop u=0, & x\in \Omega,\\
u\cdot \eta=u_0\cdot \eta, & x\in S,\\
\vert u(x)\vert\leq \frac{C}{\vert x\vert^{\rho-1}}, & x\in \Omega.
\end{array}\right.
\end{equation}
To solve this problem, the Grad--Rubin method analyzes the following iterative scheme:
\begin{equation}\label{BeltramiPerturbacionArmonico.EsquemaIterativo.form}
\left\{\begin{array}{ll}
\curl u_{n+1}=f_n u_n, & x\in \Omega,\\
\divop u_{n+1}=0, & x\in\Omega,\\
u_{n+1}\cdot \eta=u_0\cdot \eta, & x\in S,\\
\vert u_{n+1}(x)\vert\leq \frac{C}{\vert x\vert^{\rho-1}}, & x\in \Omega,
\end{array}\right.\hspace{1.5cm}\left\{\begin{array}{ll}
\nabla f_n\cdot u_n=0, & x\in \Omega,\\
f_n =f^0, & x\in \Sigma.
\end{array}\right.
\end{equation}

The strong convergence to a solution of (\ref{BeltramiPerturbacionArmonico.form}) was rigorously proved in \cite{Kaiser} for small enough prescriptions $f^0$ of the proportionality factor on $\Sigma$ and under the assumption that the field $u_0$ has a $(\rho_0,T,\delta)$-stream tube by providing the a priori bounds that were missing in \cite{Bineau}. The way to go is the following. First, it is easy to solve the steady transport equation in the right hand side when the stream tube of $u_n$ emanating from the open subset $\Sigma$ is a $(\rho_0,T,\delta)$-stream tube and the prescription $f^0$ of the proportionality factor has compact support inside of $\Sigma$. The transportation of $f^0$ along this stream tube leads to the existence and uniqueness of $f_n$ in $C^{1,\alpha}(\overline{\Omega})$ whenever $u_n$ lies in $C^{1,\alpha}(\overline{\Omega},\RR^3)$. Second, in order to solve the exterior inhomogeneous Neumann problem for the div-curl equation, 
a boundary integral equation method for harmonic fields was studied in \cite{Neudert}. Roughly speaking, the existence of solution to the Neumann problem was solved by means of the Helmholtz--Hodge's decomposition theorem for vector fields with suitable decay at infinity, splitting them into harmonic volume and single layer potentials depending on its divergence, its curl and its normal and tangential component. Although the tangential component $\eta\times u_+$ is not prescribed in the boundary data, it can be obtained from a boundary integral equation which can be deduced from \textit{jump relations} for the derivatives of harmonic single layer potentials (see \cite[Theorem 14.IV]{Miranda1}, \cite[Teorema 2.II]{Miranda2} or \cite[Theorem 2.17]{ColtonKress} for a proof of jump relation and \cite{Neudert,vonWahl} for a detailed study of the associated boundary integral equation). H\"{o}lder estimates for the solution follow from a careful study of the harmonic volume and single layer potentials in the decomposition \cite{ColtonKress,Heinemann,Miranda1,Miranda2}. Although only $C^{0,\alpha}$ proportionality factors and $C^{1,\alpha}$ magnetic fields were considered \cite{Kaiser}, a $C^{k,\alpha}$ theory can be obtained after studying higher order derivates of potentials. The only hypotheses that the data must satisfy to ensure the well-posedness of the inhomogeneous div-curl problems are:
\begin{itemize}
\item The inhomogeneous term $f_nu_n$ in the equation for $\curl u_{n+1}$ must have ``zero flux'' in $\Omega$. Specifically, the flux of $f_n u_n$ across any closed surface in $\Omega$ must vanish or equivalently $f_n u_n$ must be divergence-free in $\Omega$ and its flux across the surface $S$ has to be zero. This hypothesis can be deduced in each step from the corresponding hypothesis in the preceding step. As it is verified in the step $n=0$ because $u_0$ is divergence-free and $f_0$ verifies the steady transport equation, these hypothesis are satisfied for every step $n\in\NN$ in the iteration.
\item The right hand sides of the equations for $\divop u_{n+1}$ and $\curl u_{n+1}$ must decay at infinity as $\vert x\vert^{-\rho}$. This is easily checked since $f_n$ has compact support.
\end{itemize}

A natural question is to ascertain whether these results can be adapted to get perturbations of strong Beltrami fields with any constant proportionality factor $\lambda\neq 0$. We devote now some lines to explain why a direct application of the same Grad--Rubin method cannot work. Assume that $u_0$ is a strong Beltrami field with constant proportionality factor in the exterior domain $\Omega$. We will restrict ourselves to strong Beltrami fields $u_0$ with optimal decay at infinity, say $\vert x\vert^{-1}$ (see \cite{Enciso2,Nadirashvili} and compare with the $\vert x\vert^{-2}$ decay in the harmonic case).
\begin{equation}\label{CampoBeltramiFuerte.form}
\left\{\begin{array}{ll}
\curl u_0=\lambda u_0, & x\in \Omega,\\
\divop u_0=0, & x\in \Omega,\\
\vert u_0(x)\vert\leq \frac{C}{\vert x\vert}, & x\in\Omega.
\end{array}\right.
\end{equation}

Now, we would like to solve the next problem
\begin{equation}\label{BeltramiPerturbacion.form}
\left\{\begin{array}{ll}
\curl u=(\lambda+\varphi)u, & x\in \Omega,\\
\divop u=0, & x\in \Omega,\\
u\cdot \eta=u_0\cdot \eta, & x\in S,\\
\vert u(x)\vert\leq \frac{C}{\vert x\vert}, & x\in \Omega,
\end{array}\right.
\end{equation}
where $\varphi$ is a ``small'' perturbation of the constant proportionality factor $\lambda$. The same ideas as above lead to the next modification of the classical Grad--Rubin iterative method
\begin{equation*}
\left\{\begin{array}{ll}
\curl u_{n+1}=(\lambda+\varphi_n) u_n, & x\in \Omega,\\
\divop u_{n+1}=0, & x\in\Omega,\\
u_{n+1}\cdot \eta=u_0\cdot \eta, & x\in S,\\
\vert u_{n+1}(x)\vert\leq \frac{C}{\vert x\vert}, & x\in \Omega,
\end{array}\right.\hspace{1.5cm}\left\{\begin{array}{ll}
\nabla \varphi_n\cdot u_n=0, & x\in \Omega,\\
\varphi_n =\varphi^0, & x\in \Sigma.
\end{array}\right.
\end{equation*}
Although the steady transport system in the right hand side can be solved in exactly the same way, we cannot ensure now the previous hypothesis leading to the well-posedness of the exterior inhomogeneous div-curl problems. On the one hand, the vanishing flux hypothesis follows from the same argument as above. On the other hand, $(\lambda+\varphi_0)u_0$ should decay as $\vert x\vert^{-2}$ in order for the solution $u_1$ to decay as $\vert x\vert^{-1}$. Unfortunately, it is not possible when $\lambda\neq 0$ because $u_0$ has optimal decay $\vert x\vert^{-1}$.

To solve this problem, we move the term $\lambda u$ in the equation for $\curl u$ from the inhomogeneous side, to the homogeneous one arriving at
\begin{equation*}
\left\{\begin{array}{ll}
\curl u_{n+1}-\lambda u_{n+1}=\varphi_n u_n, & x\in \Omega,\\
\divop u_{n+1}=0, & x\in\Omega,\\
u_{n+1}\cdot \eta=u_0\cdot \eta, & x\in S,\\
\vert u_{n+1}(x)\vert\leq \frac{C}{\vert x\vert}, & x\in \Omega,
\end{array}\right.\hspace{1.5cm}\left\{\begin{array}{ll}
\nabla \varphi_n\cdot u_n=0, & x\in \Omega,\\
\varphi_n =\varphi^0, & x\in \Sigma.
\end{array}\right.
\end{equation*}
However, as it will be shown in the next section, the exterior inhomogeneous Beltrami equation for divergence-free vector fields is an overdetermined system in general. Notice that when one computes the divergence in the first equation and assumes $\lambda\neq 0$, one recovers $\divop u_{n+1}$ from the first equation
$$
\divop u_{n+1}=-\frac{1}{\lambda}\divop(\varphi_nu_n).
$$
Therefore, it is an easy task to check that as soon as $u_0$ is divergence-free and $\varphi_n$ is a fist integral of $u_n$, then $u_{n+1}$ is also divergence-free in each step of the iteration. Consequently, we can simplify the previous overdetermined systems by removing the divergence-free conditions.
\begin{equation}\label{BeltramiPerturbacion.EsquemaIterativo.fom}
\left\{\begin{array}{ll}
\curl u_{n+1}-\lambda u_{n+1}=\varphi_n u_n, & x\in \Omega,\\
u_{n+1}\cdot \eta=u_0\cdot \eta, & x\in S,\\
\vert u_{n+1}(x)\vert\leq \frac{C}{\vert x\vert}, & x\in \Omega,
\end{array}\right.\hspace{1.5cm}\left\{\begin{array}{ll}
\nabla \varphi_n\cdot u_n=0, & x\in \Omega,\\
\varphi_n =\varphi^0, & x\in \Sigma.
\end{array}\right.
\end{equation}
The stationary problem along a $(\rho_0,T,\delta)$-stream tube of $u_n$ in the right hand side of (\ref{BeltramiPerturbacion.EsquemaIterativo.fom}) will be studied in the $C^{k+1,\alpha}$ setting in the next section. The inhomogeneous Beltrami equations in the left hand side was studied in the preceding Section \ref{Beltrami.NoHomogenea.Seccion} through the analysis of complex-valued solutions satisfying both the $L^1$ decay condition (\ref{CondCaidaBeltrami.Intro.form}) and the $L^1$ SMB radiation condition (\ref{CondicionRadiacion.Beltrami.L1SilverMullerBeltrami.Intro.form}). Consequently, we arrive at the \textit{modified Grad--Rubin iterative method} discussed in the Introduction (Equation (\ref{paso.limite.esquemaiterativo.Intro.form})).

\subsection{Linear transport problem}\label{Problema.TransporteLineal.Subseccion}
We begin with the steady transport equations along $(\rho_0,T,\delta)$-stream tubes in the right hand side of (\ref{paso.limite.esquemaiterativo.Intro.form}). The main idea to find a solution is to transport $\varphi^0$ along the foliated stream tube and to check that this definition leads to regular enough factors $f_n$ of $u_n$ due to the regularity of the tube. 

\begin{theo}\label{problematransporte.teo}
Let $G,\Sigma,\mu$ satisfy the hypotheses (\ref{GSigmaMu.hipot}), consider any $u\in C^{k+1,\alpha}(\overline{\Omega},\RR^3)$ such that $\mathcal{T}(\Sigma,u)$ is a $(\rho_0,T,\delta)$-stream tube of such a velocity field and assume that $\varphi^0\in C^{k+1,\alpha}_c(\Sigma)$. Consider the first integral equation associated with $u$
\begin{equation}\label{esquemaiter.transportevarphi.eq}
\left\{\begin{array}{ll}
\displaystyle u\cdot \nabla \varphi=0 & \mbox{ in }\Omega\\
\displaystyle \varphi=\varphi^0 & \mbox{ on }\Sigma.
\end{array}\right.
\end{equation}
Then, there exists an unique solution $\varphi$ along $\mathcal{T}(\Sigma,u)$, its support lies in the closure of $\mathcal{T}(\Sigma,u)$ and it can be extended to a global solution in $\Omega$ with zero value outside $\mathcal{T}(\Sigma,u)$. Moreover, it belongs to $C^{k+1,\alpha}(\overline{\Omega})$ and the estimate 
$$
\Vert \varphi\Vert_{C^{k+1,\alpha}(\Omega)}\leq \Vert \varphi^0\Vert_{C^{k+1,\alpha}(\Sigma)}\,\kappa\left(\Vert u\Vert_{C^{k+1,\alpha}(\Omega)},T\right)
$$
holds, for some continuous function $\kappa:\RR_0^+\times\RR_0^+\longrightarrow\RR_0^+$.
\end{theo}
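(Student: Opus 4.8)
The plan is to construct $\varphi$ explicitly by transporting $\varphi^0$ along the foliation of the stream tube provided by Proposition~\ref{TuboFlujo.parametr.pro}, and then to verify the claimed regularity and estimate by pulling back to the parameter domain $\mathcal{D}(\Sigma,u)$ where the transport equation trivializes. Concretely, recall that $\phi(t,s)=X(t;0,\mu(s))$ is a $C^{k+1}$ diffeomorphism from $\mathcal{D}(\Sigma,u)$ onto $\mathcal{T}(\Sigma,u)$ with $\jac(\phi)^{\pm1}\in C^{k,\alpha}$ locally in $t$, with the quantitative bounds of Proposition~\ref{TuboFlujo.parametr.pro}(iv). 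A function $\varphi$ on $\mathcal{T}(\Sigma,u)$ satisfies $u\cdot\nabla\varphi=0$ iff $\varphi\circ\phi$ is independent of $t$, since $\frac{d}{dt}(\varphi\circ\phi)(t,s)=u(\phi(t,s))\cdot\nabla\varphi(\phi(t,s))$. Hence the unique solution with boundary value $\varphi^0$ on $\Sigma=\{t=0\}$ is forced to be $\varphi(\phi(t,s)):=\varphi^0(\mu(s))$, i.e.
$$
\varphi(x):=\varphi^0\bigl(\mu(\pi_2(\phi^{-1}(x)))\bigr),\qquad x\in\mathcal{T}(\Sigma,u),
$$
where $\pi_2$ is the projection onto the $s$-coordinates, and $\varphi:=0$ on $\Omega\setminus\mathcal{T}(\Sigma,u)$. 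Uniqueness along the tube is immediate from the ODE argument above; well-definedness of the extension by zero needs a short argument, which I address below.

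Next I would check the regularity and the estimate. On the open set $\mathcal{T}(\Sigma,u)$ the function $\varphi$ is a composition of $\varphi^0\in C^{k+1,\alpha}_c(\Sigma)$ (pulled back by the $C^{k+1,\alpha}$ chart $\mu$, hence a $C^{k+1,\alpha}$ function of $s$) with the $s$-component of $\phi^{-1}$. Although $\phi^{-1}$ is only asserted to have $C^{k,\alpha}$ Jacobian, one more derivative is gained here because $\varphi$ does not depend on $t$: writing $\varphi\circ\phi(t,s)=\varphi^0(\mu(s))$, all $t$-derivatives vanish and the $s$-derivatives up to order $k+1$ are bounded in terms of $\Vert\varphi^0\Vert_{C^{k+1,\alpha}(\Sigma)}$ and $\Vert\jac(\phi^{-1})\Vert_{C^{k,\alpha}}$ by the Faà~di~Bruno / chain rule expansion (the top-order term involving $D^{k+1}$ of the coordinate change is multiplied by $\varphi^0$ and controlled by the $C^{k,\alpha}$ bound on $\jac(\phi)^{-1}$, while $\alpha$-H\"older continuity at top order likewise follows). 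Plugging the bounds of Proposition~\ref{TuboFlujo.parametr.pro}(iv) for $\jac(\phi)^{\pm1}$ on $\mathcal{D}(\Sigma,u,T)$ — which cover the whole tube since it is a $(\rho_0,T,\delta)$-stream tube, so all relevant times are $<T$ — yields
$$
\Vert\varphi\Vert_{C^{k+1,\alpha}(\mathcal{T}(\Sigma,u))}\leq \Vert\varphi^0\Vert_{C^{k+1,\alpha}(\Sigma)}\,\kappa\bigl(\Vert u\Vert_{C^{k+1,\alpha}(\Omega)},T\bigr)
$$
for a suitable continuous separately-increasing $\kappa$.

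The main obstacle is to show that the extension of $\varphi$ by zero across the lateral boundary of the stream tube is again of class $C^{k+1,\alpha}$ on all of $\overline{\Omega}$, rather than merely piecewise smooth. Here the hypothesis $\varphi^0\in C^{k+1,\alpha}_c(\Sigma)$ is essential: $\supp\varphi^0$ is a compact subset $K\Subset\Sigma$, so $\varphi$ is supported in the closed sub-tube $\phi(\{(t,s):\mu(s)\in K\})$, which is a compact set whose closure stays away from the lateral boundary $\phi(\partial D\times\cdot)$. Thus near any point of $\partial(\mathcal{T}(\Sigma,u))$ that is not an endpoint on $S$, $\varphi$ vanishes in a neighborhood, so there is no gluing issue there. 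At the two ends, where the tube meets $S$ and $S_\delta$, the sub-tube through $K$ detaches transversally (by the $(\rho_0,T,\delta)$ condition, $u\cdot\eta\geq\rho_0>0$ on $\Sigma$ and the flow returns transversally), so again a neighborhood argument applies after extending via $\phi$ slightly beyond $t=0$ and beyond the return time — using that by Proposition~\ref{FlujoHolder.pro} the extended flow $X(t;0,\cdot)$ of $\mathcal{P}(u)$ is a global $C^{k+1}$ diffeomorphism defined for all $t$, so $\varphi$ can be defined in a full open neighborhood of the closed sub-tube and is $C^{k+1,\alpha}$ there, vanishing identically outside. Stitching this local description together with the identically-zero definition on the rest of $\Omega$ gives $\varphi\in C^{k+1,\alpha}(\overline{\Omega})$ with support in $\overline{\mathcal{T}(\Sigma,u)}$, completing the proof. (That $\varphi$ so constructed indeed solves $u\cdot\nabla\varphi=0$ on all of $\Omega$ is clear: on $\mathcal{T}(\Sigma,u)$ by construction, and on the interior of its complement trivially, while on the common boundary both one-sided expressions vanish.)
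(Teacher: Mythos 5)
Your proposal matches the paper's own proof: transport $\varphi^0$ along the $\phi$-foliation from Proposition~\ref{TuboFlujo.parametr.pro}, establish uniqueness via the ODE along stream lines, extend by zero, and get the $C^{k+1,\alpha}$ estimate via Fa\`a~di~Bruno together with the $C^{k,\alpha}$ bounds on $\jac(\phi)^{\pm 1}$. Two small remarks: the framing ``one more derivative is gained because $\varphi$ does not depend on $t$'' is an unnecessary detour --- the $C^{k,\alpha}$ bound on $\jac(\phi^{-1})$ already means $s(x)=\pi_2(\phi^{-1}(x))$ is $C^{k+1,\alpha}$, so the composition with $\varphi^0\circ\mu\in C^{k+1,\alpha}$ is directly $C^{k+1,\alpha}$; and in the top-order term of the chain-rule expansion the $(k+1)$-th derivative of $s$ is multiplied by $D(\varphi^0\circ\mu)$ evaluated at $s(x)$, not by $\varphi^0$ itself --- but neither point affects the validity of the argument, and your treatment of the extension by zero is in fact more explicit than the paper's.
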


\begin{proof}
Let us now sketch the proof of this result, which can be found in \cite[Lemmas 4.8, 4.9 and 5.2]{Kaiser} for $k=0$. 
 Define the Calder\'on extension of $u$, $\overline{u}:=\mathcal{P}(u)$, according to Proposition \ref{ExtensionHolder.pro} and denote its flux mapping by $X(t;t_0,x_0)$. First, let us prove the uniqueness part of our assertion. Notice that as long as $\varphi$ is a smooth first integral of $u$, then
$$\frac{d}{dt}\varphi(X(t;0,\mu(s)))=(\overline{u}\cdot \nabla\varphi)(X(t;0,\mu(s)))=(u\cdot \nabla\varphi)(X(t;0,\mu(s)))=0,$$
for every $(t,s)\in\mathcal{D}(\Sigma,u)$. Therefore, $\varphi(x)=\varphi^0(\mu(s(x)))$ for every $x\in \mathcal{T}(\Sigma,u)$, where $(t(x),s(x))=\phi^{-1}(x)$. Second, regarding the existence assertion, the previous formula for $\varphi$ defines a smooth function in $\mathcal{T}(\Sigma,u)$ (by virtue of the bijectivity and regularity of the parametrization $\phi$ in Proposition \ref{FlujoHolder.pro}) which obviously solves (\ref{esquemaiter.transportevarphi.eq}) along the stream tube. Furthermore, with the exception of the endpoints, it is compactly supported in the interior of the tube. The extension of $\varphi$ by zero outside the tube yields a global smooth solution of (\ref{esquemaiter.transportevarphi.eq}) in $\Omega$. 

To show the bound for $\Vert \varphi\Vert_{C^{k+1,\alpha}(\Omega)}$ (equivalently for $\Vert \varphi\Vert_{C^{k+1,\alpha}(\mathcal{T}(\Sigma,u))}$), let us fix any multi-index $\gamma=(\gamma_1,\gamma_2,\gamma_3)$ such that $\vert \gamma\vert\leq k+1$ and note that
$$D^\gamma\varphi(x)=\gamma!\sum_{(l,\beta,\delta)\in \mathcal{D}(\gamma)}(D^\delta(\varphi^0\circ\mu))(s(x))\prod_{r=1}^l\frac{1}{\delta_r!}\left(\frac{1}{\beta_r!}D^{\beta_r}s(x)\right)^{\delta_r}.$$
for every $x\in \mathcal{T}(\Sigma,u)$. First of all, it is necessary to know how to handle $D^{\beta_r}s(x)$. To this end, note that $\jac(\phi^{-1})(x)=\jac(\phi)^{-1}(\phi^{-1}(x))$, so
$$D^\rho(\jac(\phi^{-1})_{i,j})(x)=\sum_{n=1}^{n_\rho}\prod_{\substack{\beta\in \Gamma_n\\
1\leq p,q\leq 3}}A_{n,p,q}^{i,j}(\rho,\beta)(D^\beta(\jac(\phi)^{-1}_{p,q}))(\phi^{-1}(x)),$$
for every multi-index $\rho$ such that $\vert \rho\vert\leq k$. Here, $A_{n,p,q}^{i,j}(\rho,\beta)$ stand for constant coefficients and $\Gamma_n$ is a set of $3$-multi-indices of order at most $\vert \rho\vert\leq k$. Expanding the products of sums by distributivity, each term in $D^\gamma\varphi$ takes the form
$$(D^\delta(\varphi^0\circ\mu))(s(x))\prod_{\substack{\beta\in \Gamma\\
1\leq p,q\leq 3}}B_{p,q}^{i,j}(\gamma,\beta)(D^\beta(\jac(\phi)^{-1}_{p,q}))(\phi^{-1}(x)),$$
where $\Gamma$ is a set of multi-indices with degree at most $k$. The first factor can be bounded by $\Vert \varphi^0\Vert_{C^{k+1,\alpha}(\Sigma)}$ whilst the terms in the second factor are bounded by $\kappa(\Vert u\Vert_{C^{k+1,\alpha}(\Omega)},T)$ as stated in Proposition \ref{FlujoHolder.pro}. Hence, it is clear that
$$\Vert \varphi\Vert_{C^{k+1}(\Omega)}\leq \Vert\varphi^0\Vert_{C^{k+1,\alpha}(\Sigma)}\kappa(\Vert u\Vert_{C^{k+1,\alpha}(\Omega)},T).$$

Finally, for any multi-index with maximum order $k+1$, the $\alpha$-H\"{o}lder seminorm of $D^\gamma \varphi$ can be estimated as follows. Take $x_1,x_2\in \mathcal{T}(\Sigma,u)$ and appropriately add and subtract the crossed terms. Since $D^\delta(\varphi^0\circ\mu)$ is bounded by $\Vert \varphi^0\Vert_{C^{k+1,\alpha}(\Sigma)}$ and $D^\beta(\jac(\phi)^{-1}_{pq})$ is bounded by $\kappa(\Vert u\Vert_{C^{k+1,\alpha}(\Omega)},T)$, then it only remains to obtain estimates for
\begin{align*}
I:&=\left.(D^\delta(\varphi^0\circ\mu))(s(x))\right\vert_{x_1}^{x_2},\\
II:&= \left.(D^\beta(\jac(\phi)^{-1}_{p,q}))(\phi^{-1}(x))\right\vert_{x_1}^{x_2}.
\end{align*}
First, we distinguish the cases $\vert \delta\vert<k+1$ and $\vert\delta\vert=k+1$. In the former case, the mean value theorem, the estimates in Proposition \ref{FlujoHolder.pro} for $\jac(\phi)^{-1}$ and the estimate (\ref{diametro.tuboflujo.form}) of the diameter of the stream tube $\mathcal{T}(\Sigma,u)$ yield the upper bound
\begin{align*}
I&\leq \Vert \varphi^0\Vert_{C^{k+1,\alpha}(\Sigma)}\kappa(\Vert u\Vert_{C^{k+1,\alpha}(\Omega)},T)\vert x_1-x_2\vert\\
&\leq \Vert \varphi^0\Vert_{C^{k+1,\alpha}(\Sigma)}\kappa(\Vert u\Vert_{C^{k+1,\alpha}})(T\Vert u\Vert_{C^0(\Omega)}+\mbox{diam}(\Sigma))^{1-\alpha}\vert x_1-x_2\vert^\alpha.
\end{align*}
In the later case, the $\alpha$-H\"{o}lder continuity of $D^\delta(\varphi^0\circ\mu)$ gives rise to an analogous estimate
$$
I\leq \Vert \varphi^0\Vert_{C^{k+1,\alpha}(\Sigma)}\kappa\left(\Vert u\Vert_{C^{k+1,\alpha}(\Omega)},T\right)^\alpha\vert x_1-x_2\vert^\alpha.$$
Second, note that $D^\beta(\jac(\phi)^{-1}_{p,q})$ is $\alpha$-H\"{o}lder continuous with H\"{o}lder's constant that can be bounded above by $\kappa(\Vert u\Vert_{C^{k+1,\alpha}(\Omega)},T)$ by virtue of Proposition \ref{FlujoHolder.pro}. Thus,
$$II\leq\kappa(\Vert u\Vert_{C^{k+1,\alpha}(\Omega)},T)\vert \phi^{-1}(x_1)-\phi^{-1}(x_2)\vert^\alpha.$$
The mean value theorem then leads to the desired upper estimate
$$\vert D^\gamma \varphi(x_1)-D^\gamma\varphi(x_2)\vert\leq \kappa(\Vert u\Vert_{C^{k+1,\alpha}(\Omega)},T)\vert x_1-x_2\vert^\alpha,$$
appropriately modifying the separately increasing function $\kappa$.
\end{proof}

In addition to the existence and uniqueness results of (\ref{esquemaiter.transportevarphi.eq}), in order to take limits in (\ref{paso.limite.esquemaiterativo.Intro.form}) we will need a compactness result for $\{\varphi_n\}_{n\in\NN}$. Once we know that the sequence $\{u_n\}_{n\in\NN}$ converges in $C^{k+1,\alpha}(\overline{\Omega},\RR^3)$, a result of stability for the problem (\ref{esquemaiter.transportevarphi.eq}) leads to the convergence of the sequence $\{\varphi_n\}_{n\in\NN}$ in $C^{k,\alpha}(\overline{\Omega})$. This stability result was proved in \cite[Lemma 5.3]{Kaiser} in the  $C^{1,\alpha}$ framework and can be easily extended  to $C^{k+1,\alpha}$ using the same lines as in Theorem~\ref{problematransporte.teo} (the details, which are straightforward, can be found in \cite[Corollary 2.4.4]{Poyato}):

\begin{cor}\label{problematransporte.perturb.cor}
Let $G$, $\Sigma$, $\mu$ satisfy the properties (\ref{GSigmaMu.hipot}). Consider any couple of vector fields $u_1,u_2\in C^{k+1,\alpha}(\overline{\Omega},\RR^3)$, and denote as $\mathcal{T}_1:=\mathcal{T}(\Sigma,u_1)$ and $\mathcal{T}_2:=\mathcal{T}(\Sigma,u_2)$ the associated stream tubes which emanate from $\Sigma$. Assume that $\mathcal{T}_i$ is a $(\rho_0,T,\delta_i)$-stream tube of $u_i$. Consider any boundary data $\varphi^0\in C^{k+1,\alpha}_c(\Sigma)$ and the solutions $\varphi_1$ and $\varphi_2$ (according to Theorem \ref{problematransporte.teo}) to each transport problem associated with $u_1$ and $u_2$ respectively:
$$\left\{\begin{array}{ll}
\nabla \varphi_1\cdot u_1=0, & x\in \Omega,\\
\varphi_1=\varphi^0, & x\in \Sigma,
\end{array}\right.\hspace{1cm}\left\{\begin{array}{ll}\nabla \varphi_2\cdot u_2=0, & x\in \Omega,\\
\varphi_2=\varphi^0, & x\in \Sigma.\end{array}\right.$$
Then,
$$\Vert \varphi_1-\varphi_2\Vert_{C^{k,\alpha}(\Omega)}\leq \Vert \varphi^0\Vert_{C^{k+1,\alpha}(\Sigma)}\cdot\kappa\left(\Vert u_1\Vert_{C^{k+1,\alpha}(\Omega)},T\right)\cdot\kappa\left(\Vert u_2\Vert_{C^{k+1,\alpha}(\Omega)},T\right)\Vert u_1-u_2\Vert_{C^{k+1,\alpha}(\Omega)},$$
where $\kappa:\RR_0^+\times\RR_0^+\longrightarrow\RR_0^+$ is a continuous and separately increasing function which does not depend on $u_i,\varphi^0$ or $T$.
\end{cor}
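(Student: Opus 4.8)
The plan is to follow the proof of Theorem~\ref{problematransporte.teo} line by line, now applied to the \emph{difference} of the two solutions, keeping track of how every bound depends on $\|u_1-u_2\|_{C^{k+1,\alpha}(\Omega)}$. By the uniqueness statement in Theorem~\ref{problematransporte.teo}, each solution has the explicit form $\varphi_i(x)=\varphi^0(\mu(s_i(x)))$ on $\mathcal{T}_i$ and vanishes on $\Omega\setminus\overline{\mathcal{T}_i}$, where $(t_i(x),s_i(x))=\phi_i^{-1}(x)$ and $\phi_i(t,s)=X_i(t;0,\mu(s))$ is the stream‑tube parametrization of Proposition~\ref{TuboFlujo.parametr.pro} associated with $u_i$ (here $X_i$ is the flow of the Calder\'on extension $\overline{u}_i=\mathcal{P}(u_i)$ of Proposition~\ref{ExtensionHolder.pro}). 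Thus the whole estimate reduces to (i) a quantitative stability bound $\|\phi_1^{-1}-\phi_2^{-1}\|_{C^{k,\alpha}}\le\kappa(\|u_1\|_{C^{k+1,\alpha}},T)\,\kappa(\|u_2\|_{C^{k+1,\alpha}},T)\,\|u_1-u_2\|_{C^{k+1,\alpha}}$ for the parametrizations, and (ii) running the high‑order chain rule of the proof of Theorem~\ref{problematransporte.teo} on $\varphi_i=\varphi^0\circ\mu\circ s_i$. The structural point that makes the estimate \emph{linear} (rather than carrying a fractional power of $\|u_1-u_2\|$) is that, since we only ask for $C^{k,\alpha}$ control of the difference while the data live in $C^{k+1,\alpha}$, every quantity one must compare involves derivatives of $\overline{u}_i$, $\mu$ and $\varphi^0$ of order at most $k$, which are Lipschitz; the single derivative of slack between the two sides is exactly what is used here.

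For (i) I would first extend, so that $\|\overline{u}_1-\overline{u}_2\|_{C^{k+1,\alpha}(\RR^3)}\le C_{\mathcal{P}}\|u_1-u_2\|_{C^{k+1,\alpha}(\Omega)}$. As in the proof of Proposition~\ref{TuboFlujo.parametr.pro}, for each multi‑index $\gamma$ with $|\gamma|\le k+1$ the derivative $D_s^\gamma\phi_i$ solves a linear ODE in $t$ whose coefficients are built from $D^{\le|\gamma|}\overline{u}_i$ composed with $\phi_i$, with the same initial value $D_s^\gamma\mu$ for $i=1,2$. Subtracting the two ODEs and applying Gronwall's lemma recursively on $|\gamma|$, at each step the inhomogeneity splits — by adding and subtracting crossed terms — into a factor $(D^m\overline{u}_1-D^m\overline{u}_2)\circ\phi_1$, bounded by $\|u_1-u_2\|_{C^m}$, an increment $D^m\overline{u}_2(\phi_1)-D^m\overline{u}_2(\phi_2)$, bounded by $\mathrm{Lip}(D^m\overline{u}_2)\,|\phi_1-\phi_2|$ (finite since $u_2\in C^{k+1,\alpha}$ and $m\le k$), and lower‑order differences already controlled; this gives $\|\phi_1-\phi_2\|_{C^{k}}\le\kappa\,\|u_1-u_2\|_{C^{k+1,\alpha}}$ on each bounded time‑slab, and the $\alpha$‑H\"older seminorm of $D^k(\phi_1-\phi_2)$ in $s$ is handled the same way using that $D^k\overline{u}_i$ is $\alpha$‑H\"older. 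One then transfers this to $\phi_i^{-1}$ through the identity $\jac\phi_i^{-1}=(\jac\phi_i)^{-1}\circ\phi_i^{-1}$ and the uniform lower bound~\eqref{FlujoHolder.cota.determinante} on $\det\jac\phi_i$ — which depends only on $\rho_0$, $\mu$, $\|u_i\|$, $T$, not on $u_1-u_2$ — by the same one‑at‑a‑time replacement of the $u_i$‑dependent factors.

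For (ii) I would expand $D^\gamma(\varphi^0\circ\mu\circ s_i)$, $|\gamma|\le k$, by Fa\`a di Bruno exactly as in the proof of Theorem~\ref{problematransporte.teo} into a sum of products of factors $D^\delta(\varphi^0\circ\mu)(s_i(\cdot))$ with $|\delta|\le k$ and $D^\beta(\jac\phi_i^{-1})_{pq}(\phi_i^{-1}(\cdot))$ with $|\beta|\le k-1$, all bounded and Lipschitz with constants $\le\kappa(\|u_i\|,T)\|\varphi^0\|_{C^{k+1,\alpha}}$, respectively $\le\kappa(\|u_i\|,T)$; then subtract the $i=1$ and $i=2$ expansions and replace the $u_i$‑dependent factors one at a time, so that each resulting term is a product of bounded Lipschitz factors times a single difference factor — either $D^\delta(\varphi^0\circ\mu)(s_1)-D^\delta(\varphi^0\circ\mu)(s_2)$, bounded by $\|\varphi^0\|_{C^{k+1,\alpha}}\|s_1-s_2\|_{C^0}$, or $D^\beta(\jac\phi_1^{-1})\circ\phi_1^{-1}-D^\beta(\jac\phi_2^{-1})\circ\phi_2^{-1}$, bounded by $\|\phi_1^{-1}-\phi_2^{-1}\|_{C^{k,\alpha}}$ — both already estimated linearly in step (i). Summing, and repeating this bookkeeping at the level of the $\alpha$‑H\"older seminorm of the top‑order derivatives (splitting, as at the end of the proof of Theorem~\ref{problematransporte.teo}, into the $|\delta|<k$ case, where the mean value theorem and the diameter bound~\eqref{diametro.tuboflujo.form} suffice, and the $|\delta|=k$ case, where one invokes the $\alpha$‑H\"older continuity of $D^k(\varphi^0\circ\mu)$), yields the claimed inequality on each stream tube; since $\varphi_1-\varphi_2$ vanishes off $\overline{\mathcal{T}_1}\cup\overline{\mathcal{T}_2}$ and, by Lemma~\ref{TuboFlujoRecurr.Perturb.lem} applied with $\supp\varphi^0\Subset\Sigma$, the supports of $\varphi_1$ and $\varphi_2$ lie in a common tube region once $\|u_1-u_2\|$ is small (the inequality being trivial otherwise, after enlarging $\kappa$), these local bounds patch into the global estimate on $\Omega$. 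The main difficulty is precisely this last bookkeeping: one must organize the replacement so that the small factor $\|u_1-u_2\|$ (equivalently $\|\phi_1-\phi_2\|$ or $\|s_1-s_2\|$) always appears against a merely bounded companion, which is possible here only because the roughest derivatives involved — $D^k\overline{u}_i$, $D^k(\varphi^0\circ\mu)$ and $D^{k-1}\jac\phi_i^{-1}$ — are Lipschitz; this is exactly why a $C^{k+1,\alpha}\to C^{k+1,\alpha}$ version of the estimate cannot be obtained by the same argument. (A cleaner alternative is to observe that $\psi:=\varphi_1-\varphi_2$ solves the inhomogeneous transport equation $u_1\cdot\nabla\psi=(u_2-u_1)\cdot\nabla\varphi_2$ with $\psi|_\Sigma=0$, whose right‑hand side already carries a full factor $u_1-u_2$, and to apply the solution formula and Fa\`a di Bruno estimates of Theorem~\ref{problematransporte.teo} directly, only the $u_1$‑flow then being involved.)
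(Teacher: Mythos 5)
Your plan follows exactly the route the paper indicates: the paper itself gives no detailed proof, merely observing that Kaiser's $C^{1,\alpha}$ version extends ``along the same lines as Theorem~\ref{problematransporte.teo},'' which is precisely your strategy of running the Fa\`a di Bruno computation from that proof on the difference $\varphi_1-\varphi_2=\varphi^0\circ\mu\circ s_1-\varphi^0\circ\mu\circ s_2$ and telescoping one $u_i$-dependent factor at a time. You have correctly isolated the structural reason the estimate closes \emph{linearly} in $\Vert u_1-u_2\Vert$ with only one derivative of loss: every factor that must absorb a difference --- $D^{\le k}(\varphi^0\circ\mu)$, $D^{\le k}\overline u_i$, $D^{\le k-1}\jac\phi_i^{-1}$ --- is a Lipschitz object precisely because the data live one order higher, and the flow-stability estimate in step (i) is the standard recursive Gronwall argument of Proposition~\ref{TuboFlujo.parametr.pro} applied to the variational equations of the two flows.

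The one place your write-up is thinner than the actual difficulty is the patching across the symmetric difference of the tubes, which you flag but dispose of in a clause. Concretely: the representation $\varphi_i=\varphi^0\circ\mu\circ s_i$ requires $s_i=\mathrm{pr}_2\circ\phi_i^{-1}$ to be defined, i.e.\ it is only valid on $\mathcal{T}(\Sigma,u_i)$, and $\mathcal{T}(\Sigma,u_1)\ne\mathcal{T}(\Sigma,u_2)$ in general. For the comparison to make sense on all of $\operatorname{supp}\varphi_1\cup\operatorname{supp}\varphi_2$ one needs the inclusion $\overline{\mathcal{T}(\operatorname{supp}\varphi^0,u_1)}\cup\overline{\mathcal{T}(\operatorname{supp}\varphi^0,u_2)}\subseteq\mathcal{T}(\Sigma,u_1)\cap\mathcal{T}(\Sigma,u_2)$, which is what Lemma~\ref{TuboFlujoRecurr.Perturb.lem} buys when $\Vert u_1-u_2\Vert$ is below the $\delta_i$-dependent threshold of that lemma; on that common region $\varphi_1(x)-\varphi_2(x)=\varphi^0(\mu(s_1(x)))-\varphi^0(\mu(s_2(x)))$ even where one of the two vanishes, and the argument goes through. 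For $\Vert u_1-u_2\Vert$ above that threshold your ``trivial otherwise, after enlarging $\kappa$'' escape works only if $\kappa$ is allowed to absorb the threshold constant, which carries a hidden $\delta_i$- and $\rho_0$-dependence; this is consistent with the statement (which excludes dependence on $u_i,\varphi^0,T$ but is silent on the stream-tube parameters, and indeed the $\kappa$ in Proposition~\ref{TuboFlujo.parametr.pro} already depends on $\rho_0$ and $\mu$), but it should be said explicitly. Your alternative reformulation via the inhomogeneous transport equation $u_1\cdot\nabla(\varphi_1-\varphi_2)=(u_2-u_1)\cdot\nabla\varphi_2$ with zero data on $\Sigma$ is indeed cleaner for the interior estimate, but it faces the same domain issue on $u_1$-characteristics that do not start in $\Sigma$ yet meet $\operatorname{supp}\varphi_2$, so it is not an automatic shortcut around the patching.
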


\subsection{Limit of the approximate solutions}
The existence and uniqueness results in Theorems \ref{problematransporte.teo} and \ref{BeltramiNoHomog.teo} together with the stability result for the transport problem in Corollary \ref{problematransporte.perturb.cor} now allow us to take the limit as $n\rightarrow +\infty$ in the modified Grad--Rubin iterative scheme (\ref{paso.limite.esquemaiterativo.Intro.form}). Therefore, we obtain a generalized Beltrami field which is close to the initial strong Beltrami field and whose proportionality factor is a non-constant small enough perturbation of the initial constant proportionality factor $\lambda$:

\begin{theo}\label{paso.limite.teo}
Let $G,\Sigma,\mu$ satisfy the hypotheses (\ref{GSigmaMu.hipot}) and assume that $0\neq \lambda\in\RR$ is not a Dirichlet eigenvalue of Laplace operator in the interior domain $G$. Consider any complex-valued strong Beltrami field $v_0\in C^{k+1,\alpha}(\overline{\Omega},\CC^3)$ which satisfy the $L^1$ SMB radiation condition (\ref{CondicionRadiacion.Beltrami.L1SilverMullerBeltrami.form}) and the $L^1$ decay property (\ref{CondCaidaBeltrami.form}) in the exterior domain. Consider its real part $u_0:=\Re v_0$, and assume that $\mathcal{T}(\Sigma,u_0)$ is a $(\rho_0,T,\delta)$-stream tube of the velocity field $u_0$. 
Let $\varepsilon_0$ be a positive number. Then, there exists a nonnegative constant $\delta_0$ for which  the real parts $u_{n+1}$ of the solutions $v_{n+1}\in C^{k+1,\alpha}(\overline{\Omega},\CC^3)$ together with the solutions $\varphi_n\in C^{k+1,\alpha}(\overline{\Omega})$ of the coupled problems in the modified Grad--Rubin iterative scheme (\ref{paso.limite.esquemaiterativo.Intro.form})
(Theorems \ref{problematransporte.teo} and \ref{BeltramiNoHomog.teo})
have a limit vector field $u\in C^{k+1,\alpha}(\overline{\Omega},\RR^3)$ and a limit perturbation of the proportionality factor $\varphi\in C^{k,\alpha}(\overline{\Omega})$ such that
$$
u_n\rightarrow u\mbox{ in }C^{k+1,\alpha}(\overline{\Omega},\RR^3),\ \ \varphi_n\rightarrow \varphi\mbox{ in }C^{k,\alpha}(\overline{\Omega}),
$$
as $n\rightarrow +\infty$, for any $\varphi^0\in C^{k+1,\alpha}_c(\Sigma)$ with $\Vert \varphi^0\Vert_{C^{k+1,\alpha}(\Sigma)}<\delta_0$. Also,  $(u,\lambda+\varphi)$ solves the following boundary value problem
$$\left\{\begin{array}{ll}
\curl u=(\lambda+\varphi)u, & x\in\Omega,\\
\divop u=0, & x\in \Omega,\\
u\cdot \eta=u_0\cdot \eta, & x\in S,\\
\varphi=\varphi^0, & x\in \Sigma.
\end{array}\right.$$
Furthermore, $u=O(\vert x\vert^{-1})$ as $\vert x\vert\rightarrow +\infty$, $\mathcal{T}(\Sigma,u)$ is a $(\rho_0,T,\delta/2)$-stream tube of $u$, $\varphi$ has compact support inside the closure of such stream tube and $u$ is close enough to $u_0$, specifically
$$
\Vert u-u_0\Vert_{C^{k+1,\alpha}(\Omega)}\leq \varepsilon_0\Vert u\Vert_{C^{k+1,\alpha}(\Omega)}.
$$
\end{theo}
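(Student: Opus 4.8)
The strategy is to run the modified Grad--Rubin iteration \eqref{paso.limite.esquemaiterativo.Intro.form} and show that, for $\varphi^0$ small enough in $C^{k+1,\alpha}(\Sigma)$, the sequences $\{u_n\}$ and $\{\varphi_n\}$ stay in a fixed ball around $(u_0,0)$ and form Cauchy sequences in $C^{k+1,\alpha}(\overline\Omega,\RR^3)$ and $C^{k,\alpha}(\overline\Omega)$ respectively. First I would set up the induction hypothesis: there is a radius $M$ (to be fixed, essentially $M\approx 2\Vert u_0\Vert_{C^{k+1,\alpha}}$) and a height $\delta/2$ such that $\Vert u_n\Vert_{C^{k+1,\alpha}(\Omega)}\le M$ and $\mathcal{T}(\Sigma,u_n)$ is a $(\rho_0,T,\delta/2)$-stream tube. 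The base case $n=0$ holds by hypothesis (with $\delta$ rather than $\delta/2$), and one may enlarge the working ball slightly. Granting the hypothesis at step $n$, Theorem~\ref{problematransporte.teo} produces $\varphi_n\in C^{k+1,\alpha}_c(\overline\Omega)$ with $\Vert\varphi_n\Vert_{C^{k+1,\alpha}(\Omega)}\le \Vert\varphi^0\Vert_{C^{k+1,\alpha}(\Sigma)}\,\kappa(M,T)$, supported in $\overline{\mathcal{T}(\Sigma,u_n)}$; since the tube has diameter bounded by \eqref{diametro.tuboflujo.form}, this support sits in a fixed ball $B_R(0)$ independent of $n$. The inhomogeneity $w_n:=\varphi_n u_n$ is then $C^{k,\alpha}_c$, divergence-free (because $u_n$ is divergence-free and $\varphi_n$ is a first integral of $u_n$, exactly as in the discussion preceding \eqref{BeltramiPerturbacion.EsquemaIterativo.fom}), so $\divop w_n=0\in C^{k,\alpha}$, and the compatibility condition \eqref{BeltramiNoHomog.Condiciongyw.form} reduces to $\int_S (\lambda\, u_0\cdot\eta)\,dS=0$, which holds because $\int_S u_0\cdot\eta\,dS=\int_\Omega\divop u_0 = 0$ (using the decay of $u_0$; $w_n\cdot\eta=0$ on $S$ when $\supp\varphi^0\subset\Sigma$ is chosen so the tube meets $S$ only on $\Sigma$, or one absorbs the $w_n\cdot\eta$ term). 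Hence Theorem~\ref{BeltramiNoHomog.teo} and Corollary~\ref{BeltramiNoHomog.EstimacionSchauder.cor} apply and yield $v_{n+1}\in C^{k+1,\alpha}(\overline\Omega,\CC^3)$ with
$$
\Vert v_{n+1}\Vert_{C^{k+1,\alpha}(\Omega)}\le C_0\bigl(\Vert w_n\Vert_{C^{k,\alpha}(\Omega)}+\Vert\divop w_n\Vert_{C^{k,\alpha}(\Omega)}+\Vert u_0\cdot\eta\Vert_{C^{k+1,\alpha}(S)}\bigr)\le C_0\bigl(C(M)\Vert\varphi^0\Vert_{C^{k+1,\alpha}(\Sigma)}+\Vert u_0\Vert_{C^{k+1,\alpha}}\bigr),
$$
and $u_{n+1}=\Re v_{n+1}=O(|x|^{-1})$. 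Writing $u_{n+1}-u_0 = \Re(v_{n+1}-v_0)$ and noting $v_0$ itself solves the NIB problem with $w=0$, $g=u_0\cdot\eta$, linearity of the construction in Theorem~\ref{BeltramiNoHomog.teo} (the map $(w,g)\mapsto u$ is linear, via $\xi=(\tfrac12 I-T_\lambda)^{-1}\mu$ and the estimate \eqref{AplicacionAbierta.ineq}) gives $\Vert u_{n+1}-u_0\Vert_{C^{k+1,\alpha}(\Omega)}\le C_0 C(M)\Vert\varphi^0\Vert_{C^{k+1,\alpha}(\Sigma)}$. Choosing $\delta_0$ small enough that $C_0 C(M)\delta_0$ is both $\le M-\Vert u_0\Vert$ (closing the ball bound) and small enough to apply Lemma~\ref{TuboFlujoRecurr.Perturb.lem} with $\theta=1/2$ and $u_1=u_0$, $u_2=u_{n+1}$ — using that $u_{n+1}\cdot\eta=u_0\cdot\eta$ on $\Sigma$ by construction — we conclude $\mathcal{T}(\Sigma,u_{n+1})$ is a $(\rho_0,T,\delta/2)$-stream tube, closing the induction.

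The convergence step is the heart of the matter. I would estimate the differences $d_n:=\Vert u_{n+1}-u_n\Vert_{C^{k+1,\alpha}(\Omega)}$ and $e_n:=\Vert\varphi_n-\varphi_{n-1}\Vert_{C^{k,\alpha}(\Omega)}$. By linearity of Theorem~\ref{BeltramiNoHomog.teo} and Corollary~\ref{BeltramiNoHomog.EstimacionSchauder.cor},
$$
d_n=\Vert\Re(v_{n+1}-v_n)\Vert_{C^{k+1,\alpha}}\le C_0\bigl(\Vert\varphi_n u_n-\varphi_{n-1}u_{n-1}\Vert_{C^{k,\alpha}}+\Vert\divop(\varphi_n u_n-\varphi_{n-1}u_{n-1})\Vert_{C^{k,\alpha}}\bigr),
$$
and splitting $\varphi_n u_n-\varphi_{n-1}u_{n-1}=(\varphi_n-\varphi_{n-1})u_n+\varphi_{n-1}(u_n-u_{n-1})$ together with the product estimates in $C^{k,\alpha}$ (and $\divop(\varphi_n u_n)=\nabla\varphi_n\cdot u_n+\varphi_n\divop u_n=\nabla\varphi_n\cdot u_n$, so $\divop(\varphi_n u_n)$ involves only $k+1$ derivatives of $\varphi_n$ — which we control in $C^{k+1,\alpha}$ — and $k$ of $u_n$) gives $d_n\le C(M)\bigl(e_n + \Vert\varphi^0\Vert_{C^{k+1,\alpha}(\Sigma)}\,d_{n-1}\bigr)$. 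On the transport side, Corollary~\ref{problematransporte.perturb.cor} (applied to $u_1=u_n$, $u_2=u_{n-1}$, which have tubes of the required type by the induction above) yields $e_n\le \Vert\varphi^0\Vert_{C^{k+1,\alpha}(\Sigma)}\,\kappa(M,T)^2\,d_{n-1}$. Substituting, $d_n\le C'(M)\Vert\varphi^0\Vert_{C^{k+1,\alpha}(\Sigma)}\,d_{n-1}$, so choosing $\delta_0$ also small enough that $q:=C'(M)\delta_0<1$ makes $\{d_n\}$ geometrically summable; hence $\{u_n\}$ is Cauchy in $C^{k+1,\alpha}(\overline\Omega,\RR^3)$, and then $\{e_n\}$ is summable too, so $\{\varphi_n\}$ is Cauchy in $C^{k,\alpha}(\overline\Omega)$. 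Let $u$ and $\varphi$ be the limits; $u\in C^{k+1,\alpha}(\overline\Omega,\RR^3)$ and $\varphi\in C^{k,\alpha}(\overline\Omega)$, $\varphi$ supported in $\overline{\mathcal{T}(\Sigma,u)}$, and the bound $\Vert u-u_0\Vert_{C^{k+1,\alpha}}\le C_0C(M)\delta_0\le\varepsilon_0\Vert u\Vert_{C^{k+1,\alpha}}$ holds after a final shrinking of $\delta_0$ (note $\Vert u\Vert_{C^{k+1,\alpha}}\ge \Vert u_0\Vert_{C^{k+1,\alpha}}-\varepsilon_0\Vert u\Vert$, so $\Vert u\Vert$ is bounded below, making this consistent).

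It remains to pass the equations to the limit. Convergence in $C^{k+1,\alpha}$ (hence $C^1$) of $u_n$ and in $C^{k,\alpha}$ (hence $C^0$) of $\varphi_n$ lets me pass to the limit in $\curl u_{n+1}-\lambda u_{n+1}=\varphi_n u_n$, obtaining $\curl u=(\lambda+\varphi)u$; taking the divergence and using $\lambda\ne0$ gives $\divop u=0$ (or pass $\divop u_{n+1}=-\tfrac1\lambda\divop(\varphi_n u_n)\to 0$ directly); the boundary condition $u\cdot\eta=u_0\cdot\eta$ on $S$ passes because $u_{n+1}\cdot\eta=u_0\cdot\eta$ for every $n$; and $\varphi=\varphi^0$ on $\Sigma$ passes since each $\varphi_n=\varphi^0$ there. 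The decay $u=O(|x|^{-1})$ is inherited from the uniform-in-$n$ bound in Corollary~\ref{BeltramiNoHomog.EstimacionSchauder.cor} applied to the limiting data $w=\varphi u$, which is $C^{k,\alpha}_c$ and divergence-free, via Theorem~\ref{BeltramiNoHomog.teo}: $u$ solves the NIB problem for this $w$, hence falls off like $|x|^{-1}$. Finally, applying Lemma~\ref{TuboFlujoRecurr.Perturb.lem} once more with $u_1=u_0$, $u_2=u$ and $\theta=1/2$ shows $\mathcal{T}(\Sigma,u)$ is a $(\rho_0,T,\delta/2)$-stream tube. \textbf{The main obstacle} is the contraction estimate: one must verify that the composed map ``solve transport, then solve NIB'' is Lipschitz with constant $O(\Vert\varphi^0\Vert_{C^{k+1,\alpha}(\Sigma)})$ \emph{uniformly} over the ball $\Vert u_n\Vert_{C^{k+1,\alpha}}\le M$, which requires (i) the uniform Schauder constant $C_0$ from Corollary~\ref{BeltramiNoHomog.EstimacionSchauder.cor} (this is why the potential-theoretic estimates of Section~\ref{Teoria.Potencial.Tecnicas.Seccion} and the invertibility estimate \eqref{AplicacionAbierta.ineq} are needed) and (ii) the stability estimate for the transport problem, Corollary~\ref{problematransporte.perturb.cor}, whose constant $\kappa(M,T)$ must not blow up — this in turn rests on the uniform control of the stream-tube parametrization's Jacobian in Proposition~\ref{TuboFlujo.parametr.pro}. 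The interplay of the divergence-free condition with the overdeterminacy (ensuring $\divop(\varphi_n u_n)$ costs only $k+1$ derivatives of $\varphi_n$, which we have, not $k+1$ of $u_n$ differentiated once more, which we would not) is the delicate bookkeeping point.
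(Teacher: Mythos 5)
Your overall architecture matches the paper's proof: run the iteration, establish by induction a uniform bound on $\|u_n\|_{C^{k+1,\alpha}}$ and a uniform $(\rho_0,T,\delta/2)$-stream-tube property, then combine the Schauder estimate of Corollary~\ref{BeltramiNoHomog.EstimacionSchauder.cor} with the transport stability of Corollary~\ref{problematransporte.perturb.cor} to obtain a contraction with factor $O(\|\varphi^0\|_{C^{k+1,\alpha}(\Sigma)})$, and finally pass to the limit. That is exactly the paper's scheme and the contraction bookkeeping is right.

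There is, however, a concrete gap in your verification of the compatibility condition~\eqref{BeltramiNoHomog.Condiciongyw.form}. First, you claim $\int_S u_0\cdot\eta\,dS=\int_\Omega\divop u_0\,dx=0$ ``using the decay of $u_0$.'' This does not work: $u_0=O(|x|^{-1})$, so the flux through $\partial B_R(0)$ is a priori only $O(R)$ and the boundary contribution at infinity does not vanish when you apply the divergence theorem in the exterior domain. The paper instead uses the Beltrami relation $u_0\cdot\eta=-\tfrac1\lambda\divop_S(\eta\times u_0)$ (Appendix~\ref{Appendix.A}), whence the integral of a surface divergence over the closed compact surface $S$ vanishes — a purely local argument that sidesteps decay entirely. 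Second, your alternative claim that $w_n\cdot\eta=0$ on $S$ ``when the tube meets $S$ only on $\Sigma$'' is wrong: a $(\rho_0,T,\delta)$-stream tube by definition both emanates from and \emph{returns to} $S$, so the transported $\varphi_n$ is generically nonzero on the exit patch of the tube in $S\setminus\Sigma$. The correct treatment (as in the paper) applies the divergence theorem to $\varphi_n u_n$ over the truncated annulus $\Omega_{R'}$: the outer boundary term vanishes because $\varphi_n$ is compactly supported inside $\Omega_R$, and the volume term vanishes because $\divop(\varphi_n u_n)=\nabla\varphi_n\cdot u_n+\varphi_n\divop u_n=0$. Everything else in your argument is sound; fixing this one step brings it into full agreement with the paper.
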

\begin{proof}
For simplicity of notation, we will denote the stream tubes associated with each vector field $u_n$ which emanates from $\Sigma$ by $\mathcal{T}_n:=\mathcal{T}(\Sigma,u_n)$. First of all, it is necessary to check whether the hypothesis of Theorems \ref{problematransporte.teo} and \ref{BeltramiNoHomog.teo} hold and they can be deduced in each step from the corresponding hypotheses in the previous step in the iteration. Let us begin with the step $n=0$:
$$\left\{\begin{array}{ll}
\nabla \varphi_0\cdot u_0=0, & x\in \Omega,\\
\varphi_0=\varphi^0, & x\in \Sigma,
\end{array}\right. \hspace{1.5cm}\left\{\begin{array}{l}
\curl v_{1}-\lambda v_{1}=\varphi_0 u_0, \hspace{1cm} x\in \Omega,\\
v_{1}\cdot \eta= u_0\cdot \eta, \hspace{2.1cm} x\in S,\\
+\ L^1\mbox{ Decay property }(\ref{CondCaidaBeltrami.Intro.form}),\\
+\ L^1\mbox{ SBM radiation condition }(\ref{CondicionRadiacion.Beltrami.L1SilverMullerBeltrami.Intro.form}).
\end{array}\right.$$

The hypotheses imply that $\mathcal{T}_0$ is a $(\rho_0,T,\delta)$-stream tube of $u_0$  and $\varphi^0\in C^{k+1,\alpha}_c(\Sigma)$. Hence, there exists a global solution $\varphi_0$ to the transport equation  (Theorem \ref{problematransporte.teo}). Moreover, $\varphi_0u_0\in C^{k+1,\alpha}_c(\overline{\Omega},\RR^3)\subseteq C^{k,\alpha}_c(\overline{\Omega},\RR^3)$ and its compact support is contained in the stream tube $\overline{\mathcal{T}_0}$. In particular, the estimate (\ref{diametro.tuboflujo.form}) ensures that $\mbox{supp}(\varphi_0u_0)\subseteq \overline{\mathcal{T}_0}\subseteq \overline{\Omega_R}$, where $\Omega_R:=B_R(0)\setminus \overline{G}$ and $R:=2T\Vert u_0\Vert_{C^{k+1,\alpha}(\Omega)}+\mbox{diam}(\Sigma)$. On the other hand, as $S$ is regular enough, so  $\eta$ is  and, consequently, $u_0\cdot \eta\in C^{k+1,\alpha}(S)$. An integration by parts leads to the following expression
\begin{align*}
\int_S (\lambda u_0\cdot \eta&+\varphi_0u_0\cdot \eta)\,dS =\lambda\int_S u_0\cdot\eta\,dS+\int_S\varphi_0 u_0\cdot \eta\,dS\\
&=\lambda\int_S u_0\cdot \eta\,dS+\int_{\partial B_{R'}(0)}\varphi_0 u_0\cdot \eta\,dS-\int_{\Omega_{R'}}\divop(\varphi_0 u_0)\,dx
\end{align*}
For $R'>R$, the second term vanishes as a consequence of the previous estimate for the diameter of the initial stream tube. Regarding the third term, notice that the same argument as above leads to
$$\divop(\varphi_0u_0)=\nabla\varphi_0\cdot u_0+\varphi_0 \divop u_0=0.$$

Finally, in Appendix \ref{Appendix.A} we show that the Beltrami equation for $u_0$ allows us to write 
$$u_0\cdot \eta=-\frac{1}{\lambda}\divop_S(\eta\times u_0).$$
The divergence theorem then concludes that the first term vanishes too. Therefore, the hypotheses of Theorem \ref{BeltramiNoHomog.teo} are satisfied, so there is a unique solution $v_1$ to the corresponding complex-valued inhomogeneous Beltrami equation in the right hand side of the step $n=0$. 

Let us prove an estimate for $u_1-u_0$ that will be useful to prove the Cauchy condition in $C^{k+1,\alpha}(\overline{\Omega},\RR^3)$ for the sequence $\{u_n\}_{n\in\NN}$. This vector field is the real part of $v_1-v_0$, which satisfies the complex-valued exterior Neumann problem
\begin{equation*}
\left\{\begin{array}{ll}
(\curl-\lambda)(v_1-v_0)=\varphi_0 u_0, \hspace{1cm} x\in\Omega,\\
(v_1-v_0)\cdot \eta=0, \hspace{1.72cm} x\in S,\\
+\ L^1\mbox{ decay condition } (\ref{CondCaidaBeltrami.Intro.form}),\\
+\ L^1\mbox{ SMB radiation conditon }(\ref{CondicionRadiacion.Beltrami.L1SilverMullerBeltrami.Intro.form}).
\end{array}\right.
\end{equation*}
Therefore, the uniqueness of the solution to this problem (Proposition \ref{BeltramiNoHomog.Unicidad.EcuacionIntegral.pro}), the $C^{k+1,\alpha}$ estimates of such solutions (Corollary \ref{BeltramiNoHomog.EstimacionSchauder.cor}), and the $C^{k,\alpha}$ estimates for the solution of the steady transport equation (Theorem \ref{problematransporte.teo}) allow us to obtain the following estimate for $v_1-v_0$ and, consequently, for $u_1-u_0$:
\begin{align*}
\Vert u_1-u_0\Vert_{C^{k+1,\alpha}(\Omega)}&=\Vert \Re(v_1-v_0)\Vert_{C^{k+1,\alpha}(\Omega)}\leq \Vert v_1-v_0\Vert_{C^{k+1,\alpha}(\Omega)} \leq C_0\Vert \varphi_0 u_0\Vert_{C^{k,\alpha}(\Omega)}.
\end{align*}
Here $C_0>0$ is some constant, which depends on $k,\alpha,\lambda,G$ and $R$. The \textit{Leibniz rule} for the derivative of a product reads
$$D^\gamma(\varphi_0u_0)=\sum_{\beta\leq \gamma}\binom{\gamma}{\beta}D^\beta \varphi_0 D^{\gamma-\beta}u_0,$$
for any multi-index $\gamma$. 

Therefore, the estimates in Theorem \ref{problematransporte.teo} for the derivatives up to order $k$ of $\varphi_0$ and the combination of the mean value theorem and the Calder\'on's extension theorem (Proposition \ref{ExtensionHolder.pro}) to estimate the $C^{0,\alpha}$-norm of the derivatives of $u_0$ up to order $k$ allow us to arrive at the inequality
\begin{align*}
\Vert D^\gamma(\varphi_0 u_0)\Vert_{ C^{0}(\Omega)}&\leq C_k\Vert \varphi^0\Vert_{C^{k+1,\alpha}(\Sigma)}\kappa\left(\Vert u_0\Vert_{C^{k+1,\alpha}(\Omega)},T\right)\Vert u_0\Vert_{C^{k+1,\alpha}(\Omega)},
\end{align*}
for every multi-index $\gamma$ with $\vert \gamma\vert\leq k$, and
\begin{align*}
\Vert D^\gamma(&\varphi_0 u_0)\Vert_{C^{0,\alpha}(\Omega)} =\Vert D^\gamma(\varphi_0 u_0)\Vert_{C^{0,\alpha}(\mathcal{T}_0)}\\
&\leq C_kC_\mathcal{P}\Vert \varphi^0\Vert_{C^{k+1,\alpha}(\Sigma)}\kappa\left(\Vert u_0\Vert_{C^{k+1,\alpha}},T\right)\Vert u_0\Vert_{C^{k+1,\alpha}(\Omega)}(T\Vert u_0\Vert_{C^{k,\alpha}(\Omega)}+\mbox{diam}\Sigma)^{1-\alpha},
\end{align*}
for every multi-index $\gamma$ so that $\vert \gamma\vert=k$ and a nonnegative constant $C_k$ depending on $k$. To derive the last estimate, we have used that
\begin{align*}
\vert D^{\gamma-\beta}&u_0(x)-D^{\gamma-\beta}u_0(y)\vert \leq \Vert D^{\gamma-\beta}\overline{u_0}\Vert_{C^1(\RR^3)}\vert x-y\vert \leq C_\mathcal{P}\Vert u_0\Vert_{C^{k+1,\alpha}(\Omega)}\vert x-y\vert^\alpha(\mbox{diam}\mathcal{T}_0)^{1-\alpha},
\end{align*}
for every $x,y\in \mathcal{T}_0$ and the estimate (\ref{diametro.tuboflujo.form}) for the diameter of the $(\rho_0,T,\delta)$-stream tube of $\mathcal{T}_0$. Hence the following inequality 
\begin{multline*}
\Vert u_1-u_0\Vert_{C^{k+1,\alpha}(\Omega)}\\
\leq K\left\{1+(T\Vert u_0\Vert_{C^{k+1,\alpha}(\Omega)}+\mbox{diam}\Sigma)^{1-\alpha}\right\}\Vert \varphi^0\Vert_{C^{k+1,\alpha}(\Sigma)}\kappa\left(\Vert u_0\Vert_{C^{k+1,\alpha}(\Omega)},T\right)\Vert u_0\Vert_{C^{k+1,\alpha}(\Omega)}
\end{multline*}
holds, with a constant $K=K(k,\alpha,\lambda,G,R)$.

Now, we can fix the small parameter $\delta_0$ such that it satisfies
\begin{equation}\label{paso.limite.estimac.delta0}
\left\{\begin{array}{l}
\displaystyle K\left\{1+(4T\Vert u_0\Vert_{C^{k+1,\alpha}(\Omega)}+\mbox{diam}\Sigma)^{1-\alpha}\right\}\times\\
\displaystyle\times\left\{\kappa\left(2\Vert u_0\Vert_{C^{k+1,\alpha}(\Omega)},T\right)+\Vert u_0\Vert_{C^{k+1,\alpha}(\Omega)}\kappa\left(2\Vert u_0\Vert_{C^{k+1,\alpha}(\Omega)},T\right)^2\right\}\delta_0<\frac{1}{2}\min\{\varepsilon_0,1\}.\\
\,\\
\displaystyle K\left\{1+(4T\Vert u_0\Vert_{C^{k+1,\alpha}(\Omega)}+\mbox{diam}\Sigma)^{1-\alpha}\right\}\times\\
\displaystyle\times \left\{\kappa\left(2\Vert u_0\Vert_{C^{k+1,\alpha}(\Omega)},T\right)+\Vert u_0\Vert_{C^{k+1,\alpha}(\Omega)}\kappa\left(2\Vert u_0\Vert_{C^{k+1,\alpha}(\Omega)},T\right)^2\right\}\Vert u_0\Vert_{C^{k+1,\alpha}(\Omega)}\,\delta_0 \\
\displaystyle <\frac{1}{4}\frac{2\delta}{C_\mathcal{P}T}e^{-\frac{1}{2}C_\mathcal{P}\Vert u_0\Vert_{C^{k+1,\alpha}(\Omega)}T}.
\end{array}\right.
\end{equation}
Then we infer
\begin{equation}\label{paso.limite.induccion.0}
\left\{\begin{array}{rcl}
\displaystyle\Vert u_1-u_0\Vert_{C^{k+1,\alpha}(\Omega)}&<&\displaystyle\min\{\varepsilon_0,1\}\frac{1}{2}\Vert u_0\Vert_{C^{k+1,\alpha}(\Omega)},\\
\displaystyle \Vert u_1-u_0\Vert_{C^{k+1,\alpha}(\Omega)}&<&\displaystyle\frac{1}{4}\frac{2\delta}{C_\mathcal{P}T}e^{-\frac{1}{2}C_\mathcal{P}T\Vert u_0\Vert_{C^{k+1,\alpha}(\Omega)}},\\
\displaystyle \Vert u_1\Vert_{C^{k+1,\alpha}(\Omega)}&\leq& \displaystyle\frac{3}{2}\Vert u_0\Vert_{C^{k+1,\alpha}(\Omega)}.
\end{array}\right.
\end{equation}

To obtain  similar estimates for the remaining terms of the iterative scheme we will use induction to show that
\begin{equation}\label{paso.limite.induccion.n}
\left\{\begin{array}{rcl}
\displaystyle\Vert u_{n+1}-u_n\Vert_{C^{k+1,\alpha}(\Omega)}&\leq&\displaystyle\frac{1}{2^{n}}\Vert u_1-u_0\Vert_{C^{k+1,\alpha}(\Omega)}<\min\{\varepsilon_0,1\}\frac{1}{2^{n+1}}\Vert u_0\Vert_{C^{k+1,\alpha}(\Omega)},\\
\displaystyle \Vert u_{n+1}-u_{n}\Vert_{C^{k+1,\alpha}(\Omega)}&<&\displaystyle\frac{1}{2}\frac{1}{2^{n+1}}\frac{2\delta}{C_\mathcal{P}T}e^{-\frac{1}{2}C_\mathcal{P}T\Vert u_0\Vert_{C^{k+1,\alpha}(\Omega)}},\\
\displaystyle \Vert u_{n+1}-u_0\Vert_{C^{k+1,\alpha}(\Omega)}&<& \displaystyle\min\{\varepsilon_0,1\}\sum_{i=1}^{n+1}\frac{1}{2^i}\Vert u_0\Vert_{C^{k+1,\alpha}(\Omega)},\\
\displaystyle\Vert u_{n+1}-u_0\Vert_{C^{k+1,\alpha}(\Omega)}&<& \displaystyle\frac{1}{2}\sum_{i=1}^{n+1}\frac{1}{2^i}\frac{2\delta}{C_\mathcal{P}T}e^{-\frac{1}{2}C_\mathcal{P}\Vert u_0\Vert_{C^{k+1,\alpha}(\Omega)}},\\
\displaystyle \Vert u_{n+1}\Vert_{C^{k+1,\alpha}(\Omega)}&<& \displaystyle\sum_{i=0}^{n+1}\frac{1}{2^i} \Vert u_0\Vert_{C^{k+1,\alpha}(\Omega)}.
\end{array}\right.
\end{equation}
This is true for $n=0$ due to (\ref{paso.limite.induccion.0}), so we can assume that the inductive hypotheses  holds for all indices less than $n$. Specifically, we assume that $\varphi_m$, $v_{m+1}$ are well defined, i.e., the corresponding problems have a unique solution, that $u_{m+1}$ are divergence-free and (\ref{paso.limite.induccion.n}) hold for indices $m<n$. 

Let us now prove that the result is verified for the index $m=n$. The inductive hypotheses imply the existence of a vector field $v_n\in C^{k+1,\alpha}(\overline{\Omega},\CC^3)$ and $\varphi_{n-1}\in C^{k,\alpha}(\overline{\Omega})$. Moreover, $\mathcal{T}_n$ is a $\left(\rho_0,T,\left(1-\frac{1}{2}\sum_{i=1}^n\frac{1}{2^i}\right)\delta\right))$-stream tube of the real part $u_n=\Re v_n$ because of the third inequality in  (\ref{paso.limite.induccion.n}). Consequently, there exists a unique solution $\varphi_n\in C^{k,\alpha}(\overline{\Omega})$ to the transport problem in the left hand side of (\ref{paso.limite.esquemaiterativo.Intro.form}) according to Theorem \ref{problematransporte.teo}. The last estimate in (\ref{paso.limite.induccion.n}) along with (\ref{diametro.tuboflujo.form}) lead to $\mathcal{T}_n\subseteq \overline{\Omega_R}$. Therefore, $\varphi_n$ is compactly supported in $\overline{\Omega_R}\subseteq \overline{\Omega}$ and the same argument as in the step $n=0$ ensures the existence and uniqueness of a solution $v_{n+1}\in C^{k+1,\alpha}(\overline{\Omega},\CC^3)$ to the complex-valued exterior Neumann problem for the inhomogeneous Beltrami equation in the right hand side of (\ref{paso.limite.esquemaiterativo.Intro.form}). 

Notice that the vanishing flux hypothesis in  Theorem \ref{BeltramiNoHomog.teo} is satisfied. To check it we repeat the previous argument to get
$$\int_S (\lambda u_0\cdot\eta+\varphi_nu_n\cdot \eta)\,dS=\lambda\int_S u_0\cdot \eta\,dS+\int_{\partial B_{R'}(0)}\varphi_nu_n\cdot \eta\,dS-\int_{\Omega_{R'}}\divop(\varphi_nu_n)\,dx.$$
The first term is zero as before, the second one also vanishes for a choice $R'>R$ and the last one is zero too because $\varphi_n$ is a first integral of $u_n$ and $u_n$ is divergence-free according to the induction hypothesis. Consequently, it is easy that $u_{n+1}$ is also divergence-free. 

To conclude, let us prove the inductive hypothesis (\ref{paso.limite.induccion.n}) for $u_{n+1}-u_n$. Taking the difference of the corresponding complex-valued exterior boundary value problems we have that $v_{n+1}-v_n$ solves
\begin{equation*}
\left\{\begin{array}{l}
(\curl-\lambda)(v_{n+1}-v_n)=\varphi_n u_n-\varphi_{n-1}u_{n-1}, \hspace{1cm} x\in\Omega,\\
(v_{n+1}-v_n)\cdot \eta=0, \hspace{3.72cm} x\in S,\\
+\ L^1 \mbox{ decay conditions } (\ref{CondCaidaBeltrami.Intro.form}),\\
+\ L^1\mbox{ SMB radiation condition }(\ref{CondicionRadiacion.Beltrami.L1SilverMullerBeltrami.Intro.form}).
\end{array}\right.
\end{equation*}
Again, thanks to the uniqueness property (Proposition \ref{BeltramiNoHomog.Unicidad.EcuacionIntegral.pro}), the $C^{k+1,\alpha}$ estimates for these solutions (Corollary \ref{BeltramiNoHomog.EstimacionSchauder.cor}) and the $C^{k,\alpha}$ estimates for the solution of the steady transport equation (Theorem \ref{problematransporte.teo}), we obtain the following estimate for $v_{n+1}-v_n$ and, consequently, for $u_{n+1}-u_n$
\begin{align*}
\Vert u_{n+1}-u_n\Vert_{C^{k+1,\alpha}(\Omega)}&=\Vert \Re(v_{n+1}-v_n)\Vert_{C^{k+1,\alpha}(\Omega)}\leq \Vert v_{n+1}-v_n\Vert_{C^{k+1,\alpha}(\Omega)}\\
&\leq C_0\Vert \varphi_n u_n-\varphi_{n-1}u_{n-1}\Vert_{C^{k,\alpha}(\Omega)}.
\end{align*}

Now,  $\varphi_nu_n-\varphi_{n-1}u_{n-1}$ has compact support inside $\overline{\mathcal{T}_n}\cup\overline{\mathcal{T}_{n-1}}\subseteq \overline{\Omega_R}$ (see estimate (\ref{diametro.tuboflujo.form}) and the last inequalities for the $C^{k+1,\alpha}$ norms of $u_n$ and $u_{n-1}$ in the inductive hypothesis). Thus, Theorem \ref{Potencial.volumetrico.regularidad.teo} asserts that the constant $C_0=C_0(k,\alpha,\lambda,G,R)$ is the same as in the basic step because all the supports of the inhomogeneous terms in the complex-valued exterior Neumann problems are contained in the same bounded subset $\overline{\Omega_R}$ of the exterior domain. This is a crucial fact because it prevents those constants from depending on the iteration number $n$ and avoids the blowup when $n\rightarrow +\infty$. Notice that
$$
\Vert \varphi_n u_n-\varphi_{n-1}u_{n-1}\Vert_{C^{k,\alpha}(\Omega)}\leq \Vert(\varphi_n-\varphi_{n-1})u_n\Vert_{C^{k,\alpha}(\Omega)}+\Vert \varphi_{n-1}(u_n-u_{n-1})\Vert_{C^{k,\alpha}(\Omega)}.
$$
Since $\mathcal{T}_n$ is a $\left(\rho_0,T,\left(1-\frac{1}{2}\sum_{i=1}^{n}\frac{1}{2^i}\right)\delta\right)$-stream tube of $u_{n}$, $\mathcal{T}_{n-1}$ is a $\left(\rho_0,T,\left(1-\frac{1}{2}\sum_{i=1}^{n-1}\frac{1}{2^i}\right)\delta\right)$-stream tube of $u_{n-1}$ and $u_{n-1}\cdot \eta=u_0\cdot \eta=u_{n}\cdot \eta$ on $S$, we can apply both estimates in Theorem \ref{problematransporte.teo} and Corollary \ref{problematransporte.perturb.cor} to obtain the inequality
\begin{align*}
\Vert \varphi_n u_n-\varphi_{n-1}&u_{n-1}\Vert_{C^{k+1,\alpha}(\Omega)}
\leq K\Vert \varphi^0\Vert_{C^{k+1,\alpha}(\Sigma)}\left\{1+(4T\Vert u_0\Vert_{C^{k+1,\alpha}(\Omega)}+\mbox{diam}\Sigma)^{1-\alpha}\right\}\\
&\times\left\{\kappa(2\Vert u_0\Vert_{C^{k+1,\alpha}(\Omega)},T)+\Vert u_0\Vert_{C^{k+1,\alpha}(\Omega)}\kappa\left(2\Vert u_0\Vert_{C^{k+1,\alpha}(\Omega)},T\right)^2\right\}\Vert u_n-u_{n-1}\Vert_{C^{k+1,\alpha}(\Omega)}.
\end{align*}
Consequently, the estimate
\begin{align*}
\Vert u_{n+1}&-u_n\Vert_{C^{k+1,\alpha}(\Omega)}
\leq K\Vert \varphi^0\Vert_{C^{k+1,\alpha}(\Sigma)}\left\{1+(4T\Vert u_0\Vert_{C^{k+1,\alpha}(\Omega)}+\mbox{diam}\Sigma)^{1-\alpha}\right\}\\
&\times\left\{\kappa(2\Vert u_0\Vert_{C^{k+1,\alpha}(\Omega)},T)+\Vert u_0\Vert_{C^{k+1,\alpha}(\Omega)}\kappa\left(2\Vert u_0\Vert_{C^{k+1,\alpha}(\Omega)},T\right)^2\right\}\Vert u_n-u_{n-1}\Vert_{C^{k+1,\alpha}(\Omega)}
\end{align*}
holds, with a constant $K$ independent of $n$. Since $\Vert \varphi^0\Vert_{C^{k+1,\alpha}(\Sigma)}<\delta_0$ and $\delta_0$ is small enough to ensure (\ref{paso.limite.estimac.delta0}), one has
$$\Vert u_{n+1}-u_n\Vert_{C^{k+1,\alpha}(\Omega)}<\frac{1}{2}\Vert u_n-u_{n-1}\Vert_{C^{k+1,\alpha}(\Omega)},$$
and the inductive hypothesis for indices less than $n$ leads to the first two inequalities in (\ref{paso.limite.induccion.n}). 

The last three estimates can be obtained as follows. Firstly, the preceding two estimates together with the induction hypotheses lead to
\begin{align*}
\Vert u_{n+1}-u_0\Vert_{C^{k+1,\alpha}(\Omega)}&\leq \sum_{i=0}^n\Vert u_{i+1}-u_i\Vert_{C^{k+1,\alpha}(\Omega)}\\
&\leq \min\{\varepsilon_0,1\}\sum_{i=0}^n\frac{1}{2^{i+1}}\Vert u_0\Vert_{C^{k+1,\alpha}(\Omega)}=\min\{\varepsilon_0,1\}\sum_{i=1}^{n+1}\frac{1}{2^i}\Vert u_0\Vert_{C^{k+1,\alpha}(\Omega)}.
\end{align*}
Similarly, we have
\begin{align*}
\Vert u_{n+1}-u_0\Vert_{C^{k+1,\alpha}(\Omega)}&\leq \sum_{i=0}^n\Vert u_{i+1}-u_i\Vert_{C^{k+1,\alpha}(\Omega)} \leq \frac{1}{2}\sum_{i=1}^{n+1}\frac{1}{2^i}\frac{2\delta}{C_\mathcal{P}T}e^{-\frac{1}{2}C_\mathcal{P}T\Vert u_0\Vert_{C^{k+1,\alpha}(\Omega)}}.
\end{align*}
The last inequality in (\ref{paso.limite.induccion.n}) is obvious by the triangle inequality:
\begin{align*}
\Vert u_{n+1}\Vert_{C^{k+1,\alpha}(\Omega)}&\leq \Vert u_0\Vert_{C^{k+1,\alpha}(\Omega)}+\Vert u_{n+1}-u_0\Vert_{C^{k+1,\alpha}(\Omega)}\\
&\leq\left(1+\sum_{i=1}^{n+1}\frac{1}{2^i}\right)\Vert u_0\Vert_{C^{k+1,\alpha}(\Omega)}=\sum_{i=0}^{n+1}\frac{1}{2^i}\Vert u_0\Vert_{C^{k+1,\alpha}(\Omega)}.
\end{align*}

Using the above inequalities in (\ref{paso.limite.induccion.n}) one can show that $\{u_n\}_{n\in \NN}$ and $\{\varphi_n\}_{n\in\NN}$ are Cauchy sequences in $C^{k+1,\alpha}(\overline{\Omega},$ $\RR^3)$ and $C^{k,\alpha}(\overline{\Omega})$, respectively. On the one hand, we find
\begin{align*}
\Vert u_{n+m}-u_n\Vert_{C^{k+1,\alpha}(\Omega)}&\leq \sum_{i=n}^{n+m-1}\Vert u_{i+1}-u_i\Vert_{C^{k+1,\alpha}(\Omega)}\\
&<\sum_{i=n}^{n+m-1}\frac{1}{2^{i+1}}\Vert u_0\Vert_{C^{k+1,\alpha}(\Omega)} \leq \sum_{i=n+1}^{+\infty}\frac{1}{2^i}\Vert u_0\Vert_{C^{k+1,\alpha}(\Omega)}= \frac{1}{2^n} \|u_0\|_{C^{k+1,\alpha}(\Omega)}.
\end{align*}
Likewise, the third inequality in (\ref{paso.limite.induccion.n}) along with the property  
$
u_{n}\cdot \eta=u_0\cdot \eta\mbox{ on }S,
$
shows that $\mathcal{T}_n$ are $\left(\rho_0,T,\left(1-\frac{1}{2}\sum_{i=0}^n \frac{1}{2^i}\right)\delta\right)$-stream tubes of $u_n$. Therefore, $\{\varphi_n\}_{n\in\NN}$ also satisfies the Cauchy condition in $C^{k,\alpha}(\overline{\Omega})$ due to Corollary \ref{problematransporte.perturb.cor}. Thus, it converges in $C^{k,\alpha}$ to some $\varphi\in C^{k,\alpha}(\overline{\Omega})$.

Let us now take the limit as $n\rightarrow +\infty$ in the iterative scheme to deduce
$$\begin{array}{ccccccccccc}
\divop u_{n+1} & = & 0 & \hspace{1cm} & \curl u_{n+1} -\lambda u_{n+1}& = & \varphi_n u_n & \hspace{1cm} & u_{n+1}\cdot \eta & = & u_0\cdot \eta\\
\downarrow & & \downarrow & \hspace{1cm} &  \downarrow & & \downarrow & \hspace{1cm} & \downarrow & & \downarrow\\
\divop u & = & 0 & \hspace{1cm} & \curl u -\lambda u& = & \varphi u & \hspace{1cm} & u\cdot \eta & = & u_0\cdot \eta.
\end{array}$$

Moreover, the $L^1$ SMB radiation condition (\ref{CondicionRadiacion.Beltrami.L1SilverMullerBeltrami.Intro.form}) and the decay property (\ref{CondCaidaBeltrami.Intro.form}) lead to complex-valued solutions $v_n$ to the exterior Neumann problem for the inhomogeneous Beltrami equations in the iterative scheme with the asymptotic behavior 
$$\vert v_n(x)\vert\leq \frac{C}{\vert x\vert},\ x\in\Omega,$$
for every $n$ and a constant $C$ independent of $n$. To check it, notice that Theorem \ref{BeltramiNoHomog.teo} provides a decomposition of $v_{n+1}$ into generalized volume and single layer potentials whose densities are $u_0\cdot \eta$, $\varphi_n u_n$ and the sequence $\xi_n$ of solutions to the boundary integral equations (\ref{EcIntegral.DatoFrontera.Beltrami.form}). The single layer potentials and its first order partial derivarives are dominated by the corresponding integral kernels $\Gamma_\lambda$ and $\nabla\Gamma_\lambda$ for $x$ far enough from the surface $S$. This leads to an upper bound $C\vert x\vert^{-1}$ where $C$ depends on the $ C^0$ norm of the densities $u_0\cdot \eta$ and $\xi_n$. Both quantities can be bounded above by $\Vert u_0\cdot\eta\Vert_{C^{k,\alpha}(S)}$ and $\Vert\varphi_nu_n\Vert_{C^{k,\alpha}(\Omega)}$, which are uniformly bounded with respect to $n$. Furthermore, the volume layer potentials and its first order partial derivatives can be bounded by $C\vert x\vert^{-1}$ for an  $n$-independent constant thanks to Theorem \ref{DecPotencialRiesz} and the above argument. Consequently, we get the same asymptotic behavior at infinity for the limit vector field $u$.

Let us show now that $\mathcal{T}(\Sigma,u)$ is a $\left(\rho_0,T,\delta/2\right)$-stream tube of $u$ and that the support of $\varphi$ lies in it. Since, by taking limits in the fourth inequality in (\ref{paso.limite.induccion.n}),
$$
\Vert u-u_0\Vert_{C^{k+1,\alpha}(\Omega)}\leq \frac{1}{2}\frac{2\delta}{C_\mathcal{P}T}e^{-\frac{1}{2}C_\mathcal{P}T\Vert u_0\Vert_{C^{k+1,\alpha}(\Omega)}},
$$
Corollary \ref{problematransporte.perturb.cor} yields the first assertion. The second one is clear by taking into account that $\mbox{supp}\,\varphi_n\subseteq \overline{\mathcal{T}_n}$, for every $n\in\NN$. Finally, to check that the limit solution is close to the initial strong Beltrami field $u_0$, it suffices to take limits in the third inequality in (\ref{paso.limite.induccion.n}) to get
\[
\Vert u-u_0\Vert_{C^{k+1,\alpha}(\Omega)}\leq \min\{\varepsilon_0,1\}\sum_{i=1}^{+\infty}\frac{1}{2^i}\Vert u_0\Vert_{C^{k+1,\alpha}(\Omega)}\leq \varepsilon_0\Vert u_0\Vert_{C^{k+1,\alpha}(\Omega)}.  
\qedhere \]
\end{proof}

\begin{rem}\label{paso.limite.caidaoptima.rem}
Notice that the generalized Beltrami field $u\in C^{k+1,\alpha}(\overline{\Omega},\RR^3)$ obtained by means of the preceding theorem solves the equation
$$\curl u-f u=0, \hspace{0.25cm}x\in \Omega,$$
with proportionality factor $f=\lambda+\varphi$ and a compactly supported perturbation $\varphi\in C^{k,\alpha}(\overline{\Omega})$. Moreover, by construction it decays as $\vert x\vert^{-1}$ at infinity. Let us show now why this is indeed the optimal decay. First, recall that
$$\divop(\varphi u)=\nabla \varphi\cdot u+\varphi\divop u=0,$$
since $u$ is divergence-free and $\varphi$ is a first integral of $u$. Second, $\varphi$ is compactly supported in $\overline{\mathcal{T}(\Sigma,u)}$, which is a $(\rho_0,T,\delta/2)$-stream tube of $u$. Indeed, consider any open subset $\Sigma'\subseteq S$ such that 
$$\supp\varphi^0\subseteq \Sigma'\subseteq \overline{\Sigma'}\subseteq \Sigma.$$
Then, the preceding proof actually shows that $\varphi$ is compactly supported in $\overline{\mathcal{T}(\Sigma',u)}$. Take any $x\in \Sigma\setminus \overline{\Sigma'}$ and note that $u(x)\cdot \eta(x)\geq \rho_0>0$. Hence, the transversality condition and Corollary \ref{BeltramiNoHomog.CaidaOptima.cor} entail the optimal decay $\vert x\vert^{-1}$ of the vector field $u$.
\end{rem}

A related remark in the harmonic case ($\lambda=0$) is in order now.

\begin{rem}\label{paso.limite.caidaoptima.armonicos.rem}
Recall that a similar result to that in Theorem \ref{paso.limite.teo} was previously proved in \cite{Kaiser} to obtain generalized Beltrami fields $u\in C^{1,\alpha}(\overline{\Omega},\RR^3)$ (nonlinear force-free fields), i.e., solutions to
$$\curl u=fu, \hspace{0.25cm}x\in \Omega,$$
with compactly supported small proportionality factors $f\in C^{0,\alpha}(\overline{\Omega})$.

On the one hand, the low regularity $C^{1,\alpha}$ and $C^{0,\alpha}$ is not a weakness in such result since despite not being directly considered in \cite{Kaiser}, our results in Section \ref{Teoria.Potencial.Tecnicas.Seccion} provide the necessary background to promote the existence theorem in \cite{Kaiser} to a high regularity setting.  On the other hand, such generalized Beltrami fields decay as $\vert x\vert^{-2}$ at infinity. There is no contradiction neither with Corollary \ref{BeltramiNoHomog.CaidaOptima.GeneralizedBeltramis.cor} (since it holds under the assumption $\lambda\neq 0$) nor with the Liouville theorem in \cite{CC,Nadirashvili} (since it just holds for globally defined generalized Beltrami fields). 

On the contrary, the latter can be used to show an interesting property of such generalized Beltrami fields obtained as perturbations of harmonic fields. Specifically: they cannot be globally extended to the whole space by virtue of the fall-off obstructions in \cite{CC,Nadirashvili}. Nevertheless, the same cannot be directly said for generalized Beltrami fields obtained as perturbations of strong Beltrami fields.
\end{rem}

\section{Knotted and linked stream lines and tubes in generalized Beltrami fields}\label{Estructuras.Anudadas.Seccion}

Our objective in this section is to apply the convergence result for the modified Grad--Rubin method (\ref{paso.limite.esquemaiterativo.Intro.form}) that we established in the previous section (Theorem~\ref{paso.limite.teo}) to show the existence of almost global Beltrami fields of class $C^{k+1,\alpha}$ with a nonconstant factor that realize any given configuration of vortex tubes and vortex lines, modulo a small diffeomorphism. Here $k$ is an arbitrary integer.

\subsection{Knots and links in strong Beltrami fields}
The main result in \cite{Enciso2} ensures the existence of strong Beltrami field with the sharp decay at infinity and exhibiting any finite collection of (possibly knotted and liked) vortex lines and thin vortex tubes. The vortex tubes can be though as (small deformations of) metric neighborhoods of a smooth knotted loop in $\RR^3$. Specifically, for a closed curve $\Gamma\subseteq \RR^3$ of any knot type the associated tube of thickness $\varepsilon>0$ will be denoted by
$$
\mathcal{T}_\varepsilon(\Gamma):=\{x\in \RR^3\,:\,\mbox{dist}(x,\Gamma)<\varepsilon\}.
$$

With this notation, the main result of~\cite{Enciso2} reads as follows. In the statement, let us agree to say that a vortex tube $\mathcal T$ of a field~$u$ is \textit{structurally stable} if any divergence-free field $u'$ that is close enough to~$u$ in $C^{3,\alpha}(V)$ has an ``invariant tube'' (i.e., invariant torus) of the form $\Psi(\mathcal T)$, where $V$ is any fixed neighborhood of $\mathcal T$ and $\Psi$ is a diffeomorphism of $\RR^3$ that is close to the identity in $C^\alpha$.

\begin{theo}\label{BeltramiFuerte.TubosAunudados.Enciso.teo}
Let $\Gamma_1,\ldots,\Gamma_n$ be $n$ pairwise disjoint (possibly knotted and linked) closed curves in $\RR^3$. For any small enough $\varepsilon$, we can transform the collection of pairwise disjoint thin tubes $\mathcal{T}_\varepsilon(\Gamma_1),$ $\ldots,$ $\mathcal{T}_\varepsilon(\Gamma_n)$ by a diffeomorphism $\Phi$ of $\RR^3$, arbitrarily close to the identity in any $C^m$ norm, so that $\Phi(\mathcal{T}_\varepsilon(\Gamma_1)),$ $\dots,$ $\Phi(\mathcal{T}_\varepsilon(\Gamma_n))$ are vortex tubes of a strong Beltrami field $u$, which satisfies the equation $\curl u=\lambda u$ in $\RR^3$ for some non-zero constant $\lambda$ and decays at infinity as $\vert x\vert^{-1}$. Furthermore, these vortex tubes are structurally stable.
\end{theo}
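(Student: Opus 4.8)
This is the main theorem of~\cite{Enciso2}; we only indicate the strategy, which splits into a local construction near the curves, a global Runge-type approximation that preserves the optimal decay, and a structural-stability argument.

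\textbf{Step 1: a local Beltrami field carrying the prescribed tube.} The plan is to work in a tubular neighbourhood~$U$ of $\Gamma:=\Gamma_1\cup\cdots\cup\Gamma_n$ and to build a real-analytic field $w$ on~$U$ with $\curl w=\lambda w$ whose flow has, for each~$i$, an invariant solid torus that is a $C^\infty$-small perturbation of $\mathcal T_\varepsilon(\Gamma_i)$ and on which $w$ is smoothly conjugate to a constant-slope flow with Diophantine frequency. Since $\curl w=\lambda w$ entails $\Delta w+\lambda^2 w=0$ together with $\divop w=0$, this amounts to a Cauchy problem for an overdetermined elliptic system, which one solves \`a la Cauchy--Kovalevskaya by prescribing a suitable jet of~$w$ along~$\Gamma$; the delicate point is to choose the low-order transverse Taylor coefficients so that the normal form on the tube satisfies a twist (isoenergetic nondegeneracy) condition. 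For the part of the statement about vortex lines one instead makes each~$\Gamma_i$ a hyperbolic periodic orbit of~$w$.

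\textbf{Step 2: global approximation with the sharp fall-off.} Next I would invoke a Runge-type approximation theorem for the Beltrami equation: $w$ can be approximated, in the $C^m$ norm on a slightly smaller neighbourhood $U'\Subset U$ of~$\Gamma$, by an \emph{entire} Beltrami field $u$ on~$\RR^3$ with $\curl u=\lambda u$ and $u=O(|x|^{-1})$ at infinity. By interior elliptic estimates for the Beltrami operator it is enough to approximate in $L^2$ on a compact set with connected complement, and by a Hahn--Banach duality this reduces to the statement that an $L^2$ vector field orthogonal to the restrictions of all entire Beltrami fields of the relevant class is orthogonal to~$w$. That class is precisely the ``Beltrami--Herglotz'' fields, i.e.\ the real parts of $x\mapsto\int_{S^2}e^{i\lambda\,\sigma\cdot x}A(\sigma)\,d_\sigma S$ with $A\in L^2(S^2)$ tangent --- this is the far-field-pattern picture of Remark~\ref{HelmholtzBeltrami.relacion.obs}; testing against it turns the orthogonality relation into the vanishing of a real-analytic function on the sphere, after which the unique continuation property for the Helmholtz equation (satisfied componentwise by~$w$) propagates the orthogonality to~$w$ itself. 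Taking~$u$ in the Herglotz class yields automatically the fall-off $O(|x|^{-1})$, which is optimal by Corollary~\ref{BeltramiNoHomog.CaidaOptima.cor} and the Liouville theorem of~\cite{CC,Nadirashvili}.

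\textbf{Step 3: persistence of the tubes and conclusion.} Finally, because~$u$ is $C^m$-close to~$w$ on~$U'$ and $\divop u=0$, a KAM-type persistence theorem for invariant tori carrying Diophantine linear flows in the class of divergence-free fields produces an invariant solid torus of~$u$ obtained from that of~$w$ by a diffeomorphism $C^\infty$-close (hence $C^\alpha$-close) to the identity; since for a Beltrami field the vortex lines coincide with the stream lines, this torus is a structurally stable vortex tube of~$u$. In the vortex-line case normal hyperbolicity gives instead the persistence of the periodic orbit. Composing this diffeomorphism with the one from Step~1 yields the desired $\Phi$. The crux of the whole argument is Step~2: one must simultaneously prove the duality/unique-continuation statement and keep the approximating field within the Herglotz class so that the sharp decay survives the approximation; the nondegeneracy of the local normal form in Step~1 is the other, less severe, difficulty.
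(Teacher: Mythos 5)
Your Steps~2 and~3 broadly match the strategy of~\cite{Enciso2} as summarized here: the global field is produced by a Runge-type approximation argument that simultaneously secures the sharp $|x|^{-1}$ fall-off by keeping the approximant in the Fourier--Bessel class (equivalently, the Beltrami--Herglotz class; cf.\ Eqs.~\eqref{equ0}--\eqref{eqtu0}), and the tubes then survive by KAM theory applied to a Poincar\'e map. But Step~1 is a genuine gap: you propose exactly the approach the paper says does \emph{not} work for vortex tubes. The Cauchy--Kowalewski theorem of~\cite{Enciso1} takes data on a non-characteristic analytic \emph{surface}, so the natural way to deploy it here is with data on the boundary tori $\partial\mathcal T_\varepsilon(\Gamma_i)$; the resulting field lives only in a thickened-torus collar whose complement in $\RR^3$ is disconnected, which rules out any Runge-type approximation. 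Your alternative --- prescribing a transverse jet along the core curves $\Gamma_i$ --- is not a Cauchy problem at all, since $\Gamma_i$ has codimension two and the Helmholtz recursion in two normal variables underdetermines the jet order by order, so there is no Cauchy--Kowalewski iteration to invoke and certainly no control on the domain of analyticity. What~\cite{Enciso2} actually does is abandon the Cauchy route in favor of the \emph{Neumann} boundary value problem $\curl\widetilde u_0=\lambda\widetilde u_0$ in the interior of the solid tori with the tangency condition $\widetilde u_0\cdot\nu=0$ on $\partial\mathcal T_\varepsilon(\Gamma_i)$, solved variationally after fixing the $L^2$-projection onto tangential harmonic fields (the ``harmonic part'') so as to produce a nontrivial solution; the solid-torus domain has connected complement, which is what makes Step~2 go through.

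You also understate the difficulty of the persistence step. As emphasized in the text, the twist condition for the Poincar\'e map is \emph{barely} satisfied, the structural stability only holds for small $\lambda$ (of order $\varepsilon^3$, in contrast with the existence part of the BVP which works for all $|\lambda|<C/\varepsilon$), and making the KAM scheme run requires deriving fine small-$\varepsilon$ asymptotics for $\widetilde u_0$ coupled with dynamical estimates. Treating the nondegeneracy as something one arranges freely by choosing low-order transverse Taylor coefficients of a Cauchy jet, and then citing a black-box KAM theorem, skips over the hardest part of the argument.
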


For the benefit of the reader, let us briefly discuss the main ideas of the proof. The gist is to construct a ``local'' Beltrami field realizing the desired configuration of vortex tubes so that they are structurally stable and they approximate the local Beltrami field by a Beltrami field with the same constant that is global, that is, defined everywhere in $\RR^3$.

Hence, the first step is to show the existence of Beltrami field that possess the desired collection of vortex tubes and which is defined in a neigborhood of the boundary tori. Since the tubes can be chosen analytic without loss of generality (by slightly deformating the initial vortex tube configuration), one could try to do that using a kind of Cauchy--Kowalewski theorem that was proved in \cite{Enciso1}. However, this would lead to a local Beltrami field defined in a region whose complement is not connected, and this should prevent us from applying any kind of approximation theorem (this restriction is already present in the classical theorem of Runge, and appears in all the approximations theorems known to date). This leads to move from the above-mentioned Cauchy problem to the following boundary value problem of Neumann type in the interior of the above tori
$$
\left\{\begin{array}{ll}
\displaystyle\curl \widetilde{u}_0=\lambda \widetilde{u}_0, & x\in \cup_{i=1}^n\mathcal{T}_\varepsilon(\Gamma_i),\\
\displaystyle \widetilde{u}_0\cdot \nu=0, & x\in \cup_{i=1}^n \partial\mathcal{T}_\varepsilon(\Gamma_i),
\end{array}\right.
$$
where $\nu$ stands for the unit outward normal vector to the boundary of the tube. The tangency boundary condition ensures that any solution $\widetilde{u}_0$ has the invariant tori $\partial \mathcal{T}_\varepsilon(\Gamma_1),\ldots,\partial \mathcal{T}_\varepsilon(\Gamma_n)$. To get nontrivial solutions, one can prescribe the $L^2$ projection of $\widetilde{u}_0$ into the space of tangential harmonic fields (called the \textit{harmonic part} of $\widetilde{u}_0$). Therefore, the above Neumann boundary value problem for a vector field $\widetilde{u}_0$ with fixed harmonic part is uniquely solvable by means of a variational approach as long as $\lambda$ does not belong to the spectrum of certain operator with a compact inverse (in particular, it works if $|\lambda|< C/\varepsilon$). 

The above argument ensures the existence of local Beltrami fields tangent to the tubes but it does not say anything about the structural stability of the tubes. This is obtained by applying a KAM theorem. What makes the proof subtle is that the applicability of the KAM theorem involves a combination of delicate PDE and dynamical systems estimates. Indeed, the above existence theorem applies to most domains and $\lambda$'s, while the structural stability hinges on the smallness of~$\lambda$. The point is to derive fine asymptotics for the field $\widetilde{u}_0$ for small $\varepsilon$ and use this information to show that one can effectively use KAM theory on an associated Poincar\'e map. The reason for which the estimates are subtle is that, from the point of view of KAM theory, the situation is very degenerate because the twist condition is barely satisfied, and in turn this has a bearing on the power of the PDE estimates that are needed to make the KAM argument go through.

What is of a greater direct interest for our purpose here is the way that we pass from the local solution $\widetilde{u}_0$ to a global Beltrami field $u_0$ solving the same Beltrami equation in the whole space and exhibiting the desired decay properties at infinity. Especially, in the next section we will need to use that the global Beltrami field $u_0$ is of the form
\begin{equation}\label{equ0}
u_0=\frac{\curl(\curl+\lambda)}{2\lambda^2} \widetilde u_0\,,
\end{equation}
where $\widetilde u_0$ is a finite Fourier--Bessel series of the form
\begin{equation}\label{eqtu0}
\widetilde{u}_0(x)=\sum_{l=0}^{L}\sum_{m=-l}^{l}c_l^mj_l(\lambda \vert x\vert)Y_{l}^m\left(\frac{x}{\vert x\vert}\right),
\end{equation}
where $c_l^m$ are constant vectors in $\CC^3$, $j_l$ stands for the {spherical Bessel function of first kind and $l$-th order} and $Y_l^m$ is the {$m$-th spherical harmonic of $l$-th order}. Obviously $\widetilde u_0$ is real-valued.

\subsection{Knots and links in almost global generalized Beltrami fields}

Our goal in this section is to show that the partial stability result for almost global Beltrami fields allows us to conclude the existence of Beltrami fields with a non-constant proportionality factor that are defined in all of $\RR^3$ but, say, in the complement of an arbitrarily small ball, and which have a collection of vortex tubes and vortex lines of arbitrary topology. More precisely, our objective is to prove the following result. Let us recall that in the Introduction we defined that a vortex tube (invariant torus) of a divergence-free field $u$ is structurally stable if any divergence-free field that is close enough to $u$ in $C^{3,\alpha}$ has an invariant torus given by a $C^{0,\alpha}$-small diffeomorphism of the initial tube. Although we shall not state these properties explicitly, just as in~\cite{Enciso2} the vortex tubes that we construct are accumulated on by a positive-measure set of invariant tori on which the vortex lines are ergodic.

\begin{theo}\label{BeltramiGeneralizado.TubosAunudados.teo}
Let $G$ be an exterior domain satisfying the hypotheses (\ref{GSigmaMu.hipot}) and consider any collection of disjoint knotted and linked thin tubes $\mathcal{T}_\varepsilon(\Gamma_1),\ldots,\mathcal{T}_\varepsilon(\Gamma_n)$ whose closure is contained in the exterior domain~$\Omega$. Then, for $\varepsilon$ small enough and any $k,\alpha$ there exists a nonzero constant $\lambda$, an open subset $\Sigma\subseteq S$ and some $\delta_0>0$ with the following property: for any function $\varphi^0\in C^{k+1,\alpha}_c(\Sigma)$ with $\Vert \varphi^0\Vert_{C^{k+1,\alpha}(\Sigma)}<\delta_0$ there is a Beltrami field~$u\in C^{k+1,\alpha}(\overline{\Omega},\RR^3)$ with factor $\lambda+\varphi$, where $\varphi$ is a function in $C^{k,\alpha}(\overline{\Omega})$ satisfying $\varphi|_\Sigma=\varphi^0$:
$$\left\{\begin{array}{ll}
\curl u=(\lambda+\varphi)u, & x\in\Omega,\\
\divop u=0, & x\in\Omega.
\end{array}\right.$$
Furthermore, $u=O\left(\vert x\vert^{-1}\right)$ as $\vert x\vert\rightarrow +\infty$, the support of $\varphi$ is compact and lies in the $(\rho_0,T,\delta)$-stream tube $\mathcal{T}(\Sigma,u)$ of $u$ radiating from $\Sigma$ (with the exception of the endpoints) and $\mathcal{T}_\varepsilon(\Gamma_1),\ldots,\mathcal{T}_\varepsilon(\Gamma_n)$ can be modified by a diffeomorphism $\Phi$ close enough to the identity in any $C^m$ norm into a collection of structurally stable vortex tubes of $u$, $\Phi(\mathcal{T}_\varepsilon(\Gamma_1)),\ldots,\Phi(\mathcal{T}_\varepsilon(\Gamma_n))$, (possibly) knotted and linked with $\mathcal{T}(\Sigma,u)$.
\end{theo}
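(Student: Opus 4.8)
The strategy is to feed the almost global partial stability theorem (Theorem~\ref{paso.limite.teo}) with a strong Beltrami field supplied by the existence theorem for knotted vortex structures (Theorem~\ref{BeltramiFuerte.TubosAunudados.Enciso.teo}), after checking that the latter can be arranged to satisfy the hypotheses of the former. First I would invoke Theorem~\ref{BeltramiFuerte.TubosAunudados.Enciso.teo}: for $\varepsilon$ small it yields a nonzero constant $\lambda$, a diffeomorphism $\Phi_0$ arbitrarily close to the identity in any $C^m$ norm, and a global strong Beltrami field $u_0$ on $\RR^3$, of the form \eqref{equ0}--\eqref{eqtu0}, decaying as $|x|^{-1}$ at infinity, whose structurally stable vortex tubes are the $\Phi_0(\mathcal T_\varepsilon(\Gamma_i))$. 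Since $\lambda$ may be taken as small as we please, I would moreover require $|\lambda|$ to be below the first Dirichlet eigenvalue of $-\Delta$ in $G$ (so the eigenvalue hypothesis of Theorem~\ref{paso.limite.teo} holds automatically) and to avoid the at most countable exceptional set of Theorem~\ref{BeltramiFuerte.TubosAunudados.Enciso.teo}. Because $\widetilde u_0$ is a finite Fourier--Bessel series, replacing every factor $j_l(\lambda|x|)Y_l^m(x/|x|)$ by the outgoing multipole $h_l^{(1)}(\lambda|x|)Y_l^m(x/|x|)$ and applying the same operator $\curl(\curl+\lambda)/(2\lambda^2)$ produces a complex-valued Beltrami field $v_0$ with the same factor $\lambda$, real-analytic on $\overline\Omega$ (hence of class $C^{k+1,\alpha}$ there, since $0\in G$), satisfying the strong --- hence the $L^2$, and hence by Remark~\ref{HipotesisRepresentacionL1SilverMullerBeltramiL2.obs} the $L^1$ --- SMB radiation condition \eqref{CondicionRadiacion.Beltrami.L1SilverMullerBeltrami.form} and the $L^1$ decay property \eqref{CondCaidaBeltrami.form}, and with $\Re v_0=u_0$ by \eqref{equ0}.

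The next step is to arrange that $u_0$ carries a $(\rho_0,T,\delta)$-stream tube issuing from an open set $\Sigma\subseteq S$. Concretely one needs, near the \emph{given} surface $S=\partial G$, an integral curve of $u_0$ that leaves $G$ transversally through $S$, loops through $\Omega$, re-enters $G$ through $S$, and penetrates to a positive depth; a small neighbourhood $\Sigma$ in $S$ of its starting point then yields the stream tube by continuous dependence of the flow (Propositions~\ref{FlujoHolder.pro} and~\ref{TuboFlujo.parametr.pro}), with $T$ larger than twice the transit time and $\delta$ smaller than the depth attained. To force this for an arbitrary $G$ I would, if necessary, superpose on $u_0$ a further global Beltrami field $w$ with the same factor $\lambda$ --- again of the form \eqref{equ0}--\eqref{eqtu0} --- obtained by a Runge-type approximation so that $w$ is as small as we wish on a fixed neighbourhood of $\bigcup_i\overline{\mathcal T_\varepsilon(\Gamma_i)}$, while near $G$ it drives $u_0+w$ to possess a periodic orbit crossing $S$ transversally; the smallness of $w$ near the $\Gamma_i$ together with the structural stability of the vortex tubes guarantees that $u_0+w$ still realizes the prescribed knots, and $u_0+w$ is again of the form \eqref{equ0}--\eqref{eqtu0}, so the complexification above applies to it verbatim. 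Relabelling, I keep the names $u_0$ and $v_0$ for this modified field and its radiating complexification.

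With these preparations Theorem~\ref{paso.limite.teo} applies to $v_0$. I would first fix $\varepsilon_0>0$ small enough that $\varepsilon_0\|u_0\|_{C^{k+1,\alpha}(\Omega)}$ lies below the structural-stability threshold of the vortex tubes $\Phi_0(\mathcal T_\varepsilon(\Gamma_i))$ on a fixed neighbourhood of them; the theorem then yields $\delta_0>0$ such that for every $\varphi^0\in C^{k+1,\alpha}_c(\Sigma)$ with $\|\varphi^0\|_{C^{k+1,\alpha}(\Sigma)}<\delta_0$ the modified Grad--Rubin scheme \eqref{paso.limite.esquemaiterativo.Intro.form} converges to $u\in C^{k+1,\alpha}(\overline\Omega,\RR^3)$ and $\varphi\in C^{k,\alpha}(\overline\Omega)$ solving $\curl u=(\lambda+\varphi)u$, $\divop u=0$ in $\Omega$, with $\varphi|_\Sigma=\varphi^0$, $u=O(|x|^{-1})$ at infinity, $\mathcal T(\Sigma,u)$ a $(\rho_0,T,\delta/2)$-stream tube of $u$ whose closure contains $\supp\varphi$, and $\|u-u_0\|_{C^{k+1,\alpha}(\Omega)}\le\varepsilon_0\|u_0\|_{C^{k+1,\alpha}(\Omega)}$. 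By the choice of $\varepsilon_0$ and structural stability there is a diffeomorphism $\Psi$ close to the identity with $\Psi\circ\Phi_0(\mathcal T_\varepsilon(\Gamma_i))$ structurally stable vortex tubes of $u$, so that $\Phi:=\Psi\circ\Phi_0$ is the diffeomorphism claimed in the statement. That $\mathcal T(\Sigma,u)$ may itself be knotted or linked with the $\Phi(\mathcal T_\varepsilon(\Gamma_i))$ is simply read off from the chosen integral curve near $G$ and poses no obstruction.

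The hard part will be the middle step: producing a single strong Beltrami field $u_0$ of the form \eqref{equ0}--\eqref{eqtu0} that \emph{simultaneously} realizes the prescribed KAM-stable knotted vortex tubes and possesses a genuine $(\rho_0,T,\delta)$-stream tube emanating from $S=\partial G$ for an arbitrarily prescribed $G$ --- the non-generic failure to be excluded being that the stream lines issuing from $\Sigma$ escape to infinity or fail to dip into $G$ --- while keeping intact, under the superposition of $w$, the delicate asymptotics that underlie the structural stability of the tubes. The remainder is a bookkeeping assembly of Theorems~\ref{BeltramiFuerte.TubosAunudados.Enciso.teo} and~\ref{paso.limite.teo} together with the transport and potential-theoretic estimates already in place.
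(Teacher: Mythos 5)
The crucial obstacle you correctly identify in your final paragraph --- producing a strong Beltrami field that simultaneously realizes the prescribed knotted tubes and possesses a $(\rho_0,T,\delta)$-stream tube issuing from the given surface $S=\partial G$ --- is precisely where your proposed route breaks down. You try to manufacture the stream tube by superposing on $u_0$ a second global Beltrami field $w$, obtained ``by a Runge-type approximation'', that is small near the knots but drives $u_0+w$ to have a periodic orbit crossing $S$ transversally. But the approximation machinery behind Theorem~\ref{BeltramiFuerte.TubosAunudados.Enciso.teo} lets you approximate a \emph{prescribed} local Beltrami field; it does not let you ``drive'' a sum to have a periodic orbit with a prescribed geometry near $G$ while remaining small elsewhere. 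You would still have to construct, by hand, a local Beltrami field near $G$ exhibiting such a recurrent orbit, and you would then face exactly the same KAM/structural-stability analysis one has already run for the original tubes --- none of which you carry out. So the middle step is genuinely open in your argument, not just a bookkeeping matter.

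The paper sidesteps this entirely by a much cleaner device that you did not find: it \emph{augments} the collection with one extra \emph{unknotted} curve $\Gamma_0$ chosen to intersect $S$ transversally so that $\mathcal{T}_\varepsilon(\Gamma_0)\cap\Omega$ is connected and $\Gamma_0$ is disjoint from the other $\Gamma_j$. Theorem~\ref{BeltramiFuerte.TubosAunudados.Enciso.teo} is then applied to $\{\Gamma_0,\Gamma_1,\ldots,\Gamma_n\}$ all at once, producing a single $u_0$ (still of the form \eqref{equ0}--\eqref{eqtu0}, with $\lambda\sim\varepsilon^3$) whose structurally stable vortex tubes are $\Phi'(\mathcal{T}_\varepsilon(\Gamma_i))$, $i=0,\ldots,n$. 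The invariant torus $\Phi'(\mathcal{T}_\varepsilon(\Gamma_0))$, being an invariant tube that crosses $S$, automatically provides the $(\rho_0,T,\delta)$-stream tube: one takes $x_0\in S\cap\Phi'(\Gamma_0)$ where $u_0$ points outward and $\Sigma$ a small neighborhood of $x_0$ inside $S\cap\Phi'(\mathcal{T}_\varepsilon(\Gamma_0))$; the stream lines issuing from $\Sigma$ stay inside the tube, wind around $\Omega$, and re-enter $G$, so $\mathcal{T}(\Sigma,u_0)\subseteq\Phi'(\mathcal{T}_\varepsilon(\Gamma_0))$ is a $(\rho_0,T,\delta)$-stream tube by construction. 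The rest of your proposal (complexification of $u_0$ via Hankel functions, checking $L^1$ decay and SMB, choosing $\lambda$ small to avoid Dirichlet eigenvalues, applying Theorem~\ref{paso.limite.teo}, and concluding with structural stability) matches the paper and is fine; replace your unworked superposition step with the auxiliary-curve trick and the argument goes through.
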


\begin{proof}
Take a curve $\Gamma_0$ intersecting $S$ transversally and such that $\mathcal{T}_\varepsilon(\Gamma_0)\cap \Omega$ has only a connected component. We also assume that $\Gamma_0$ does not intersect any of the other curves $\Gamma_j$, so that the setup is then as depicted in Figure \ref{fig:Fig6}). For $\varepsilon>0$ small enough, Theorem \ref{BeltramiFuerte.TubosAunudados.Enciso.teo} asserts the existence of some diffeomorphism $\Phi'$ arbitrarily close to the identity map in any $C^m$ norm such that $\Phi'(\mathcal{T}_\varepsilon(\Gamma_0)),\ldots,\Phi'(\mathcal{T}_\varepsilon(\Gamma_n))$ are vortex tubes of a strong Beltrami field $u_0$ which satisfies the equation $\curl u_0=\lambda u_0$ in $\RR^3$ for some non-zero constant $\lambda$ (of order $\varepsilon^3$). By construction, these tubes are structurally stable and $\Phi'$ can be assumed to be arbitrarily close to the identity in any $C^m$ norm, so the new thin tubes enjoy the same geometric features as we had assumed on the initial ones. Let us then take the point $x_0\in S\cap \Phi'(\Gamma_0)$ where $u_0$ points outwards and consider any open and connected neighborhood $\Sigma$ of $x_0$ in $S$ such that $\Sigma\subseteq S\cap \Phi'(\mathcal{T}_\varepsilon(\Gamma_0))$.

\begin{figure}[t]
\centering

\includegraphics[scale=0.7]{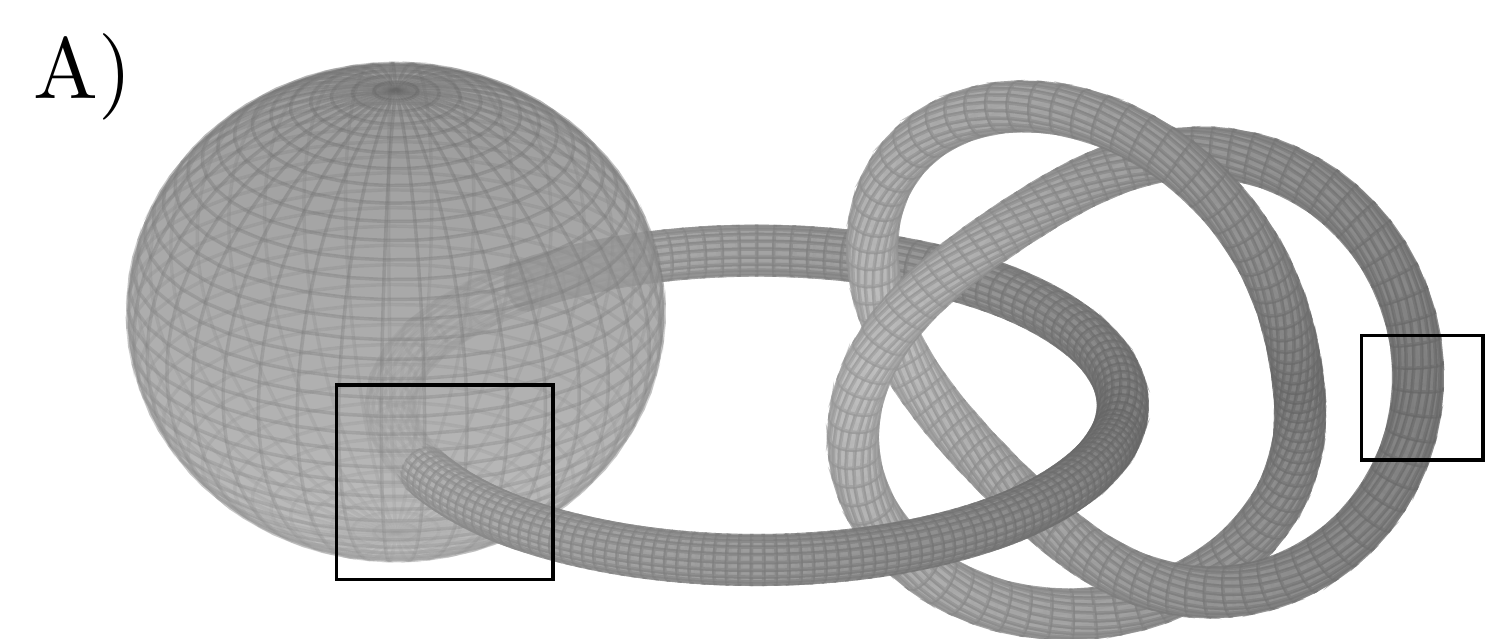}

\centering

\includegraphics[scale=0.7]{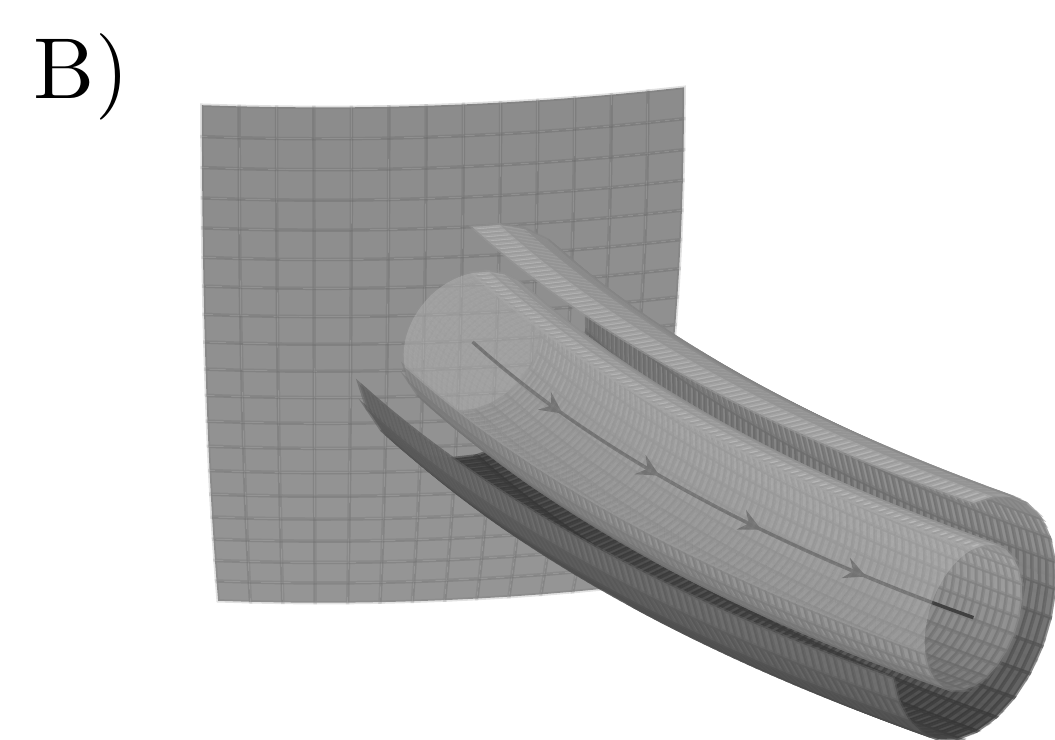}
\hspace{1cm}
\includegraphics[scale=0.7]{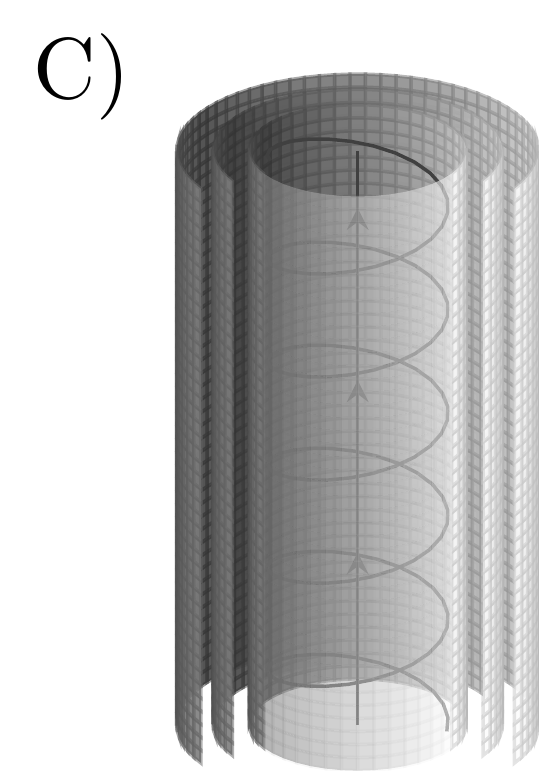}

\caption{A) Collection of knotted and linked vortex tubes of the strong Beltrami field $u_0$, $\{\Phi'(\mathcal{T}_\varepsilon(\Gamma_0)),\Phi'(\mathcal{T}_\varepsilon(\Gamma_1))\}$, respectively homeomorphic to the unknot and to the trefoil. B) Transverse intersection of the vortex tube $\Phi'(\mathcal{T}_\varepsilon(\Gamma_0))$ and the interior domain $G$. Here we have zoomed in the squared region on the left side of the above figure, showing the smaller outward pointing $(\rho_0,T,\delta)$-stream tube of $u_0$ that emerges from $\Sigma$. The perturbation $\varphi$ of the constant proportionality factor $\lambda$ will be supported there. C) Zoom of the vortex tube $\Phi'(\mathcal{T}_\varepsilon(\Gamma_1))$ with trefoil knot. It shows the internal structure of such vortex tube of $u_0$, which contains uncountably many nested tori and knotted vortex lines.}
\label{fig:Fig6}
\end{figure}

We recorded in Equations~\eqref{equ0}-\eqref{eqtu0} that $u_0$ is of the form
\[
u_0=\frac{\curl(\curl+\lambda)}{2\lambda^2}\sum_{l=0}^{L}\sum_{m=-l}^{l}c_l^mj_l(\lambda \vert x\vert)Y_{l}^m\left(\frac{x}{\vert x\vert}\right).
\]
Since $u_0$ is obviously real-valued, it is the real part of the vector field
\[
v_0=\frac{\curl(\curl+\lambda)}{2\lambda^2}\sum_{l=0}^{L}\sum_{m=-l}^{l}c_l^mh_l^{(1)}(\lambda \vert x\vert)Y_{l}^m\left(\frac{x}{\vert x\vert}\right),
\]
where $h_l^{(1)}:=j_l+ iy_l$ is the spherical Hankel function of $l$-th order and $y_l$ denotes the spherical Bessel function of the second kind and $l$-th order. By construction, $v_0$ satisfies the Beltrami equation (and in particular is smooth) in $\RR^3\backslash\{0\}$, while it diverges at the origin due to the presence of a Bessel function of the second kind. In particular, it is a Beltrami field in~$\Omega$.

The advantage of $v_0$ is that, as the Hankel function $h_l^{(1)}$ has been chosen to satisfy the scalar radiation condition
\[
(\partial_r -i\lambda)h_l^{(1)}(\lambda r)=o(r^{-1}),
\]
it is straightforward to check that $v_0\in C^{k+1,\alpha}(\overline{\Omega},\CC^3)$ is a complex-valued solution to the Beltrami equation in the exterior domain $\Omega$ which satisfies the $L^1$ SMB radiation condition (\ref{CondicionRadiacion.Beltrami.L1SilverMullerBeltrami.form}) and the weak $L^1$ decay property (\ref{CondCaidaBeltrami.form}) (see \cite[Equation 2.41]{ColtonKress} along with Remark \ref{HelmholtzBeltrami.relacion.obs} and Figure \ref{fig:Fig5}). It is also apparent that $\mathcal{T}(\Sigma,u_0)\subseteq \Phi'(\mathcal{T}_\varepsilon(\Gamma_0))$ is a $(\rho_0,T,\delta)$-stream tube of $u_0$ by construction (see Figure \ref{fig:Fig6}), and that $\lambda\sim\varepsilon^3$ can be prevented from being a Dirichlet eigenvalue of the Laplace operator in the interior domain $G$ as long as $\varepsilon$ is taken small enough. Then, we are ready to apply the convergence Theorem \ref{paso.limite.teo} for the modified Grad--Rubin method starting up with the strong Beltrami field $u_0$. This result ensures the existence of $\delta_0>0$ so that whenever $\Vert \varphi^0\Vert_{C^{k+1,\alpha}(\Sigma)}\leq \delta_0$, then there exists a generalized Beltrami field $u\in C^{k+1,\alpha}(\overline{\Omega},\RR^3)$ and a perturbation $\varphi\in C^{k,\alpha}(\overline{\Omega})$ solving the exterior boundary value problem
$$\left\{\begin{array}{ll}
\curl u=(\lambda+\varphi)u, & x\in\Omega,\\
\divop u=0, & x\in \Omega,\\
u\cdot \eta=u_0\cdot \eta, & x\in S,\\
\varphi=\varphi^0, & x\in \Sigma.
\end{array}\right.$$
Furthermore, $u=O\left(\vert x\vert^{-1}\right)$ as $\vert x\vert\rightarrow +\infty$, $\mathcal{T}(\Sigma,u)$ is a $(\rho_0,T,\delta/2)$ stream tube of $u$, $\varphi$ is compactly supported in the closure of such stream tube and
$\Vert u-u_0\Vert_{C^{k+1,\alpha}(\Omega)}$
can be made arbitrarily small. In view of the structural stability of the vortex tubes of $u_0$, the theorem follows.
\end{proof}

\section{Local stability of generalized Beltrami fields}
\label{Ch.local}

Our objective in this section is to show that, in fact, any generalized Beltrami field possesses a local partial stability property which can be essentially regarded as a local version of Theorem~\ref{paso.limite.teo}. We recall that, in view of the results in~\cite{Enciso1}, one cannot prove a full stability result even in arbitrarily small open sets, so we regard this partial stability (where partial is understood in a very precise sense) as a satisfactory counterpart to the results in this paper.

\subsection{A local stability theorem}

We shall next present the local stability result that constitutes the core of this section. The philosophy of this result is that, as one is able to perturb strong Beltrami fields, one should also be able to perturb generalized Beltrami fields in small domains, since in a small region a $C^{k,\alpha}$~function behaves as a constant plus a small perturbation. Somehow, this reduces our effort to estimates similar to the ones that we have already obtained, so our presentation of the proof of this result will be a little sketchier than before. The gist will be to show that, although the strong convergence of the modified Grad--Rubin iterative scheme cannot be granted in $C^{k+1,\alpha}$ for $u_n$ and $C^{k,\alpha}$ for $f_n$, we can pass to the limit in $C^{1,\alpha}$ and $C^{0,\alpha}$ provided that both the domain and the perturbation of the proportionality factor are small enough. Elliptic regularity will then yield the desired high order regularity by a bootstrap argument. 

In order to support our argument, let us first sketch the effect of the size of the domain on the solutions of the next Neumann boundary value problem associated with the inhomogeneous Beltrami equation in some open ball $B_R(x_0)$
\begin{equation}\label{BeltramiNoHomog.ball.eq}
\left\{
\begin{array}{ll}
\curl u-\lambda u=w, & x\in B_R(x_0),\\
u\cdot \eta=0, & x\in \partial B_R(x_0),
\end{array}
\right.
\end{equation}
where $w\in C^{0,\alpha}(\overline{B}_R(x_0),\RR^3)$ has zero flux. We will be interested in the case where $R$ becomes very small. 

This problem has being carefully analyzed in \cite{vonWahl} for bounded domains and in \cite{Kaiser} for exterior unbounded domains in the harmonic case ($\lambda=0$). The non-harmonic counterpart was studied in \cite{Kress} and Section \ref{Beltrami.NoHomogenea.Seccion} for the inhomogeneous Beltrami equation in bounded and exterior domains respectively. In the bounded setting, $\lambda$ has to be assumed ``regular'' (see \cite{Kress}). To this end, notice that taking $\vert\lambda\vert<c/R$ (for an appropriate universal constant $c>0$) prevents $\lambda$ from being an eigenvalue of the Laplacian in $B_R(x_0)$. Hence, $\vert\lambda\vert<c/R$ is a sufficient condition ensuring the well-posedness of (\ref{BeltramiNoHomog.ball.eq}). All the above results provide an estimate for the unique solution $u$ to (\ref{BeltramiNoHomog.ball.eq}) in terms of $w$ of the form
$$\Vert u\Vert_{C^{1,\alpha}(B_R(x_0))}\leq C_{\lambda,R}\Vert w\Vert_{C^{0,\alpha}(B_R(x_0))},$$
where the dependence of the constant $C_{\lambda.R}$ on $\lambda$ and $R$ is not explicit. The next technical result aims to provide some explicit $R$-dependent estimate for $u$ in some space.

\begin{lem}\label{div-curl.ball.estimate.lem}
Let $u\in C^{1,\alpha}(\overline{B}_R(x_0),\RR^3)$ be the unique solution to the Neumann boundary value problem associated with the Beltrami equation (\ref{BeltramiNoHomog.ball.eq}) for $\vert \lambda\vert<c/R$ and $R\in (0,1)$. Then,
\begin{equation}\label{div-curl.ball.estimates.form}
\Vert u\Vert_{C^{1,\alpha}(B_R(x_0))}\leq C R^{-\alpha}\Vert w\Vert_{C^{0,\alpha}(B_R(x_0))},
\end{equation}
for some positive constant $C$ depending on $\alpha$ but not on $u,w,x_0$ or $R$.
\end{lem}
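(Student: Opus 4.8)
The natural approach is a rescaling argument that reduces the estimate on $B_R(x_0)$ to the fixed unit ball $B_1(0)$, where the $C^{1,\alpha}$ bound is already available (from the bounded-domain theory in \cite{Kress}, or the exterior version established in Section~\ref{Beltrami.NoHomogenea.Seccion}), and then track how the various norms transform under the dilation. First I would translate so that $x_0=0$ without loss of generality, and introduce the rescaled field $\widetilde{u}(y):=u(Ry)$ for $y\in B_1(0)$. A direct computation shows that $\curl_y\widetilde{u}(y)=R\,(\curl u)(Ry)$, so that $\widetilde{u}$ solves the inhomogeneous Beltrami equation
\begin{equation*}
\curl\widetilde{u}-\widetilde{\lambda}\,\widetilde{u}=\widetilde{w},\qquad y\in B_1(0),
\end{equation*}
with $\widetilde{\lambda}:=R\lambda$ and $\widetilde{w}(y):=R\,w(Ry)$, together with the boundary condition $\widetilde{u}\cdot\eta=0$ on $\partial B_1(0)$. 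The hypothesis $|\lambda|<c/R$ becomes $|\widetilde{\lambda}|<c$, i.e.\ the rescaled wave number lies in a fixed bounded range, and in particular $\widetilde{\lambda}$ is not a Dirichlet eigenvalue of the Laplacian in $B_1(0)$; hence the fixed-domain well-posedness and Schauder-type estimate apply and yield a constant $C=C(\alpha)$, uniform over $\widetilde{\lambda}$ in this range (one should check that the constant can indeed be taken uniform, e.g.\ by a standard compactness/contradiction argument, or by noting that the relevant layer-potential estimates depend continuously on $\widetilde{\lambda}$ on the compact interval $[-c,c]$), such that
\begin{equation*}
\Vert\widetilde{u}\Vert_{C^{1,\alpha}(B_1(0))}\leq C\,\Vert\widetilde{w}\Vert_{C^{0,\alpha}(B_1(0))}.
\end{equation*}

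The remaining task is bookkeeping of the scaling of H\"older norms. Writing out the definitions, for a function $f$ on $B_R(0)$ and $\widetilde f(y)=f(Ry)$ one has $\Vert\widetilde f\Vert_{C^0(B_1)}=\Vert f\Vert_{C^0(B_R)}$, $\Vert\nabla\widetilde f\Vert_{C^0(B_1)}=R\Vert\nabla f\Vert_{C^0(B_R)}$, and $[\nabla\widetilde f]_{\alpha,B_1}=R^{1+\alpha}[\nabla f]_{\alpha,B_R}$, and similarly $[\widetilde w]_{\alpha,B_1}=R^\alpha[w]_{\alpha,B_R}$ with $\Vert\widetilde w\Vert_{C^0(B_1)}=\Vert w\Vert_{C^0(B_R)}$ for $\widetilde w=R\,w(R\cdot)$ after absorbing the extra factor of $R$. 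Plugging these relations into the fixed-ball estimate and using $0<R<1$ to bound the lower-order powers of $R$ by the worst one, the powers of $R$ on the left are all at least $R$, while on the right the dominant (smallest) power is $R^{-\alpha}$ coming from the seminorm term $[w]_{\alpha}$ once the prefactor $R$ of $\widetilde w$ is distributed; collecting terms gives
\begin{equation*}
\Vert u\Vert_{C^{1,\alpha}(B_R(0))}\leq C\,R^{-\alpha}\,\Vert w\Vert_{C^{0,\alpha}(B_R(0))}
\end{equation*}
with $C$ depending only on $\alpha$, which is the claimed inequality.

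The only genuinely delicate point is the uniformity of the Schauder constant on the unit ball with respect to the rescaled parameter $\widetilde\lambda\in(-c,c)$: one must make sure that the constant in the fixed-domain estimate does not blow up as $\widetilde\lambda$ approaches the endpoints or passes near (but, by choice of $c$, never reaches) a resonance. This is handled by choosing $c$ small enough that $[-c,c]$ avoids the discrete Dirichlet spectrum of the Laplacian in $B_1(0)$ and then invoking the continuous dependence of the boundary-integral operator $\tfrac12 I-T_{\widetilde\lambda}$ and of its inverse on $\widetilde\lambda$ (Proposition~\ref{BeltramiNoHomog.Unicidad.EcuacionIntegral.pro} and Remark~\ref{ExitenciaUnicidadEcIntegral.obs}), together with the continuity in $\widetilde\lambda$ of the kernels $\Gamma_{\widetilde\lambda}$, $\nabla\Gamma_{\widetilde\lambda}$ appearing in the potential-theoretic estimates of Section~\ref{Teoria.Potencial.Tecnicas.Seccion}; a compactness argument on $[-c,c]$ then gives a single constant. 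Everything else is routine scaling algebra, and the proof of Theorem~\ref{perturbacion.local.teo} will feed on \eqref{div-curl.ball.estimates.form} to close the contraction in $C^{1,\alpha}$ once the radius of the domain is taken sufficiently small.
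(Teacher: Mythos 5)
Your proof is correct and follows the same rescaling argument as the paper: dilate to the unit ball via $y=(x-x_0)/R$, apply the fixed-domain Schauder estimate for the Beltrami--Neumann problem on $B_1(0)$ with rescaled wave number $\widetilde\lambda=R\lambda$, and then track the powers of $R$ in the $C^{1,\alpha}$ and $C^{0,\alpha}$ norms, using $R<1$ to bound by the worst exponent. Your explicit remark on the uniformity of the Schauder constant over $\widetilde\lambda\in[-c,c]$ (via continuous dependence of $\tfrac12 I-T_{\widetilde\lambda}$ and a compactness argument) is a detail the paper treats only implicitly by requiring $|\lambda|<c/R$.
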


\begin{proof}
To obtain an explicit $R$-dependent estimate of $u$ in some space, let us perform the next change of variables $y=\frac{x-x_0}{R}$. Then, one obtains the following vector fields in the unit ball centered at the origin:
$$U(y)=u(x),\hspace{0.5cm}W(y)=w(x),$$
solving the next Neumann boundary value problem for the Beltrami equation in $B_1(0)$:
$$
\left\{
\begin{array}{ll}
\curl U-\lambda R\,U=R\,W, & y\in B_1(0),\\
U\cdot \eta=0, & y\in \partial B_1(0).
\end{array}
\right.
$$
Thus, the above-mentioned results yield the following estimate for some $R$-independent positive constant $C$
$$\Vert U\Vert_{C^{1,\alpha}(B_1(0))}\leq C R \Vert W\Vert_{C^{0,\alpha}(B_1(0))},$$
where the assumption $\vert\lambda\vert<c/R$ has been used to avoid the $\lambda$-dependence of the constant $C$. Note that by definition
\begin{align*}
\Vert W\Vert_{C^{0,\alpha}(B_1(0))}&=\Vert w\Vert_{C^0(B_R(x_0))}+R^{\alpha}[w]_{\alpha,B_R(x_0)},\\
\Vert U\Vert_{C^{1,\alpha}(B_1(0))}&=\Vert u\Vert_{C^0(B_R(x_0))}+R\sum_{i=1}^3\Vert \partial_{x_i}u\Vert_{C^0(B_R(x_0))}+R^{1+\alpha}\sum_{i=1}^3 [\partial_{x_i}u]_{\alpha,B_R(x_0)}.
\end{align*}
Since $R\in (0,1)$, then we are led to (\ref{div-curl.ball.estimates.form}).
\end{proof}

Another key ingredient is to show that $C^{1,\alpha}$ vector fields near a non-equilibrium point verify a ``structurally stable'' flow box theorem, to be understood in the next precise sense.

\begin{lem}\label{flow-box.structurally.stable.lem}
Let $u\in C^{1,\alpha}(\Omega,\RR^3)$ be a (nontrivial) vector field and consider some $x_0\in \Omega$ such that $u(x_0)\neq 0$. There exist $R_0>0$ and $\delta_0>$ such that $\overline{B}_{2R_0}(x_0)\subseteq \Omega$, $u$ vanishes nowhere in the ball and for every $0<R<R_0$ there exists some surface $\Sigma_R\subseteq \partial B_{R}(x)$ and a positive function $T_R\in C(\Sigma_R)$ such that for every $v\in C^{1,\alpha}(\overline{B}_{R}(x_0),\RR^3)$ with $\Vert u-v\Vert_{C^{1,\alpha}(B_{R}(x_0))}<\delta_0$, then
$$B_R(x_0)\subseteq \mathcal{T}(\Sigma_R,\overline{v},T_R)\subseteq B_{2R}(x_0).$$
\end{lem}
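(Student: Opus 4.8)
The plan is to reduce everything to the classical flow box (straightening) theorem for the unperturbed field $u$ and then to control the perturbation quantitatively using the $C^1$-dependence of integral curves on the vector field, exactly in the spirit of Lemma~\ref{TuboFlujoRecurr.Perturb.lem}. First I would fix $x_0$ with $u(x_0)\neq 0$ and, by continuity of $u$, choose $R_0>0$ so small that $\overline{B}_{2R_0}(x_0)\subseteq\Omega$ and $|u|\geq c_0>0$ on $\overline{B}_{2R_0}(x_0)$; after a rotation we may assume $u(x_0)$ points in the direction $-e_3$ (or any fixed direction), so that near $x_0$ all integral curves of $u$ cross the ``cap'' of $\partial B_R(x_0)$ centered at $x_0+Re$ (with $e$ the direction of $u(x_0)$) transversally, with $u\cdot\eta\geq c_0/2$ there, for every $R<R_0$ after possibly shrinking $R_0$. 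This cap, intersected with a fixed solid cone around $x_0+\RR^+e$, is the candidate $\Sigma_R\subseteq\partial B_R(x_0)$. Using Proposition~\ref{FlujoHolder.pro} applied to a Calderón extension $\overline u$ of $u$, every integral curve of $u$ starting at a point of $\Sigma_R$ stays in $B_{2R}(x_0)$ for a time $\lesssim R/c_0$ and sweeps out a flow box; a compactness/continuity argument (uniformly in $R<R_0$, after rescaling $y=(x-x_0)/R$, which turns the statement into a fixed, $R$-independent one on $B_1(0)$) shows that there is a continuous positive height function $T_R\in C(\Sigma_R)$ and that $B_R(x_0)\subseteq\mathcal{T}(\Sigma_R,\overline u,T_R)\subseteq B_{2R}(x_0)$, with some margin to spare (say $\mathcal{T}(\Sigma_R,\overline u,T_R)\subseteq B_{(3/2)R}(x_0)$).

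Next I would handle the perturbation. Given $v\in C^{1,\alpha}(\overline{B}_R(x_0),\RR^3)$ with $\|u-v\|_{C^{1,\alpha}(B_R(x_0))}<\delta_0$, I extend $v$ to $\overline v=\mathcal{P}(v)$ via Proposition~\ref{ExtensionHolder.pro}; by property~(4) of that proposition the $C^1$-distance between $\overline u$ and $\overline v$ on $\RR^3$ is still $O(\delta_0)$. Standard Gronwall estimates for ODEs (the same ones underlying Lemma~\ref{TuboFlujoRecurr.Perturb.lem}) give that, on the time interval $[0,T_R]$ with $T_R\lesssim R/c_0\lesssim R_0/c_0$, the flow maps of $\overline u$ and $\overline v$ differ by at most $C\,\delta_0$ uniformly, where $C$ depends only on $\|u\|_{C^1}$, $R_0$ and $c_0$; in particular, for $\delta_0$ small enough $v\cdot\eta\geq c_0/4>0$ on $\Sigma_R$, so the stream tube $\mathcal{T}(\Sigma_R,\overline v,T_R)$ is well-defined, and it is a $C\delta_0$-deformation of $\mathcal{T}(\Sigma_R,\overline u,T_R)$. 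Since the latter contains $B_R(x_0)$ with a margin of order $R$ and is contained in $B_{(3/2)R}(x_0)$, while the deformation size $C\delta_0$ is \emph{uniform in $R$} only after we observe that the natural comparison takes place in the rescaled picture — i.e.\ we must compare $U(y)=u(x_0+Ry)$ and $V(y)=v(x_0+Ry)$ on $B_1(0)$, where $\|U-V\|_{C^1(B_1(0))}\leq\|u-v\|_{C^1(B_R(x_0))}<\delta_0$ — the inclusions $B_1(0)\subseteq\mathcal{T}(\Sigma_1,\overline V,T_1)\subseteq B_2(0)$ follow for $\delta_0$ below a fixed threshold, and scaling back yields $B_R(x_0)\subseteq\mathcal{T}(\Sigma_R,\overline v,T_R)\subseteq B_{2R}(x_0)$.

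The main technical obstacle is precisely the uniformity in $R$: a naive bound on the perturbation of the flow would degrade or improve with $R$ and could destroy either inclusion. The fix, as indicated above, is to do the whole comparison after the rescaling $y=(x-x_0)/R$, so that one works on the \emph{fixed} ball $B_1(0)$ with a rescaled field $U_R(y)=u(x_0+Ry)$ whose $C^1$ norm on $B_1(0)$ is bounded (indeed $\|U_R\|_{C^1(B_1(0))}\leq\|u\|_{C^1(B_{R_0}(x_0))}$ since $R<R_0<1$) and whose value at $y=0$ has norm $\geq c_0$; the rescaled perturbation $\|U_R-V_R\|_{C^1(B_1(0))}$ is bounded by $\|u-v\|_{C^1(B_R(x_0))}<\delta_0$, and the cone-cap $\Sigma_1$, the height $T_1$, and the double inclusion become an $R$-independent statement about a single straightened flow, which then transfers back verbatim. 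The rest is the routine Gronwall/flow-box bookkeeping already encapsulated in Propositions~\ref{FlujoHolder.pro} and~\ref{TuboFlujo.parametr.pro} and Lemma~\ref{TuboFlujoRecurr.Perturb.lem}, which I would not reproduce in detail.
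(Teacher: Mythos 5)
The paper does not actually give a proof of this lemma: immediately after stating it, the authors remark that ``the proof follows the same lines as Lemma~\ref{TuboFlujoRecurr.Perturb.lem}'' and skip it, so there is no in-paper argument to compare against. Your overall skeleton is a reasonable way to carry it out: a flow-box picture for the frozen field $u$ near $x_0$, the Gronwall-type flow comparison already encapsulated in Proposition~\ref{FlujoHolder.pro} and Lemma~\ref{TuboFlujoRecurr.Perturb.lem}, and the rescaling $y=(x-x_0)/R$ so that $U_R(y)=u(x_0+Ry)$, $V_R(y)=v(x_0+Ry)$ satisfy $\|U_R\|_{C^1(B_1(0))}\leq\|u\|_{C^1}$, $|U_R(0)|\geq c_0$ and $\|U_R-V_R\|_{C^1(B_1(0))}\leq\|u-v\|_{C^1(B_R(x_0))}$ for $R<1$; this is exactly what makes the threshold $\delta_0$ uniform in $R$.

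There is, however, a concrete error in your choice of $\Sigma_R$, and a related point you gloss over. You place $\Sigma_R$ on the cap of $\partial B_R(x_0)$ around $x_0+Re$, where $e$ is the direction of $u(x_0)$, so that $u\cdot\eta\geq c_0/2$ there. But $\mathcal{T}(\Sigma_R,\overline v,T_R)=\{X^{\overline v}(t;0,x):\,x\in\Sigma_R,\ 0<t<T_R(x)\}$ is swept out by flowing \emph{forward} in time from $\Sigma_R$; on the cap you chose the field points outward, so the tube lives on the outflow side and cannot contain $B_R(x_0)$. The surface must sit on the inflow side, around $x_0-Re$, where $u\cdot\eta\leq -c_0/2$. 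The subtler point is that a cap on which $|u\cdot\eta|$ stays bounded below by a fixed constant (equivalently, a cap of half-angle $\theta_0<\pi/2$) is not large enough for the inclusion $B_R(x_0)\subseteq\mathcal{T}$: in the rescaled, nearly constant-field picture the tube from such a cap is a slanted cylinder of cross-sectional radius $R\sin\theta_0<R$ and misses an annular region of $B_R(x_0)$ near the equator where $(x-x_0)\perp e$. To cover all of $B_R(x_0)$ one must take $\Sigma_R$ to be essentially the entire inflow hemisphere (and in fact slightly more, to absorb the $O(\delta_0)$ tilt that a generic perturbation $v$ induces on the inflow set); but then $u\cdot\eta$ degenerates to zero at $\partial\Sigma_R$, so the transversality is not uniform and the ``routine flow-box bookkeeping'' you defer to is no longer routine there. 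Your proof plan needs either an explicit $O(\delta_0)$ angular enlargement of $\Sigma_R$ together with a careful discussion of the grazing streamlines, or a weakening of the inner inclusion (e.g.\ to $B_{(1-\vep)R}(x_0)\subseteq\mathcal{T}$) before the argument closes.
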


\begin{figure}[t]
\centering
\includegraphics[scale=0.75]{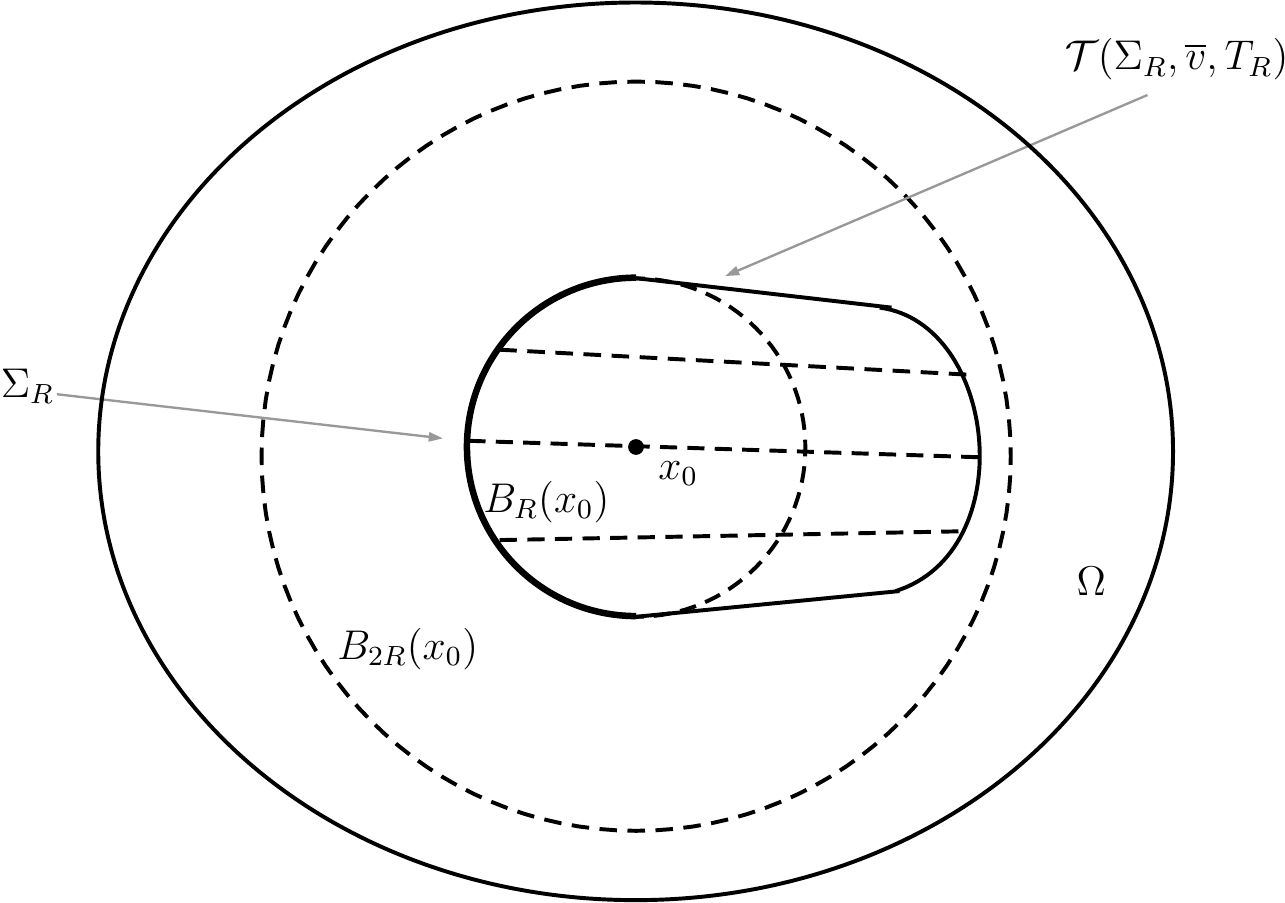}
\caption{Flow box $\mathcal{T}(\Sigma_R,\overline{v},T_R)$ covering the small ball $B_R(x_0)$.}
\label{fig:FigLocal-pics}
\end{figure}

Here, the above stream tube reads
$$\mathcal{T}(\Sigma_R,\overline{v},T_R):=\{X^{\overline{v}}(t;0,x):\,x\in \Sigma_R,\,t\in (0,T_R(x))\},$$
$\overline{v}$ is the Calder\'on extension of $v$ from $B_R(x_0)$ to $\overline{B}_{2R_0}(x_0)$ (Proposition \ref{ExtensionHolder.pro}) and the height $T_R$ of the stream tube is not constant but it continuously depends, stream line by stream line, on the base point $x\in \Sigma_R$ (see Figure \ref{fig:FigLocal-pics}). Furthermore, the parametrizations $\mu_R$ of $\Sigma_R$ can be normalized by choosing 
$$\mu_R(s)=R\mu(s),\ s\in D_R,$$
for some open subset $D_R\subseteq D_1(0)$ of the unit disc centered at $0$, and some local parametrization of the unit sphere $\mu:D_1(0)\longrightarrow \partial B_1(x_0)$. Since the proof follows the same lines as Lemma \ref{TuboFlujoRecurr.Perturb.lem} in Section \ref{Esquema.Iterativo.Seccion}, we skip it and pass to the central result of this section.

\begin{theo}\label{perturbacion.local.teo}
Let $u_0$ be a nontrivial generalized Beltrami field of class $C^{k+1,\alpha}(\Omega,\RR^3)$, where $k\in\NN$ and $\alpha\in (0,1)$, and consider its (nonconstant) proportionality factor $f_0\in C^{k,\alpha}(\Omega)$. Take some nonequilibrium point $x_0\in \Omega$ of $u_0$ and fix some $\varepsilon_0>0$. Then, for each small enough radius $R>0$ there is some surface $\Sigma_R\subseteq \partial B_R(x_0)$ and some constant $\delta_R>0$ so that for every $\varphi^0\in C^{k+1,\alpha}(\Sigma_R)$ with $\Vert \varphi^0\Vert_{C^{k+1,\alpha}(\Sigma_R,\mu_R)}<\delta_R$ there exist $\varphi\in C^{k,\alpha}(\overline{B}_R(x_0))$ and $u\in C^{k+1,\alpha}(\overline{B}_R(x_0),\RR^3)$ such that $\varphi=\varphi^0$ on $\Sigma_R$ and $u$ is a strong Beltrami field with proportionality factor $f_0+\varphi$ enjoying the same normal component as $u_0$ in $\partial B_R(x_0)$, i.e.,
$$\left\{
\begin{array}{ll}
\curl u=(f_0+\varphi)u, & x\in B_R(x_0),\\
\divop u=0, & x\in B_R(x_0),\\
u\cdot \eta=u_0\cdot \eta, & x\in \partial B_R(x_0).
\end{array}
\right.$$
Furthermore,
$$\Vert u-u_0\Vert_{C^{k+1,\alpha}(B_R(x_0))}\leq \varepsilon_0\Vert u_0\Vert_{C^{k+1,\alpha}(B_R(x_0))}.$$
\end{theo}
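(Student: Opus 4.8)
The plan is to mimic the almost-global argument of Theorem~\ref{paso.limite.teo}, but working in a small ball $B_R(x_0)$ where the nonconstant factor $f_0$ is, up to a small error, a constant. Writing $\lambda_0:=f_0(x_0)$, in $B_R(x_0)$ we have $f_0=\lambda_0+(f_0-\lambda_0)$ with $\|f_0-\lambda_0\|_{C^{0}(B_R(x_0))}=O(R)$ and $\|f_0-\lambda_0\|_{C^{0,\alpha}(B_R(x_0))}=O(R^{1-\alpha})$. Since $u_0(x_0)\neq 0$, Lemma~\ref{flow-box.structurally.stable.lem} provides, for every small $R$, a surface $\Sigma_R\subseteq\partial B_R(x_0)$, its parametrization $\mu_R=R\mu$, and a nonconstant-height flow box $\mathcal{T}(\Sigma_R,\overline{v},T_R)$ covering $B_R(x_0)$ for every $v$ that is $C^{1,\alpha}$-close to $u_0$; this makes the transport step well-posed throughout the iteration. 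The idea is then to run the modified Grad--Rubin scheme
$$
\left\{\begin{array}{ll}\nabla\varphi_n\cdot u_n=0, & x\in B_R(x_0),\\ \varphi_n=\varphi^0, & x\in\Sigma_R,\end{array}\right.\qquad
\left\{\begin{array}{l}\curl u_{n+1}-\lambda_0 u_{n+1}=(f_0-\lambda_0+\varphi_n)u_n, \quad x\in B_R(x_0),\\ u_{n+1}\cdot\eta=u_0\cdot\eta, \quad x\in\partial B_R(x_0),\end{array}\right.
$$
with $u_0$ the given generalized Beltrami field. The inhomogeneous term has zero flux across $\partial B_R(x_0)$ at each step: one checks inductively, exactly as in Theorem~\ref{paso.limite.teo}, that $\divop((f_0-\lambda_0+\varphi_n)u_n)=0$ because $\divop u_n=-\frac1{\lambda_0}\divop((f_0-\lambda_0+\varphi_{n-1})u_{n-1})$ closes up to $\divop u_n=0$, using that $f_0-\lambda_0+\varphi_n$ is a first integral of $u_n$ (for the $f_0$-part this is the hypothesis $u_0\cdot\nabla f_0=0$ transported; more precisely one keeps track that $u_n\cdot\nabla f_0=0$ too, since $u_n\cdot\eta=u_0\cdot\eta$ and $u_0$ is tangent to the level sets of $f_0$ — this needs a small additional check). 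Taking $|\lambda_0|<c/R$ (automatic for $R$ small, since $\lambda_0$ is fixed) guarantees $\lambda_0$ is not a Dirichlet eigenvalue of the Laplacian in $B_R(x_0)$, so the Neumann problem for the inhomogeneous Beltrami equation is uniquely solvable by the bounded-domain analogue of Theorem~\ref{BeltramiNoHomog.teo} (or equivalently the results of \cite{Kress}).

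The heart of the argument is the contraction estimate, and here is where the smallness of $R$ does the work that smallness of $\|\varphi^0\|$ alone did before. By Lemma~\ref{div-curl.ball.estimate.lem} the solution operator of the linearized Beltrami Neumann problem in $B_R(x_0)$ satisfies $\|u_{n+1}-u_n\|_{C^{1,\alpha}(B_R(x_0))}\le CR^{-\alpha}\|(f_0-\lambda_0)(u_n-u_{n-1})+\varphi_n u_n-\varphi_{n-1}u_{n-1}\|_{C^{0,\alpha}(B_R(x_0))}$. The term with the prefactor $f_0-\lambda_0$ contributes, via the product (Leibniz) estimate and $\|f_0-\lambda_0\|_{C^{0,\alpha}(B_R(x_0))}=O(R^{1-\alpha})$, a bound of the form $C R^{-\alpha}\cdot O(R^{1-\alpha})\|u_n-u_{n-1}\|_{C^{1,\alpha}}=O(R^{1-2\alpha})\|u_n-u_{n-1}\|_{C^{1,\alpha}}$, so one restricts to $\alpha<1/2$ (harmless — choose $\alpha$ small for the iteration and recover higher Hölder regularity afterwards by bootstrap); the remaining terms $\varphi_n u_n-\varphi_{n-1}u_{n-1}$ are handled exactly as in Theorem~\ref{paso.limite.teo} via the transport estimate Theorem~\ref{problematransporte.teo} and the stability Corollary~\ref{problematransporte.perturb.cor}, contributing $C_R\|\varphi^0\|_{C^{k+1,\alpha}(\Sigma_R,\mu_R)}\|u_n-u_{n-1}\|_{C^{1,\alpha}}$. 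Choosing first $R$ small so that the $O(R^{1-2\alpha})$ piece is below $1/4$, and then $\delta_R$ small so the $\varphi^0$-piece is below $1/4$, yields a genuine contraction in $C^{1,\alpha}$ for $u_n$ and, via Corollary~\ref{problematransporte.perturb.cor}, in $C^{0,\alpha}$ for $\varphi_n$. Passing to the limit gives $u\in C^{1,\alpha}(\overline{B}_R(x_0),\RR^3)$, $\varphi\in C^{0,\alpha}(\overline{B}_R(x_0))$ with $\curl u=(f_0+\varphi)u$, $\divop u=0$, $u\cdot\eta=u_0\cdot\eta$, $\varphi|_{\Sigma_R}=\varphi^0$, together with the closeness $\|u-u_0\|_{C^{1,\alpha}}\le \varepsilon_0\|u_0\|_{C^{1,\alpha}}$ by summing the geometric series (one arranges the smallness thresholds so that the cumulative perturbation is $\le\varepsilon_0$, as in the proof of Theorem~\ref{paso.limite.teo}).

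It remains to upgrade the regularity to $C^{k+1,\alpha}$ for $u$ and $C^{k,\alpha}$ for $\varphi$, and to turn the $C^{1,\alpha}$-closeness into $C^{k+1,\alpha}$-closeness. This is a bootstrap: $\varphi$ is a first integral of $u$ transported from $\varphi^0\in C^{k+1,\alpha}(\Sigma_R)$ along the flow of $u$; once $u$ is known to be $C^{m,\alpha}$, Theorem~\ref{problematransporte.teo} (its local, ball version) gives $\varphi\in C^{\min(m,k+1),\alpha}$, and then $w:=(f_0-\lambda_0+\varphi)u$ has one more—well, the same—order of regularity, so the Schauder estimate Corollary~\ref{BeltramiNoHomog.EstimacionSchauder.cor} (bounded-domain version) upgrades $u$ one notch, until one reaches $u\in C^{k+1,\alpha}$, $\varphi\in C^{k,\alpha}$ (the ceiling being set by the data $f_0\in C^{k,\alpha}$ and $\varphi^0\in C^{k+1,\alpha}$). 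The final closeness estimate $\|u-u_0\|_{C^{k+1,\alpha}(B_R(x_0))}\le\varepsilon_0\|u_0\|_{C^{k+1,\alpha}(B_R(x_0))}$ follows by re-running the telescoping estimate in the $C^{k+1,\alpha}$ norm once the uniform $C^{k+1,\alpha}$ bounds on the $u_n$ are in hand (the $R^{-\alpha}$ prefactor now multiplies higher-order norms of $f_0-\lambda_0$, which still carry a gaining power of $R$ since $f_0$ is fixed and smooth enough near $x_0$). The main obstacle I anticipate is the bookkeeping in the contraction step: unlike the almost-global case, the prefactor $f_0-\lambda_0$ is \emph{not} compactly supported inside the flow tube, so its product with the transported $u_n$ must be controlled on all of $B_R(x_0)$, and one must verify carefully that $u_n\cdot\nabla f_0=0$ persists along the iteration (so that the zero-flux condition and the divergence-free reduction go through) — this is where the restriction that $u_0$ is tangent to the level sets of $f_0$, i.e.\ the generalized-Beltrami structure of the initial datum, is genuinely used.
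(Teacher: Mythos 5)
Your overall strategy is the right one — freeze $f_0$ to $\lambda_0:=f_0(x_0)$, absorb the remainder $f_0-\lambda_0$ into the inhomogeneous term, run a Grad--Rubin iteration in the ball using Lemma~\ref{div-curl.ball.estimate.lem} and Lemma~\ref{flow-box.structurally.stable.lem}, restrict to $\alpha<1/2$ so the $R^{-\alpha}\cdot O(R^{1-\alpha})=O(R^{1-2\alpha})$ term gives a contraction for small $R$, and bootstrap afterwards. This is indeed the architecture of the paper's proof. However, your choice of transport equation breaks the scheme.

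You pose the transport step as $\nabla\varphi_n\cdot u_n=0$, which makes $\varphi_n$ a first integral of $u_n$, and then you need $f_0-\lambda_0+\varphi_n$ to also be a first integral of $u_n$ so that $\divop((f_0-\lambda_0+\varphi_n)u_n)=0$, that the inhomogeneous term has zero flux, and that $\divop u_{n+1}=0$ closes inductively. This requires $u_n\cdot\nabla f_0=0$, which holds for $n=0$ (that is precisely the generalized-Beltrami hypothesis on $u_0$) but fails for $n\ge 1$: $u_1$ is produced by an inhomogeneous Beltrami problem with constant factor $\lambda_0$, it has no relationship with the level sets of $f_0$, and matching the normal trace $u_1\cdot\eta=u_0\cdot\eta$ on $\partial B_R(x_0)$ does not propagate any interior tangency. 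You flag this yourself as ``a small additional check'' and ``the main obstacle,'' but it is not a check; with your scheme the assertion $u_n\cdot\nabla f_0=0$ is simply false, and so the zero-flux condition, the divergence-free reduction, and the limiting identity $\curl u=(f_0+\varphi)u$ with $\divop u=0$ all fail. The paper fixes this by using the \emph{inhomogeneous} transport equation
$$
\nabla\varphi_n\cdot u_n=-\nabla f_0\cdot u_n \quad\text{in }B_R(x_0),\qquad \varphi_n=\varphi^0 \text{ on }\Sigma_R,
$$
which forces $u_n\cdot\nabla(f_0+\varphi_n)=0$ at every step by construction, regardless of whether $u_n$ is tangent to the level sets of $f_0$ alone. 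With this change, all your estimates (the $R^{-\alpha}$ bound from Lemma~\ref{div-curl.ball.estimate.lem}, the $O(R^{1-\alpha})$ Hölder norm of the remainder, the transport estimates as in Theorem~\ref{problematransporte.teo} and Corollary~\ref{problematransporte.perturb.cor}, and the elliptic bootstrap) go through essentially as you describe.
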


\begin{proof}
The proof has two steps. First, we will prove the theorem for low H\"{o}lder exponents and regularity (namely, $\alpha\in (0,1/2)$ and $k=0$). Second, we will show a bootstrap argument based on elliptic gain of regularity that will raise the estimates in the first step to its full strength and will conclude the proof of the theorem for general regularity and H\"{o}lder exponents.

Then, let us first assume that $\alpha\in (0,1/2)$, define $\lambda_0:=f_0(x_0)$ and fix some radius $R_0>0$ so that $\overline{B}_{2R_0}(x_0)\subseteq \Omega$, $u_0$ vanishes nowhere in $\overline{B}_{2R_0}(x_0)$ and the assertions in Lemma \ref{flow-box.structurally.stable.lem} fulfil. Without loss of generality, we can assume that $R_0<\min\{1,c/\vert \lambda_0\vert\}$. Moreover, note that the homogeneous generalized Beltrami equation can be restated as an inhomogeneous Beltrami equation with constant proportionality factor and an inhomogeneous term taking the form of a small remainder, i.e.,
\begin{equation}\label{BeltamiNoHomog.restated.remainder.eq}
\curl u_0-\lambda_0u_0=\mathcal{R}(x-x_0)u_0, \hspace{0.25cm} x\in \Omega,
\end{equation}
where $f_0(x)=\lambda_0+\mathcal{R}(x-x_0)$ for every $x\in\overline{B}_{2R_0}(x_0)$, i.e.,
$$\mathcal{R}(z):=\left(\int_0^1 \nabla f_0(x_0+\theta z) \,d\theta\right)\cdot z,\hspace{0.4cm}z\in \overline{B}_{2R_0}(0).$$

Next, consider the following modified iterative scheme of Grad--Rubin type. It consists of a sequence of transport equations
\begin{equation}\label{Grad-Rubin-modied.ball.eq1}
\left\{\begin{array}{ll}
\nabla\varphi_n\cdot u_n=-\nabla f_0\cdot u_n, & x\in B_R(x_0),\\
\varphi_n=\varphi^0, & x\in \Sigma_R.,
\end{array}\right.
\end{equation}
along with a sequence of boundary value problems associated with the inhomogeneous Beltrami equation
\begin{equation}\label{Grad-Rubin-modied.ball.eq2}
\left\{\begin{array}{ll}
\curl u_{n+1}-\lambda_0 u_{n+1}=\mathcal{R}(x-x_0)u_n+\varphi_n u_n, & x\in B_R(x_0),\\
u_{n+1}\cdot \eta=u_0\cdot \eta, & x\in \partial B_R(x_0).
\end{array}\right.
\end{equation}
Note that they have been chosen in a consistent way so that as long as $\{u_n\}_{n\in\NN}$ and $\{\varphi_n\}_{n\in\NN}$ have limits (in some sense), then the limits $u$ and $\varphi$ give rise to a generalized Beltrami field whose proportionality factor is a perturbation $f_0+\varphi$ of the initial factor $f_0$. Without loss of generality, we can assume that $\lambda_0\neq 0$ (in the case $\lambda_0=0$ would need the additional condition $\divop u_{n+1}=0$). 

Let us show that for every $n\in\NN$ both $u_{n+1}\in C^{1,\alpha}(\overline{B}_R(x_0),\RR^3)$ and $f_n\in C^{0,\alpha}(\overline{B}_R(x_0))$ are well defined and that
\begin{equation}\label{paso.limite.paso.n.local}
\left\{\begin{array}{rcl}
\displaystyle\Vert u_{n+1}-u_n\Vert_{C^{1,\alpha}(B_R(x_0))}&\leq&\displaystyle\frac{1}{2^n}\Vert u_1-u_0\Vert_{C^{1,\alpha}(B_R(x_0))}<\min\{\varepsilon_0,1\}\frac{1}{2^{n+1}}\Vert u_0\Vert_{C^{1,\alpha}(B_R(x_0))},\\
\displaystyle\Vert u_{n+1}-u_0\Vert_{C^{1,\alpha}(B_R(x_0))}&\leq&\displaystyle\min\{\varepsilon_0,1\}\sum_{i=1}^{n+1}\frac{1}{2^i}\Vert u_0\Vert_{C^{1,\alpha}(B_R(x_0))},\\
\displaystyle\Vert u_{n+1}\Vert_{C^{1,\alpha}(B_R(x_0))}&\leq&\displaystyle\min\{\varepsilon_0,1\}\sum_{i=0}^{n+1}\Vert u_0\Vert_{C^{1,\alpha}(B_R(x_0))}.
\end{array}\right.
\end{equation}
Let us start with $n=0$. On the one hand, the transport problem (\ref{Grad-Rubin-modied.ball.eq1}) with $n=0$ can be solved in $B_R(x_0)$ as $B_R(x_0)\subseteq \mathcal{T}(\Sigma_R,u_0,T_R)\subseteq B_{2R}(x_0)$ by virtue of Lemma \ref{flow-box.structurally.stable.lem}. Indeed, 
$$\varphi_0(X^{u_0}(t;0,x))=\varphi^0(x)-\int_0^t(\nabla f_0\cdot u_0)(X^{u_0}(\tau,0,x))\,d\tau,\hspace{0.25cm}x\in \Sigma_R,\,t\in (0,T_R(x))$$
defines a solution in $\mathcal{T}(\Sigma_R,u_0,T_R)$ and, in particular, in $B_R(x_0)$. Now, notice that
\begin{align*}
\int_{\partial B_R(x_0)}&\left(\mathcal{R}(\cdot-x_0)u_0 +\varphi_0 u_0\right)\cdot \eta\,dS+\lambda_0\int_{\partial B_R(x_0)}u_0\cdot \eta\,dS\\
&=\int_{B_R(x_0)}\left(\nabla(f_0+\varphi_0)\cdot u_0+(f_0+\varphi_0)\divop u_0\right)\,dx=0,
\end{align*}
and $\lambda$ is regular (see \cite{Kress}) with respect to the inhomogeneous problem (\ref{Grad-Rubin-modied.ball.eq2}) with $n=0$ because $R<R_0<c/\vert \lambda_0\vert$. Hence, (\ref{Grad-Rubin-modied.ball.eq2}) has an unique solution $u_1\in C^{1,\alpha}(\overline{B}_R(x_0),\RR^3)$ by virtue of the existence theorem in \cite{Kress}. Notice that since $\divop u_0=0$ and the first integral equations in (\ref{Grad-Rubin-modied.ball.eq1}) hold, then
$$-\lambda_0\divop u_{1}=(f_0+\varphi_0)\divop u_0+\nabla (f_0+\varphi_0)\cdot u_0=0.$$
Furthermore, $u_1-u_0$ solves the Neumann boundary value problem
$$\left\{\begin{array}{ll}
\displaystyle(\curl-\lambda_0)(u_1-u_0)=\mathcal{R}(x-x_0)u_0+\varphi_0 u_0, & x\in B_R(x_0),\\
\displaystyle(u_1-u_0)\cdot \eta=0, & x\in B_R(x_0).
\end{array}\right.$$
Consequently,
\begin{align*}
\Vert u_1-u_0\Vert_{C^{1,\alpha}(B_R(x_0))}&\leq \frac{C}{R^\alpha}\Vert \mathcal{R}(\cdot-x_0)u_0+\varphi_0u_0\Vert_{C^{0,\alpha}(B_R(x_0))}\\
&\leq \frac{C}{R^\alpha}\left(\Vert \mathcal{R}(\cdot-x_0)\Vert_{C^{0,\alpha}(B_R(x_0))}+\Vert \varphi_0\Vert_{C^{0,\alpha}(B_R(x_0))}\right)\Vert u_0\Vert_{C^{0,\alpha}(B_R(x_0))}.
\end{align*}
A similar result to that in Theorem \ref{problematransporte.teo} yields the estimate
\begin{multline*}
\Vert \varphi_0\Vert_{C^{0,\alpha}(B_R(x_0))}\leq \left(\Vert \varphi^0\Vert_{C^{1,\alpha}(\Sigma_R,\mu_R)}+R^{1-\alpha}+\Vert T_R\Vert_{C^0(\Sigma_R)}\right)\\
\times\kappa\left(\Vert u_0\Vert_{C^{1,\alpha}(B_R(x_0))},\Vert T_R\Vert_{C^0(\Sigma_R)},\Vert \mu_R\Vert_{C^{1,\alpha}(B_R(x_0))}\right),
\end{multline*}
for some separately increasing function $\kappa$.  Regarding the remainder, it is clear that
\begin{equation}\label{paso.limite.local.remainder.ineq}
\Vert \mathcal{R}(\cdot-x_0)\Vert_{C^{0,\alpha}(B_R(x_0))}\leq C R^{1-\alpha},
\end{equation}
which is indeed the reason behind the estimate for $\varphi_0$ that we stated above. Notice that although $\mathcal{R}$ is clearly bounded above by $R$ in $B_R(0)$, the $\alpha$-H\"{o}lder constant is $O(R^{1-\alpha})$. Specifically, take $z_1,z_2\in B_R(0)$ and split $\mathcal{R}$ as follows
$$\mathcal{R}(z_1)-\mathcal{R}(z_2)=I+II,$$
where
\begin{align*}
I&:=\left(\int_0^1\nabla f_0(x_0+\theta z_1)\,d\theta\right)\cdot (z_1-z_2),\\
II&:=\left(\int_0^1(\nabla f_0(x_0+\theta z_1)-\nabla f_0(x_0+\theta z_1))\,d\theta\right)\cdot z_2.
\end{align*}
By virtue of the $\alpha$-H\"{o}lder continuity of $\nabla f_0$, $II$ can be bounded as follows:
$$\vert II\vert\leq \Vert f_0\Vert_{C^{1,\alpha}(B_R(x_0))}\vert z_2\vert\int_0^1 \vert z_1-z_2\vert^\alpha\theta^\alpha\,d\theta\leq \frac{\Vert f_0\Vert_{C^{1,\alpha}(B_R(x_0))}}{\alpha+1}R\vert z_1-z_2\vert^\alpha.$$
The first term enjoys the bound
$$\vert I\vert\leq \Vert \nabla f_0\Vert_{C^0(B_R(x_0))}\vert z_1-z_2\vert\leq 2^{1-\alpha}\Vert \nabla f_0\Vert_{C^0(B_R(x_0))}R^{1-\alpha}\vert z_1-z_2\vert^\alpha,$$
which then leads to the desired estimate (\ref{paso.limite.local.remainder.ineq}). Notice that one could have raised the $R^{1-\alpha}$ power to $R$ if one assumed that $\nabla f_0(x_0)=0$. 

Also, note that $\Vert \mu_R\Vert_{C^{1,\alpha}(D_R)},\Vert T_R\Vert_{C^0(\Sigma_R)}\leq C_0 R$ for some universal constant $C_0>0$. Then, the above estimate for $u_1-u_0$ can be written as
\begin{multline*}
\Vert u_1-u_0\Vert_{C^{0,\alpha}(B_R(x_0))}\\
\leq \frac{C}{R^\alpha}\left(\Vert \varphi^0\Vert_{C^{1,\alpha}(\Sigma_R,\mu_R)}+2R^{1-\alpha}\right)\left\{1+\kappa\left(\Vert u_0\Vert_{C^{1,\alpha}(B_R(x_0))},C_0,\Vert \mu\Vert_{C^{1,\alpha}(D_1(0))}\right)\right\}\Vert u_0\Vert_{C^{0,\alpha}(B_R(x_0))}.
\end{multline*}
Hereafter we will assume that
\begin{align}\label{paso.limite.local.hipotesis.R.delta0.ineq}
\begin{split}
C\left(\frac{\delta_R}{R^\alpha}+2R^{1-2\alpha}\right)&\left\{1+\kappa\left(2\Vert u_0\Vert_{C^{1,\alpha}(B_R(x_0))},C_0,\Vert \mu\Vert_{C^{1,\alpha}(D_1(0))}\right)\right.\\
&\left.+2\kappa\left(2\Vert u_0\Vert_{C^{1,\alpha}(B_R(x_0))},C_0,\Vert \mu\Vert_{C^{1,\alpha}(D_1(0))}\right)^2\Vert u_0\Vert_{C^{1,\alpha}(B_R(x_0))} \right\}<\frac{\varepsilon_0}{2},
\end{split}
\end{align}
with $\varepsilon_0\in (0,1)$ small enough so that $\varepsilon_0\Vert u_0\Vert_{C^{0,\alpha}(B_R(x_0))}<\delta_0$. Since we are considering low H\"{o}lder exponents $\alpha\in (0,1/2)$, then we can ensure the existence of small enough $R\in (0,R_0)$ and $\delta_R>0$ enjoying the above property. 

Let us assume that we have already defined $f_{m}\in C^{0,\alpha}(\overline{B}_R(x_0))$ and $u_{m+1}\in C^{1,\alpha}(\overline{B}_R(x_0),\RR^3)$ for every $m<n$ such that they verify (\ref{Grad-Rubin-modied.ball.eq1})--(\ref{paso.limite.paso.n.local}) and $u_m=0$ is divergence-free for every index $m<n$. To close the inductive argument let us prove the result for $m=n$. First, the transport problem (\ref{Grad-Rubin-modied.ball.eq1}) can be uniquely solved in $B_R(x_0)$ by virtue of Lemma \ref{flow-box.structurally.stable.lem}, the inductive hypothesis (\ref{paso.limite.paso.n.local}) and the assumption on $\varepsilon_0$ since
$$\Vert u_n-u_0\Vert_{C^{1,\alpha}(B_R(x_0))}\leq \varepsilon_0\Vert u_0\Vert_{C^{1,\alpha}(B_R(x_0))}<\delta_0.$$
Second, the boundary value problem (\ref{Grad-Rubin-modied.ball.eq2}) can also be uniquely solved since
\begin{align*}
\int_{\partial B_R(x_0)}&\left(\mathcal{R}(\cdot-x_0)u_n +\varphi_n u_n\right)\cdot \eta\,dS+\lambda_0\int_{\partial B_R(x_0)}u_0\cdot \eta\,dS\\
&=\int_{B_R(x_0)}\left(\nabla(f_0+\varphi_n)\cdot u_n+(f_0+\varphi_n)\divop u_n\right)\,dx=0,
\end{align*}
by the inductive hypothesis and $\lambda$ is assumed to be a regular value. Furthermore, a similar argument to that in the step $n=0$ shows that $u_{n+1}$ is divergence-free again. Let us finally obtain the desired estimates for $u_{n+1}-u_n$. To this end, note that $u_{n+1}-u_n$ solves the boundary value problem
$$\left\{\begin{array}{ll}
\displaystyle(\curl-\lambda_0)(u_{n+1}-u_n)=\mathcal{R}(\cdot-x_0)(u_n-u_{n-1})+(\varphi_n -\varphi_{n-1})u_n+\varphi_{n-1}(u_n-u_{n-1}), & \displaystyle x\in B_R(x_0),\\
\displaystyle(u_{n+1}-u_n)\cdot \eta=0, & \displaystyle x\in \partial B_R(x_0).
\end{array}\right.$$
Hence, we arrive at the following bound
\begin{multline*}
\Vert u_{n+1}-u_n\Vert_{C^{1,\alpha}(B_R(x_0))}\leq \frac{C}{R^\alpha}(\Vert \mathcal{R}(\cdot-x_0)\Vert_{C^{0,\alpha}(B_R(x_0))}\Vert u_n-u_{n-1}\Vert_{C^{0,\alpha}(B_R(x_0))}\\
+\Vert \varphi_n-\varphi_{n-1}\Vert_{C^{0,\alpha}(B_R(x_0))}\Vert u_n\Vert_{C^{0,\alpha}(B_R(x_0))}+\Vert \varphi_{n-1}\Vert_{C^{0,\alpha}(B_R(x_0))}\Vert u_n-u_{n-1}\Vert_{C^{0,\alpha}(B_R(x_0))}).
\end{multline*}
On the one hand, the remainder can be bounded above as in (\ref{paso.limite.local.remainder.ineq}). On the other hand, $\Vert \varphi_n\Vert_{C^{0,\alpha}(B_R(x_0))}$ and $\Vert \varphi_n-\varphi_{n-1}\Vert_{C^{0,\alpha}(B_R(x_0))}$ can be estimated as 
\begin{align*}
\Vert \varphi_{n-1}\Vert_{C^{0,\alpha}(B_R(x_0))}\leq& \left(\Vert \varphi^0\Vert_{C^{1,\alpha}(\Sigma_R,\mu_R)}+R^{1-\alpha}+\Vert T_R\Vert_{C^0(\Sigma_R)}\right)\\
&\times\kappa\left(\Vert u_{n-1}\Vert_{C^{1,\alpha}(B_R(x_0))},\Vert T_R\Vert_{C^0(\Sigma_R)},\Vert \mu_R\Vert_{C^{1,\alpha}(B_R(x_0))}\right),\\
\Vert \varphi_n-\varphi_{n-1}\Vert_{C^{0,\alpha}(B_R(x_0))}\leq& \left(\Vert \varphi^0\Vert_{C^{1,\alpha}(\Sigma_R,\mu_R)}+R^{1-\alpha}+\Vert T_R\Vert_{C^0(\Sigma_R)}\right)\\
&\times\kappa\left(\Vert u_{n}\Vert_{C^{1,\alpha}(B_R(x_0))},\Vert T_R\Vert_{C^0(\Sigma_R)},\Vert \mu_R\Vert_{C^{1,\alpha}(B_R(x_0))}\right)\\
&\times\kappa\left(\Vert u_{n-1}\Vert_{C^{1,\alpha}(B_R(x_0))},\Vert T_R\Vert_{C^0(\Sigma_R)},\Vert \mu_R\Vert_{C^{1,\alpha}(B_R(x_0))}\right)\\
&\times\Vert u_n-u_{n-1}\Vert_{C^{1,\alpha}(B_R(x_0))}.
\end{align*}
Consequently, the inductive hypothesis along with our choice (\ref{paso.limite.local.hipotesis.R.delta0.ineq}) leads to the first inequality in (\ref{paso.limite.paso.n.local}) and the remaining two inequalities obviously follows from the first one by virtue of the triangle inequality.

As in Section \ref{Esquema.Iterativo.Seccion}, the first inequality in (\ref{paso.limite.paso.n.local}) shows that $\{u_n\}_{n\in\NN}$ is a Cauchy sequence in $C^{1,\alpha}(\overline{B}_R(x_0),\RR^3)$. By completeness, consider $u\in C^{1,\alpha}(\overline{B}_R(x_0))$ such that
$$u_n\rightarrow u\hspace{0.25cm}\mbox{ in }C^{1,\alpha}(\overline{B}_R(x_0,\RR^3)).$$
Moreover, the same reasoning as above yields the estimate
\begin{align*}
\Vert \varphi_n-\varphi_{m}\Vert_{C^{0,\alpha}(B_R(x_0))}\leq& \left(\Vert \varphi^0\Vert_{C^{1,\alpha}(\Sigma_R,\mu_R)}+R^{1-\alpha}+\Vert T_R\Vert_{C^0(\Sigma_R)}\right)\\
&\times\kappa\left(\Vert u_{n}\Vert_{C^{1,\alpha}(B_R(x_0))},\Vert T_R\Vert_{C^0(\Sigma_R)},\Vert \mu_R\Vert_{C^{1,\alpha}(B_R(x_0))}\right)\\
&\times\kappa\left(\Vert u_{m}\Vert_{C^{1,\alpha}(B_R(x_0))},\Vert T_R\Vert_{C^0(\Sigma_R)},\Vert \mu_R\Vert_{C^{1,\alpha}(B_R(x_0))}\right)\\
&\times\Vert u_n-u_{m}\Vert_{C^{1,\alpha}(B_R(x_0))},
\end{align*}
for every indices $n,m\in\NN$. Then, there exists some constant $K=K(\delta_R,R,\Vert u_0\Vert_{C^{0,\alpha}})>0$ so that
$$\Vert \varphi_n-\varphi_m\Vert_{C^{0,\alpha}(\overline{B}_R(x_0))}\leq K\Vert u_n-u_m\Vert_{C^{1,\alpha}(B_R(x_0))}.$$
Hence, $\{\varphi_n\}_{n\in\NN}$ is also a Cauchy sequence in $C^{0,\alpha}(\overline{B}_R(x_0))$ and one can consider $\varphi\in C^{0,\alpha}(\overline{B}_R(x_0))$ such that
$$\varphi_n\rightarrow \varphi\hspace{0.25cm}\mbox{ in }C^{0,\alpha}(\overline{B}_R(x_0)).$$
Taking limits in (\ref{Grad-Rubin-modied.ball.eq1})-(\ref{Grad-Rubin-modied.ball.eq2}) we are led to a generalized Beltrami field $u\in C^{1,\alpha}(\overline{B}_R(x_0),\RR^3)$ solving
$$
\left\{
\begin{array}{ll}
\curl u=(f_0+\varphi)u, & x\in B_R(x_0),\\
\divop u=0, & x\in B_R(x_0),\\
u\cdot \eta=u_0\cdot \eta, & x\in \partial B_R(x_0),
\end{array}
\right.
$$
for a perturbation $\varphi\in C^{0,\alpha}(\overline{B}_R(x_0))$ of the factor such that $\varphi=\varphi^0$ on $\Sigma_R$.

Let us finally show that $u\in C^{k+1,\alpha}(\overline{B}_R(x_0),\RR^3)$ and $\varphi\in C^{k,\alpha}(\overline{B}_R(x_0))$ by a bootstrap argument based on the elliptic gain of regularity. Recall that the vector-valued boundary problem associated with the Laplacian with relative boundary conditions,
$$\left\{\begin{array}{ll}
\Delta w=F, & x\in B_R(x_0),\\
u\cdot \eta=G, & x\in \partial B_R(x_0),\\
\curl u\times \eta=H, & x\in \partial B_R(x_0),
\end{array}\right.$$
is well known to satisfy the estimate
$$
\|w\|_{C^{l+1,\alpha}(B_R(x_0))}\leq C\big(\|F\|_{C^{l-1,\alpha}(B_R(x_0))}+\|G\|_{C^{l+1,\alpha}(\partial{B_R(x_0)})}+\|H\|_{C^{l,\alpha}(\partial{B_R(x_0)})}\big)\,.
$$
The key observation now is that, by acting with the curl operator on the equation for $u$, it follows that
$$\left\{\begin{array}{ll}
\Delta u=-\curl((f_0+\varphi) u), & x\in B_R(x_0),\\
u\cdot \eta=u_0\cdot\eta, & x\in \partial B_R(x_0),\\
\curl u\times \eta=(f_0+\varphi)u\times \eta, & x\in \partial B_R(x_0).
\end{array}\right.$$
Then, the next hierarchy of inequalities hold for every $l\geq 0$
\begin{multline*}
\Vert u\Vert_{C^{l+1,\alpha}(B_R(x_0))}\\
\leq C(\Vert (f_0+\varphi_0)u\Vert_{C^{l,\alpha}(B_R(x_0))}+\Vert u_0\cdot \eta\Vert_{C^{l+1,\alpha}(\partial B_R(x_0))}+\Vert (f_0+\varphi)u\times \eta\Vert_{C^{l,\alpha}(B_R(x_0))}).
\end{multline*}
We then get that the fact that $u$ is of class $C^{1,\alpha}$ implies that $\varphi$ is of class $C^{0,\alpha}$. In turns, it ensures that $u$ is in $C^{2,\alpha}$ and, repeating the argument as many times as necessary (up to the regularity on $\varphi^0$ and $u_0$, i.e., $C^{k+1,\alpha}$) we derive the desired gain of regularity. Indeed, the estimate
$$\Vert u-u_0\Vert_{C^{1,\alpha}(B_R(x_0))}\leq \varepsilon_0\Vert u_0\Vert_{C^{1,\alpha}(B_R(x_0))},$$
can be promoted to its $C^{k+1,\alpha}$ version, i.e.,
$$\Vert u-u_0\Vert_{C^{k+1,\alpha}(B_R(x_0))}\leq \varepsilon_0\Vert u_0\Vert_{C^{k+1,\alpha}(B_R(x_0))}.$$

So far, we have only taken low H\"{o}lder exponents $\alpha\in (0,1/2)$. Assume now that $u_0\in C^{k+1,\alpha'}(\Omega,\RR^3)$ and $f_0\in C^{k,\alpha'}(\Omega)$ for some $\alpha'\in (\alpha,1)$. In particular, $u_0\in C^{k+1,\alpha}(\overline{B}_{2R_0}(x_0),\RR^3)$ and $\varphi_0\in C^{k,\alpha}(\overline{B}_{2R}(x_0))$. The above argument, yields a strong Beltrami field $u\in C^{k+1,\alpha}(\overline{B}_R(x_0),\RR^3)$ with proportionality factor $f_0+\varphi$ for some perturbation $\varphi\in C^{k,\alpha}(\overline{B}_R(x_0))$ such that $\varphi=\varphi^0$ on $\Sigma_R$ as long as $R$ is small enough and $\Vert \varphi^0\Vert_{C^{k+1,\alpha'}(\Sigma_R)}<\delta_R$. Since 
$$\Vert \varphi^0\Vert_{C^{k+1,\alpha}(\Sigma_R)}=\Vert \varphi^0\circ\mu_R\Vert_{C^{k+1,\alpha}(D_R)}\leq \Vert \varphi^0\circ\mu_R\Vert_{C^{k+1,\alpha'}(D_R)}=\Vert \varphi^0\Vert_{C^{k+1,\alpha'}(\Sigma_R)},$$
then, the above smallness assumption on the $C^{k+1,\alpha}(\Sigma_R)$ norm $\varphi^0$ follows from the corresponding assumption on the $C^{k+1,\alpha'}(\Sigma_R)$ norm, i.e., 
$$\Vert \varphi^0\Vert_{C^{k+1,\alpha'}(\Sigma_R)}<\delta_R.$$
Since $\varphi$ solves
$$\left\{\begin{array}{ll}
\nabla \varphi\cdot u=-\nabla f_0\cdot u, & x\in B_R(x_0),\\
\varphi=\varphi^0, & x\in \Sigma_R,
\end{array}
\right.$$
then, a similar result to that in Theorem \ref{problematransporte.teo} leads to $\varphi\in C^{1,\alpha}(\overline{B}_R(x_0))$ because so is $u$, $f_0$ and $\varphi^0$. In particular $\varphi\in C^{0,\alpha'}(\overline{B}_R(x_0))$ and $u\in C^{0,\alpha'}(\overline{B}_R(x_0),\RR^3)$. Then, the above bootstrap in the Beltrami equation yields $\varphi\in C^{k,\alpha'}(\overline{B}_R(x_0))$ and $u\in C^{k+1,\alpha'}(\overline{B}_R(x_0))$, thereby concluding the proof of the theorem.
\end{proof}

\section{Potential theory techniques for inhomogeneous integral kernels}\label{Teoria.Potencial.Tecnicas.Seccion}
Our goal in this section is to extend some results of classical potential theory  to inhomogeneous kernels like the fundamental solution of the Helmholtz equation $\Gamma_\lambda(x)$ (see e.g.\
\cite{Coifman,David,Gilbarg,Giraud,Li,Miranda1,Miranda2,Semmes,Stein} in the case of homogeneous kernels). While there are some previous results concerning the inhomogeneous case (see \cite{ColtonKress,ColtonKress2,Nedelec} for a study of $\Gamma_\lambda(x)$ with non-zero $\lambda$), only low order H\"{o}lder estimates have been obtained. Our approach roughly follows the treatment of  \cite{Heinemann,Neudert} for the harmonic case ($\lambda=0$), and we will introduce nontrivial modifications to derive  higher order H\"{o}lder estimates of generalized volume and single layer potentials in the inhomogeneous setting. These results were used in Section~\ref{Beltrami.NoHomogenea.Seccion} and, of course, the main point throughout is to be able to consider exterior (unbounded) domains.

\subsection{Inhomogeneous volume and single layer potentials}

In our context, all the integral kernels that we need to consider come from the fundamental solution of the $3$-dimensional Helmholtz equation (\ref{SolucionFundamental.form})
$$
\Gamma_\lambda(z)=\frac{e^{i\lambda \vert z\vert}}{4\pi \vert z\vert}=\frac{1}{4\pi}\left(\frac{\cos(\lambda\vert z\vert)}{\vert z\vert}+i\frac{\sin(\lambda \vert z\vert)}{\vert z\vert}\right),\ z\in \RR^3\setminus\{0\}.
$$
For $\lambda =0$ we recover the Newtonian potential associated to the Laplace equation in $\RR^3$, \cite{Gilbarg,Giraud,Miranda1,Miranda2}. As it is not longer homogeneous, the classical theory cannot be directly applied. 

Fortunately, this kernel can be though to be ``almost homogeneous'' in the following sense. Let us consider the functions
\begin{equation}\label{PhiPsi.SolucionFundamental.form}
\left\{\begin{array}{ll}
\displaystyle\phi_\lambda(r):=\frac{e^{i\lambda r}}{4\pi r}, & r>0,\\
\displaystyle\psi_\lambda(r):=\phi_\lambda(r)-\frac{1}{4\pi r}\equiv\frac{e^{i\lambda r}-1}{4\pi r}, & r>0.
\end{array}\right.
\end{equation}
From the definition one has the following splitting 
\begin{equation}\label{PhiPsi.SolucionFundamental.Descomposicion.form}
\phi_\lambda(r)=\frac{1}{4\pi r}+\psi_\lambda(r),
\end{equation}
and consequently, the following decomposition of the fundamental solution 
\begin{equation}\label{PhiPsi.SolucionFundamental.Descomposicion.Gamma0+Rlambda}
\Gamma_\lambda(z)=\phi_\lambda(\vert z\vert)=\frac{1}{4\pi \vert z\vert}+\psi_\lambda(\vert z\vert)=:\Gamma_0(z)+R_\lambda(z)
\end{equation}
holds. This amounts to a decomposition of the inhomogeneous kernel $\Gamma_\lambda(z)$ into the homogeneous part $\Gamma_0(z)$ and an inhomogeneous remainder $R_\lambda(z)$ enjoying lower order singularities at the origin. This is the main argument supporting our subsequent results: we do not need our whole kernel to be purely homogeneous, but only the principal (or more singular) part. While higher order derivatives of harmonic potentials can be directly controlled through the harmonic kernel $\Gamma_0(z)$ and the classical results in \cite{Gilbarg,Giraud,Miranda1,Miranda2}, it is also important to control the behavior of the higher order derivatives of $R_\lambda(z)$. 

Specifically, we can compute the first derivative of $\psi_\lambda(r)$ and write it by means of homogeneous functions and $\psi_\lambda(r)$ itself
$$
\psi_\lambda'(r)=i\lambda\frac{1}{4\pi r}+\left(i\lambda-\frac{1}{r}\right)\psi_\lambda(r).
$$
As $\psi_\lambda(r)$ is locally bounded near $r=0$ and decay as $r^{-1}$ at infinity, it is globally bounded. Thus,
$$
\vert\psi_\lambda'(r)\vert\leq C\left(1+\frac{1}{r}\right),\ \ r>0.
$$
A recursive reasoning leads to estimates for higher order derivatives of $\psi_\lambda(r)$ of the type
\begin{equation}\label{PhiPsi.SolucionFundamental.Cotas.Derivadas.form}
\vert \psi_\lambda^{(m)}(r)\vert \leq C\left(1+\frac{1}{r^m}\right),\ \ r>0,
\end{equation}
where $C=C(\lambda,m)$ is a nonnegative constant. Consequently, we have the following bounds for $R_\lambda(z)=\psi_\lambda(\vert z\vert)$ and its higher order derivatives
\begin{equation}\label{PhiPsi.SolucionFundamental.Cotas.Derivadas.Rlambda.form}
\vert D^\gamma R_\lambda(z)\vert\leq C\left(1+\frac{1}{\vert z\vert^{\vert \gamma\vert}}\right),
\end{equation}
for every $z\in\RR^3\setminus\{0\}$ and each multi-index $\gamma$, in contrast with the analogous bounds for $\Gamma_0(z)$:

\begin{equation}\label{PhiPsi.SolucionFundamental.Cotas.Derivadas.Gamma0.form}
\vert D^\gamma \Gamma_0(z)\vert\leq C\frac{1}{\vert z\vert^{\vert \gamma\vert+1}}.
\end{equation}

A basic fact is that the remainder $R_\lambda(z)$,  which is not homogeneous, is one degree less singular than $\Gamma_0(z)$, so we will combine statements about singular integrals (such as $D^2 \Gamma_0(z)$) for which the Calderon--Zygmund theory essentially applies, with a treatment of weakly singular integral kernels (such as $D^2 R_\lambda(z)$) based on the Hardy--Littlewood--Sobolev theorem. See also \cite{Nedelec} for a treatment of pseudo-homogeneous kernels.

For the sake of completeness, we shall next introduce the kind of kernels that we will consider in this section. Let us consider a bounded domain $D\subseteq \RR^N$ and a continuous function $K(x,z),\ x\in \overline{D},\ z\in\RR^N\setminus\{0\}$. $K$ is said to be a \textit{weakly singular kernels of exponent $\beta$} if there exists a nonnegative constant $C$ such that
$$
\vert K(x,z)\vert\leq \frac{C}{\vert z\vert^\beta},\ \forall\,x\in\overline{D},\ \forall\,y\in\RR^N\setminus\{0\},
$$
for a given $0\leq \beta\leq N-1$. The kind of singular integral kernel that arises in this paper are first order partial derivatives of positively homogeneous kernel or degree $-(N-1)$, i.e.,
$$\frac{\partial}{\partial z_i}K(x,z),\ x\in \overline{D},\ z\in\RR^3\setminus\{0\},$$ 
where $K(x,z)$ satisfies
$$K(x,\lambda z)=\frac{1}{\lambda^{N-1}}K(x,z),$$
for all $x\in\overline{D},\ z\in\RR^N\setminus\{0\},\ \lambda>0$ and $K(x,\sigma)$ is continuous for $x\in\overline{D}$ and $\sigma\in \partial B_1(0)$.

A classical results about the boundedness of generalized volume and single layer potencial in H\"{o}lder spaces allows us to bound the single layer potential associated with $\Gamma_\lambda(z)$ both in bounded and unbounded domains (see \cite[Teorema 2.I]{Miranda2}):

\begin{theo}[Generalized single layer potential]\label{Potencial.capasimple.regularidad.teo}
Let $G\subseteq \RR^3$ be a bounded domain with regularity $C^{k+1,\alpha}$, $\Omega:=\RR^3\setminus G$ its outer domain and $S=\partial G$ the boundary surface. Consider the generalized single layer potential associated with the Helmholtz equation and generated by a density $\zeta:S\longrightarrow \RR$ over the boundary,
$$(\mathcal{S}_\lambda\zeta)(x):=\int_S\Gamma_\lambda(x-y)\zeta(y)\,d_yS,\ \ x\in\RR^3\setminus S.$$
Then, $\mathcal{S}_\lambda\zeta$ is well defined both in $G$ and $\Omega$ for each $\zeta\in C^{k,\alpha}(S)$, it belongs to $C^{k+1,\alpha}(\overline{G})$ and $C^{k+1,\alpha}(\overline{\Omega})$ respectively and we have the associated bounded linear operators
$$\begin{array}{cccl}
\mathcal{S}^-_\lambda: & C^{k,\alpha}(S) & \longrightarrow & C^{k+1,\alpha}(\overline{G}),\\
 & \zeta & \longmapsto & \left.(\mathcal{S}_\lambda\zeta)\right\vert_{G},\\
\mathcal{S}^+_\lambda: & C^{k,\alpha}(S) & \longrightarrow & C^{k+1,\alpha}(\overline{\Omega}),\\
 & \zeta & \longmapsto & \left.(\mathcal{S}_\lambda\zeta)\right\vert_{\Omega},
\end{array}$$
i.e., there exists a nonnegative constant $K=K(k,\alpha,\lambda,G)$ so that  $\zeta\in C^{k,\alpha}(S)$ satisfies
\begin{align*}
\Vert \mathcal{S}^-_\lambda\zeta\Vert_{C^{k+1,\alpha}(G)}&\leq K\Vert \zeta\Vert_{C^{k,\alpha}(S)},\\
\Vert \mathcal{S}^+_\lambda\zeta\Vert_{C^{k+1,\alpha}(\Omega)}&\leq K\Vert \zeta\Vert_{C^{k,\alpha}(S)}.
\end{align*}
Differentiation under the integral sign leads to
\begin{align*}
\nabla(\mathcal{S}^-_\lambda\zeta)(x)&=\int_S\nabla_x \Gamma_\lambda(x-y)\zeta(y)\,d_yS,\ \ x\in G,\\
\nabla(\mathcal{S}^+_\lambda\zeta)(x)&=\int_S\nabla_x \Gamma_\lambda(x-y)\zeta(y)\,d_yS,\ \ x\in \Omega.
\end{align*}
\end{theo}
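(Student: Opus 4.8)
The plan is to decompose the kernel $\Gamma_\lambda(z)=\Gamma_0(z)+R_\lambda(z)$ as in \eqref{PhiPsi.SolucionFundamental.Descomposicion.Gamma0+Rlambda}, so that $\mathcal{S}_\lambda\zeta=\mathcal{S}_0\zeta+\mathcal{R}_\lambda\zeta$, where $\mathcal{S}_0$ is the classical (harmonic) single layer potential and $\mathcal{R}_\lambda\zeta(x)=\int_S R_\lambda(x-y)\zeta(y)\,d_yS$ is the potential generated by the remainder kernel. The harmonic part $\mathcal{S}_0$ is handled directly by the classical theory of single layer potentials on $C^{k+1,\alpha}$ surfaces \cite{Miranda1,Miranda2}: it is a bounded operator $C^{k,\alpha}(S)\to C^{k+1,\alpha}(\overline{G})$ and $C^{k,\alpha}(S)\to C^{k+1,\alpha}(\overline{\Omega})$, with the stated differentiation-under-the-integral-sign formulas, and in the exterior domain the decay of $\Gamma_0$ and $\nabla\Gamma_0$ as $|x|^{-1}$ and $|x|^{-2}$ guarantees that all the relevant integrals converge and define functions with finite $C^{k+1,\alpha}$ norm. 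So the real work is to show that the remainder operator $\mathcal{R}_\lambda$ enjoys the same mapping properties, and this is where the bounds \eqref{PhiPsi.SolucionFundamental.Cotas.Derivadas.Rlambda.form} on $D^\gamma R_\lambda$ come in.

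First I would treat the interior case. Since $S$ is compact and $R_\lambda(z)$, $\nabla R_\lambda(z)$ are weakly singular of exponent $0$ and $1$ respectively (in the sense defined in this section: $|D^\gamma R_\lambda(z)|\le C(1+|z|^{-|\gamma|})$, which near the boundary behaves like $|z|^{-|\gamma|}\le |z|^{-2}<|z|^{-(N-1)}$ only for $|\gamma|\le 2$), differentiation under the integral sign is justified up to order two directly, and the resulting kernels $D^\gamma R_\lambda$ with $|\gamma|\le 2$ are weakly singular of exponent at most $2=N-1$ on $\RR^3$; the classical estimates for generalized volume/single layer potentials with weakly singular kernels (Theorem~\ref{Potencial.capasimple.regularidad.teo} is in fact the statement we are proving, so here I mean the underlying Giraud--Miranda estimates and their local versions) then give that $\mathcal{R}_\lambda\zeta$ and its first and second derivatives are H\"older continuous. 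For derivatives of order higher than two, the plan is a bootstrap using the splitting $R_\lambda=\psi_\lambda(|z|)$ together with the recursion $\psi_\lambda'(r)=i\lambda\frac{1}{4\pi r}+(i\lambda-\frac1r)\psi_\lambda(r)$: each extra derivative of $R_\lambda$ produces either a term of the form (lower-order derivative of $R_\lambda$)$\times$(homogeneous factor controlled as in \eqref{PhiPsi.SolucionFundamental.Cotas.Derivadas.Gamma0.form}) or a purely harmonic-type term $D^\gamma\Gamma_0$. The first kind is, after one integration by parts transferring a tangential derivative onto the density $\zeta\in C^{k,\alpha}(S)$ (using that $\nabla_x\Gamma_\lambda(x-y)=-\nabla_y\Gamma_\lambda(x-y)$ and that the tangential gradient splits off as in the computation for $a(x)$ earlier in the paper), reduced to a kernel of the same type with one less derivative acting on a density of lower regularity; the second kind is handled by the classical harmonic theory. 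Iterating this $k+1$ times converts $k+1$ derivatives of the potential into at most one singular integral of Calder\'on--Zygmund type acting on $D^k\zeta$ plus weakly singular integrals acting on derivatives of $\zeta$ of order $\le k$, and the H\"older estimate follows, yielding $\|\mathcal{S}^-_\lambda\zeta\|_{C^{k+1,\alpha}(G)}\le K\|\zeta\|_{C^{k,\alpha}(S)}$.

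For the exterior case the only additional point is decay at infinity: for $x$ far from $S$, $|x-y|\ge |x|-\mathrm{diam}(S)$ uniformly in $y\in S$, so $|D^\gamma R_\lambda(x-y)|\le C(1+|x|^{-|\gamma|})\le C$ and $|D^\gamma\Gamma_0(x-y)|\le C|x|^{-(|\gamma|+1)}$, hence each derivative of $\mathcal{S}_\lambda\zeta$ is bounded on $\Omega$ (in fact $\nabla\mathcal{S}_\lambda\zeta=O(|x|^{-1})$, which is the decay rate relevant to this paper — the $|x|^{-1}$ behaviour already appears in Theorem~\ref{HelmholtzHodgeBeltrami.teo}), and the H\"older seminorms of the top derivatives are estimated exactly as in the interior by splitting the domain of integration into a neighbourhood of $S$ and its complement and using, respectively, the local weakly-singular/singular estimates and the smooth-kernel bounds. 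I expect the main obstacle to be the careful bookkeeping in the bootstrap: organising the derivatives $D^\gamma\Gamma_\lambda$ so that the genuinely singular contribution is always a single Calder\'on--Zygmund kernel applied to the highest derivative of $\zeta$ while everything else is weakly singular, and verifying that the integration-by-parts on $S$ does not lose regularity — this is precisely the content of the technical Lemmas \ref{Potencial.volumetrico.regularidad.teo} and \ref{Potencial.capasimple.regularidad.frontera.teo} developed in this section, on which the present statement ultimately rests.
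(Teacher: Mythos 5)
The paper does not in fact supply a proof of this theorem: immediately after the statement it says ``We omit the proof of this theorem since we are interested in a more singular regularity result that generalizes this one,'' citing \cite[Teorema 2.I]{Miranda2}, and moves on to Theorem~\ref{Potencial.capasimple.regularidad.frontera.teo}. So there is no paper proof to match against line by line. Your general strategy --- split $\Gamma_\lambda = \Gamma_0 + R_\lambda$ as in~\eqref{PhiPsi.SolucionFundamental.Descomposicion.Gamma0+Rlambda}, let Miranda's classical theory absorb the $\Gamma_0$ part, and exploit the lower degree of singularity of $R_\lambda$ for the rest --- is consistent with how the rest of Section~\ref{Teoria.Potencial.Tecnicas.Seccion} treats related kernels, and is a reasonable route.

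However, your closing sentence introduces a genuine circularity. You claim the present statement ``ultimately rests'' on Theorems~\ref{Potencial.volumetrico.regularidad.teo} and~\ref{Potencial.capasimple.regularidad.frontera.teo}, but the dependency runs the other way: the proof of Lemma~\ref{Potencial.volumetrico.regularidad.lem1} (and hence of Theorem~\ref{Potencial.volumetrico.regularidad.teo}) explicitly \emph{invokes} Theorem~\ref{Potencial.capasimple.regularidad.teo} to estimate the boundary terms that appear after integrating by parts. So the single-layer estimate must be established independently of the later volume-potential results; it cannot lean on them. The only sound external input is the classical Miranda/Giraud machinery (or, equivalently, the local-coordinate / Taylor-expansion / planar singular-kernel argument of the type carried out for Theorem~\ref{Potencial.capasimple.regularidad.frontera.teo}, which is logically independent).

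There is also a dimensional slip worth flagging. You quote the threshold ``exponent at most $2=N-1$'' for weak singularity, but the criterion $\beta\le N-1$ in this section's Definition and in Theorem~\ref{NucleoDebSingular.Holder.teo} is calibrated for \emph{volume} potentials over $N$-dimensional domains. The single layer potential integrates over the $(N-1)$-dimensional surface $S$, where absolute integrability of $|z|^{-\beta}$ requires $\beta<N-1=2$; exponent exactly $2$ is the Calder\'on--Zygmund-critical case, not weakly singular. Your statement that ``differentiation under the integral sign is justified up to order two directly'' therefore does not follow from the paper's bounds~\eqref{PhiPsi.SolucionFundamental.Cotas.Derivadas.Rlambda.form}, which only give $D^2R_\lambda = O(|z|^{-2})$: that is critical on a $2$-surface. (In fact $\psi_\lambda'$ and $\psi_\lambda''$ are bounded near $r=0$ so the true singularity of $D^2R_\lambda$ is only $O(|z|^{-1})$, but your argument as written does not use this improvement and would need to.) Finally, the integration-by-parts step on $S$ only moves \emph{tangential} derivatives onto $\zeta$; normal derivatives must be handled separately (typically using that the potential solves the Helmholtz equation off $S$), a point your sketch elides entirely. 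Fixing the circular citation and spelling out the normal-vs-tangential bookkeeping would bring this to a complete proof.
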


We omit the proof of this theorem since we are interested in a more singular regularity result that generalizes this one. Specifically, we will study the regularity along the boundary surface $S$ of these generalized single layer potentials along with some other related potentials with inhomogeneous kernels that arose in previous sections, where we will use arguments as in \cite[Teorema 2.I]{Miranda2}. In the next results, we show the regularity of generalized volume (or Newtonian) potentials with compactly supported densities both for interior and exterior domains, which conclude with the derivation of the classical \textit{H\"{o}lder--Korn--Lichtenstein--Giraud inequality} for high order estimates of H\"{o}lder type in the inhomogeneous case.

\begin{lem}\label{Potencial.volumetrico.regularidad.lem1}
Let $G\subseteq \RR^3$ be a bounded domain with regularity $C^{k+1,\alpha}$, $\Omega:=\RR^3\setminus G$ its exterior domain and $S=\partial G$ the boundary surface. Define the generalized volume potential on $G$ associated with the Helmholtz equation and generated by a density in $G$, $\zeta:G\longrightarrow\RR$
$$(\mathcal{N}^-_\lambda\zeta)(x)=\int_{G}\Gamma_\lambda(x-y)\zeta(y)\,dy,\ \ x\in G.$$
Then, $\mathcal{N}^-_\lambda\zeta \in C^{k+2,\alpha}(\overline{G})$ is well defined over $G$ for every $\zeta\in C^{k,\alpha}(\overline{G})$,
and 
$$
\begin{array}{cccc}
\mathcal{N}^-_\lambda: & C^{k,\alpha}(\overline{G}) & \longrightarrow & C^{k+2,\alpha}(\overline{G}),\\
 & \zeta & \longmapsto & \mathcal{N}^-_\lambda\zeta,
\end{array}$$
defines a bounded linear operator, i.e., there exists a nonnegative constant $K=K(k,\alpha,\lambda,G)$ so that
$$\Vert \mathcal{N}^-_\lambda\zeta\Vert_{C^{k+2,\alpha}(G)}\leq K\Vert \zeta\Vert_{C^{k,\alpha}(\overline{G})},$$
for every density $\zeta\in C^{k,\alpha}(\overline{G})$.
\end{lem}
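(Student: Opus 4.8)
The plan is to proceed by induction on the regularity index $k\geq 0$, using the splitting $\Gamma_\lambda=\Gamma_0+R_\lambda$ of \eqref{PhiPsi.SolucionFundamental.Descomposicion.Gamma0+Rlambda} in the base case and an integration by parts that transfers one derivative onto the density in the inductive step. Well-definedness and linearity of $\mathcal{N}_\lambda^-$ are immediate: since $\Gamma_\lambda(z)=O(|z|^{-1})$ is locally integrable in $\RR^3$ and $G$ is bounded, the integral $\int_G\Gamma_\lambda(x-y)\zeta(y)\,dy$ converges absolutely for every $\zeta\in C^{k,\alpha}(\overline G)\subseteq L^\infty(G)$, and clearly depends linearly on $\zeta$.

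\textbf{Base case $k=0$.} Here one wants $\mathcal{N}_\lambda^-\zeta\in C^{2,\alpha}(\overline G)$ with the corresponding estimate, for $\zeta\in C^{0,\alpha}(\overline G)$. Write $\mathcal{N}_\lambda^-\zeta=\mathcal{N}_0^-\zeta+\int_G R_\lambda(x-y)\zeta(y)\,dy$. The first summand is the classical Newtonian potential, and the bound $\Vert\mathcal{N}_0^-\zeta\Vert_{C^{2,\alpha}(G)}\leq K\Vert\zeta\Vert_{C^{0,\alpha}(\overline G)}$ is precisely the H\"older--Korn--Lichtenstein--Giraud inequality (see e.g.\ \cite{Gilbarg,Miranda1,Miranda2}); its proof rests on expressing the second derivatives as a principal value integral $\mathrm{p.v.}\int_G D^2\Gamma_0(x-y)\,[\zeta(y)-\zeta(x)]\,dy$ plus a boundary term, exploiting the homogeneity of degree $-3$ of $D^2\Gamma_0$ together with the cancellation coming from the H\"older continuity of $\zeta$. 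For the remainder, the bounds \eqref{PhiPsi.SolucionFundamental.Cotas.Derivadas.Rlambda.form} show that $D^\gamma R_\lambda(z)=O(|z|^{-|\gamma|})$, so for $|\gamma|\leq 2$ the kernel $D^\gamma R_\lambda$ is weakly singular of exponent $\leq 2<3=N$; hence differentiation up to second order may be carried out under the integral sign, and the resulting operators $\zeta\mapsto\int_G D^\gamma R_\lambda(x-y)\zeta(y)\,dy$ are bounded from $C^{0,\alpha}(\overline G)$ (indeed from $L^\infty(G)$) into $C^{0,\alpha}(\overline G)$ by the standard H\"older estimates for weakly singular kernels. Adding the two contributions gives the base case.

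\textbf{Inductive step.} Assume the statement of the lemma holds with $k$ replaced by $k-1$ (for domains of class $C^{k,\alpha}$; our $G$, being of class $C^{k+1,\alpha}$, is a fortiori admissible). Let $\zeta\in C^{k,\alpha}(\overline G)$ and fix $i\in\{1,2,3\}$. Using $\partial_{x_i}\Gamma_\lambda(x-y)=-\partial_{y_i}\Gamma_\lambda(x-y)$ and integrating by parts in $G$,
\begin{equation*}
\partial_i(\mathcal{N}_\lambda^-\zeta)(x)=\int_G\Gamma_\lambda(x-y)\,\partial_i\zeta(y)\,dy-\int_S\Gamma_\lambda(x-y)\,\eta_i(y)\,\zeta(y)\,d_yS=\mathcal{N}_\lambda^-(\partial_i\zeta)(x)-\big(\mathcal{S}_\lambda^-(\eta_i\zeta)\big)(x),
\end{equation*}
for $x\in G$. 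Now $\partial_i\zeta\in C^{k-1,\alpha}(\overline G)$, so the induction hypothesis yields $\mathcal{N}_\lambda^-(\partial_i\zeta)\in C^{k+1,\alpha}(\overline G)$ with the corresponding norm control; and since $S\in C^{k+1,\alpha}$ we have $\eta\in C^{k,\alpha}(S)$, whence $\eta_i\zeta\in C^{k,\alpha}(S)$ and Theorem~\ref{Potencial.capasimple.regularidad.teo} gives $\mathcal{S}_\lambda^-(\eta_i\zeta)\in C^{k+1,\alpha}(\overline G)$, again with estimate. Therefore $\partial_i(\mathcal{N}_\lambda^-\zeta)\in C^{k+1,\alpha}(\overline G)$ for every $i$, i.e.\ $\mathcal{N}_\lambda^-\zeta\in C^{k+2,\alpha}(\overline G)$, and chaining the two bounds produces $\Vert\mathcal{N}_\lambda^-\zeta\Vert_{C^{k+2,\alpha}(G)}\leq K\Vert\zeta\Vert_{C^{k,\alpha}(\overline G)}$ with $K=K(k,\alpha,\lambda,G)$, closing the induction.

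The main obstacle is concentrated entirely in the base case: there one faces the genuinely singular part $D^2\Gamma_0$ (a Calder\'on--Zygmund kernel, not weakly singular), for which the gain of two derivatives requires the classical HKLG argument with its principal-value cancellation; the inhomogeneous remainder $R_\lambda$, being exactly one order less singular than $\Gamma_0$, contributes only a routine weakly singular term. Once $k=0$ is settled, the passage to arbitrary $k$ is a soft argument, relying only on the boundary regularity of $S$ (to keep $\eta$ in $C^{k,\alpha}$) and on the mapping properties of the single layer potential already recorded in Theorem~\ref{Potencial.capasimple.regularidad.teo}.
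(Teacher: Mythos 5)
Your proof is correct, and it takes a genuinely different route from the paper's. The paper does not induct on $k$: it fixes a multi-index $\gamma$ with $|\gamma|\leq k$, performs $|\gamma|$ successive integrations by parts to move $D^\gamma$ off the kernel onto $\zeta$ (generating a string of single layer boundary terms along the way), then takes one more derivative and runs the H\"older--Korn--Lichtenstein--Giraud cancellation argument directly on $D^2\Gamma_\lambda$, writing the dangerous volume integral as $\int_G D^2\Gamma_\lambda(x-y)[D^\gamma\zeta(y)-D^\gamma\zeta(x)]\,dy$ plus a boundary piece $D^\gamma\zeta(x)\int_S D\Gamma_\lambda(x-y)\eta_j\,d_yS$, and estimating the first piece by hand using the pointwise bounds on $D^2\Gamma_\lambda$ and $D^3\Gamma_\lambda$ obtained from the decomposition $\Gamma_\lambda=\Gamma_0+R_\lambda$. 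You instead isolate all the singular-integral work in the base case $k=0$, where you split the \emph{operator} (not just the pointwise bounds) along $\Gamma_\lambda=\Gamma_0+R_\lambda$: the $\Gamma_0$ piece is the classical Newtonian potential, for which HKLG is already in the literature, and the $R_\lambda$ piece is genuinely weakly singular (its second derivatives are $O(|z|^{-2})$ in $\RR^3$), so Theorem~\ref{NucleoDebSingular.Holder.teo} applies without any principal-value cancellation. Your inductive step, a single integration by parts producing $\mathcal{N}^-_\lambda(\partial_i\zeta)-\mathcal{S}^-_\lambda(\eta_i\zeta)$, is then entirely soft, resting only on the induction hypothesis and Theorem~\ref{Potencial.capasimple.regularidad.teo}. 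What the paper's version buys is that the whole estimate is carried out in one pass without a separate inductive scaffold; what your version buys is conceptual separation -- the only hard analysis lives at $k=0$, and even there the genuinely singular part is outsourced to the classical Newtonian result -- at the cost of tracking the boundary regularity through the induction (which you handle correctly by noting that $C^{k+1,\alpha}\subset C^{k,\alpha}$ lets the inductive hypothesis apply). Both arguments hinge on the same two mechanisms -- integration by parts to trade kernel derivatives for boundary single layer potentials, and a one-power gain in singularity from the H\"older cancellation in $\zeta$ -- so the ideas are the same even though the bookkeeping is organized quite differently.
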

\begin{proof}
The proof follows the lines of \cite[Teorema 3.II]{Miranda2} for the harmonic case $\lambda=0$, that we extend to the  inhomogeneous case. 

Let us obtain first a $ C^1$ estimate of $\mathcal{N}^-_\lambda\zeta$. Since
$$\Gamma_\lambda(z)=O(\vert z\vert^{-1})\ \mbox{ and }\nabla \Gamma_\lambda(z)=O(\vert z\vert^{-2})\ \mbox{ as }\vert z\vert\rightarrow 0,$$
$G$ is bounded and $\zeta\in  C^0(G)$, then one can take derivatives under the integral sign, i.e.,
$$\frac{\partial}{\partial x_i}(\mathcal{N}^-_\lambda\zeta)(x)=\int_{G}\frac{\partial}{\partial x_i}\Gamma_\lambda(x-y)\zeta(y)\,dy,\ \ x\in G.$$
Moreover, straightforward computations supported by the local integrability of both $\Gamma_\lambda(z)$ and $\nabla\Gamma_\lambda(z)$ show that
$$\Vert \mathcal{N}^-_\lambda\zeta\Vert_{ C^1(G)}\leq C\Vert \zeta\Vert_{ C^0(G)}\leq C\Vert \zeta\Vert_{C^{k,\alpha}(G)}.$$

Fix any multi-index $\gamma$ with $\vert \gamma\vert\leq k$ and takes derivatives again under the integral sign to get
$$D^\gamma\frac{\partial}{\partial x_i}(\mathcal{N}^-_\lambda\zeta)(x)=\int_G D^\gamma_x\frac{\partial}{\partial x_i}\Gamma_\lambda(x-y)\zeta(y)\,dy.$$
For any index $1\leq l\leq 3$ so that $e_l\leq \gamma$, an integration by parts recasts the above identity as
\begin{align*}
D^\gamma\frac{\partial}{\partial x_i}(\mathcal{N}^-_\lambda\zeta)(x)&=-\int_G D^{\gamma-e_l}_x\frac{\partial}{\partial x_i}\frac{\partial}{\partial y_l}\Gamma_\lambda(x-y)\zeta(y)\,dy\\
&=\int_{G}D^{\gamma-e_l}_x\frac{\partial}{\partial x_i}\Gamma_\lambda(x-y)\frac{\partial \zeta}{\partial y_l}(y)\,dy-\int_S D^{\gamma-e_l}_x\frac{\partial}{\partial x_i}\Gamma_\lambda(x-y)\zeta(y)\eta_l(y)\,d_yS,
\end{align*} 
where $\eta$ stands for the exterior unit normal vector field along $S$. A recursive reasoning leads to
\begin{align*}
D^\gamma\frac{\partial}{\partial x_i}&(\mathcal{N}^-_\lambda\zeta)(x)
=-\sum_{m_1=1}^{\gamma_1}\int_S D^{\gamma-m_1e_1}_x\frac{\partial}{\partial x_i}\Gamma_\lambda(x-y)D^{(m_1-1)e_1}\zeta(y)\eta_1(y)\,d_yS\\
&-\sum_{m_2=1}^{\gamma_2}\int_S D^{\gamma-\gamma_1e_1-\gamma_2e_2}_x\frac{\partial}{\partial x_i}\Gamma_\lambda(x-y)D^{\gamma_1e_1+(m_2-1)e_2}\zeta(y)\eta_2(y)\,d_yS\\
&-\sum_{m_3=1}^{\alpha_3}\int_S D^{\gamma-\gamma_1e_1-\gamma_2 e_2-m_3 e_3}_x\frac{\partial}{\partial x_i}\Gamma_\lambda(x-y)D^{\gamma_1e_1+\gamma_2e_2+(m_3-1)e_3}\zeta(y)\eta_3(y)\,d_yS\\
&+\int_G\frac{\partial}{\partial x_i}\Gamma_\lambda(x-y)D^\gamma \zeta(y)\,dy.
\end{align*}
Therefore, the same argument as above for the last volume integral along with Theorem \ref{Potencial.capasimple.regularidad.teo} for the boundary integrals show the following upper bound for the derivatives up to order $k+1$ of the generalized volume potential:
$$\left\Vert D^\gamma\frac{\partial}{\partial x_i}(\mathcal{N}^-_\lambda\zeta)\right\Vert_{ C^0(G)}\leq K\Vert \zeta\Vert_{C^{k,\alpha}(G)}.
$$
Here $\vert \gamma\vert\leq k$ and $1\leq i\leq 3$. 

To complete the proof, we consider the derivatives of order $k+2$. Let us then consider another index $1\leq j\leq 3$ and take derivatives under the integral sign once more to arrive at
\begin{align*}
D^\gamma\frac{\partial^2}{\partial x_i\partial x_j}&(\mathcal{N}^-_\lambda\zeta)(x)
=-\sum_{m_1=1}^{\gamma_1}\int_S D^{\gamma-m_1e_1+e_j}_x\frac{\partial}{\partial x_i}\Gamma_\lambda(x-y)D^{(m_1-1)e_1}\zeta(y)\eta_1(y)\,d_yS\\
&-\sum_{m_2=1}^{\gamma_2}\int_S D^{\gamma-\gamma_1e_1-\gamma_2e_2+e_j}_x\frac{\partial}{\partial x_i}\Gamma_\lambda(x-y)D^{\gamma_1e_1+(m_2-1)e_2}\zeta(y)\eta_2(y)\,d_yS\\
&-\sum_{m_3=1}^{\alpha_3}\int_S D^{\gamma-\gamma_1e_1-\gamma_2 e_2-m_3 e_3+e_j}_x\frac{\partial}{\partial x_i}\Gamma_\lambda(x-y)D^{\gamma_1e_1+\gamma_2e_2+(m_3-1)e_3}\zeta(y)\eta_3(y)\,d_yS\\
&+\int_G\frac{\partial^2}{\partial x_i\partial x_j}\Gamma_\lambda(x-y)D^\gamma \zeta(y)\,dy.
\end{align*}
Similar estimates for the boundary terms can be obtained in $C^{0,\alpha}(G)$ by virtue of Theorem \ref{Potencial.capasimple.regularidad.teo}, while the volume integral has to be studied separately carrying out an adaptation of the ideas in the harmonic case \cite[Teorema 3.II]{Miranda2}. 

We first split it into two parts and use again integration by parts in the second term
\begin{align*}
\int_{G}\frac{\partial^2}{\partial x_i\partial x_j}&\Gamma_\lambda(x-y)D^\gamma\zeta(y)\,dy\\
=&\int_{G}\frac{\partial^2}{\partial x_i\partial x_j}\Gamma_\lambda(x-y)(D^\gamma\zeta(y)-D^\gamma\zeta(x))\,dy+D^\gamma\zeta(x)\int_G\frac{\partial}{\partial x_j}\frac{\partial}{\partial x_i}\Gamma_\lambda(x-y)\,dy\\
=&\int_G\frac{\partial^2}{\partial x_i\partial x_j}\Gamma_\lambda(x-y)(D^\gamma\zeta(y)-D^\gamma\zeta(x))\,dy-D^\gamma\zeta(x)\int_S\frac{\partial}{\partial x_i}\Gamma_\lambda(x-y)\eta_j(y)\,d_yS\\
=:&F(x)-H(x).
\end{align*}
The idea behind such decomposition is apparent now since the second term, $H(x)$ can be bounded in $C^{0,\alpha}(G)$ according to Theorem \ref{Potencial.capasimple.regularidad.teo}
$$\left\Vert H\right\Vert_{C^{0,\alpha}(G)}\leq K\Vert \eta\Vert_{C^{0,\alpha}(G)}\,\Vert \zeta\Vert_{C^{k,\alpha}(S)}$$
and we have cancelled an $\alpha$ power of the singularity in the first term $F(x)$:
$$\left\vert F(x)\right\vert\leq \left[D^\gamma\zeta\right]_{\alpha,G}\int_G \left\vert\frac{\partial^2}{\partial x_i\partial x_j}\Gamma_\lambda(x-y)\right\vert\,\vert x-y\vert^\alpha\,dy.$$
Bearing (\ref{PhiPsi.SolucionFundamental.Descomposicion.Gamma0+Rlambda}) in mind, we obtain the derivative formulas
\begin{align*}
\frac{\partial}{\partial z_i}\Gamma_\lambda(z)&=\left(-\frac{1}{4\pi\vert z\vert^2}+\psi_\lambda'(\vert z\vert)\right)\frac{z_i}{\vert z\vert},\\
\frac{\partial}{\partial z_i}\frac{\partial}{\partial z_j}\Gamma_\lambda(z)&=\left(\frac{2}{4\pi \vert z\vert^3}+\psi_\lambda''(\vert z\vert)\right)\frac{z_i}{\vert z\vert}\frac{z_j}{\vert z\vert}+\left(-\frac{1}{4\pi\vert z\vert^2}+\psi_\lambda'(\vert z\vert)\right)\frac{\delta_{ij}\vert z\vert-\frac{z_iz_j}{\vert z\vert}}{\vert z\vert^2}.
\end{align*}
Thus, the estimates (\ref{PhiPsi.SolucionFundamental.Cotas.Derivadas.Rlambda.form}) and (\ref{PhiPsi.SolucionFundamental.Cotas.Derivadas.Gamma0.form}) amount to the following $ C^0$ estimate
\begin{align*}
\left\vert F(x)\right\vert&\leq C\Vert \zeta\Vert_{C^{k,\alpha}(G)}\int_G \left(1+\frac{1}{\vert x-y\vert^3}\right)\vert x-y\vert^\alpha\,dy\leq C \Vert \zeta\Vert_{C^{k,\alpha}(G)}\int_G\frac{dy}{\vert x-y\vert^{3-\alpha}}.
\end{align*}
The local integrability of $\vert z\vert^{\alpha-3}$ along with the boundedness of $G$ lead again to the upper bound 
$$\Vert F\Vert_{ C^0(G)}\leq K\Vert \zeta\Vert_{C^{k,\alpha}(G)}.$$

Let us finally show the local $\alpha$-H\"{o}lder property for $F$, i.e.,
$$\vert F(x^1)-F(x^2)\vert\leq C\vert x^1-x^2\vert^\alpha,$$
for every $x^1,x^2\in G$ such that $\vert x^1-x^2\vert<\delta$ and any small $\delta>0$. To this end, consider a neighborhood $U$ of $x^1$ with 
$$B_{2d}(x^1)\subseteq U\subseteq B_{7d}(x^1)$$
so that,
\begin{align*}
F(x_1)-F(x_2)=&\int_G \frac{\partial^2 \Gamma_\lambda(x^1-y)}{\partial x_i\partial x_j}(D^\gamma\zeta(y)-D^\gamma\zeta (x^1))\,dy-\int_G\frac{\partial^2\Gamma_\lambda(x^2-y)}{\partial x_i\partial x_j}(D^\gamma\zeta(y)-D^\gamma\zeta(x^1))\,dy\\
=&\int_{G\cap B_{7d}(x^1)}\frac{\partial^2 \Gamma_\lambda(x^1-y)}{\partial x_i\partial x_j}(D^\gamma\zeta(y)-D^\gamma\zeta (x^1))\,dy\\
&-\int_{G\cap B_{7d}(x^1)}\frac{\partial^2\Gamma_\lambda(x^2-y)}{\partial x_i\partial x_j}(D^\gamma\zeta(y)-D^\gamma\zeta(x^2))\,dy\\
&+\int_{G\setminus B_{7d}(x^1)}\left(\frac{\partial^2 \Gamma_\lambda(x^1-y)}{\partial x_i\partial x_j}-\frac{\partial^2G_\lambda(x^2-y)}{\partial x_i\partial x_j}\right)(D^\gamma\zeta(y)-D^\gamma\zeta(x^1))\,dy\\
&-(D^\gamma\zeta(x^1)-D^\gamma\zeta(x^2))\int_{G\setminus B_{7d}(x^1)}\frac{\partial^2 \Gamma_\lambda(x^2-y)}{\partial x_i\partial x_j}\,dy.
\end{align*}
Taking Euclidean norms, we finally arrive at
\begin{align}
\vert F(x^1)-F(x^2)\vert \leq &\int_{G\cap B_{7d}(x^1)}\left\vert\frac{\partial^2 \Gamma_\lambda(x^1-y)}{\partial x_i\partial x_j}\right\vert \,\vert(D^\gamma\zeta(y)-D^\gamma\zeta (x^1))\vert\,dy\nonumber\\
&+\int_{G\cap B_{8d}(x^2)}\left\vert\frac{\partial^2 \Gamma_\lambda(x^2-y)}{\partial x_i\partial x_j}\right\vert\,\vert(D^\gamma\zeta(y)-D^\gamma\zeta (x^2))\vert\,dy\nonumber\\
&+\int_{G\setminus B_{2d}(x^1)}\left\vert\frac{\partial^2 \Gamma_\lambda(x^1-y)}{\partial x_i\partial x_j}-\frac{\partial^2\Gamma_\lambda(x^2-y)}{\partial x_i\partial x_j}\right\vert\,\vert D^\gamma\zeta(y)-D^\gamma\zeta(x^1)\vert\,dy\nonumber\\
&+\vert D^\gamma\zeta(x^1)-D^\gamma\zeta(x^2)\vert\,\int_{G\setminus U}\left\vert\frac{\partial^2 \Gamma_\lambda(x^2-y)}{\partial x_i\partial x_j}\right\vert\,dy,\label{Potencial.volumetrico.regularidad.lem1.DescomposicionF}
\end{align}
where in the last three terms we have respectively used that $G\cap B_{7d}(x^1)\subseteq G\cap B_{8d}(x^2)$, $G\setminus B_{7d}(x^1)\subseteq G\setminus B_{2d}(x^1)$ and $G\setminus B_{7d}(x^1)\subseteq G\setminus U$.

The first term in (\ref{Potencial.volumetrico.regularidad.lem1.DescomposicionF}) can be bounded by virtue of the $\alpha$-H\"{o}lder property for $D^\gamma \zeta$ and the fact that $D^2 \Gamma_\lambda(z)=O\left(\vert z\vert^{-3}\right)$ near the origin:
\begin{align*}
\int_{G\cap B_{7d}(x^1)}\left\vert\frac{\partial^2 \Gamma_\lambda(x^1-y)}{\partial x_i\partial x_j}\right\vert \,&\vert(D^\gamma\zeta(y)-D^\gamma\zeta (x^1))\vert\,dy
\leq C\int_{G\cap B_{7d}(x^1)}\frac{1}{\vert x^1-y\vert^{3-\alpha}}\,dy\\
&\leq C\int_{B_{7d}(x^1)}\frac{dy}{\vert x^1-y\vert^{3-\alpha}}=4\pi C\int_0^{7d}r^{2-3+\alpha}\,dr=4\pi C\frac{7^\alpha }{\alpha}\vert x^1-x^2\vert^\alpha.
\end{align*}
A similar bound follows for the second term. Regarding the third term in (\ref{Potencial.volumetrico.regularidad.lem1.DescomposicionF}), we find
\begin{align*}
\left\vert\frac{\partial^2 \Gamma_\lambda}{\partial z_i\partial z_j}\right.\left.(x^1-y)-\frac{\partial^2 \Gamma_\lambda}{\partial z_i\partial z_j}(x^2-y)\right\vert
=&\left\vert\int_0^1\frac{d}{d\theta}\left[\frac{\partial^2 \Gamma_\lambda}{\partial z_i\partial z_j}(\theta x^1+(1-\theta)x^2-y)\right]\,d\theta\right\vert\\
\leq &\int_0^1\left\vert\left(\nabla\frac{\partial^2}{\partial z_i\partial z_j}\Gamma_\lambda\right)(\theta x^1+(1-\theta)x^2-y)\cdot (x^1-x^2)\right\vert\,d\theta,\\
\leq & C\int_0^1\frac{d\theta}{\vert \theta x^1+(1-\theta)x^2-y\vert^4}\vert x^1-x^2\vert\,d\theta.
\end{align*}
Since $y\in G\setminus B_{2d}(x^1)$ in the third term of the decomposition (\ref{Potencial.volumetrico.regularidad.lem1.DescomposicionF}) and $0\leq \theta\leq 1$, then
\begin{align*}
\frac{1}{\vert \theta x^1+(1-\theta)x^2-y\vert^4}
=\frac{1}{\vert (1-\theta) (x^2-x^1)+(x^1-y)\vert^4}\leq\frac{1}{(\vert x^1-y\vert-\vert x^1-x^2\vert)^4}\leq \frac{2^4}{\vert x^1-y\vert^4}.
\end{align*}
Therefore,
$$\left\vert\frac{\partial^2 \Gamma_\lambda}{\partial z_i\partial z_j}(x^1-y)-\frac{\partial^2 \Gamma_\lambda}{\partial z_i\partial z_j}(x^2-y)\right\vert\leq C\frac{\vert x^1-x^2\vert}{\vert x^1-y\vert^4},\ \ \forall\,y\in G\setminus B_{2d}(x^1).$$
The above estimate allows obtaining the desired estimate of $\alpha$-H\"{o}lder type for the third term in (\ref{Potencial.volumetrico.regularidad.lem1.DescomposicionF})
\begin{align*}
\int_{G\setminus B_{2d}(x^1)}&\left\vert\frac{\partial^2 \Gamma_\lambda(x^1-y)}{\partial x_i\partial x_j}-\frac{\partial^2\Gamma_\lambda(x^2-y)}{\partial x_i\partial x_j}\right\vert\,\vert D^\gamma\zeta(y)-D^\gamma\zeta(x^1)\vert\,dy\\
\leq & C\Vert \zeta\Vert_{C^{k,\alpha}(G)}\vert x^1-x^2\vert\int_{\RR^3\setminus B_{2d}(x^1)}\frac{dy}{\vert x^1-y\vert^{4-\alpha}}=4\pi C\Vert \zeta\Vert_{C^{k,\alpha}(G)}\vert x^1-x^2\vert\int_{2d}^{+\infty}r^{2-4+\alpha}\,dr\\
=&4\pi C\Vert \zeta\Vert_{C^{k,\alpha}(G)}\vert x^1-x^2\vert\int_{2d}^{+\infty}\frac{1}{r^{1+(1-\alpha)}}\,dr=\frac{4\pi C}{1-\alpha}\frac{1}{2^{1-\alpha}}\Vert\zeta\Vert_{C^{k,\alpha}(G)}\frac{\vert x^1-x^2\vert}{\vert x^1-x^2\vert^{1-\alpha}}\\
=&C\Vert \zeta\Vert_{C^{k,\alpha}(G)}\vert x^1-x^2\vert^\alpha.
\end{align*}

Concerning the last term in (\ref{Potencial.volumetrico.regularidad.lem1.DescomposicionF}), we are done as long as one notices that $D^\gamma\zeta\in C^{0,\alpha}(G)$ and shows
$$\int_{G\setminus U}\left\vert\frac{\partial^2 \Gamma_\lambda(x^2-y)}{\partial x_i\partial x_j}\right\vert\,dy\leq C,$$
for some positive constant $C$ depending on $\delta$ but not on $d=\vert x^1-x^2\vert$. For that, assume first that $2d\leq \mbox{dist}(x^1,S)$ and define $U:=\overline{B}_{2d}(x^1)$. Then $G\setminus U\equiv G\setminus\overline{B}_{2d}(x^1)$.
\begin{figure}[t]
\centering
\includegraphics[scale=0.75]{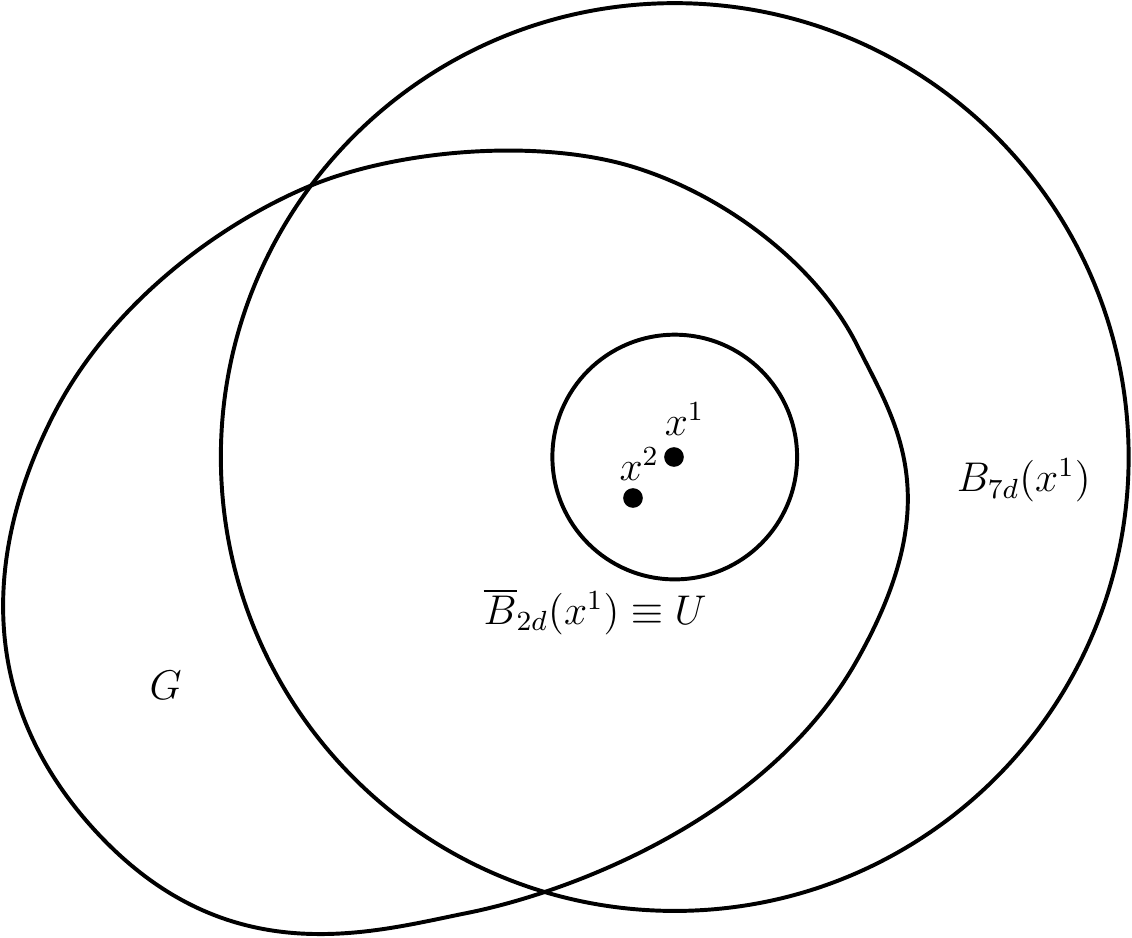}
\caption{Choice of $U=\overline{B}_{2d}(x^1)$ in the first case.}
\label{fig:Fig3}
\end{figure}
Note that $\partial (G\setminus U)=S\cup \partial B_{2d}(x^1)$ and integrate by parts to get
$$\int_{G\setminus U}\frac{\partial^2 \Gamma_\lambda(x^2-y)}{\partial x_i\partial x_j}\,dy=\int_S \frac{\partial \Gamma_\lambda(x^2-y)}{\partial x_i}\eta_j(y)\,d_yS-\int_{\partial B_{2d}(x^1)}\frac{\partial\Gamma_\lambda(x^2-y)}{\partial x_i}\frac{(y-x^1)_j}{\vert y-x^1\vert}\,d_yS.$$
Theorem \ref{Potencial.capasimple.regularidad.teo} provides an upper bound of the first term. On the other hand, by definition
$$\sup_{y\in\partial B_{2d}(x^1)}\frac{1}{\vert x^2-y\vert^2}\leq \frac{1}{\vert x^1-x^2\vert^2}$$
and one has the asymptotic behavior
$$\left\vert \frac{\partial \Gamma_\lambda}{\partial z_i}(z)\right\vert\leq C\left(1+\frac{1}{\vert z\vert^2}\right),\ \ z\in\RR^3\setminus\{0\},$$
so the second term is bounded as
$$
\left\vert\int_{\partial B_{2d}(x^1)}\frac{\partial \Gamma_\lambda(x^2-y)}{\partial x_i}\frac{(y-x^1)_j}{\vert y-x^1\vert}\,d_yS\right\vert\leq \widetilde{C}\frac{1}{\vert x^1-x^2\vert^2}\int_{\partial B_{2d}(x^1)}\,d_yS=4\pi \widetilde{C}\equiv C.
$$

Secondly, let us consider the opposite case $2d> \mbox{dist}(x^1,S)$. Now the configuration is slightly different. Let us fix some $\widetilde{x}^1\in S$ so that $\vert x^1-\widetilde{x}^1\vert=\mbox{dist}(x^1,S)$ and define $U:=\overline{B}_{4d}(\widetilde{x}^1)$.
\begin{figure}[t]
\includegraphics[scale=0.75]{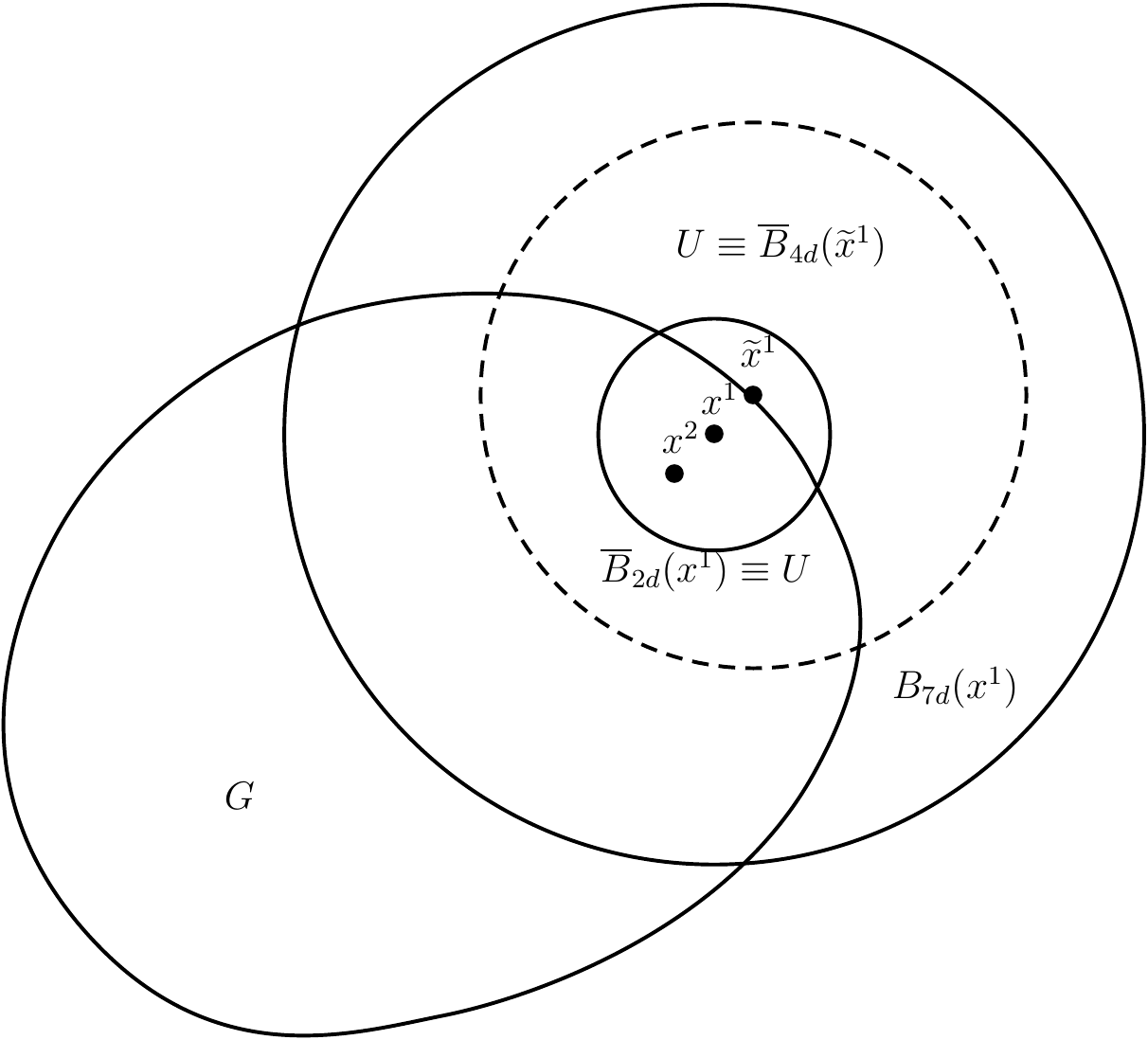}
\caption{Choice of $U=\overline{B}_{4d}(\widetilde{x}^1)$ in the second case.}
\label{fig:Fig4}
\end{figure}
By definition $x^2\in B_{3d}(\widetilde{x}^1)$ and consequently, $B_{2d}(x^1)\subseteq U\subseteq B_{7d}(x^1)$. Thus, $U$ is as in (\ref{Potencial.volumetrico.regularidad.lem1.DescomposicionF}). The last term there takes the form
$$\int_{G\setminus U}\frac{\partial^2 \Gamma_\lambda(x^2-y)}{\partial x_i\partial x_j}\,dy=\int_S \frac{\partial \Gamma_\lambda(x^2-y)}{\partial x_i}\eta_j(y)\,d_yS-\int_{\partial (U\cap G)}\frac{\partial \Gamma_\lambda(x^2-y)}{\partial x_i}\nu_j(y)\,d_yS.$$
Since the first term can be bounded through the same reasonings as above, we focus on the second term. It will be estimated following the idea in \cite[Lemma 2.IV]{Miranda2}. To this end, define some cut-off function $\xi\left(\frac{\vert y-\widetilde{x}^1\vert}{d}\right)$ for $\xi\in C^\infty_c(\RR_0^+)$ such that
\begin{equation*}
\left\{\begin{array}{ll}
\xi(r)=1, & r\in\left[0,\frac{7}{2}\right],\\
\xi(r)\in (0,1), & r\in \left(\frac{7}{2},4\right),\\
\xi(r)=0, & r\geq 4,
\end{array}\right.
\end{equation*}
and consider the splitting
\begin{align}
\int_{\partial(G\cap B_{4d}(\widetilde{x}^1))}\frac{\partial \Gamma_\lambda(x^2-y)}{\partial x_i}\nu_j(y)\,d_yS&=\int_{G\cap \partial B_{4d}(\widetilde{x}^1)}\frac{\partial \Gamma_\lambda(x^2-y)}{\partial x_i}\nu_j(y)\,d_yS\nonumber\\
&+\int_{S\cap B_{4d}(\widetilde{x}^1)}\frac{\partial \Gamma_\lambda(x^2-y)}{\partial x_i}\left[1-\xi\left(\frac{\vert y-\widetilde{x}^1\vert}{d}\right)\right]\nu_j(y)\,d_yS\nonumber\\
&+\int_{S\cap B_{4d}(\widetilde{x}^1)}\frac{\partial \Gamma_\lambda(x^2-y)}{\partial x_i}\xi\left(\frac{\vert y-\widetilde{x}^1\vert}{d}\right)\nu_j(y)\,d_yS.\label{Potencial.volumetrico.regularidad.lem1.DescomposicionMiranda}
\end{align}
Bear in mind again that $x^2\in B_{3d}(\widetilde{x}^1)$, so
$$\vert y-x^2\vert\geq \vert y-\widetilde{x}^1\vert-3d=4d-3d=d,$$
for each $y\in G\cap \partial B_{4d}(\widetilde{x}^1)$ and
$$\left\vert\int_{G\cap\partial B_{4d}(\widetilde{x}^1)}\frac{\partial \Gamma_\lambda(x^2-y)}{\partial x_i}\nu_j(y)\,d_yS\right\vert\leq \frac{\widetilde{C}}{\vert x^1-x^2\vert^2}\vert \partial B_{4d}(\widetilde{x}^1)\vert\leq 4\pi \widetilde{C}.$$
In the second term $y\in S\cap B_{4d}(\widetilde{x}^1)$. Moreover, in order that $y$  belongs to the support of the cut-off function, one has to assume $\vert y-\widetilde{x}^1\vert\geq \frac{7}{2}d$. Thus,
$$\vert y-x^2\vert\geq \vert y-\widetilde{x}^1\vert-3d\geq \frac{7}{2}d-3d=\frac{d}{2},$$
and consequently,
$$\left\vert\int_{S\cap B_{4d}(\widetilde{x}^1)}\frac{\partial \Gamma_\lambda(x^2-y)}{\partial x_i}\nu_j(y)\,d_yS\right\vert\leq \frac{\widetilde{C}}{\vert x^1-x^2\vert^2}\vert S\cap B_{4d}(\widetilde{x}^1)\vert.$$
The upper bound for the second term is done once we note that
\begin{equation}\label{Potencial.volumetrico.regularidad.lem1.AhlforsDavid.Miranda}
\vert S\cap B_{4d}(\widetilde{x}^1)\vert\leq C d^2.
\end{equation}

To prove the corresponding bound for the third term in (\ref{Potencial.volumetrico.regularidad.lem1.DescomposicionMiranda}), we consider the potential$$
\mathcal{S}(x)=\int_{S}\frac{\partial \Gamma_\lambda(x-y)}{\partial x_i}\xi\left(\frac{\vert y-\widetilde{x}^1\vert}{d}\right)\nu_j(y)\,d_yS,\ \ x\in G,$$
whose $C^{0,\alpha}$ estimate follows again from Lemma \ref{Potencial.capasimple.regularidad.teo}:
$$\Vert \mathcal{S}\Vert_{C^{0,\alpha}(\overline{G})}\leq C\left\Vert\xi\left(\frac{\vert \cdot-\widetilde{x}^1\vert}{d}\right)\nu_j\right\Vert_{C^{0,\alpha}(S)}\leq C\left(1+\frac{1}{d^\alpha}\right).$$
Let us now fix $\delta>0$ small enough so that $x-\theta \eta(x)\in G$ for every couple $x\in S$ and $0<\theta<4\delta$. Thus, $\mathcal{S}(\widetilde{x}^1-4d\eta(\widetilde{x}^1))=0$ and consequently
\begin{align*}
\vert \mathcal{S}(x^2)\vert&=\vert \mathcal{S}(x^2)-\mathcal{S}(\widetilde{x}^1-4d\eta(\widetilde{x}^1))\vert\leq C\left(1+\frac{1}{d^\alpha}\right)\vert x^2-\widetilde{x}^1+4d\eta(\widetilde{x}^1)\vert^\alpha\\
&\leq C\left(1+\frac{1}{d^\alpha}\right)(3d+4d)^\alpha\leq \widetilde{C}.
\qedhere
\end{align*}
\end{proof}

\begin{rem}\label{Potencial.volumetrico.regularidad.lem1.SuperficiesAhlforsDavid}
Let us elaborate on the key inequality (\ref{Potencial.volumetrico.regularidad.lem1.AhlforsDavid.Miranda}), which is a key regularity property intimately connected with deep issues in harmonic analysis. Indeed, a surface $S$ is said to be \textit{Alhfolrs--David regular} when
$$\vert S\cap B_R(x)\vert\leq C R^2,$$
for every couple $x\in S$, $R>0$ and some nonnegative constant $C$. These surfaces (originally curves) arise from the study of singular integrals along curves \cite{David}, and had already appeared in \cite{Calderon,Coifman,Li} on $L^2$ estimates for the Cauchy integral along Lipschitz curves. His results were improved in \cite{David} to the more general setting of Alhfors--David curves and it was generalized in \cite{Semmes} to the $N$-dimensional framework. Specifically, Ahlfors--David regularity was shown to control singular integral operators that are much more general than the Cauchy integral. Of course, $C^{k,\alpha}$ surfaces are Ahlfors--David regular.
\end{rem}

\begin{lem}\label{Potencial.volumetrico.regularidad.lem2}
Let $G\subseteq \RR^3$ be a bounded domain with regularity $C^{k+1,\alpha}$, $\Omega:=\RR^3\setminus \overline{G}$ its exterior domain and $S=\partial G$ the boundary surface. Define the generalized volume potential on $\Omega$ associated with the Helmholtz equation and generated by a density in $G$, $\zeta:G\longrightarrow\RR$
$$(\mathcal{N}^+_\lambda\zeta)(x)=\int_{G}\Gamma_\lambda(x-y)\zeta(y)\,dy,\ \ x\in\Omega.$$
Then, $\mathcal{N}^+_\lambda\zeta \in C^{k+2,\alpha}(\overline{\Omega})$ is well defined for every $\zeta\in C^{k,\alpha}(\overline{G})$, 
and 
$$
\begin{array}{cccc}
\mathcal{N}^+_\lambda: & C^{k,\alpha}(\overline{G}) & \longrightarrow & C^{k+2,\alpha}(\overline{\Omega}),\\
 & \zeta & \longmapsto & \mathcal{N}^+_\lambda\zeta,
\end{array}
$$
is a bounded linear operator, i.e., 
$$
\Vert \mathcal{N}^+_\lambda\zeta\Vert_{C^{k+2,\alpha}(\Omega)}\leq K\Vert \zeta\Vert_{C^{k,\alpha}(\overline{G})},$$
for every density $\zeta\in C^{k,\alpha}(\overline{G})$.
\end{lem}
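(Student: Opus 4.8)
The plan is to reduce this exterior estimate to the interior one already established in Lemma~\ref{Potencial.volumetrico.regularidad.lem1}. First I would fix two concentric balls with $\overline G\subseteq B_1\subseteq\overline{B_1}\subseteq B_2$ and extend the density to $\overline\zeta=\mathcal P(\zeta)\in C^{k,\alpha}(\RR^3)$ with $\supp\overline\zeta\subseteq B_1$ by Calder\'on's extension theorem (Proposition~\ref{ExtensionHolder.pro}). For $x\in\Omega$ one then has the splitting
$$(\mathcal N^+_\lambda\zeta)(x)=\int_{B_2}\Gamma_\lambda(x-y)\overline\zeta(y)\,dy-\int_{B_2\setminus\overline G}\Gamma_\lambda(x-y)\overline\zeta(y)\,dy,$$
since $\overline\zeta=\zeta$ on $G$ and $\supp\overline\zeta\subseteq B_1\subseteq B_2$. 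On $B_2$ the first term equals the interior volume potential $\mathcal N^-_\lambda(\overline\zeta|_{\overline{B_2}})$ associated with the smooth bounded domain $B_2$, hence it is of class $C^{k+2,\alpha}(\overline{B_2})$ by Lemma~\ref{Potencial.volumetrico.regularidad.lem1}; outside $B_2$ it is smooth and, being the potential of a compactly supported density, it together with all its derivatives is bounded and decays at infinity by Theorem~\ref{DecPotencialRiesz} (the homogeneous part of the kernel gives an $O(|x|^{-1})$ contribution and the remainder $R_\lambda$ likewise, with analogous decay for the derivatives). The second term is the interior volume potential over the bounded $C^{k+1,\alpha}$ domain $B_2\setminus\overline G$ (whose boundary is $S\cup\partial B_2$) with density $\overline\zeta|_{\overline{B_2\setminus\overline G}}\in C^{k,\alpha}$, so again by Lemma~\ref{Potencial.volumetrico.regularidad.lem1} it belongs to $C^{k+2,\alpha}(\overline{B_2\setminus\overline G})$, while on $\overline\Omega\setminus B_1$, where the integration variable stays at positive distance from $\supp\overline\zeta$, it is smooth with uniformly bounded, decaying derivatives. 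Since $\overline\Omega$ is covered by $\overline{B_2}\setminus G$ and $\overline\Omega\setminus\overline{B_1}$, the two descriptions agreeing (and being equally regular) on the overlap, which lies at positive distance from $S$, one obtains $\mathcal N^+_\lambda\zeta\in C^{k+2,\alpha}(\overline\Omega)$ together with the bound $\Vert\mathcal N^+_\lambda\zeta\Vert_{C^{k+2,\alpha}(\Omega)}\le K\Vert\zeta\Vert_{C^{k,\alpha}(\overline G)}$, the constant $K$ absorbing the norm of $\mathcal P$ and the constants of Lemma~\ref{Potencial.volumetrico.regularidad.lem1} and Theorem~\ref{DecPotencialRiesz}.

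Alternatively, one may repeat the argument of Lemma~\ref{Potencial.volumetrico.regularidad.lem1} essentially verbatim, the r\^ole of an interior point $x\in G$ now being played by an exterior point $x\in\Omega$. Differentiating under the integral sign and integrating the $x$-derivatives by parts turns them into $y$-derivatives falling on $\zeta$ plus boundary integrals over $S$; the latter are generalized single layer potentials evaluated at exterior points, controlled by the exterior operator $\mathcal S^+_\lambda$ of Theorem~\ref{Potencial.capasimple.regularidad.teo}. In the treatment of the top-order derivatives the only point needing adjustment is the subtraction trick for the singular integral $\int_G\partial^2_{ij}\Gamma_\lambda(x-y)D^\gamma\zeta(y)\,dy$: since now $x\notin\overline G$, the term $D^\gamma\zeta(x)$ must be replaced by $D^\gamma\overline\zeta(x)$ with $\overline\zeta=\mathcal P(\zeta)$, after which the ``subtracted'' integral is $O(|x-y|^{\alpha-3})$ and hence absolutely convergent, while $D^\gamma\overline\zeta(x)\int_G\partial^2_{ij}\Gamma_\lambda(x-y)\,dy$ reduces, by the divergence theorem, to $D^\gamma\overline\zeta(x)$ times a single layer potential over $S$. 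The geometric dichotomy $2d\le\mbox{dist}(x^1,S)$ versus $2d>\mbox{dist}(x^1,S)$, the auxiliary cut-off $\xi$, and the Ahlfors--David bound~\eqref{Potencial.volumetrico.regularidad.lem1.AhlforsDavid.Miranda} all carry over unchanged, using that $\mbox{dist}(x^1,\overline G)=\mbox{dist}(x^1,S)$ for $x^1\in\Omega$; decay at infinity (Theorem~\ref{DecPotencialRiesz}) supplies the remaining control that makes the $C^{k+2,\alpha}$ norm over the unbounded set $\overline\Omega$ finite.

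The main difficulty here is conceptual rather than technical: the genuinely delicate estimates---the Calder\'on--Zygmund type treatment of $D^2\Gamma_0$, the Hardy--Littlewood--Sobolev treatment of the weakly singular remainder $D^2R_\lambda$, and the boundary analysis near $S$ resting on Ahlfors--David regularity---have already been carried out in Lemma~\ref{Potencial.volumetrico.regularidad.lem1}. What remains is to organize the reduction so that (i) the value of the density at an exterior point makes sense, which is settled once and for all by the Calder\'on extension (and the resulting operator depends only on $\zeta$, not on the chosen extension, since $\mathcal N^+_\lambda\zeta$ does), and (ii) the $C^{k+2,\alpha}(\overline\Omega)$ norm is globally finite on the unbounded exterior domain, which follows from combining the local estimate near $S$ with the decay of the potential of a compactly supported density evaluated far from $\overline G$.
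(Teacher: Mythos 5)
Your first argument is essentially the paper's own proof: extend $\zeta$ via Calder\'on's theorem, write $\int_G=\int_{B}-\int_{B\setminus\overline G}$ for a large ball $B$ containing $\overline G$, reduce the estimate near $S$ to Lemma~\ref{Potencial.volumetrico.regularidad.lem1} on the two bounded domains $B$ and $B\setminus\overline G$, and handle the region far from $\overline G$ by observing that the kernel and its derivatives are uniformly bounded when the base point is bounded away from the support. The only cosmetic differences are your choice of two fixed balls $B_1\subsetneq B_2$ with $\supp\overline\zeta\subseteq B_1$ (the paper uses $B_R,B_{2R}$ and does not insist on support control for $\overline\zeta$), and your appeal to Theorem~\ref{DecPotencialRiesz} for the far-field boundedness where the paper just uses the direct pointwise bound $|D^\gamma_x\Gamma_\lambda(x-y)|\le \widetilde C d_R^{-(|\gamma|+1)}$; both are valid. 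Your alternative sketch (rerunning the interior argument at an exterior base point, replacing $D^\gamma\zeta(x)$ by $D^\gamma\overline\zeta(x)$ in the subtraction trick) would also work but is not what the paper does, and is more work than the reduction you gave first.
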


\begin{proof}
Our argument is based on some ideas of \cite[Teorema 3.II]{Miranda2}. Consider $R>0$ large enough for $\overline{G}\subseteq B_R(0)$ and let us estimate $\Vert\mathcal{N}^+_\lambda\zeta\Vert_{C^{k+2,\alpha}(\Omega)}$ in terms of $\Vert\mathcal{N}^+_\lambda\zeta\Vert_{C^{k+2,\alpha}(\RR^3\setminus \overline{B}_R(0))}$ and $\Vert\mathcal{N}^+_\lambda\zeta\Vert_{C^{k+2,\alpha}(\Omega_{2R})}$, where $\Omega_{2R}$ stands for $B_{2R}(0)\setminus \overline{G}$. Set 
$$d_R:=\min\{\vert x-y\vert:\,x\in\RR^3\setminus B_R(0),\,y\in \overline{G}\}>0,$$
and assume that $d_R<1$. 

Equations (\ref{PhiPsi.SolucionFundamental.Descomposicion.Gamma0+Rlambda}),  (\ref{PhiPsi.SolucionFundamental.Cotas.Derivadas.Rlambda.form}) and (\ref{PhiPsi.SolucionFundamental.Cotas.Derivadas.Gamma0.form}) yield
$$\left\vert D^\gamma_x \Gamma_\lambda(x-y)\right\vert\leq \widetilde{C}\frac{1}{d_R^{\vert \gamma\vert+1}},$$
for every multi-index $\gamma$ and every $x\in \RR^3\setminus B_R(0)$ and $y\in G$. One can then take derivatives under the integral sign and obtain the desired estimate for the $C^{k+2,\alpha}$ norm in $\RR^3\setminus \overline{B}_R(0)$. On the other hand, consider $\overline{\zeta}\in C^{k,\alpha}(\RR^3)$ through Proposition \ref{ExtensionHolder.pro}. Then, 
$$
(\mathcal{N}^+_{\lambda}\zeta)(x)=\int_{B_{2R}(0)}\Gamma_\lambda(x-y)\overline{\zeta}(y)\,dy-\int_{\Omega_{2R}}\Gamma_\lambda(x-y)\overline{\zeta}(y)\,dy,
$$
for every $x\in \Omega_{2R}$.
Since both $B_{2R}(0)$ and $\Omega_{2R}$ are $C^{k+1,\alpha}$ bounded domains, then Lemma \ref{Potencial.volumetrico.regularidad.lem1} leads to
\begin{align*}
\Vert \mathcal{N}^+_\lambda\zeta\Vert_{C^{k+2,\alpha}(\overline{\Omega_{2R}})}\leq &\left\Vert\int_{B_{2R}(0)}\Gamma_\lambda(x-y)\overline{\zeta}(y)\,dy\right\Vert_{C^{k+2,\alpha}(\overline{B}_{2R}(0))}+\left\Vert\int_{\Omega_{2R}}\Gamma_\lambda(x-y)\overline{\zeta}(y)\,dy\right\Vert_{C^{k+2,\alpha}(\overline{\Omega_{2R}})}\\
\leq & M\Vert \overline{\zeta}\Vert_{C^{k+2,\alpha}(B_{2R}(0))}\leq MC_\mathcal{P}\Vert \zeta\Vert_{C^{k+2,\alpha}(G)}.
\qedhere
\end{align*}
\end{proof}

Now, we focus on similar estimates for singular and weakly singular kernels in the whole space $\RR^N$. This results are classical in the homogeneous harmonic case, $\Gamma_0(z)$, and can be found in \cite{Giraud,Miranda1,Miranda2}. However, not only we will need harmonic potentials, but we will also deal with general singular and weakly singular kernels. To this end, we remind \cite[Satz 3.4, Satz 5.4]{Heinemann}. 

\begin{theo}[Weakly singular kernels]\label{NucleoDebSingular.Holder.teo}
Let us consider $0\leq \beta\leq N-1,\ 0<\alpha<1$ and $K(x,z),\ x\in\overline{D},z\in\RR^N\setminus\{0\}$ a weakly singular integral kernel of exponent $\beta$ satisfying the following three hypothesis:
\begin{enumerate}
\item For each $x\in \overline{D}$
\begin{equation}\label{NucleoDebSingular.Holder.Hip1.form}
K(x,\cdot)\in C^1(\RR^N\setminus\{0\}).
\end{equation}
\item For each $x\in \overline{D}$ and $z\in\RR^N\setminus\{0\}$
\begin{equation}\label{NucleoDebSingular.Holder.Hip2.form}
\left\vert\nabla_z K(x,z)\right\vert\leq \frac{C}{\vert z\vert^{\beta+1}}.
\end{equation}
\item For all $x_1,x_2\in\overline{D}$ and $z\in \RR^N\setminus\{0\}$ one has
\begin{equation}\label{NucleoDebSingular.Holder.Hip3.form}
\vert K(x_1,z)-K(x_2,z)\vert\leq C\frac{\vert x_1-x_2\vert^\alpha}{\vert z\vert^\beta}.
\end{equation}
\end{enumerate}
Then, for all $R>0$ there exists a nonnegative constant $M=M(N,\alpha,\beta,R)$ so that the generalized volume potential
$$
(\mathcal{N}_K\zeta)(x):=\int_{\RR^N}K(x,x-y)\zeta(y)\,dy,\ x\in \overline{D},
$$
with density $\zeta\in C^{0,\alpha}_c(B_R(0))$ belongs to $C^{0,\alpha}(\overline{D})$ and 
$$\Vert \mathcal{N}_K\zeta\Vert_{C^{0,\alpha}(D)}\leq M\Vert \zeta\Vert_{C^{0,\alpha}(\RR^N)}.$$
\end{theo}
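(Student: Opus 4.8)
The plan is to prove the two ingredients of the $C^{0,\alpha}(\overline D)$ estimate separately and by direct integral bounds, using only the weak singularity and the three structural hypotheses; no Calder\'on--Zygmund cancellation is needed since $\beta\le N-1<N$ makes $K(x,x-\cdot)$ locally integrable. For the sup-norm bound, note that $|K(x,z)|\le C|z|^{-\beta}$ and $\zeta$ is supported in $B_R(0)$, so $|(\mathcal N_K\zeta)(x)|\le C\|\zeta\|_{C^0}\int_{B_R(0)}|x-y|^{-\beta}\,dy$, and the compactly supported case of Theorem~\ref{DecPotencialRiesz} (applied with the exponent $\beta$ in place of $\alpha$ and $f=|\zeta|$) gives $\|\mathcal N_K\zeta\|_{C^0(D)}\le M\|\zeta\|_{C^0(\RR^N)}$ with $M=M(N,\beta,R)$ uniform over the bounded set $\overline D$.

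For the H\"older seminorm, fix $x_1,x_2\in\overline D$ and put $d:=|x_1-x_2|$. If $d$ is bounded below away from $0$ the estimate $|(\mathcal N_K\zeta)(x_1)-(\mathcal N_K\zeta)(x_2)|\le M\|\zeta\|_{C^0}d^\alpha$ is immediate from the sup bound, so I may assume $d<1$. I would split $\RR^N=B_{2d}(x_1)\cup(\RR^N\setminus B_{2d}(x_1))$. On the near ball, bound the two potentials separately: $\int_{B_{2d}(x_1)}|K(x_1,x_1-y)|\,|\zeta(y)|\,dy\le C\|\zeta\|_{C^0}\int_{B_{2d}(x_1)}|x_1-y|^{-\beta}\,dy=c_N\|\zeta\|_{C^0}(2d)^{N-\beta}\le C\|\zeta\|_{C^0}d^\alpha$, since $N-\beta\ge 1>\alpha$ and $d<1$; for the $x_2$-potential use $B_{2d}(x_1)\subseteq B_{3d}(x_2)$ and the same computation with center $x_2$.

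On the far region $|x_1-y|\ge 2d$, I would write the integrand difference as $[K(x_1,x_1-y)-K(x_2,x_1-y)]\zeta(y)+[K(x_2,x_1-y)-K(x_2,x_2-y)]\zeta(y)$. For the first bracket, hypothesis~(\ref{NucleoDebSingular.Holder.Hip3.form}) gives $\le C\,d^\alpha|x_1-y|^{-\beta}$, and since $y\in B_R(0)$ and $x_1\in\overline D$ the quantity $|x_1-y|$ stays below a constant $R'=R'(R,D)$, so $\int_{2d\le|x_1-y|\le R'}|x_1-y|^{-\beta}\,dy\le\omega_N(R')^{N-\beta}/(N-\beta)$ (finite because $N-1-\beta\ge0$), contributing $C\,d^\alpha\|\zeta\|_{C^0}$. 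For the second bracket, hypothesis~(\ref{NucleoDebSingular.Holder.Hip1.form}) allows the mean value identity $K(x_2,x_1-y)-K(x_2,x_2-y)=\int_0^1\nabla_zK\big(x_2,(x_1-y)-(1-\theta)(x_1-x_2)\big)\cdot(x_1-x_2)\,d\theta$; on the far region the argument $w$ of $\nabla_zK$ obeys $|w|\ge|x_1-y|-d\ge\tfrac12|x_1-y|$, so hypothesis~(\ref{NucleoDebSingular.Holder.Hip2.form}) yields $|K(x_2,x_1-y)-K(x_2,x_2-y)|\le C\,d\,|x_1-y|^{-(\beta+1)}$. Integrating, $\int_{2d\le|x_1-y|\le R'}d\,|x_1-y|^{-(\beta+1)}\,dy\le c_N d\int_{2d}^{R'}r^{N-\beta-2}\,dr$, which is $\le C d\le Cd^\alpha$ when $\beta<N-1$, and $\le C d\log(R'/2d)\le Cd^\alpha$ when $\beta=N-1$, because $d\log(1/d)=d^\alpha\cdot d^{1-\alpha}\log(1/d)\to0$ for $d<1$. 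Collecting the four pieces gives $|(\mathcal N_K\zeta)(x_1)-(\mathcal N_K\zeta)(x_2)|\le M\|\zeta\|_{C^0}d^\alpha\le M\|\zeta\|_{C^{0,\alpha}(\RR^N)}d^\alpha$, and together with the sup bound this proves the claimed estimate; continuity of $\mathcal N_K\zeta$ on $\overline D$ comes out for free.

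The calculation is essentially bookkeeping of exponents; the only delicate point is the borderline case $\beta=N-1$, where the gradient term produces a logarithm that must be absorbed into $d^\alpha$ using $\alpha<1$ (this is precisely why the statement excludes $\alpha=1$). I do not expect any genuine obstacle beyond this: unlike the truly singular kernels $D^2\Gamma_0$ handled elsewhere in this section, here the weak singularity already does all the work, and the whole argument is the adaptation to a non-homogeneous kernel of the classical harmonic case in \cite{Miranda2,Heinemann}, with hypotheses (1)--(3) playing the role of homogeneity.
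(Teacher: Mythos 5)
Your argument is correct, but there is nothing in the paper to compare it against: Theorem~\ref{NucleoDebSingular.Holder.teo} is quoted without proof from Heinemann's thesis (the paper says ``we remind \cite[Satz 3.4, Satz 5.4]{Heinemann}''). Your near/far splitting around $B_{2d}(x_1)$, the use of hypothesis~(\ref{NucleoDebSingular.Holder.Hip3.form}) for the $x$-increment on the far region, the mean-value integral with hypotheses~(\ref{NucleoDebSingular.Holder.Hip1.form})--(\ref{NucleoDebSingular.Holder.Hip2.form}) for the $z$-increment (with the correct observation that the straight segment from $x_2-y$ to $x_1-y$ stays at distance $\geq\tfrac12|x_1-y|$ from the origin when $|x_1-y|\geq 2d$), and the absorption of the borderline logarithm into $d^\alpha$ when $\beta=N-1$ are exactly the standard ingredients for a weakly singular, locally integrable kernel, and the exponent bookkeeping checks out.

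One small repair: for the sup bound you appeal to the compactly supported case of Theorem~\ref{DecPotencialRiesz}, but as stated that result yields $|(R_\beta*f)(x)|\leq C\|f\|_{L^\infty}/|x|^\beta$, a decay estimate whose right-hand side blows up as $x\to 0$ and so does not, taken at face value, give a bound that is uniform over a bounded set $\overline D$ containing the origin. What you actually need is the elementary observation that $\sup_{x\in\overline D}\int_{B_R(0)}|x-y|^{-\beta}\,dy<\infty$ because $\beta<N$ and $\{x-y:x\in\overline D,\ y\in B_R(0)\}$ is bounded; this is exactly what the proof of Theorem~\ref{DecPotencialRiesz}(2) does in the regime $|x|\leq 2R_0$, so the estimate you want is true, but you should invoke the direct computation rather than the stated decay conclusion.
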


\begin{theo}[Singular kernels]\label{NucleoSingular.1Derivada.Holder.teo}
Consider $0<\alpha<1$ and $K(x,z),\ x\in\overline{D},\,z\in\RR^N\setminus\{0\}$ a kernel satisfying the following hypotheses:
\begin{enumerate}
\item $K(x,z)$ is positively homogeneous of degree $-(N-1)$ with respect to the second variable, i.e., for all $x\in\overline{D},\ z\in\RR^N\setminus\{0\}$ and $\lambda>0$
\begin{equation}\label{NucleoSingular.1Derivada.Holder.Hip1.form}
K(x,\lambda z)=\frac{1}{\lambda^{N-1}} K(x,z).
\end{equation}
\item $K(x,z)$ has the following regularity properties for every $x\in\overline{D}$ and each index $1\leq i,j\leq N$:
\begin{align}\label{NucleoSingular.1Derivada.Holder.Hip2.form}
\begin{split}
K\in C^1(\overline{D}\times(\RR^N\setminus\{0\})),\hspace{0.7cm}&  K(x,\cdot)\in C^2(\RR^N\setminus\{0\}),\\
\frac{\partial K}{\partial x_i}\in C(\overline{D}\times(\RR^N\setminus\{0\})),\hspace{0.7cm}& \frac{\partial K}{\partial x_i}(x,\cdot)\in C^1(\RR^N\setminus\{0\}),\\
\frac{\partial K}{\partial z_i}\in C(\overline{D}\times(\RR^N\setminus\{0\})),\hspace{0.7cm}& \\
\frac{\partial^2 K}{\partial z_i\partial x_j}\in C(\overline{D}\times(\RR^N\setminus\{0\})),\hspace{0.7cm}& \frac{\partial^2 K}{\partial z_i\partial z_j}\in C(\overline{D}\times(\RR^N\setminus\{0\})).
\end{split}
\end{align}
\item The first derivatives of $K(x,z)$ are H\"{o}lder-continuous with exponent $\alpha$ with respect to $x$ in the sense that, for each $x_1,x_2\in\overline{D},\ z\in\RR^N\setminus\{0\}$ and for all index $1\leq i\leq N$,
\begin{align}\label{NucleoSingular.1Derivada.Holder.Hip3.form}
\begin{split}
\left\vert\frac{\partial K}{\partial x_i}(x_1,z)-\frac{\partial K}{\partial x_i}(x_2,z)\right\vert\leq & C\frac{\vert x_1-x_2\vert^\alpha}{\vert z\vert^{N-1}},\\
\left\vert\frac{\partial K}{\partial z_i}(x_1,z)-\frac{\partial K}{\partial z_i}(x_2,z)\right\vert\leq & C\frac{\vert x_1-x_2\vert^\alpha}{\vert z\vert^{N}}.
\end{split}
\end{align}
\end{enumerate}
Then, for every $R>0$ there exists a nonnegative constant $M=M(N,\alpha,R)$ so that the generalized volume potential $\mathcal{N}_K\zeta$ with density $\zeta\in C^{0,\alpha}_c(B_R(0))$ belongs to $C^{1,\alpha}(\overline{D})$ and satisfies the estimate
$$\Vert \mathcal{N}_K\zeta\Vert_{C^{1,\alpha}(D)}\leq M\Vert \zeta\Vert_{C^{0,\alpha}(\RR^N)}.$$
Moreover, its first partial derivatives can be computed as
$$\frac{\partial}{\partial x_i}\left(\mathcal{N}_K\zeta\right)=\mathcal{N}_{\frac{\partial K}{\partial x_i}}\zeta+\mathcal{N}_{\frac{\partial K}{\partial z_i}}\zeta.$$
\end{theo}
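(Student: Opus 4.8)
The plan is to reduce the statement about the singular kernel $K(x,z)$ to the combination of two pieces: a weakly singular part and a genuinely singular (Calderón--Zygmund type) part, and to differentiate the potential under a regularization. First I would fix the density $\zeta\in C^{0,\alpha}_c(B_R(0))$ and establish the differentiation formula for $\mathcal{N}_K\zeta$. Since $K(x,\cdot)$ is homogeneous of degree $-(N-1)$, the kernel $K(x,x-y)$ is locally integrable in $y$, so $\mathcal{N}_K\zeta$ is well-defined and continuous. To compute $\partial_{x_i}(\mathcal{N}_K\zeta)$, I would introduce a smooth cutoff $\chi_\varepsilon$ vanishing near $z=0$ and write $\mathcal{N}_K^\varepsilon\zeta(x)=\int_{\RR^N}\chi_\varepsilon(x-y)K(x,x-y)\zeta(y)\,dy$, differentiate under the integral sign (legitimate because the integrand is now smooth and compactly supported in $y$), and pass to the limit $\varepsilon\to 0$. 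The derivative $\partial_{x_i}K(x,x-y)$ splits via the chain rule into $(\partial_{x_i}K)(x,x-y)$, which is again homogeneous of degree $-(N-1)$ (a \emph{weakly} singular kernel of exponent $\beta=N-1$ satisfying the hypotheses of Theorem~\ref{NucleoDebSingular.Holder.teo}), plus $(\partial_{z_i}K)(x,x-y)$, which is homogeneous of degree $-N$ and hence genuinely singular. The formula in the statement, $\partial_{x_i}(\mathcal{N}_K\zeta)=\mathcal{N}_{\partial K/\partial x_i}\zeta+\mathcal{N}_{\partial K/\partial z_i}\zeta$, will then follow once I justify that the second term makes sense as a principal value plus a ``jump'' contribution coming from the mean of $\partial_{z_i}K(x,\cdot)$ over the unit sphere.

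\textbf{Key steps.} (1) Handle $\mathcal{N}_{\partial K/\partial x_i}\zeta$: the hypotheses (\ref{NucleoSingular.1Derivada.Holder.Hip1.form})--(\ref{NucleoSingular.1Derivada.Holder.Hip3.form}) on $K$ transfer directly to $\partial K/\partial x_i$, giving exactly the estimate (\ref{NucleoDebSingular.Holder.Hip2.form}) and the $x$-Hölder bound (\ref{NucleoDebSingular.Holder.Hip3.form}) with $\beta=N-1$, so Theorem~\ref{NucleoDebSingular.Holder.teo} applies and yields $\|\mathcal{N}_{\partial K/\partial x_i}\zeta\|_{C^{0,\alpha}(D)}\le M\|\zeta\|_{C^{0,\alpha}(\RR^N)}$. (2) Handle the singular term $\mathcal{N}_{\partial K/\partial z_i}\zeta$: following the classical Hölder--Korn--Lichtenstein--Giraud argument (cf.~\cite{Giraud,Miranda1,Miranda2}), I would subtract and add $\zeta(x)$ inside the integral, writing $\mathcal{N}_{\partial K/\partial z_i}\zeta(x)=\int (\partial_{z_i}K)(x,x-y)(\zeta(y)-\zeta(x))\,dy + \zeta(x)\,\text{p.v.}\!\int (\partial_{z_i}K)(x,x-y)\,dy$; the first integral is absolutely convergent because $|\zeta(y)-\zeta(x)|\le[\zeta]_\alpha|x-y|^\alpha$ cancels an $\alpha$-power of the $|z|^{-N}$ singularity, and the second reduces, via homogeneity and integration over annuli, to the surface mean of $\partial_{z_i}K(x,\cdot)$ on $\partial B_1(0)$, which is a $C^{0,\alpha}$ function of $x$ by (\ref{NucleoSingular.1Derivada.Holder.Hip3.form}). (3) Establish the Hölder continuity of the singular part: split $\RR^N$ into a ball $B_{cd}(x_1)$ around $x_1$ (with $d=|x_1-x_2|$) and its complement, exactly as in the proof of Lemma~\ref{Potencial.volumetrico.regularidad.lem1}; near $x_1$ use the $|z|^{-N}$ bound against the $|z|^\alpha$ Hölder factor to get an $O(d^\alpha)$ contribution, and away from $x_1$ use the mean value theorem on $\partial_{z_i}K(x_1,\cdot)-\partial_{z_i}K(x_2,\cdot)$ together with the gradient bound $|\nabla_z\partial_{z_i}K|\lesssim|z|^{-N-1}$ (available from homogeneity of degree $-N$ and (\ref{NucleoSingular.1Derivada.Holder.Hip2.form})), plus the $x$-Hölder estimate (\ref{NucleoSingular.1Derivada.Holder.Hip3.form}) for the kernel itself. (4) Finally, rigorously justify the limit $\varepsilon\to 0$ in the regularized formula, showing $\mathcal{N}_K^\varepsilon\zeta\to\mathcal{N}_K\zeta$ and $\partial_{x_i}\mathcal{N}_K^\varepsilon\zeta\to\mathcal{N}_{\partial K/\partial x_i}\zeta+\mathcal{N}_{\partial K/\partial z_i}\zeta$ uniformly on $\overline{D}$, so that the limit is genuinely the derivative of $\mathcal{N}_K\zeta$.

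\textbf{Main obstacle.} The delicate point is step (3), the $C^{0,\alpha}$-estimate of the singular term $\mathcal{N}_{\partial K/\partial z_i}\zeta$. Unlike the weakly singular case, here one cannot differentiate freely under the integral sign, and the principal-value structure must be tracked carefully; the bookkeeping of the annular decomposition and the need to balance the $|z|^{-N}$ (resp.\ $|z|^{-N-1}$) decay against Hölder moduli of both $\zeta$ and the kernel is exactly the classical Giraud computation, which is technically demanding but standard. A secondary subtlety is that the surface-mean term $\zeta(x)\int_{\partial B_1(0)}\partial_{z_i}K(x,\sigma)\,d\sigma$ must be shown to be Hölder in $x$; this is where hypothesis (\ref{NucleoSingular.1Derivada.Holder.Hip3.form}) on $\partial_{z_i}K$ is used, and it is the reason that bound is included among the assumptions. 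Once these estimates are in place, summing the two contributions and invoking the differentiation-under-the-integral justification from step (4) completes the proof.
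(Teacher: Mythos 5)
The paper does not supply a proof of this theorem at all: it is quoted verbatim as a known fact, with a pointer to Heinemann's diploma thesis (``we remind \cite[Satz 3.4, Satz 5.4]{Heinemann}'' immediately above the two theorem statements). So there is no in-paper argument to compare against. Judged on its own, your proposal is the correct classical H\"older--Korn--Lichtenstein--Giraud route, and the overall architecture — regularize, differentiate, split $\partial_{x_i}\bigl(K(x,x-y)\bigr)$ into the weakly singular piece $(\partial_{x_i}K)(x,x-y)$ (handled by Theorem~\ref{NucleoDebSingular.Holder.teo} with $\beta=N-1$) and the singular piece $(\partial_{z_i}K)(x,x-y)$, then do the subtract-and-add with $\zeta(x)$ and an annular decomposition for the $C^{0,\alpha}$ estimate — is exactly what works and is fully consistent with how the paper uses the result (cf.\ Corollary~\ref{NucleoSingular.kDerivadas.Holder.cor} and the decomposition in Lemma~\ref{Potencial.volumetrico.regularidad.lem1}).

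One detail in step (2) is stated imprecisely and is worth straightening out, since it is where the cancellation that makes the principal value exist is hiding. You claim the constant term ``reduces to the surface mean of $\partial_{z_i}K(x,\cdot)$ on $\partial B_1(0)$.'' In fact that surface mean is automatically zero: writing the annulus integral via the divergence theorem,
\[
\int_{\varepsilon<|z|<\rho}\partial_{z_i}K(x,z)\,dz=\int_{|z|=\rho}K(x,z)\,\frac{z_i}{\rho}\,dS_z-\int_{|z|=\varepsilon}K(x,z)\,\frac{z_i}{\varepsilon}\,dS_z,
\]
and the homogeneity of degree $-(N-1)$ of $K$ (hypothesis~\eqref{NucleoSingular.1Derivada.Holder.Hip1.form}) makes both sphere integrals independent of the radius, so they cancel identically. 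This is precisely the cancellation that prevents a logarithmic divergence and makes the PV well defined; your argument implicitly relies on it but does not call it out. What actually survives as the ``constant term'' attached to $\zeta(x)$ is the boundary integral on $\partial B_R(0)$ minus the inner-sphere limit $\int_{\partial B_1(0)}K(x,\sigma)\sigma_i\,d\sigma$, i.e.\ a term built out of $K$ itself (not $\partial_{z_i}K$), and it is H\"older in $x$ by the hypothesis $K\in C^1(\overline{D}\times(\RR^N\setminus\{0\}))$ together with~\eqref{NucleoSingular.1Derivada.Holder.Hip3.form}. With that correction, the rest of your outline — the $|z|^{-N}$ against $|x-y|^\alpha$ balance near the singularity, the mean value theorem using $|\nabla_z\partial_{z_i}K|\lesssim|z|^{-N-1}$ (from the $C^2$ hypothesis in~\eqref{NucleoSingular.1Derivada.Holder.Hip2.form} plus homogeneity) away from it, and the final passage to the limit $\varepsilon\to0$ — is sound.
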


Notice that the singular integral kernel $\frac{\partial K}{\partial z_i}$ has an associated singular integral operator $\mathcal{N}_{\frac{\partial K}{\partial z_i}}$, where the integrals are understood in the sense of Cauchy principal values, i.e.,
$$\left(\mathcal{N}_{\frac{\partial K}{\partial z_i}}\zeta\right)(x)=\mbox{PV}\int_{\RR^N}\frac{\partial K}{\partial z_i}(x,x-y)\zeta(y)\,dy,$$
by virtue of the cancelation properties arising from the homogeneity in $z$ of the original kernel $K(x,z)$. Another interesting observation, which explains some differences between volume potentials in the whole $\RR^N$ and volume potentials in a bounded domain, is the change of variables formula
\begin{equation}\label{NucleoDebSingular.CambioVariable.Potencial.form}
(\mathcal{N}_K\zeta)(x)=\int_{\RR^N}K(x,x-y)\zeta(y)\,dy=\int_{\RR^N}K(x,z)\zeta(x-z)\,dz,
\end{equation}
which lets us take derivatives in any of the fwo factors. When the kernel is not sufficiently well behaved, we can put the derivatives on the density, or the other way round. However, for densities on $G$ as in Lemma \ref{Potencial.volumetrico.regularidad.lem1}, the previous change of variable is not allowed and the only way to transfer derivatives to the densities is by means of the integration by parts argument in Lemma \ref{Potencial.volumetrico.regularidad.lem1}. This gives rise to a new boundary term appears that must be studied by estimating single layer potentials as in Theorem \ref{Potencial.capasimple.regularidad.teo}.

As a consequence, one can prove the next two corollaries, where higher order derivatives of these generalized volume potentials can be considered.

\begin{cor}\label{NucleoDebSingular.Derivadas.Holder.cor2}
Les us consider $0\leq \beta\leq N-1$, $0<\alpha<1$, $k,m\in\NN$ so that $\beta+m\leq N-1$ and $K(x,z),\ x\in\overline{D},z\in\RR^N\setminus\{0\}$, a weakly singular integral kernel of exponent $\beta$ which satisfies the following hypotheses:
\begin{enumerate}
\item For each $\gamma_1,\gamma_2$ so that $\vert \gamma_1\vert\leq k$ and $\vert\gamma_2\vert\leq m$, $D^{\gamma_1+\gamma_2}_xK(x,z)$ is weakly singular with exponent $\beta$ and $D^{\gamma_1}_xD^{\gamma_2}_zK(x,z)$ is a finite sum of weakly singular integral kernels with exponent $\beta+n$ where $n=0,\ldots,\vert \gamma_2\vert$, i.e.,
\begin{align}\label{NucleoDebSingular.Derivadas.Holder.Hip0.form2}
\begin{split}
\left\vert D^{\gamma_1+\gamma_2}_x K(x,z)\right\vert & \leq \frac{C}{\vert z\vert^\beta},\\
\left\vert D^{\gamma_1}_x D^{\gamma_2}_z K(x,z)\right\vert & \leq\frac{C}{\vert z\vert^{\beta+\vert \gamma_2\vert}}.
\end{split}
\end{align}
\item For every $x\in \overline{D}$ and $\gamma_1,\gamma_2$ such that $\vert \gamma_1\vert\leq k$ and $\vert\gamma_2\vert\leq m$
\begin{equation}\label{NucleoDebSingular.Derivadas.Holder.Hip1.form2}
(D^{\gamma_1+\gamma_2}_xK)(x,\cdot),\,(D^{\gamma_1}_xD^{\gamma_2}_z K)(x,\cdot)\in C^1(\RR^N\setminus\{0\}).
\end{equation}
\item For all $x\in \overline{D}$, $z\in\RR^N\setminus\{0\}$ and $\gamma_1,\gamma_2$ so that $\vert \gamma_1\vert\leq k$ and $\vert\gamma_2\vert\leq m$ 
\begin{align}\label{NucleoDebSingular.Derivadas.Holder.Hip2.form2}
\begin{split}
\left\vert\nabla_z D^{\gamma_1+\gamma_2}_xK(x,z)\right\vert & \leq \frac{C}{\vert z\vert^{\beta+1}},\\
\left\vert\nabla_z D^{\gamma_1}_xD^{\gamma_2}_z K(x,z)\right\vert & \leq \frac{C}{\vert z\vert^{\beta+\vert\gamma_2\vert+1}}.
\end{split}
\end{align}
\item For any $x_1,x_2\in\overline{D}$, $z\in \RR^N\setminus\{0\}$ and $\gamma_1,\gamma_2$ satisfying $\vert \gamma_1\vert\leq k$ and $\vert\gamma_2\vert\leq m$
\begin{align}\label{NucleoDebSingular.Derivadas.Holder.Hip3.form2}
\begin{split}
\vert D^{\gamma_1+\gamma_2}_xK(x_1,z)-D^{\gamma_1+\gamma_2}_xK(x_2,z)\vert & \leq C\frac{\vert x_1-x_2\vert^\alpha}{\vert z\vert^\beta},\\
\vert D^{\gamma_1}_x D^{\gamma_2}_zK(x_1,z)-D^{\gamma_1}_x D^{\gamma_2}_zK(x_2,z)\vert & \leq C\frac{\vert x_1-x_2\vert^\alpha}{\vert z\vert^{\beta+\vert \gamma_2\vert}}.
\end{split}
\end{align}
\end{enumerate}
Then, for every $R>0$ there exists a nonnegative constant $M=M(N,\alpha,\beta,k,m,R)$ such that the generalized volume potential $\mathcal{N}_K\zeta$ generated by a density $\zeta\in C^{k,\alpha}_c(B_R(0))$ belongs to $C^{k+m,\alpha}(\overline{D})$ and verifies the estimate
$$\Vert \mathcal{N}_K\zeta\Vert_{C^{k+m,\alpha}(D)}\leq M\Vert \zeta\Vert_{C^{k,\alpha}(\RR^N)}.$$
Moreover, for every multi-index $\gamma=\gamma_1+\gamma_2$ so that $\vert \gamma_1\vert\leq k$ and $\vert \gamma_2\vert\leq m$ 
$$D^\gamma(\mathcal{N}_K\zeta)=\sum_{\delta\leq \gamma_1}\binom{\gamma_1}{\delta}\left(\mathcal{N}_{D^{\delta+\gamma_2}_xK}D^{\gamma_1-\delta}\zeta+\mathcal{N}_{D^{\delta}_xD^{\gamma_2}_zK}D^{\gamma_1-\delta}\zeta\right).$$
\end{cor}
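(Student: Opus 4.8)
The plan is to reduce \Cref{NucleoDebSingular.Derivadas.Holder.cor2} to the base cases already established, namely \Cref{NucleoDebSingular.Holder.teo} (the $C^{0,\alpha}$ bound for weakly singular kernels, $k=m=0$) and its counterpart for the first-order gain, and then to propagate the estimate in $k$ and $m$ by an induction that transfers derivatives between the density and the kernel by means of the change-of-variables formula \eqref{NucleoDebSingular.CambioVariable.Potencial.form}. The key point is that here the density $\zeta$ has compact support in a fixed ball $B_R(0)$ but lives in the whole space $\RR^N$, so, in contrast with the situation in \Cref{Potencial.volumetrico.regularidad.lem1}, no boundary term is generated when one integrates by parts: one simply writes $(\mathcal N_K\zeta)(x)=\int_{\RR^N}K(x,z)\zeta(x-z)\,dz$ and differentiates under the integral sign, distributing derivatives in $x$ onto the two factors $K(x,z)$ and $\zeta(x-z)$ via the Leibniz rule.

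First I would fix multi-indices $\gamma=\gamma_1+\gamma_2$ with $|\gamma_1|\le k$ and $|\gamma_2|\le m$ and, using \eqref{NucleoDebSingular.CambioVariable.Potencial.form}, carry out the formal computation
\begin{equation*}
D^\gamma(\mathcal N_K\zeta)(x)=\sum_{\delta\le\gamma_1}\binom{\gamma_1}{\delta}\Bigl(\mathcal N_{D^{\delta+\gamma_2}_xK}D^{\gamma_1-\delta}\zeta+\mathcal N_{D^\delta_xD^{\gamma_2}_zK}D^{\gamma_1-\delta}\zeta\Bigr),
\end{equation*}
which is the asserted formula. Differentiation under the integral sign is legitimate here: the hypotheses \eqref{NucleoDebSingular.Derivadas.Holder.Hip0.form2} guarantee that each kernel $D^{\delta+\gamma_2}_xK$ is weakly singular of exponent $\beta$ and each $D^\delta_xD^{\gamma_2}_zK$ is a finite sum of weakly singular kernels of exponents $\beta+n$ with $0\le n\le|\gamma_2|\le m$; since $\beta+m\le N-1$, all of these exponents are strictly below $N$, so the integrals converge absolutely (and the $z$-derivative integrals converge in the principal-value sense, by the cancellation of homogeneous kernels), and the density factors $D^{\gamma_1-\delta}\zeta$ remain in $C^{0,\alpha}_c(B_R(0))$ because $\zeta\in C^{k,\alpha}_c(B_R(0))$. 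Once the formula is justified for $|\gamma|=k+m$, the lower-order derivatives are covered by the same argument with smaller $\gamma_2$, and in particular every $D^\gamma(\mathcal N_K\zeta)$ with $|\gamma|\le k+m$ is represented as a finite sum of generalized volume potentials.

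Second, to each summand I would apply \Cref{NucleoDebSingular.Holder.teo}, i.e.\ the $C^{0,\alpha}$ estimate for weakly singular kernels. For this one must check that the kernels appearing, $D^{\delta+\gamma_2}_xK$ and $D^\delta_xD^{\gamma_2}_zK$, satisfy the three structural hypotheses \eqref{NucleoDebSingular.Holder.Hip1.form}--\eqref{NucleoDebSingular.Holder.Hip3.form} of that theorem (with exponent $\beta$, respectively $\beta+n$): the $C^1$ regularity in $z$ is \eqref{NucleoDebSingular.Derivadas.Holder.Hip1.form2}, the gradient bound in $z$ is \eqref{NucleoDebSingular.Derivadas.Holder.Hip2.form2}, and the $\alpha$-H\"older continuity in $x$ is \eqref{NucleoDebSingular.Derivadas.Holder.Hip3.form2}. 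Applying \Cref{NucleoDebSingular.Holder.teo} to each term then yields
\begin{equation*}
\Vert D^\gamma(\mathcal N_K\zeta)\Vert_{C^{0,\alpha}(D)}\le M\sum_{\delta\le\gamma_1}\binom{\gamma_1}{\delta}\Vert D^{\gamma_1-\delta}\zeta\Vert_{C^{0,\alpha}(\RR^N)}\le M'\Vert\zeta\Vert_{C^{k,\alpha}(\RR^N)},
\end{equation*}
and summing over all $\gamma$ with $|\gamma|\le k+m$ gives $\Vert\mathcal N_K\zeta\Vert_{C^{k+m,\alpha}(D)}\le M\Vert\zeta\Vert_{C^{k,\alpha}(\RR^N)}$, with $M=M(N,\alpha,\beta,k,m,R)$, where the dependence on $R$ enters only through \Cref{NucleoDebSingular.Holder.teo}.

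The main obstacle is purely bookkeeping rather than conceptual: one has to be careful about the \emph{mixed} $z$-derivatives. When $|\gamma_2|\ge1$, the kernel $D^\delta_xD^{\gamma_2}_zK$ is, by hypothesis \eqref{NucleoDebSingular.Derivadas.Holder.Hip0.form2}, only a finite sum of weakly singular kernels of exponents up to $\beta+|\gamma_2|$, so one must split it accordingly and apply \Cref{NucleoDebSingular.Holder.teo} to each homogeneous-exponent piece, checking in each case that the relevant exponent does not exceed $N-1$ — which is exactly where the standing assumption $\beta+m\le N-1$ is used, and where it is essential that it be imposed on $m$ and not merely on $|\gamma_2|$ for the specific $\gamma$ at hand. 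A secondary, routine point is to justify interchanging differentiation and integration for the principal-value integrals $\mathcal N_{D^\delta_xD^{\gamma_2}_zK}$; this follows from the homogeneity/cancellation structure exactly as in \Cref{NucleoSingular.1Derivada.Holder.teo}, approximating the principal value by integrals over $\{|z|>\varepsilon\}$ and passing to the limit, and I would simply invoke that argument rather than repeat it.
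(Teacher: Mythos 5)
Your overall strategy is the right one, and it matches what the paper implicitly has in mind (the corollary is stated ``as a consequence'' of Theorem~\ref{NucleoDebSingular.Holder.teo} without a written proof): reduce to the $C^{0,\alpha}$ estimate for weakly singular kernels by expressing $D^\gamma(\mathcal N_K\zeta)$ as a finite sum of generalized volume potentials and then applying that theorem term by term. However, the differentiation step you call a ``formal computation'' is exactly where the delicate bookkeeping lives, and as written your argument has a real gap.

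You propose to differentiate the convolution form $\int K(x,z)\zeta(x-z)\,dz$ and distribute the $\gamma$ derivatives between $K(x,z)$ and $\zeta(x-z)$ by the Leibniz rule. But this produces the sum $\sum_{\sigma\le\gamma}\binom{\gamma}{\sigma}\int D^\sigma_x K(x,z)\,D^{\gamma-\sigma}\zeta(x-z)\,dz$, in which $|\gamma-\sigma|$ ranges up to $|\gamma|=k+m$, i.e.\ up to $k+m$ derivatives land on $\zeta$. That is not allowed: $\zeta$ is only $C^{k,\alpha}$. To keep at most $k$ derivatives on $\zeta$, one must alternate between the two representations of the potential: use the direct form $\int K(x,x-y)\zeta(y)\,dy$ to place the ``extra'' $\gamma_2$ derivatives onto the kernel, and the convolution form (or, equivalently, integration by parts, which introduces no boundary terms because $\zeta$ has compact support) to transfer the remaining $\gamma_1$ derivatives to $\zeta$. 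Doing this honestly yields
\begin{equation*}
D^\gamma(\mathcal N_K\zeta)(x)=\sum_{\sigma\le\gamma_2}\sum_{\delta\le\gamma_1}\binom{\gamma_2}{\sigma}\binom{\gamma_1}{\delta}\int_{\RR^N}\bigl(D^{\delta+\gamma_2-\sigma}_x D^{\sigma}_z K\bigr)(x,x-y)\,D^{\gamma_1-\delta}\zeta(y)\,dy,
\end{equation*}
a \emph{double} sum containing mixed cross-terms $D^{\delta+\gamma_2-\sigma}_x D^\sigma_z K$ with $0<\sigma<\gamma_2$. For $|\gamma_2|\le 1$ the sum degenerates to the two extreme terms $\sigma\in\{0,\gamma_2\}$ and you recover exactly the formula in the statement; this is the only case that is actually used in the paper (the application in Theorem~\ref{Potencial.capasimple.regularidad.frontera.teo} always takes $m=1$). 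But for $m\ge 2$ the cross-terms cannot be controlled by hypothesis~\eqref{NucleoDebSingular.Derivadas.Holder.Hip0.form2} as stated, since it only covers $D^{\eta_1}_x D^{\eta_2}_z K$ for $|\eta_1|\le k$ while here $|\delta+\gamma_2-\sigma|$ can be as large as $k+m-1$ with $\sigma\ne 0$. So your verification that ``the kernels appearing satisfy the structural hypotheses'' is incomplete: it only checks the two extreme terms, not the intermediate ones that Leibniz actually produces. You would either need to strengthen hypothesis~\eqref{NucleoDebSingular.Derivadas.Holder.Hip0.form2} to cover $|\eta_1|\le k+m$, or restrict the corollary to $m\le 1$; at minimum, you should flag this instead of treating it as ``purely bookkeeping.''

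A minor but telling inaccuracy: there are no principal-value integrals to worry about here. In this corollary the kernels $D^\delta_x D^{\gamma_2}_z K$ are, by hypothesis, weakly singular with exponent $\beta+|\gamma_2|\le\beta+m\le N-1$, strictly below the critical exponent $N$, so all integrals are absolutely convergent. The principal-value machinery you invoke belongs to the \emph{singular}-kernel case, Corollary~\ref{NucleoSingular.kDerivadas.Holder.cor} (and its building block Theorem~\ref{NucleoSingular.1Derivada.Holder.teo}), where a genuine Calder\'on--Zygmund exponent $N$ appears. Citing that argument here signals a conflation of the two corollaries and their distinct applicable base theorems.
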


\begin{cor}\label{NucleoSingular.kDerivadas.Holder.cor}
Let $0<\alpha<1$, $k\in\NN$, $\ x\in\overline{D},\,z\in\RR^N\setminus\{0\}$ and $K(x,z)$ be a weakly singular kernel, which has the following properties:
\begin{enumerate}
\item $K(x,z)$ is positively homogeneous of degree $-(N-1)$ in the second variable, i.e., 
\begin{equation}\label{NucleoSingular.kDerivadas.Holder.Hip1}
K(x,\lambda z)=\frac{1}{\lambda^{N-1}}K(x,z).
\end{equation}
\item $K(x,z)$ has the regularity properties 
\begin{align}\label{NucleoSingular.kDerivadas.Holder.Hip2}
\begin{split}
D^\gamma_xK\in C^1(\overline{D}\times(\RR^N\setminus\{0\})),\hspace{0.7cm}&  (D^\gamma_xK)(x,\cdot)\in C^2(\RR^N\setminus\{0\}),\\
\frac{\partial}{\partial x_i}D^\gamma_x K\in C(\overline{D}\times(\RR^N\setminus\{0\})),\hspace{0.7cm}& \left(\frac{\partial}{\partial x_i}D^\gamma_x K\right)(x,\cdot)\in C^1(\RR^N\setminus\{0\}),\\
\frac{\partial}{\partial z_i}D^\gamma_x K\in C(\overline{D}\times(\RR^N\setminus\{0\})),\hspace{0.7cm}& \\
\frac{\partial^2}{\partial z_i\partial x_j}D^\gamma_x K\in C(\overline{D}\times(\RR^N\setminus\{0\})),\hspace{0.7cm}& \frac{\partial^2}{\partial z_i\partial z_j}D^\gamma_x K\in C(\overline{D}\times(\RR^N\setminus\{0\})),
\end{split}
\end{align}
for  each index $1\leq i,j\leq N$ and each  $\gamma$ with $\vert \gamma\vert\leq k$.
\item The derivatives of $K(x,z)$ with respect to $x$ up to order $k$ are H\"{o}lder-continuous with exponent $\alpha$ in the sense that
\begin{align}\label{NucleoSingular.kDerivadas.Holder.Hip3}
\begin{split}
\left\vert\left(\frac{\partial}{\partial x_i}D^\gamma_x K\right)(x_1,z)-\left(\frac{\partial}{\partial x_i}D^\gamma_x K\right)(x_2,z)\right\vert\leq & C\frac{\vert x_1-x_2\vert^\alpha}{\vert z\vert^{N-1}},\\
\left\vert\left(\frac{\partial}{\partial z_i}D^\gamma_x K\right)(x_1,z)-\left(\frac{\partial}{\partial z_i}D^\gamma_x K\right)(x_2,z)\right\vert\leq & C\frac{\vert x_1-x_2\vert^\alpha}{\vert z\vert^{N}}
\end{split}
\end{align}
for each $x_1,x_2\in\overline{D}$, $z\in\RR^N\setminus\{0\}$, each index $1\leq i\leq N$ and $\vert \gamma\vert\leq k$.
\end{enumerate}
Then, for every $R>0$ there exists a nonnegative constant $M=M(N,\alpha,k,R)$  such that the generalized volume potential $\mathcal{N}_K\zeta$ generated by any density $\zeta\in C^{k,\alpha}_c(B_R(0))$ belongs to $C^{k+1,\alpha}(\overline{D})$ and 
$$\Vert \mathcal{N}_K\zeta\Vert_{C^{k+1,\alpha}(D)}\leq M\Vert \zeta\Vert_{C^{k,\alpha}(\RR^N)}.$$
Moreover, the partial derivatives of the volume potential are
$$\frac{\partial}{\partial x_i}D^\gamma_x\left(\mathcal{N}_K\zeta\right)=\sum_{\delta\leq \gamma}\binom{\gamma}{\delta}\left(\mathcal{N}_{\frac{\partial}{\partial x_i}D^\gamma_x K}D^{\delta-\gamma}\zeta+\mathcal{N}_{\frac{\partial}{\partial z_i}D^\gamma_x K}D^{\delta-\gamma}\zeta\right),$$
for $\vert \gamma\vert\leq k$ and $1\leq i\leq N$.
\end{cor}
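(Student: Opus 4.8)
The plan is to argue by induction on $k$, taking the case $k=0$ — which is precisely Theorem~\ref{NucleoSingular.1Derivada.Holder.teo} — as the base step, and to use the change of variables identity (\ref{NucleoDebSingular.CambioVariable.Potencial.form}) to reduce every higher order derivative of $\mathcal{N}_K\zeta$ to a volume potential of the type covered by that theorem. The crucial elementary observation is that differentiating $K$ with respect to the parameter $x$ does not alter its homogeneity in $z$: differentiating the identity (\ref{NucleoSingular.kDerivadas.Holder.Hip1}) in $x$ gives $D^\delta_x K(x,\lambda z)=\lambda^{-(N-1)}D^\delta_x K(x,z)$ for every multi-index $\delta$, so together with the continuity of $D^\delta_x K$ on $\overline{D}\times\partial B_1(0)$ supplied by (\ref{NucleoSingular.kDerivadas.Holder.Hip2}) one obtains the pointwise bound $|D^\delta_x K(x,z)|\le C|z|^{-(N-1)}$, which is locally integrable.

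First I would establish, by an inner induction on $|\gamma|\le k$, that $\mathcal{N}_K\zeta\in C^{|\gamma|}(\overline D)$ and
\begin{equation*}
D^\gamma(\mathcal{N}_K\zeta)(x)=\sum_{\delta\le\gamma}\binom{\gamma}{\delta}\int_{\RR^N}(D^\delta_xK)(x,z)\,(D^{\gamma-\delta}\zeta)(x-z)\,dz=\sum_{\delta\le\gamma}\binom{\gamma}{\delta}\,\mathcal{N}_{D^\delta_xK}(D^{\gamma-\delta}\zeta)(x).
\end{equation*}
Differentiation under the integral sign is legitimate at each stage because the integrand after $m<k$ derivatives is a finite sum of terms $(D^\delta_xK)(x,z)(D^{\gamma-\delta}\zeta)(x-z)$ with $|\delta|\le m$, and, upon one further $x$-differentiation, each such term is dominated by $C|z|^{-(N-1)}\chi_{\{|z|\le R_0\}}$ for a fixed $R_0$ — using the bound above for $D^{\delta}_xK$ and for $D^{\delta+e_i}_xK$ (note $|\delta+e_i|\le k$, so it is covered by the hypotheses), together with the boundedness and compact support of $D^{\gamma-\delta}\zeta$ and of $\partial_{x_i}(D^{\gamma-\delta}\zeta)$.

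Next I would fix $\gamma$ with $|\gamma|=k$ and consider each kernel $K_\delta:=D^\delta_xK$ for $\delta\le\gamma$. It is positively homogeneous of degree $-(N-1)$ in $z$ by the observation above; it satisfies the regularity requirements (\ref{NucleoSingular.1Derivada.Holder.Hip2.form}) by (\ref{NucleoSingular.kDerivadas.Holder.Hip2}) applied with exponent $\delta$; and its first derivatives are $\alpha$-Hölder continuous in $x$ by (\ref{NucleoSingular.kDerivadas.Holder.Hip3}) with exponent $\delta$. Hence Theorem~\ref{NucleoSingular.1Derivada.Holder.teo} applies to $\mathcal{N}_{K_\delta}(D^{\gamma-\delta}\zeta)$, since $D^{\gamma-\delta}\zeta\in C^{0,\alpha}_c(B_R(0))$; this yields $\mathcal{N}_{K_\delta}(D^{\gamma-\delta}\zeta)\in C^{1,\alpha}(\overline D)$ with $\|\mathcal{N}_{K_\delta}(D^{\gamma-\delta}\zeta)\|_{C^{1,\alpha}(D)}\le M\|D^{\gamma-\delta}\zeta\|_{C^{0,\alpha}(\RR^N)}\le M\|\zeta\|_{C^{k,\alpha}(\RR^N)}$, together with the splitting $\partial_{x_i}\mathcal{N}_{K_\delta}(D^{\gamma-\delta}\zeta)=\mathcal{N}_{\partial_{x_i}K_\delta}(D^{\gamma-\delta}\zeta)+\mathcal{N}_{\partial_{z_i}K_\delta}(D^{\gamma-\delta}\zeta)$, the second summand being a Cauchy principal value singular integral. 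Summing over $\delta\le\gamma$ and over all $\gamma$ with $|\gamma|\le k$ then shows $\mathcal{N}_K\zeta\in C^{k+1,\alpha}(\overline D)$ with the asserted estimate, while inserting this splitting into the formula of the inner induction produces the stated expression for $\partial_{x_i}D^\gamma_x(\mathcal{N}_K\zeta)$.

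The main obstacle is the bookkeeping at top order: after $k$ differentiations in $x$, one of the terms carries all $k$ derivatives on the kernel, and to make its gradient meaningful one must \emph{not} differentiate $K$ once more directly — the resulting degree $-N$ kernel is only a principal-value operator — but instead keep every kernel of homogeneity exactly $-(N-1)$ by means of the change of variables (\ref{NucleoDebSingular.CambioVariable.Potencial.form}), so that the Calderón--Zygmund cancellation required is precisely the one already packaged inside Theorem~\ref{NucleoSingular.1Derivada.Holder.teo}. The remaining Hölder estimates and the weakly singular contributions $\partial_{x_i}K_\delta$ are then routine consequences of that theorem (and, if one prefers to isolate the lower-order remainder-type pieces, of Corollary~\ref{NucleoDebSingular.Derivadas.Holder.cor2}); checking that the hypotheses of Theorem~\ref{NucleoSingular.1Derivada.Holder.teo} descend to each $D^\delta_xK$ is purely mechanical.
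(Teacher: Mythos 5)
Your proposal is correct and takes the route the paper implicitly has in mind: the paper states this corollary without proof, presenting it as a "consequence" of Theorem~\ref{NucleoSingular.1Derivada.Holder.teo} (and Corollary~\ref{NucleoDebSingular.Derivadas.Holder.cor2}), and your argument supplies exactly the missing details — using the change-of-variables identity~(\ref{NucleoDebSingular.CambioVariable.Potencial.form}) and the Leibniz rule to transfer $x$-derivatives between the kernel and the density without ever degrading the homogeneity in $z$, checking that each $D^\delta_x K$ inherits the hypotheses of Theorem~\ref{NucleoSingular.1Derivada.Holder.teo} directly from~(\ref{NucleoSingular.kDerivadas.Holder.Hip2})--(\ref{NucleoSingular.kDerivadas.Holder.Hip3}), and applying that theorem to obtain the final $C^{1,\alpha}$ estimate and the splitting of the $(k+1)$-st derivative. (You also correctly identified that the kernel in the stated derivative formula should be $D^\delta_x K$ paired with $D^{\gamma-\delta}\zeta$, as in your inner induction; the printed subscripts in the statement appear to contain typos.)
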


When the constants $C$ appearing in the statements of the above results do not depend on the chosen bounded domain $D$, the above estimates can be extended from H\"{o}lder estimates over $\overline{D}$, to global estimates in $\RR^N$. 
This is the case for the integral kernels which do not depend on the variable $x$ (e.g., $\Gamma_0(z)$, $R_\lambda(z)$ and $\Gamma_\lambda(z)$).
In this way, we get the next result in the spirit of Lemmas \ref{Potencial.volumetrico.regularidad.lem1} and \ref{Potencial.volumetrico.regularidad.lem2}.

\begin{lem}\label{Potencial.volumetrico.regularidad.lem3}
Consider the generalized volume potential with compactly supported density $\zeta:\RR^3\longrightarrow\RR$ associated with the Helmholtz equation
$$(\mathcal{N}_\lambda\zeta)(x):=\int_{\RR^3}\Gamma_\lambda(x-y)\zeta(y)\,d_yS,\ x\in\RR^3.$$
Then, for every $R>0$ there exists a nonnegative constant $K=K(k,\alpha,\lambda,R)$ such that $\mathcal{N}_\lambda\zeta$ is well defined for every $\zeta\in C^{k,\alpha}_c(B_R(0))$, belongs to $C^{k+2,\alpha}(\RR^3)$ and the following estimate is verified
$$\left\Vert\mathcal{N}_\lambda\zeta\right\Vert_{C^{k+2,\alpha}(\RR^3)}\leq K\Vert \zeta\Vert_{C^{k,\alpha}(\RR^3)}.$$
\end{lem}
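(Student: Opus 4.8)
The plan is to exploit the splitting (\ref{PhiPsi.SolucionFundamental.Descomposicion.Gamma0+Rlambda}) of the kernel, $\Gamma_\lambda=\Gamma_0+R_\lambda$, which induces $\mathcal{N}_\lambda\zeta=\mathcal{N}_{\Gamma_0}\zeta+\mathcal{N}_{R_\lambda}\zeta$, and to treat each summand with the appropriate corollary of this section. Well-definedness of $\mathcal{N}_\lambda\zeta$ (and of the derivatives that will be differentiated under the integral sign below) follows at once from the local integrability of $\Gamma_\lambda$ and of $\nabla\Gamma_\lambda$ together with the compact support of $\zeta$. To get a \emph{global} bound on $\RR^3$ rather than on a single bounded domain, I would argue exactly as in the proof of Lemma \ref{Potencial.volumetrico.regularidad.lem2}: estimate $\mathcal{N}_\lambda\zeta$ separately on the ball $\overline{B}_{2R}(0)$ and on the exterior region $\RR^3\setminus\overline{B}_{2R}(0)$, and bound $\|\mathcal{N}_\lambda\zeta\|_{C^{k+2,\alpha}(\RR^3)}$ by the sum of the two regional norms.

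\emph{Near region.} On $D:=B_{2R}(0)$ the arguments $z=x-y$ occurring in the potentials range over the bounded set $\overline{B}_{3R}(0)$, so the bounds (\ref{PhiPsi.SolucionFundamental.Cotas.Derivadas.Rlambda.form}) show that, restricted to that range, $D^\gamma_z R_\lambda$ is a weakly singular kernel of exponent $|\gamma|$ (with constants depending on $R$). Since $R_\lambda$ is independent of $x$, the hypotheses (\ref{NucleoDebSingular.Derivadas.Holder.Hip0.form2})--(\ref{NucleoDebSingular.Derivadas.Holder.Hip3.form2}) of Corollary \ref{NucleoDebSingular.Derivadas.Holder.cor2} are satisfied with $\beta=0$ and $m=2$ (note the borderline identity $\beta+m=2=N-1$), whence $\mathcal{N}_{R_\lambda}\zeta\in C^{k+2,\alpha}(\overline{B}_{2R}(0))$ with $\|\mathcal{N}_{R_\lambda}\zeta\|_{C^{k+2,\alpha}(B_{2R}(0))}\le K\|\zeta\|_{C^{k,\alpha}(\RR^3)}$. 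For the harmonic part, $\Gamma_0$ is weakly singular of exponent $1<N=3$, so $\mathcal{N}_{\Gamma_0}\zeta$ is $\alpha$-Hölder by Theorem \ref{NucleoDebSingular.Holder.teo}; differentiating once under the integral sign (legitimate because $\partial_{z_i}\Gamma_0\in L^1_{\mathrm{loc}}$ and $\zeta$ is bounded with compact support) gives $\partial_i\mathcal{N}_{\Gamma_0}\zeta=\mathcal{N}_{\partial_{z_i}\Gamma_0}\zeta$. The kernel $\partial_{z_i}\Gamma_0(z)=-z_i/(4\pi|z|^3)$ is $x$-independent, smooth on $\RR^3\setminus\{0\}$ and positively homogeneous of degree $-2=-(N-1)$ in $z$, so the hypotheses (\ref{NucleoSingular.kDerivadas.Holder.Hip1})--(\ref{NucleoSingular.kDerivadas.Holder.Hip3}) of Corollary \ref{NucleoSingular.kDerivadas.Holder.cor} hold trivially and yield $\partial_i\mathcal{N}_{\Gamma_0}\zeta\in C^{k+1,\alpha}(\overline{B}_{2R}(0))$ for $i=1,2,3$; combined with the Hölder continuity of $\mathcal{N}_{\Gamma_0}\zeta$ itself this gives $\mathcal{N}_{\Gamma_0}\zeta\in C^{k+2,\alpha}(\overline{B}_{2R}(0))$ with the corresponding estimate. (Here it is essential to use the singular-kernel corollary, since $D^2\Gamma_0$ is genuinely singular and the associated operator must be read as a Cauchy principal value; this is precisely what distinguishes $\Gamma_0$ from $R_\lambda$ in this argument.)

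\emph{Far region.} If $|x|\ge 2R$ and $y\in\supp\zeta\subseteq B_R(0)$ then $|x-y|\ge 2R-R=R$, so on $\RR^3\setminus\overline{B}_{2R}(0)$ the bounds (\ref{PhiPsi.SolucionFundamental.Cotas.Derivadas.Rlambda.form})--(\ref{PhiPsi.SolucionFundamental.Cotas.Derivadas.Gamma0.form}) give $|D^\gamma_x\Gamma_\lambda(x-y)|\le C_\gamma$ uniformly for every multi-index $\gamma$; differentiating under the integral sign and estimating trivially then yields $\mathcal{N}_\lambda\zeta\in C^{k+2,\alpha}(\RR^3\setminus\overline{B}_{2R}(0))$ with norm bounded by $C\|\zeta\|_{C^0(\RR^3)}\le C\|\zeta\|_{C^{k,\alpha}(\RR^3)}$. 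Since the two estimates overlap on $\{R<|x|\le 2R\}$ and their union covers $\RR^3$, patching them as in Lemma \ref{Potencial.volumetrico.regularidad.lem2} produces $\mathcal{N}_\lambda\zeta\in C^{k+2,\alpha}(\RR^3)$ with $\|\mathcal{N}_\lambda\zeta\|_{C^{k+2,\alpha}(\RR^3)}\le K\|\zeta\|_{C^{k,\alpha}(\RR^3)}$ for some $K=K(k,\alpha,\lambda,R)$, which is the claim.

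I do not expect a genuine obstacle here: the statement is an assembly of the tools already developed. The only points that require care are (i) recognising $\partial_{z_i}\Gamma_0$ as a bona fide homogeneous singular kernel of degree $-(N-1)$, so that two derivatives are gained from $\Gamma_0$ (one by differentiating under the integral, $k+1$ more from Corollary \ref{NucleoSingular.kDerivadas.Holder.cor}); (ii) the borderline exponent $\beta+m=N-1$ when applying Corollary \ref{NucleoDebSingular.Derivadas.Holder.cor2} to $R_\lambda$; and (iii) the elementary near/far decomposition needed to upgrade the bounded-domain conclusions of those corollaries to a genuinely global $C^{k+2,\alpha}(\RR^3)$ estimate, exactly along the lines of Lemma \ref{Potencial.volumetrico.regularidad.lem2}.
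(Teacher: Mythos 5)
Your argument is correct and amounts to an explicit unpacking of what the paper itself only sketches. The paper's ``proof'' is the one-sentence remark preceding the lemma: because the kernels $\Gamma_0$, $R_\lambda$, $\Gamma_\lambda$ are $x$-independent, the constants in Theorems~\ref{NucleoDebSingular.Holder.teo}, \ref{NucleoSingular.1Derivada.Holder.teo} and Corollaries~\ref{NucleoDebSingular.Derivadas.Holder.cor2}, \ref{NucleoSingular.kDerivadas.Holder.cor} do not depend on the bounded domain $D$, so one may let $D$ exhaust $\RR^3$. You reach the same conclusion by the near/far splitting of Lemma~\ref{Potencial.volumetrico.regularidad.lem2}, which is equivalent in content: on the bounded near region you invoke exactly the two corollaries that the paper's remark points at (Corollary~\ref{NucleoDebSingular.Derivadas.Holder.cor2} for $R_\lambda$ with $\beta=0$, $m=2$, and Corollary~\ref{NucleoSingular.kDerivadas.Holder.cor} for $\partial_{z_i}\Gamma_0$ after one legitimate differentiation under the integral), while on the far region you estimate trivially because $|x-y|$ is uniformly bounded below. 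The decomposition $\Gamma_\lambda=\Gamma_0+R_\lambda$ that you use is indeed the one the paper must have in mind, since $\Gamma_\lambda$ is not homogeneous and Corollary~\ref{NucleoSingular.kDerivadas.Holder.cor} cannot be applied to it directly.

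One small imprecision: your declared regions are $B_{2R}(0)$ and $\RR^3\setminus\overline{B}_{2R}(0)$, which do not overlap, yet you then claim they overlap on $\{R<|x|\le 2R\}$. Either enlarge the near region (e.g.\ to $B_{3R}(0)$) so that a genuine overlapping annulus is available, or drop the overlap language entirely and note that the H\"older seminorm patches across the sphere $|x|=2R$ by the usual triangle-inequality argument through an intermediate point on the segment $[x_1,x_2]$, since both regional estimates control the function and its derivatives up to the closed boundary. This is a cosmetic fix and does not affect the substance of your proof.
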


Combining  the above results, we can estimate generalized volume potentials in $\Omega$ whose densities have compact support in $\overline{\Omega}$ by means of an appropriate splitting. Using Calder\'on's extension theorem (Proposition \ref{ExtensionHolder.pro}),  for every $\zeta\in C^{k,\alpha}_c(\overline{\Omega})$ there exists an extension  $\overline{\zeta}\in C^{k,\alpha}_c(\RR^3)$, so
$$\mathcal{N}^+_\lambda\zeta=\left.\left(\mathcal{N}_\lambda\overline{\zeta}\right)\right\vert_{ \Omega}-\mathcal{N}^+_\lambda\left(\left.\overline{\zeta}\right\vert_{G}\right)\ \mbox{ in }\Omega.$$
Then, Lemmas \ref{Potencial.volumetrico.regularidad.lem2} and \ref{Potencial.volumetrico.regularidad.lem3} lead to the following result:

\begin{theo}[Generalized volume potential]\label{Potencial.volumetrico.regularidad.teo}
Let $G\subseteq \RR^3$ be a bounded domain with regularity $C^{k+1,\alpha}$, $\Omega:=\RR^3\setminus \overline{G}$ its exterior domain and $S=\partial G$ the boundary surface. Define the generalized volume potential in $\Omega$ with density $\zeta:\Omega\longrightarrow\RR$ associated with the Helmholtz equation
$$(\mathcal{N}^+_\lambda\zeta)(x)=\int_{\Omega}\Gamma_\lambda(x-y)\zeta(y)\,dy,\ x\in\Omega.$$
Let  $R>0$ be such  that $\overline{G}\subseteq B_R(0)$, and define $\Omega_R:=B_R(0)\setminus \overline{G}$. Then,  $\mathcal{N}^+_\lambda\zeta$ is well defined in $\Omega$ and  belongs to $C^{k+2,\alpha}(\overline{\Omega})$, for every $\zeta\in C^{k,\alpha}(\overline{\Omega})$ such that $\supp(\zeta)\subseteq \overline{\Omega_R}$. In addition, the bound 
$$\Vert \mathcal{N}_\lambda\zeta\Vert_{C^{k+2,\alpha}(\Omega)}\leq K\Vert \zeta\Vert_{C^{k,\alpha}(\Omega)}$$
holds for some $K>0$ depending on $k,\alpha,\lambda,G,R$ but not on $\zeta$.
\end{theo}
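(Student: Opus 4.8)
The plan is to obtain the statement as a straightforward assembly of the two preceding lemmas, via the decomposition already announced in the paragraph preceding the theorem. First I would settle well-definedness: for fixed $x\in\Omega$ the kernel $\Gamma_\lambda(x-\cdot)$ is locally integrable, since $\Gamma_\lambda(z)=O(|z|^{-1})$ near the origin by \eqref{PhiPsi.SolucionFundamental.Cotas.Derivadas.Gamma0.form}, and $\zeta$ is bounded with support in the \emph{bounded} set $\overline{\Omega_R}$; hence the integral defining $(\mathcal{N}^+_\lambda\zeta)(x)$ converges absolutely.

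The next step is to produce a compactly supported extension of $\zeta$ with controlled norm. Applying Calder\'on's extension theorem (Proposition \ref{ExtensionHolder.pro}) with $O=\Omega$ and $O'=\RR^3$ yields $\widetilde\zeta:=\mathcal{P}(\zeta)\in C^{k,\alpha}(\RR^3)$ with $\widetilde\zeta|_{\overline\Omega}=\zeta$ and $\|\widetilde\zeta\|_{C^{k,\alpha}(\RR^3)}\le C_\mathcal{P}\|\zeta\|_{C^{k,\alpha}(\Omega)}$, the constant $C_\mathcal{P}$ depending only on $k$ and $G$. I would then fix a cut-off $\chi\in C^\infty_c(B_{2R}(0))$ with $\chi\equiv 1$ on $\overline{B_R(0)}$ and set $\overline\zeta:=\chi\,\widetilde\zeta\in C^{k,\alpha}_c(B_{2R}(0))$. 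Because $\supp\zeta\subseteq\overline{\Omega_R}\subseteq\overline{B_R(0)}$, one checks that $\overline\zeta|_{\overline\Omega}=\zeta$ (on $\{|x|\le R\}$ because $\chi=1$ there, and on $\{|x|>R\}\cap\overline\Omega$ because $\zeta$ vanishes there), while $\|\overline\zeta\|_{C^{k,\alpha}(\RR^3)}\le C_\chi C_\mathcal{P}\|\zeta\|_{C^{k,\alpha}(\Omega)}$ with $C_\chi$ depending only on $R$.

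With this in hand, I would decompose, for $x\in\Omega$, using $\RR^3=\Omega\cup S\cup G$ (and $|S|=0$) together with $\overline\zeta=\zeta$ on $\overline\Omega$,
\[
(\mathcal{N}^+_\lambda\zeta)(x)=\int_\Omega\Gamma_\lambda(x-y)\overline\zeta(y)\,dy=\int_{\RR^3}\Gamma_\lambda(x-y)\overline\zeta(y)\,dy-\int_G\Gamma_\lambda(x-y)\overline\zeta(y)\,dy,
\]
i.e. $\mathcal{N}^+_\lambda\zeta=\left.\left(\mathcal{N}_\lambda\overline\zeta\right)\right|_{\Omega}-\mathcal{N}^+_\lambda\!\left(\left.\overline\zeta\right|_{G}\right)$ on $\Omega$, where $\mathcal{N}_\lambda$ is the full-space potential of Lemma \ref{Potencial.volumetrico.regularidad.lem3} and the subtracted term is the potential of Lemma \ref{Potencial.volumetrico.regularidad.lem2} with density on $\overline G$. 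By Lemma \ref{Potencial.volumetrico.regularidad.lem3} (applied with the radius $2R$), $\mathcal{N}_\lambda\overline\zeta\in C^{k+2,\alpha}(\RR^3)$ with $\|\mathcal{N}_\lambda\overline\zeta\|_{C^{k+2,\alpha}(\RR^3)}\le K_1\|\overline\zeta\|_{C^{k,\alpha}(\RR^3)}$, $K_1=K_1(k,\alpha,\lambda,R)$, so its restriction to $\overline\Omega$ lies in $C^{k+2,\alpha}(\overline\Omega)$ with the same bound; by Lemma \ref{Potencial.volumetrico.regularidad.lem2}, $\mathcal{N}^+_\lambda(\overline\zeta|_{\overline G})\in C^{k+2,\alpha}(\overline\Omega)$ with $\|\mathcal{N}^+_\lambda(\overline\zeta|_{\overline G})\|_{C^{k+2,\alpha}(\Omega)}\le K_2\|\overline\zeta|_{\overline G}\|_{C^{k,\alpha}(\overline G)}\le K_2\|\overline\zeta\|_{C^{k,\alpha}(\RR^3)}$, $K_2=K_2(k,\alpha,\lambda,G)$. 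Adding and inserting the norm bound on $\overline\zeta$ from the previous step gives $\mathcal{N}^+_\lambda\zeta\in C^{k+2,\alpha}(\overline\Omega)$ with $\|\mathcal{N}^+_\lambda\zeta\|_{C^{k+2,\alpha}(\Omega)}\le K\|\zeta\|_{C^{k,\alpha}(\Omega)}$, where $K:=(K_1+K_2)C_\chi C_\mathcal{P}$ depends only on $k,\alpha,\lambda,G,R$, as claimed.

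I do not expect any genuine analytic obstacle here: all the real work is already contained in Lemmas \ref{Potencial.volumetrico.regularidad.lem2} and \ref{Potencial.volumetrico.regularidad.lem3}. The only point that needs care is the bookkeeping around the extension: one must arrange that $\overline\zeta$ (i) agrees with $\zeta$ on $\overline\Omega$, (ii) is supported in a \emph{fixed} ball $B_{2R}(0)$ so that the constant furnished by Lemma \ref{Potencial.volumetrico.regularidad.lem3} does not grow with $\zeta$, and (iii) has $C^{k,\alpha}$ norm controlled linearly, with a $\zeta$-independent constant, by $\|\zeta\|_{C^{k,\alpha}(\Omega)}$. All three are secured by the linearity and continuity of the Calder\'on extension operator $\mathcal{P}$ combined with the cut-off $\chi$, and it is precisely for (ii)--(iii) that one must not apply an extension theorem directly to the unbounded domain's norm in a way that could introduce an unbounded constant.
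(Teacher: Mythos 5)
Your proof is correct and follows essentially the same route as the paper: Calder\'on extension followed by the decomposition $\mathcal{N}^+_\lambda\zeta=(\mathcal{N}_\lambda\overline\zeta)|_{\Omega}-\mathcal{N}^+_\lambda(\overline\zeta|_{G})$, then Lemmas \ref{Potencial.volumetrico.regularidad.lem3} and \ref{Potencial.volumetrico.regularidad.lem2}. The explicit cut-off step you insert to guarantee $\overline\zeta\in C^{k,\alpha}_c(\RR^3)$ is a welcome refinement that the paper leaves implicit, since Proposition \ref{ExtensionHolder.pro} applied with $O'=\RR^3$ does not by itself deliver compact support of the extension.
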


\subsection{Regularity of the boundary integral operator}

The next step is to analyze the regularity properties of the boundary integral operator $T_\lambda$ (\ref{EcIntegral.DatoFrontera.Beltrami.T.form}) arising in the boundary integral equation associated with the boundary data $\eta\times u$ (\ref{EcIntegral.DatoFrontera.Beltrami.form}) in Theorem \ref{BeltramiNoHomog.teo}. Firstly, we split the operator $T_\lambda$ into simpler integral operators. By  inspection, $T_\lambda$ is given by
$$
T_\lambda=\mathcal{M}^T_\lambda+\lambda\mathcal{S}^T_\lambda.
$$
$\mathcal{M}_\lambda^T\zeta$ is known as the \textit{magnetic dipole operator}, which is the tangent component of the electric field generated by a dipole distribution with density $\zeta\in\mathfrak{X}(S)$, i.e.,
$$(\mathcal{M}_\lambda^T\zeta)(x):=\int_S \eta(x)\times\curl_x\left(\Gamma_\lambda(x-y)\zeta(y)\right)\,d_yS,\ \ x\in S.$$
$\mathcal{S}_\lambda^T$ is the tangential component of the generalized single layer potential generated by $\zeta$,
$$
(\mathcal{S}_\lambda^T\zeta)(x)=\int_S \Gamma_\lambda(x-y)\eta(x)\times \zeta(y)\,d_yS,\ \ x\in S.
$$

The integral kernel of $\mathcal{S}_\lambda^T$ is weakly singular over $S$, so this integral is absolutely convergent under suitable hypotheses for $\zeta$. The integral kernel of $\mathcal{M}_\lambda^T$ looks singular over $S$ but, as we shall see below, this integral is just weakly singular when $\zeta$ is a tangent vector field on $S$. Thus, this integral is actually absolutely convergent under minimal assumptions on $\zeta$. In order to see why, notice that, given any tangent field along $S$, $\zeta\in\mathfrak{X}^{k,\alpha}(S)$, one has the decomposition
\begin{align*}
\eta(x)\times(\nabla_x\Gamma_\lambda(x-y)\times \zeta(y))\
&=\eta(x)\cdot \zeta(y)\,\nabla_x \Gamma_\lambda(x-y)-\eta(x)\cdot \nabla_x \Gamma_\lambda(x-y)\,\zeta(y)\\
&=(\eta(x)-\eta(y))\cdot \zeta(y)\,\nabla_x \Gamma_\lambda(x-y)-\eta(x)\cdot \nabla_x \Gamma_\lambda(x-y)\,\zeta(y).
\end{align*}
Consequently, the $j$-th coordinates of the integrands read
\begin{align*}
(\eta(x)\times(\nabla_x\Gamma_\lambda(x-y)\times \zeta(y)))_j
&=(\eta(x)-\eta(y))\cdot \zeta(y)\,\partial_{x_j} \Gamma_\lambda(x-y)-\eta(x)\cdot \nabla_x \Gamma_\lambda(x-y)\,\zeta_j(y)\\
&=\sum_{i=1}^3(\eta_i(x)-\eta_i(y))\zeta_i(y)\,\partial_{x_j}\Gamma_\lambda(x-y)-\eta(x)\cdot \nabla_x \Gamma_\lambda(x-y)\,\zeta_j(y),
\\
(\Gamma_\lambda(x-y)\eta(x)\times \zeta(y))_j &=\Gamma_\lambda(x-y)(\eta(x)\times \zeta(y))\cdot e_j\\
&=\Gamma_\lambda(x-y)(e_j\times \eta(x))\cdot \zeta(y)=\sum_{i=1}^3 \Gamma_\lambda(x-y)(e_i\times e_j)\cdot \eta(x)\zeta_i(y).
\end{align*}
Consider any extension $\widetilde{\eta}\in C^{k+4,\alpha}_c(\RR^3)$ of the outward unit normal vector field $\eta$ to the compact surface $S$ and define the kernels
\begin{align}\label{NucleosMyS.form}
K^{\mathcal{D}}_\lambda(x,z)&=\widetilde{\eta}(x)\cdot \nabla \Gamma_\lambda(z),\nonumber\\
K^{i,j}_\lambda(x,z)&=(\widetilde{\eta}_i(x)-\widetilde{\eta}_i(x-z))\,\partial_{z_j}\Gamma_\lambda(z),\\
\widetilde{K}^{i,j}_\lambda(x,z)&=(e_i\times e_j)\cdot \widetilde{\eta}(x)\,\Gamma_\lambda(z)\nonumber.
\end{align}
Then, we have the associated splitting of the operators $\mathcal{M}_\lambda^T$ and $\mathcal{S}_\lambda^T$
\begin{align}\label{NucleosMyS.Decomposicion.form}
\begin{split}
(\mathcal{M}_\lambda^T\zeta)_j(x)&=\sum_{i=1}^3T_{K^{i,j}_\lambda}\zeta_i-T_{K^\mathcal{D}_\lambda}\zeta_j,\\
(\mathcal{S}_\lambda^T\zeta)_j(x)&=\sum_{i=1}^3T_{\widetilde{K}^{i,j}_\lambda}\zeta_i,
\end{split}
\end{align}
where the integral operators in the above decomposition are
\begin{align}\label{NucleosMyS.Decomposicion.OperadoresIntegrales.form}
(T_{K^\mathcal{D}_\lambda}\zeta_j)(x)&=\int_S K^{\mathcal{D}}_\lambda(x,x-y)\zeta_j(y)\,d_yS,\nonumber\\
(T_{K^{i,j}_\lambda}\zeta_i)(x)&=\int_S K^{i,j}_\lambda(x,x-y)\zeta_i(y)\,d_yS,\\
(T_{\widetilde{K}^{i,j}_\lambda}\zeta_i)(x)&=\int_S \widetilde{K}^{i,j}_\lambda(x,x-y)\zeta_i(y)\,d_yS.\nonumber
\end{align}
Since every $C^2$ compact surface satisfies
\begin{align*}
\vert \eta(x)\cdot (x-y)\vert & \leq L\vert x-y\vert^2,\\
\vert \eta(x)-\eta(y)\vert & \leq L\vert x-y\vert,
\end{align*}
for each $x,y\in S$, then all the preceding integral kernels are weakly singular. In particular, it prevents these integrals from being considered in the Cauchy principal value sense.

The study of H\"{o}lder estimates for all these potentials can be performed along the same lines as in \cite[Satz 4.3, Satz 4.4]{Heinemann}. In that work, the author dealt with the homogeneous harmonic case $\lambda=0$, where the kernels have a simpler form. In our case  $\lambda\neq 0$, we will decompose the $3$-dimensional kernels into a homogeneous part and an inhomogeneous but less singular part as in (\ref{PhiPsi.SolucionFundamental.Descomposicion.Gamma0+Rlambda}). Then, we will consider a coordinate system over $S$ which allows transforming the integrals over $S$ into integrals over planar domains by means of a change of variables. The homogeneous and more singular parts will satisfy the hypothesis in Corollary \ref{NucleoSingular.kDerivadas.Holder.cor} and the terms in the remainder will verify those in Corollary \ref{NucleoDebSingular.Derivadas.Holder.cor2}. Our regularity result then reads as follows:

\begin{theo}\label{Potencial.capasimple.regularidad.frontera.teo}
Let  $G$ be a bounded domain of class $C^{k+5}$, $S=\partial G$ the boundary surface, $\eta\in C^{k+4}(S,\RR^3)$ the outward unit normal vector field along $S$ and any extension $\widetilde{\eta}\in C^{k+4}_c(\RR^3,\RR^3)$ of $\eta$. Let $K^{\mathcal{D}}_\lambda(x,z),\ K^{i,j}_\lambda(x,z)$ and $\widetilde{K}^{i,j}_\lambda(x,z)$ be the kernels given by  (\ref{NucleosMyS.form}) and  $T_{K^{\mathcal{D}}_\lambda},\ T_{K^{i,j}_\lambda}$ and $T_{\widetilde{K}^{i,j}_\lambda}$ the associated boundary integral operators given by (\ref{NucleosMyS.Decomposicion.OperadoresIntegrales.form}). Then, these integral operators are bounded from $C^{k,\alpha}(S)$ into $C^{k+1,\alpha}(S)$, i.e., the following linear operators are continuous:
\begin{equation*}
\begin{array}{cccc}
T_{K^{\mathcal{D}}_\lambda}: & C^{k,\alpha}(S) & \longrightarrow & C^{k+1,\alpha}(S),\\
T_{K^{i,j}_\lambda}: & C^{k,\alpha}(S) & \longrightarrow & C^{k+1,\alpha}(S),\\
T_{\widetilde{K}^{i,j}_\lambda}: & C^{k,\alpha}(S) & \longrightarrow & C^{k+1,\alpha}(S).
\end{array}
\end{equation*}
As a consequence, the linear operators
\begin{equation*}
\begin{array}{cccc}
\mathcal{M}_\lambda^T: & \mathfrak{X}^{k,\alpha}(S) & \longrightarrow & \mathfrak{X}^{k+1,\alpha}(S),\\
\mathcal{S}_\lambda^T: & \mathfrak{X}^{k,\alpha}(S) & \longrightarrow & \mathfrak{X}^{k+1,\alpha}(S)
\end{array}
\end{equation*}
are bounded too.
\end{theo}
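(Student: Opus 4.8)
The plan is to reduce the statement about the operators $T_{K^{\mathcal{D}}_\lambda}$, $T_{K^{i,j}_\lambda}$, $T_{\widetilde{K}^{i,j}_\lambda}$ on $C^{k,\alpha}(S)$ to the already-established results on volume potentials for singular and weakly singular kernels in planar domains (Corollaries~\ref{NucleoSingular.kDerivadas.Holder.cor} and~\ref{NucleoDebSingular.Derivadas.Holder.cor2}). First I would localize: cover the compact surface $S$ by finitely many coordinate neighborhoods $\Sigma_l=\mu_l(D_l)$ with $C^{k+5}$ charts $\mu_l:D_l\to\RR^3$ (which exist because $S$ is $C^{k+5}$), take a subordinate partition of unity $\{\chi_l\}$, and write each boundary integral operator as a finite sum of pieces $T^{(l)}$ acting on $\chi_l\zeta$, plus a ``far'' piece where $x\in\Sigma_{l'}$ and $y\in\Sigma_l$ with $\overline{\Sigma_l}\cap\overline{\Sigma_{l'}}=\emptyset$. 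The far pieces have $C^\infty$ kernels bounded uniformly (since $|x-y|$ is bounded below), so they are trivially bounded $C^{k,\alpha}(S)\to C^{k+1,\alpha}(S)$; the content is in the local pieces.

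For a single local piece, the plan is to pull the surface integral back to the planar disk $D_l$ via the chart $\mu_l$. Writing $x=\mu_l(s)$, $y=\mu_l(t)$, the surface measure becomes $J_l(t)\,dt$ where $J_l=|\partial_{t_1}\mu_l\times\partial_{t_2}\mu_l|\in C^{k+4,\alpha}(D_l)$, and $x-y=\mu_l(s)-\mu_l(t)$. The essential geometric input is that $\mu_l(s)-\mu_l(t)=\mathrm{Jac}(\mu_l)(t)\cdot(s-t)+O(|s-t|^2)$, so $|\mu_l(s)-\mu_l(t)|\simeq|s-t|$ uniformly; moreover, for each of the three kernels one uses the decomposition $\Gamma_\lambda(z)=\Gamma_0(z)+R_\lambda(z)$ of~\eqref{PhiPsi.SolucionFundamental.Descomposicion.Gamma0+Rlambda}. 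The homogeneous part $\Gamma_0$ contributes, after the chart substitution, a kernel in the $s,t$ variables of the form (homogeneous of degree $-(N-1)=-1$ in a suitable principal symbol) times $C^{k+4,\alpha}$ coefficients — for $K^{\mathcal{D}}_\lambda$ this is a genuine singular kernel of Calder\'on--Zygmund type satisfying the hypotheses~\eqref{NucleoSingular.kDerivadas.Holder.Hip1}--\eqref{NucleoSingular.kDerivadas.Holder.Hip3} of Corollary~\ref{NucleoSingular.kDerivadas.Holder.cor} in dimension $N=2$, while for $K^{i,j}_\lambda$ the extra factor $\widetilde\eta_i(x)-\widetilde\eta_i(x-z)=O(|z|)$ and for $\widetilde K^{i,j}_\lambda$ the bounded factor $\Gamma_\lambda(z)=O(|z|^{-1})$ each lower the singularity by one order, yielding weakly singular kernels of exponent $\beta=0$ satisfying~\eqref{NucleoDebSingular.Derivadas.Holder.Hip0.form2}--\eqref{NucleoDebSingular.Derivadas.Holder.Hip3.form2} of Corollary~\ref{NucleoDebSingular.Derivadas.Holder.cor2}. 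The remainder part $R_\lambda$ is, by the bounds~\eqref{PhiPsi.SolucionFundamental.Cotas.Derivadas.Rlambda.form}, even less singular in every derivative, so it always falls under the weakly singular corollary. Applying these corollaries (with $m=1$, and any fixed $R$ covering the $D_l$) gives, for each local piece, a bound $\|T^{(l)}(\chi_l\zeta)\|_{C^{k+1,\alpha}(D_{l'})}\le M\|\chi_l\zeta\|_{C^{k,\alpha}(D_l)}\le M'\|\zeta\|_{C^{k,\alpha}(S)}$; summing over the finitely many charts and using that the $C^{k,\alpha}(S)$, $C^{k+1,\alpha}(S)$ norms are equivalent (up to the degree of smoothness of $S$, here $C^{k+5}$) to the maxima of the pulled-back norms, one obtains the continuity of $T_{K^{\mathcal D}_\lambda}$, $T_{K^{i,j}_\lambda}$ and $T_{\widetilde K^{i,j}_\lambda}$ from $C^{k,\alpha}(S)$ to $C^{k+1,\alpha}(S)$. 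The final assertion about $\mathcal{M}_\lambda^T$ and $\mathcal{S}_\lambda^T$ then follows immediately from the componentwise decompositions~\eqref{NucleosMyS.Decomposicion.form}, since a tangent field $\zeta\in\mathfrak X^{k,\alpha}(S)$ has components $\zeta_i\in C^{k,\alpha}(S)$ and one only has to check, which is automatic, that $\mathcal{M}_\lambda^T\zeta$ and $\mathcal{S}_\lambda^T\zeta$ are again tangent to $S$ (both are of the form $\eta(x)\times(\cdots)$).

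I expect the main obstacle to be the bookkeeping of the chart substitution: verifying precisely that, after writing $s-t$ coordinates, the differentiated kernels $D^{\gamma_1}_s D^{\gamma_2}_{s-t}$ of the pulled-back homogeneous and remainder parts indeed satisfy the hypotheses~\eqref{NucleoSingular.kDerivadas.Holder.Hip2}--\eqref{NucleoSingular.kDerivadas.Holder.Hip3} and~\eqref{NucleoDebSingular.Derivadas.Holder.Hip0.form2}--\eqref{NucleoDebSingular.Derivadas.Holder.Hip3.form2}. This requires controlling how many derivatives of $\mu_l$ and of $\widetilde\eta$ are consumed: differentiating the kernel $k+1$ times in $x$ and once more in the chain rule, together with the $C^{k+4}$-regularity of $\widetilde\eta$ and the $J_l\in C^{k+4,\alpha}$ weight, is exactly why the hypothesis $S\in C^{k+5}$ is needed (higher-order derivatives of the normal vector field appear, as already flagged in the remark after~\eqref{GSigmaMu.hipot}). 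The homogeneity-in-$z$ bookkeeping for $K^{\mathcal{D}}_\lambda$ — ensuring the principal part of the pulled-back kernel is homogeneous of degree $-1$ in the planar variable so that the Cauchy principal value and the cancellation properties behind Corollary~\ref{NucleoSingular.kDerivadas.Holder.cor} apply — and the routine estimates $|\widetilde\eta(x)\cdot(x-y)|\le L|x-y|^2$, $|\widetilde\eta(x)-\widetilde\eta(y)|\le L|x-y|$ on $S$ are the only genuinely delicate points, and they are standard once the decomposition into homogeneous-plus-remainder is in place. The rest is a finite gluing argument.
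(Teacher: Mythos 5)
Your general strategy — localize via a $C^{k+5}$ atlas and a partition of unity, pull each local piece back to a planar disk, decompose $\Gamma_\lambda=\Gamma_0+R_\lambda$, and feed the resulting planar kernels into Corollaries~\ref{NucleoDebSingular.Derivadas.Holder.cor2} and~\ref{NucleoSingular.kDerivadas.Holder.cor} — is exactly the paper's approach, and the final reduction of $\mathcal{M}_\lambda^T,\mathcal{S}_\lambda^T$ to the three scalar operators via~\eqref{NucleosMyS.Decomposicion.form} is also correct. The problem is in the singularity bookkeeping for the pulled-back kernels, which is the core of the proof, and there you have a genuine error.

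You classify $K^{\mathcal{D}}_\lambda$ as a degree $-1$ singular kernel (correct) but claim that $K^{i,j}_\lambda$ and $\widetilde K^{i,j}_\lambda$, after the chart substitution, become weakly singular of exponent $\beta=0$ because of the extra $O(|z|)$ and ``bounded'' factors. That is off by one order. Because $\widetilde\eta(x)\cdot(x-y)=O(|x-y|^2)$ on $S$, the surface restriction of $K^{\mathcal{D}}_\lambda=\widetilde\eta(x)\cdot\nabla\Gamma_\lambda(z)$ is already $O(|z|^{-2})\cdot O(|z|)=O(|z|^{-1})$, exactly the same order as $K^{i,j}_\lambda=(\widetilde\eta_i(x)-\widetilde\eta_i(x-z))\partial_{z_j}\Gamma_\lambda(z)=O(|z|)\cdot O(|z|^{-2})=O(|z|^{-1})$ and as $\widetilde K^{i,j}_\lambda=(e_i\times e_j)\cdot\widetilde\eta(x)\,\Gamma_\lambda(z)=O(|z|^{-1})$ (note $\Gamma_\lambda$ is not bounded). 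Since $|\mu_l(s)-\mu_l(t)|\simeq|s-t|$, all three pull back to planar kernels of order $O(|u|^{-1})$, i.e.\ weakly singular of exponent $\beta=N-1=1$ in $N=2$, not $\beta=0$. The gain of one derivative via Corollary~\ref{NucleoDebSingular.Derivadas.Holder.cor2} requires $\beta+m\le N-1$; with $\beta=1$ and $m=1$ that hypothesis fails, so your proposed application of the weakly singular corollary to $K^{i,j}_\lambda$ and $\widetilde K^{i,j}_\lambda$ does not give any regularity gain at all.

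What is actually needed — and what the paper does — is a second decomposition \emph{after} the chart substitution: write $|\mu_m(s)-\mu_m(t)|^2=P_m(s,s-t)+Q_m(s,s-t)$ with $P_m$ positively homogeneous of degree $2$ in $u=s-t$ (Eq.~\eqref{DesarrolloTaylor.Parametrizacion.form}), Taylor-expand the $\widetilde\eta\circ\mu_m$ and $\mu_m$ factors to second order, and extract for \emph{each} of the three kernels a principal part (e.g.\ $H^{i,j}_{\lambda,0}$) that is exactly homogeneous of degree $-1$ in $u$ with $C^{k+4}$ coefficients in $s$. Corollary~\ref{NucleoSingular.kDerivadas.Holder.cor} must then be applied to these homogeneous principal parts for all three kernels, not just for $K^{\mathcal{D}}_\lambda$; only the remainders $\widetilde R^{i,j}_{\lambda,0}$, $\widehat R^{i,j}_{\lambda,0}$, and the pulled-back $R_\lambda$-contribution $K^{i,j}_{\lambda,1}$ genuinely become weakly singular of exponent $\beta=0$, and it is to these that Corollary~\ref{NucleoDebSingular.Derivadas.Holder.cor2} with $m=1$ applies. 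Without isolating and treating the degree-$(-1)$ principal part via the Calder\'on--Zygmund machinery, your argument cannot produce the claimed gain $C^{k,\alpha}(S)\to C^{k+1,\alpha}(S)$.
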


\begin{rem}The above regularity assumptions on the boundary surface will be discussed during the proof of the theorem. Roughly speaking, we will need $C^{k+5}$ boundaries for the operators in (\ref{NucleosMyS.Decomposicion.OperadoresIntegrales.form}) of first and second type to be bounded from $C^{k,\alpha}(S)$ to $C^{k+1,\alpha}(S)$ whilst assuming $C^{k+4}$ boundaries suffices to ensure the corresponding result for the third kind of operators in  (\ref{NucleosMyS.Decomposicion.OperadoresIntegrales.form}).  See \cite[Satz 4.3, Satz 4.4]{Heinemann} for the homogeneous harmonic case $\lambda=0$. Let us recall that a similar formalism was introduced in \cite{Nedelec} to deal with weakly singular operators whose homogeneous kernels are of Calder\'on--Zygmund type after a finite amount of derivatives is taken. The \textit{pseudo-homogeneous} kernels are those that can be split into finitely many homogeneous weakly singular kernels of the preceding type and an arbitrarily regular remainder. Its associated integral operators gain $m$ derivatives on any Sobolev space with finite exponent $1<p<\infty$, $-m$ being the \textit{class} of the pseudo-homogeneous kernel (see \cite[Chapter 4, Section 3]{Nedelec} for more details). In particular, \cite[Example 4.3]{Nedelec} shows that $\widetilde{K}^{i,j}_\lambda(x,z)$ is pseudo-homogeneous of class $-1$ and the aforementioned regularity result shows that $\mathcal{S}_\lambda^T$ is bounded from $W^{r,p}(S)$ to $W^{r+1,p}(S)$. This approach involves splitting the exponential function in $\Gamma_\lambda(z)$ on its Taylor series, giving rise to finitely many homogeneous kernels and a regular enough remainder. Since $\widetilde{K}^{i,j}_\lambda(x,z)$ has separated variables, it could be approached within this framework, but one cannot say the same about other of the kernels in (\ref{NucleosMyS.form}). On the contrary, the ideas in \cite{Heinemann} work well for H\"{o}lder regularity using singular and weakly singular kernels like those in Corollaries \ref{NucleoDebSingular.Derivadas.Holder.cor2} and \ref{NucleoSingular.kDerivadas.Holder.cor}.
\end{rem}

\begin{proof}
Since the kernel $\widetilde{K}^{i,j}_\lambda(x,z)$ can be analyzed through a similar reasoning (as shown in \cite{Heinemann} for the case $\lambda=0$), we will restrict out analysis to the kernels $K^{i,j}_{\lambda}(x,z)$ and $K^\mathcal{D}_\lambda(x,z)$, which were not studied in \cite{Heinemann}. Let us then split these inhomogeneous kernels into a homogeneous part and some less singular part (see the decomposition (\ref{PhiPsi.SolucionFundamental.Descomposicion.Gamma0+Rlambda}) and the functions $\phi_\lambda$ and $\psi_\lambda$ in (\ref{PhiPsi.SolucionFundamental.form})). To this end, notice that
\begin{align}\label{NucleosMyS.ReescrituraPhiyPsi.form}
\begin{split}
K^{\mathcal{D}}_\lambda(x,z)&=\frac{\phi_\lambda'(\vert z\vert)}{\vert z\vert}\widetilde{\eta}(x)\cdot z,\\
K^{i,j}_\lambda(x,z)&=(\widetilde{\eta}_i(x)-\widetilde{\eta}_i(x-z))\,\frac{\phi_\lambda'(\vert z\vert)}{\vert z\vert}z_j,
\end{split}
\end{align}
Consequently, 
\begin{align}\label{NucleosMyS.Factorizacion.form}
\begin{split}
K^{i,j}_{\lambda}(x,z)&=K^{i,j}_{\lambda,0}+K^{i,j}_{\lambda,1},\\
K^{\mathcal{D}}_{\lambda}(x,z)&=K^\mathcal{D}_{\lambda,0}+K^\mathcal{D}_{\lambda,1},
\end{split}
\end{align}
where,
\begin{equation}\label{NucleosMyS.Factorizacion.Partes.form}
\begin{array}{llcll}
\displaystyle K^{i,j}_{\lambda,0}(x,z)&\displaystyle :=-\frac{1}{4\pi}(\widetilde{\eta}_i(x)-\widetilde{\eta}_i(x-z))\,\frac{z_j}{\vert z\vert^3},&\hspace{1cm} & \displaystyle K^{i,j}_{\lambda,1}(x,z)&\displaystyle:=(\widetilde{\eta}_i(x)-\widetilde{\eta}_i(x-z))\frac{\psi_\lambda'(\vert z\vert)}{\vert z\vert}z_j,\\
\displaystyle K^\mathcal{D}_{\lambda,0}(x,z)&\displaystyle:=-\frac{1}{4\pi} \widetilde{\eta}(x)\cdot \frac{z}{\vert z\vert^3}, & \hspace{1cm} &\displaystyle K^\mathcal{D}_{\lambda,1}(x,z)& \displaystyle :=\widetilde{\eta}(x)\cdot\frac{\psi_\lambda'(\vert z\vert)z}{\vert z\vert}.
\end{array}
\end{equation}
Notice that the associated integral operators only involve values $x,y\in S$.  Define
$$d_S:=2\max_{x,y\in S}\vert x-y\vert$$
and let us take  $x\in S$ and $z\in B_{d_S}(0)$, so in this case we have
\begin{align}\label{NucleosMyS.CotasLocalesEnz.form}
\begin{split}
\frac{1}{\vert z\vert^{\beta_1}}+\frac{1}{\vert z\vert^{\beta_2}} & \leq \left(1+d_S^{M-m}\right)\frac{1}{\vert z\vert^{M}},\\
\vert z\vert^{\beta_1}+\vert z\vert^{\beta_2} & \leq \left(1+d_S^{M-m}\right)\vert z\vert^{m}
\end{split}
\end{align}
for any couple of exponents $\beta_1,\beta_2\geq 0$ and any $z\in B_{d_S}(0)$. Here $m$ and $M$ stand for the minimum and maximum values i.e.,
$$
m:=\min\{\beta_1,\beta_2\},\ \ M:=\max\{\beta_1,\beta_2\}.
$$

Consider the function arising in (\ref{NucleosMyS.Factorizacion.Partes.form}),
$$f_\lambda(r):=\frac{\psi_\lambda'(r)}{r},\ \ r>0,$$
and note that (\ref{PhiPsi.SolucionFundamental.Cotas.Derivadas.form}) leads to
\begin{equation}\label{PhiPsi.SolucionFundamental.Cotas.Derivadas.flambda.form}
\begin{array}{ll}
\displaystyle\vert f_\lambda^{(m)}(r)\vert \leq C\left(\frac{1}{r}+\frac{1}{r^{m+2}}\right), &r>0,\\
\displaystyle\vert f_\lambda^{(m)}(r)\vert \leq \widetilde{C}\frac{1}{r^{m+2}}, &  r\in \left(0,d_S\right).
\end{array}
\end{equation}
for some $C>0$ that does not depend on $m$  and some $\widetilde{C}$ depending on $m$ and $d_S$.

Let us study the boundedness of the integral operators associated with the integral kernels $K^{i,j}_{\lambda,n}$ and $K^{\mathcal{D}}_{\lambda,n}$ for $n=0,1$. To this end, let us consider a finite covering of $S$ by $M$ coordinate neighborhoods $\Sigma_1,\ldots,\Sigma_M\subseteq S$ endowed with the associated local charts $\mu_m:D_m\longrightarrow \Sigma_m$ belonging to $ C^{k+5}(\overline{D}_m,\RR^3)$ and enjoying homeomorphic extensions up to the boundary of the planar disks $D_m\subseteq \RR^2$. Also consider the associated partition of unity of class $C^{k+5}$, i.e., $\{\varphi_m\}_{m=1}^M\subseteq C^{k+5}(S)$ such that $\mbox{supp}\,\varphi_m\subseteq \Sigma_m$ for each index $\displaystyle m=1,\ldots,M$ and
$$\displaystyle 1=\sum_{m=1}^M\varphi_m(x),$$
for any $x\in S.$ We will denote the Jacobian of each local chart $\mu_m$ by
$$J_m(s):=\left\vert\left(\frac{\partial \mu_m}{\partial s_1}\times \frac{\partial \mu_m}{\partial s_2}\right)(s)\right\vert=\sqrt{\det\left(g^{ij}_m (s)\right)},\ \ s\in D_m.$$
Here $g^{ij}_m(s)$ stands for the $(i,j)$ component of the induced Euclidean metric on $S$ with respect to the local chart $\mu_m$, i.e.,
$$\displaystyle\left(g^{ij}_m(s)\right)=\left(\begin{array}{cc} E_m(s) & F_m(s)\\ F_m(s) & G_m(s)\end{array}\right),$$
where $E_m,F_m,G_m$ stands for the coefficients of the first fundamental form, namely,
$$
E_m(s):=\frac{\partial \mu_m}{\partial s_1}(s)\cdot \frac{\partial \mu_m}{\partial s_1}(s),\hspace{0.5cm} F_m(s):=\frac{\partial \mu_m}{\partial s_1}(s)\cdot \frac{\partial \mu_m}{\partial s_2}(s),\hspace{0.5cm} G_m(s):=\frac{\partial \mu_m}{\partial s_2}(s)\cdot \frac{\partial \mu_m}{\partial s_2}(s).
$$
Consequently,
\begin{align}
(T_{K^{i,j}_{\lambda,n}}\zeta)(\mu_m(s))&=\sum_{m'=1}^M\int_{D_{m'}}K^{i,j}_{\lambda,n}(\mu_m(s),\mu_m(s)-\mu_{m'}(t))\varphi_{m'}(\mu_{m'}(t))\zeta(\mu_{m'}(t))J_{m'}(t)\,dt,\label{NucleosMyS.Homogeneos.Coordenadas.form1}\\
(T_{K^\mathcal{D}_{\lambda,n}}\zeta)(\mu_m(s))&=\sum_{m'=1}^M\int_{D_{m'}}K^\mathcal{D}_{\lambda,n}(\mu_m(s),\mu_m(s)-\mu_{m'}(t))\varphi_{m'}(\mu_{m'}(t))\zeta(\mu_{m'}(t))J_{m'}(t)\,dt.\label{NucleosMyS.Homogeneos.Coordenadas.form2}
\end{align}

We will study the most singular case $m'=m$ and then show how the case $m'\neq m$ follows from it. An important fact is that we will extract the most singular homogeneous parts of $K^{i,j}_{\lambda,0}(x,z)$ and $K^\mathcal{D}_{\lambda,0}(x,z)$ by virtue of the splitting (\ref{NucleosMyS.Factorizacion.form}). However, the change of variables in the coordinate neighborhoods $\Sigma_m$ gives rise to new inhomogeneous planar kernels, 
$$K^{i,j}_{\lambda,0}(\mu_m(s),\mu_m(s)-\mu_{m}(t))	\ \mbox{ and }\ K^\mathcal{D}_{\lambda,0}(\mu_m(s),\mu_m(s)-\mu_{m}(t)).$$
To solve this difficulty, we will decompose them again into the more sigular homogeneous part, which stands for a planar homogeneous kernel of degree $-1$, and some inhomogeneous but less singular term. Then, we will prove the corresponding regularity results for each term through Corollaries \ref{NucleoDebSingular.Derivadas.Holder.cor2} and \ref{NucleoSingular.kDerivadas.Holder.cor}.

Since both  $K^{i,j}_{\lambda,0}(x,z)$ and $K^\mathcal{D}_{\lambda,0}(x,z)$ can be studied by means of a similar reasoning, we will just analyze one of them, e.g. $K^{i,j}_{\lambda,0}(x,z)$. In fact, $K^\mathcal{D}_{\lambda,0}(x,z)$ stands for the integral kernel of the adjoint operator of the harmonic Neumann--Poincar\'e operator and was studied in \cite[Satz 4.4]{Heinemann}. Inspired by \cite[Lemma 4.2]{Heinemann}, let us expand $\mu_m(s)-\mu_m(t)$ though the integral form of Taylor's theorem up to second order,
\begin{equation}\label{DesarrolloTaylor.Parametrizacion.form}
\vert \mu_m(s)-\mu_m(t)\vert=\left(P_m(s,s-t)+Q_m(s,s-t)\right)^{1/2},
\end{equation}
where
\begin{align}
P_m(s,u)&:=\sum_{p,q=1}^2\frac{\partial \mu_m}{\partial s_p}(s)\cdot \frac{\partial \mu_m}{\partial s_q}(s)u_p u_q=\sum_{p,q=1}^2g^{pq}_m(s)u_pu_q=((g^{pq}_m(s))u)\cdot u,\label{DesarrolloTaylor.Parametrizacion.P.form}\\
Q_m(s,u)&:=-2\sum_{p,q,r=1}^2\frac{\partial \mu_m}{\partial s_p}(s)\cdot\left(\int_0^1(1-\theta)\frac{\partial^2\mu_m}{\partial s_q\partial s_r}(s-\theta u)\,d\theta\right)u_pu_qu_r\nonumber\\
&+\sum_{p,q,r,l=1}^2\left(\int_0^1(1-\theta)\frac{\partial^2\mu_m}{\partial s_p\partial s_q}(s-\theta u)\,d\theta\right)\cdot\left(\int_0^1(1-\theta)\frac{\partial^2\mu_m}{\partial s_r\partial s_l}(s-\theta u)\,d\theta\right)u_pu_qu_ru_l.\label{DesarrolloTaylor.Parametrizacion.Q.form}
\end{align}
Straightforward computations shows that $P_m(s,u)$ is positively homogeneous on $u$ of degree $2$, i.e.,
\begin{equation}\label{DesarrolloTaylor.Parametrizacion.P.homogeneo.form}
P_m(s,\rho u)=\rho^2P_m(s,u),
\end{equation}
for all $s\in D_m$, $u\in\RR^2\setminus\{0\}$ and $\rho>0$. Moreover, the estimates 
\begin{equation}\label{DesarrolloTaylor.Parametrizacion.PyQ.cotas.form}
\left.\begin{array}{rcl}
\displaystyle\frac{1}{C}\vert u\vert^2\leq & \displaystyle\vert P_m(s,u)\vert & \displaystyle\leq C\vert u\vert^2,\\
 & \displaystyle\vert Q_m(s,u)\vert & \displaystyle\leq C\vert u\vert^3,\\
\displaystyle\frac{1}{C}\vert u\vert^2\leq &\displaystyle \vert P_m(s,u)+Q_m(s,u)\vert & \displaystyle\leq C\vert u\vert^2.
\end{array}\right\}\left.\begin{array}{rl}
\displaystyle\left\vert D^\gamma_s P_m(s,u)\right\vert & \leq C\vert u\vert^2,\\
\displaystyle\left\vert D^\gamma_s Q_m(s,u)\right\vert & \leq C\vert u\vert^3,\\
\displaystyle\left\vert D^\gamma_s (P(s,u)+Q(s,u))\right\vert & \leq C\vert u\vert^2,\\
\displaystyle\left\vert \frac{\partial}{\partial u_i}D^\gamma_s P_m(s,u)\right\vert & \leq C\vert u\vert,\\
\displaystyle\left\vert \frac{\partial}{\partial u_i}D^\gamma_s Q_m(s,u)\right\vert & \leq C\vert u\vert^2,\\
\displaystyle\left\vert \frac{\partial}{\partial u_i}D^\gamma_s (P(s,u)+Q(s,u))\right\vert & \leq C\vert u\vert,\\
\displaystyle\left\vert \frac{\partial^2}{\partial u_i\partial u_j}D^\gamma_s P_m(s,u)\right\vert & \leq C\vert u\vert^0,\\
\displaystyle\left\vert \frac{\partial^2}{\partial u_i\partial u_j}D^\gamma_s Q_m(s,u)\right\vert & \leq C\vert u\vert,\\
\displaystyle\left\vert \frac{\partial^2}{\partial u_i\partial u_j}D^\gamma_s (P(s,u)+Q(s,u))\right\vert & \leq C\vert u\vert^0.
\end{array}\right\}
\end{equation}
hold for each $s\in D_m$, $u\in\RR^2$ such that $s-u\in D_m$ and every multi-index  with $\vert \gamma\vert\leq k$. See \cite[Satz 4.2]{Heinemann} for the details, which are starightforward. 

Our homogenization procedure follows from the next splitting, where $(\widetilde{\eta}\circ\mu_m)_i$ and $(\mu_m)_j$ are expanded again by means of the integral form of Taylor's theorem up to second order
$$
K^{i,j}_{\lambda,0}(\mu_m(s),\mu_m(s)-\mu_m(t))=H^{i,j}_{\lambda,0}(s,s-t)+R^{i,j}_{\lambda,0}(s,s-t),
$$
where the homogeneous part $H^{i,j}_{\lambda,0}(s,u)$ and the remainder $R^{i,j}_{\lambda,0}(s,u)$ take the form
\begin{align*}
H^{i,j}_{\lambda,0}(s,u)&:=-\frac{1}{4\pi}P_m(s,u)^{-3/2}\sum_{p,q=1}^2\frac{\partial (\widetilde{\eta}\circ\mu_m)_i}{\partial s_p}(s)\frac{\partial (\mu_m)_j}{\partial s_q}(s)u_p u_q,\\
R^{i,j}_{\lambda,0}(s,u)&:=\widetilde{R}^{i,j}_{\lambda,0}(s,u)+\widehat{R}^{i,j}_{\lambda,0}(s,u),
\end{align*}
and the remainder is split into
\begin{align*}
\widetilde{R}^{i,j}_{\lambda,0}(s,u):=&-\frac{1}{4\pi}\left((P_m(s,u)+Q_m(s,u))^{-3/2}-P_m(s,u)^{-3/2}\right)\\
&\hspace{2cm}\times\left(\sum_{p,q=1}^2 \frac{\partial(\widetilde{\eta}\circ\mu_m)_i}{\partial s_p}(s)\frac{\partial (\mu_m)_j}{\partial s_q}(s)u_p u_q\right),\\
\widehat{R}^{i,j}_{\lambda,0}(s,u):= &-\frac{1}{4\pi}\left(P_m(s,u)+Q_m(s,u)\right)^{-3/2}\\
&\hspace{2cm}\times\left\{-\sum_{p,q,r=1}^2\left(\int_0^1(1-\theta)\frac{\partial^2 (\widetilde{\eta}\circ\mu_m)_i}{\partial s_p\partial s_q}(s-\theta u)\,d\theta\right)\frac{\partial (\mu_m)_j}{\partial s_r}(s)u_pu_qu_r\right.\\
&\hspace{2.7cm}-\sum_{p,q,r=1}^2\frac{\partial (\widetilde{\eta}\circ\mu_m)_i}{\partial s_p}(s)\left(\int_0^1(1-\theta)\frac{\partial (\mu_m)_j}{\partial s_q\partial s_r}(s-\theta u)\,d\theta\right)u_pu_qu_r\\
&\hspace{2.7cm}+\sum_{p,q,r,l=1}^2\left(\int_0^1(1-\theta)\frac{\partial ^2(\widetilde{\eta}\circ \mu_m)_i}{\partial s_p\partial s_q}(s-\theta u)\,d\theta\right)\times\\
&\left.\hspace{4cm}\times\left(\int_0^1 (1-\theta)\frac{\partial^2(\mu_m)_j}{\partial s_r\partial s_l}(s-\theta u)\,d\theta\right)u_pu_qu_ru_l\right\}.
\end{align*}
Note again that only small values of $u=s-t$ are involved here; specifically,  $s\in D_m$ and $u\in D_{d_S^m}(0)$ for
$$d_S^m:=2\max_{s,t\in D_m}\vert s-t\vert.$$
Hence, one enjoy similar bounds to those in (\ref{NucleosMyS.CotasLocalesEnz.form}) with $z$ replaced with $u$. 

Let us next analyze each term in the above decomposition for $K^{i,j}_{\lambda,0}(\mu_m(s),\mu_m(s)-\mu_m(t))$. Firstly, since $P_m(s,u)$ is positively homogeneous on $u$ with degree $2$, then $H^{i,j}_{\lambda,0}(s,u)$ is positively homogeneous on $u$ with degree $-1$. The regularity properties in the second part in Corollary \ref{NucleoSingular.kDerivadas.Holder.cor} can be straighforwardly checked. Let us then concentrate on the regularity properties in the third part of such corollary and, to this end, let us compute the next partial derivative
\begin{align*}
D^\gamma_s H^{i,j}_{\lambda,0}&(s,u)=\\
&-\frac{1}{4\pi}\sum_{\sigma\leq \gamma}\binom{\gamma}{\sigma}D^\sigma_s\left(P_m(s,u)^{-3/2}\right)\left[\sum_{p,q=1}^2D^{\gamma-\sigma}_s\left(\frac{\partial (\widetilde{\eta}\circ\mu_m)_i}{\partial s_p}(s)\frac{\partial (\mu_m)_j}{\partial s_q}(s)\right)u_pu_q\right].
\end{align*}

Define the homogeneous function $h(t):=t^{-3/2},\ t>0$ and use chain rule for high order derivatives to arrive at
$$D^{\sigma}_s\left(P_m(s,u)^{-3/2}\right)=\sum_{(l,\beta,\delta)\in\mathcal{D}(\sigma)}(D^\delta h)(P_m(s,u))\prod_{r=1}^l \frac{1}{\delta_r!}\left(\frac{1}{\beta_r!}D^{\beta_r}_s P_m(s,u)\right)^{\delta_r}.$$
Recall that $(l,\beta,\delta)\in\mathcal{D}(\sigma)$ stands for the decompositions
$$\sigma=\sum_{r=1}^l\vert \delta_r\vert\,\beta_r,$$
where $\beta=(\beta_1,\ldots,\beta_l)$, $\delta=\sum_{r=1}^l\delta_r$ and for each $r\in \{1,\ldots,l-1\}$ there exists some $i_r\in\{1,2\}$ such that $(\beta_r)^{i_r}<(\beta_{r+1})^{i_r}$ and $(\beta_r)^i=(\beta_{r+1})^i$ for every $i\neq i_r$.

By virtue of (\ref{DesarrolloTaylor.Parametrizacion.PyQ.cotas.form}), the derivatives with respect to $s$ behave as 
$$\left\vert D^\gamma_s H^{i,j}_{\lambda,0}(s,u)\right\vert\leq C\frac{1}{\vert u\vert}.$$
Let us take derivatives with respect to $u$ and arrive at 
\begin{align*}
\nabla_u&D^\gamma_s H^{i,j}_{\lambda,0}(s,u)\\
=&-\frac{1}{4\pi}\sum_{\sigma\leq \gamma}\binom{\gamma}{\sigma}\nabla_u D^\eta_s\left(P_m(s,u)^{-3/2}\right)\left[\sum_{p,q=1}^2D^{\sigma-\gamma}_s\left(\frac{\partial (\widetilde{\eta}\circ\mu_m)_i}{\partial s_p}(s)\frac{\partial (\mu_m)_j}{\partial s_q}(s)\right)u_pu_q\right]\\
&-\frac{1}{4\pi}\sum_{\sigma\leq \gamma}\binom{\gamma}{\sigma}D^\sigma_s\left(P_m(s,u)^{-3/2}\right)\left[\sum_{p,q=1}^2D^{\sigma-\gamma}_s\left(\frac{\partial (\widetilde{\eta}\circ\mu_m)_i}{\partial s_p}(s)\frac{\partial (\mu_m)_j}{\partial s_q}(s)\right)\nabla_u(u_pu_q)\right],
\end{align*}
They can be similarly estimated by means of (\ref{DesarrolloTaylor.Parametrizacion.PyQ.cotas.form}):
$$\left\vert D^\gamma_s\nabla_u H^{i,j}_{\lambda,0}(s,u)\right\vert\leq C\frac{1}{\vert u\vert^2}.$$
Thus, $H^{i,j}_{\lambda,0}$ has the regularity properties required in Corollary  \ref{NucleoSingular.kDerivadas.Holder.cor}, so
$$\left\Vert\int_{D_m} H^{i,j}_{\lambda,0}(s,s-t)\varphi_m(\mu_m(t))\zeta(\mu_m(t))J_m(t)\,dt\right\Vert_{C^{k+1,\alpha}(D_m)}\leq M\Vert \zeta\Vert_{C^{k,\alpha}(\Sigma_m)}.$$

Let us now move to the remainder $R^{i,j}_{\lambda,0}(s,u)$ and show that the hypoteses in Corollary \ref{NucleoDebSingular.Derivadas.Holder.cor2} are satisfied too. On the one hand, in  the first term $\widetilde{R}^{i,j}_{\lambda,0}(s,u)$ in $R^{i,j}_{\lambda,0}(s,u)$ one can rearranged terms as 
\begin{align*}
(P_m(s,u)+Q_m(s,u))^{-3/2}-P_m(s,u)^{-3/2}&=\int_0^1\frac{d}{d\theta}(P_m(s,u)+\theta Q_m(s,u))^{-3/2}\,d\theta\\
&=-\frac{3}{2}Q_m(s,u)\int_0^1(P_m(s,u)+\theta Q_m(s,u))^{-5/2}\,d\theta.
\end{align*}
Therefore, a $D^\gamma_s$ derivative of $\widetilde{R}^{i,j}_{\lambda,0}(s,u)$ takes the form
\begin{align*}
D^\gamma_s \widetilde{R}^{i,j}_{\lambda,0}(s,u)=\frac{1}{4\pi}\frac{3}{2}\sum_{\sigma\leq \gamma}\binom{\gamma}{\sigma}D^{\sigma}_s&\left(Q_m(s,u)\int_0^1(P_m(s,u)+\theta Q_m(s,u))^{-5/2}\,d\theta\right)\\
\times & \sum_{p,q=1}^2D^{\gamma-\sigma}\left(\frac{\partial(\widetilde{\eta}\circ\mu_m)_i}{\partial s_p}(s)\frac{\partial (\mu_m)_j}{\partial s_q}(s)\right)u_pu_q.
\end{align*}
If we consider $\widetilde{h}(t)=t^{-5/2}$, a similar argument shows that
\begin{align*}
D^\sigma_s&\left(Q_m(s,u)\int_0^1(P_m(s,u)+\theta Q_m(s,u))^{-5/2}\,d\theta\right)\\
&\hspace{0.5cm}=\sum_{\rho\leq \sigma}\binom{\sigma}{\rho}D^\rho_s(Q_m(s,u))\int_0^1D^{\sigma-\rho}_s\left(\left(P_m(s,u)+\theta Q_m(s,u)\right)^{-5/2}\right)\,d\theta\\
&\hspace{0.5cm}=\sum_{\rho\leq \sigma}\binom{\sigma}{\rho}D^\rho_s(Q_m(s,u)) \int_0^1\sum_{(l,\beta,\delta)\in \mathcal{D}(\sigma-\rho)}(D^\delta\widetilde{h})(P_m(s,u)+\theta Q_m(s,u))\\
&\hspace{5cm}\times\prod_{r=1}^l\frac{1}{\delta_r!}\left(\frac{1}{\beta_r!}D^{\beta_r}_s(P_m(s,u)+\theta Q_m(s,u))\right)^{\delta_r}\,d\theta.
\end{align*}

Now, the estimates in (\ref{DesarrolloTaylor.Parametrizacion.PyQ.cotas.form}) yields
\begin{align*}
\left\vert D^\gamma_s \widetilde{R}^{i,j}_{\lambda,0}(s,u)\right\vert&\leq C\frac{1}{\vert u\vert^0},\\
\left\vert\frac{\partial}{\partial u_l}D^\gamma_s \widetilde{R}^{i,j}_{\lambda,0}(s,u)\right\vert&\leq C\frac{1}{\vert u\vert},\\
\left\vert\frac{\partial^2}{\partial u_{l_1}\partial u_{l_2}} D^\gamma_s\widetilde{R}^{i,j}_{\lambda,0}(s,u)\right\vert&\leq C\frac{1}{\vert u\vert^2}.
\end{align*}
These estimates ensure that all the hypotheses in Corollary \ref{NucleoDebSingular.Derivadas.Holder.cor2} are satisfied, so
$$\left\Vert\int_{D_m}\widetilde{R}^{i,j}_{\lambda,0}(s,s-t)\varphi_m(\mu_m(t))\zeta(\mu_m(t))J_m(t)\,dt\right\Vert_{C^{k+1,\alpha}(D_m)}\leq M\Vert \zeta\Vert_{C^{k,\alpha}(\Sigma_m)}.$$

Regarding the second term $\widehat{R}^{i,j}_{\lambda,0}(s,u)$ of $R^{i,j}_{\lambda,0}(s,u)$ we can use a similar argument. First, the formula for the $D^\gamma_s$ derivative 
\begin{align*}
D^\gamma_s &\widehat{R}^{i,j}_{\lambda,0}(s,u)=\frac{1}{4\pi}\sum_{\sigma\leq \gamma}\binom{\gamma}{\sigma}D^\sigma_s\left((P_m(s,u)+Q_m(s,u))^{-3/2}\right)\\
&\hspace{1.5cm}\times\left\{\sum_{p,q,r=1}^2D^{\gamma-\sigma}_s\left(\left(\int_0^1(1-\theta)\frac{\partial^2 (\widetilde{\eta}\circ\mu_m)_i}{\partial s_p\partial s_q}(s-\theta u)\,d\theta\right)\frac{\partial (\mu_m)_j}{\partial s_r}(s)\right)u_pu_qu_r\right.\\
&\hspace{1.87cm}+\sum_{p,q,r=1}^2D^{\gamma-\sigma}_s\left(\frac{\partial (\widetilde{\eta}\circ\mu_m)_i}{\partial s_p}(s)\left(\int_0^1(1-\theta)\frac{\partial (\mu_m)_j}{\partial s_q\partial s_r}(s-\theta u)\,d\theta\right)\right)u_pu_qu_r\\
&\hspace{1.87cm}-\sum_{p,q,r,l=1}^2D^{\gamma-\sigma}_s\left(\left(\int_0^1(1-\theta)\frac{\partial ^2(\widetilde{\eta}\circ \mu_m)_i}{\partial s_p\partial s_q}(s-\theta u)\,d\theta\right)\times\right.\\
&\hspace{3.5cm}\left.\left.\times\left(\int_0^1 (1-\theta)\frac{\partial^2(\mu_m)_j}{\partial s_r\partial s_l}(s-\theta u)\,d\theta\right)\right)u_pu_qu_ru_l\right\},
\end{align*}
and the chain rule for high order derivatives lead to
\begin{align*}
&D^\sigma_s\left((P_m(s,u)+Q_m(s,u))^{-3/2}\right)\\
&\hspace{0.8cm}=\sum_{(l,\beta,\delta)\in\mathcal{D}(\sigma)}(D^\delta h)(P_m(s,u)+Q_m(s,u))\prod_{r=1}^l\frac{1}{\delta_r!}\left(\frac{1}{\beta_r!}D^{\beta_r}_s(P_m(s,u)+Q_m(s,u))\right)^{\delta_r}.
\end{align*}
Consequently, the estimates in (\ref{DesarrolloTaylor.Parametrizacion.PyQ.cotas.form}) show that
\begin{align*}
\left\vert D^{\gamma}_s\widehat{R}^{i,j}_{\lambda,0}(s,u)\right\vert&\leq C\frac{1}{\vert u\vert^0},\\
\left\vert \frac{\partial}{\partial u_l}D^{\gamma}_s\widehat{R}^{i,j}_{\lambda,0}(s,u)\right\vert&\leq C\frac{1}{\vert u\vert},\\
\left\vert \frac{\partial^2}{\partial u_{l_1}\partial u_{l_2}}D^{\gamma}_s\widehat{R}^{i,j}_{\lambda,0}(s,u)\right\vert&\leq C\frac{1}{\vert u\vert^2},
\end{align*}
and Corollary \ref{NucleoDebSingular.Derivadas.Holder.cor2} yields
$$\left\Vert\int_{D_m}\widehat{R}^{i,j}_{\lambda,0}(s,s-t)\varphi_m(\mu_m(t))\zeta(\mu_m(t))J_m(t)\,dt\right\Vert_{C^{k+1,\alpha}(D_m)}\leq M\Vert \zeta\Vert_{C^{k,\alpha}(\Sigma_m)}$$

Now we move to $K^{i,j}_{\lambda,1}(x,z)$ and expand $\widetilde{\eta}_i\circ\mu_m$ and $(\mu_m)_j$ through Taylor's theorem in integral form up to first order
\begin{align*}
K^{i,j}_{\lambda,1}&(\mu_m(s),\mu_m(s)-\mu_m(s-u))\\
&=f_\lambda(\vert P_m(s,u)+Q_m(s,u)\vert^{1/2})\sum_{p,q=1}^2\left(\int_0^1\frac{\partial(\widetilde{\eta}_i\circ\mu_m)}{\partial s_q}(s-\theta u)\,d\theta\right)\left(\int_0^1\frac{\partial(\mu_m)_j}{\partial s_q}(s-\theta u)\,d\theta\right)u_pu_q.
\end{align*}
Then, the $D^\gamma_s$ derivative of $K^{i,j}_{\lambda,1}(\mu_m(s),\mu_m(s)-\mu_m(t))$ takes the form
\begin{align*}
D^\gamma_s K^{i,j}_{\lambda,1}&(\mu_m(s),\mu_m(s)-\mu_m(s-u))\\
&=\sum_{\sigma\leq \gamma}\binom{\gamma}{\sigma}D^\sigma_s\left(f_\lambda(\vert P_m(s,u)+Q_m(s,u)\vert^{1/2})\right)\\
&\hspace{1cm}\times \left[\sum_{p,q=1}^2D^{\gamma-\sigma}_s\left(\left(\int_0^1\frac{\partial(\widetilde{\eta}_i\circ\mu_m)}{\partial s_q}(s-\theta u)\,d\theta\right)\left(\int_0^1\frac{\partial(\mu_m)_j}{\partial s_q}(s-\theta u)\,d\theta\right)\right)u_pu_q\right].
\end{align*}
Again, by the chain derivative formula we arrive at
\begin{multline*}
D^\sigma_s\left(f_\lambda(( P_m(s,u)+Q_m(s,u))^{1/2})\right)\\
=\sum_{(l,\beta,\delta)\in\mathcal{D}(\sigma)}\left.D^\delta(f_\lambda(\cdot^{1/2}))\right\vert_{P_m(s,u)+Q_m(s,u)}\prod_{r=1}^l\frac{1}{\delta_r!}\left(\frac{1}{\beta_r!}D^{\beta_r}_s(P_m(s,u)+Q_m(s,u))\right)^{\delta_r}.
\end{multline*}
Notice that (\ref{PhiPsi.SolucionFundamental.Cotas.Derivadas.flambda.form}) leads to 
$$\left\vert\frac{d^k}{dr^k}\left(f_\lambda(r^{1/2})\right)\right\vert\leq \widetilde{C}\frac{1}{r^{k+1}},\ \forall\,r\in \left(0,d_S^m\right).$$
Consequently, (\ref{DesarrolloTaylor.Parametrizacion.PyQ.cotas.form}) proves the upper bounds
\begin{align*}
\left\vert D^\gamma_s K^{i,j}_{\lambda,1}(\mu_m(s),\mu_m(s)-\mu_m(s-u))\right\vert & \leq C\frac{1}{\vert u\vert^0},\\
\left\vert \frac{\partial}{\partial u_{l_1}}D^\gamma_s K^{i,j}_{\lambda,1}(\mu_m(s),\mu_m(s)-\mu_m(s-u))\right\vert & \leq C\frac{1}{\vert u\vert},\\
\left\vert \frac{\partial^2}{\partial u_{l_1}\partial u_{l_2}}D^\gamma_s K^{i,j}_{\lambda,1}(\mu_m(s),\mu_m(s)-\mu_m(s-u))\right\vert & \leq C\frac{1}{\vert u\vert^2},
\end{align*}
so the hypotheses in Corollary \ref{NucleoDebSingular.Derivadas.Holder.cor2} are satisfied and
$$\left\Vert\int_{D_m}K^{i,j}_{\lambda,1}(\mu_m(s),\mu_m(s)-\mu_m(t))\varphi_m(\mu_m(t))\zeta(\mu_m(t))J_m(t)\,dt\right\Vert_{C^{k+1,\alpha}(D_m)}\leq M\Vert \zeta\Vert_{C^{k,\alpha}(\Sigma_m)}.$$

In order to complete the proof of the theorem, let us show how to deal with the terms $m'\neq m$ in (\ref{NucleosMyS.Homogeneos.Coordenadas.form1}) and (\ref{NucleosMyS.Homogeneos.Coordenadas.form2}). The idea is to obtain estimates over $\Sigma_m\cap \Sigma_{m'}$ and $\Sigma_m\setminus\overline{\Sigma_{m'}}$ separately. First,
\begin{align*}
&\left\Vert \int_{D_{m'}} K^{i,j}_{\lambda}(\mu_m(s),\mu_m(s)-\mu_{m'}(t))\varphi_{m'}(\mu_{m'}(t))\zeta(\mu_{m'}(t))J_{m'}(t)\,dt\right\Vert_{C^{k+1,\alpha}(\mu_m^{-1}(\Sigma_m\cap \Sigma_{m'}))}\\
&\hspace{1cm}\leq C\left\Vert \int_{D_{m'}} K^{i,j}_{\lambda}(\mu_{m'}(s),\mu_{m'}(s)-\mu_{m'}(t))\varphi_{m'}(\mu_{m'}(t))\zeta(\mu_{m'}(t))J_{m'}(t)\,dt\right\Vert_{C^{k+1,\alpha}(D_{m'})}\\
&\hspace{1cm}\leq CM\Vert \zeta\Vert_{C^{k,\alpha}(\Sigma_{m'})}.
\end{align*}
Second, define 
$$C_{m'}:=\mu_{m'}^{-1}(\supp\varphi_{m'}),\ K_{m'}:=\mu_{m'}(C_{m'})\mbox{ and }d_{m,m'}:=\mbox{dist}\left(\Sigma_m\setminus\overline{\Sigma_{m'}},K_{m'}\right)>0,$$
as showed in Figure \ref{fig:Fig2}.
\begin{figure}[t]
\centering
\includegraphics[scale=0.85]{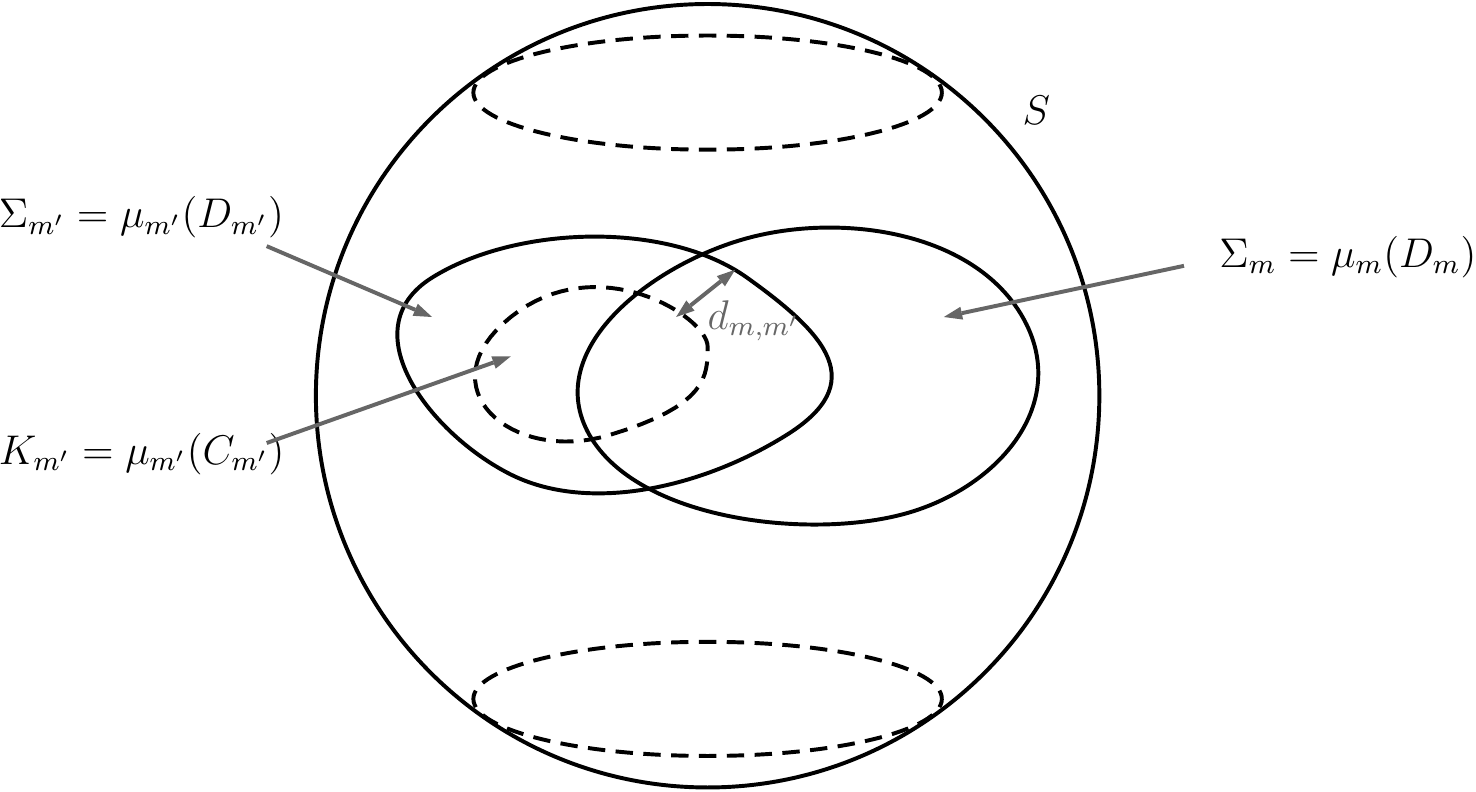}
\caption{Overlapping coordinate neighborhoods $\Sigma_m$ and $\Sigma_{m'}$.}
\label{fig:Fig2}
\end{figure}
This avoids the singularity near $z=0$ in the preceding kernels. Hence, we can take derivatives under the integral sign, obtaining the formula
\begin{align*}
D^\gamma_s \int_{D_{m'}} K^{i,j}_{\lambda}(\mu_m(s),&\mu_m(s)-\mu_{m'}(t))\varphi_{m'}(\mu_{m'}(t))\zeta(\mu_{m'}(t))J_{m'}(t)\,dt\\
=&\sum_{(l,(\delta^1,\delta^2),\beta)\in\mathcal{D}(\gamma)}\int_{C_{m'}}\left(D^{\delta^1}_xD^{\delta^2}_zK^{i,j}_\lambda\right)(\mu_m(s),\mu_m(s)-\mu_{m'}(t))\\
&\hspace{2.5cm}\times\left[\prod_{r=1}^l\frac{1}{\delta^1_r!\delta^2_r!}\left(\frac{1}{\beta_r!}D^{\beta_r}\mu_{m}(s)\right)^{\delta^1_r}\left(\frac{1}{\beta_r!}D^{\beta_r}\mu_{m}(s)\right)^{\delta^2_r}\right]\\
&\hspace{2.5cm}\times\varphi_{m'}(\mu_{m'}(t))\zeta(\mu_{m'}(t))J_{m'}(t)\,dt.
\end{align*}
for each $s\in \mu_{m}^{-1}(\Sigma_m\setminus\overline{\Sigma_{m'}})$. Since
$$\vert D^{\delta^1}_xD^{\delta^2}_zK^{i,j}_\lambda(x,z)\vert\leq \widetilde{C}\frac{1}{\vert z\vert^{\vert \delta^2\vert}},$$
for every $z\in B_{d_{m,m'}}(0)$, then
$$
\left\vert D^\gamma_s \int_{D_{m'}} K^{i,j}_{\lambda}(\mu_m(s),\mu_m(s)-\mu_{m'}(t))\varphi_{m'}(\mu_{m'}(t))\zeta(\mu_{m'}(t))J_{m'}(t)\,dt\right\vert\leq \frac{C}{d^{\vert \gamma\vert}}\vert C_{m'}\vert \Vert \zeta\Vert_{ C^0(\Sigma_{m'})}.
$$
Here, $0<d<1$ is such that $d<d_{m,m'}$ for every $m'\neq m$. Since one can take any $|\gamma|\leq k+2$ by the regularity of $S$, then we obtain the desired estimate for $m'\neq m$ and the result follows.
\end{proof}

\appendix
\section{Gradient, curl and divergence on surfaces}\label{Appendix.A}

In this Appendix we record some well known formulas for the gradient, curl and divergence operators on a compact surface $S\subseteq\RR^3$. These formulas have been useful in several sections to analyze boundary integrals. This is particularly true in the case of Lemma \ref{UnicidadBeltramiNoAcotados.lem}.

Let us consider the vector spaces of smooth tangent vector fields along $S$ and smooth $1$-forms, i.e., $\mathfrak{X}(S)$ and $\Omega^1(S)$ respectively. It is well known that these vector spaces can be identified using the Riemannian metric on $S$ by virtue of the musical isomorphisms
$$\begin{array}{cccccccc}
\phantom{o}^\flat: & \mathfrak{X}(S) & \longrightarrow & \Omega^1(S),\hspace{1cm}&\phantom{o}^\sharp: & \Omega^1(S) & \longrightarrow & \mathfrak{X}(S)\\
 & X & \longmapsto & X^\flat,\hspace{1cm}& &\alpha & \longmapsto & \alpha^\sharp.
\end{array}$$
These are defined as
$$
X^\flat(Y)=X\cdot Y, \hspace{1cm}\alpha^\sharp\cdot X=\alpha(X).
$$
for any given $X,Y\in \mathfrak{X}(S)$ and $\alpha\in \Omega^1(S)$.

The \textit{gradient vector field} over $S$ of any function $f\in C^1(S)$ can be identified with the exterior differential $1$-form over $S$ through the musical isomorphisms:$$\nabla_S f:=(d_S f)^\sharp.$$
If $\overline{f}\in C^1(\RR^3)$ is any extension of  $f$, it turns out that $\nabla_S f$ is the tangential component to the surface of the $\RR^3$ gradient field $\nabla \overline{f}$, that is,
$$\nabla_S f=-\eta\times(\eta\times \nabla \overline{f})\ \mbox{ on }S.$$

Now, we focus on the \textit{divergence} and \textit{curl} of a tangent vector field $X\in \mathfrak{X}(S)$. They can be distributionally defined by the identity
$$\begin{array}{lll}
\displaystyle\int_S \divop_S(X)\varphi\,dS=-\int_S X\cdot \nabla_S \varphi\,dS,&\forall\,\varphi\in C^\infty(S),\\
\displaystyle\int_S \curl_S(X)\varphi\,dS=-\int_S X\cdot(\eta\times \nabla_S \varphi)\,dS,&\forall\,\varphi\in C^\infty(S).
\end{array}$$
Another way to provide a coordinate-free expression for $\divop_S$ and $\curl_S$ is through the Hodge star operator $*$ and the codifferential $\delta_S$. Recall that $*$ acts on each $k$-forms space $\Omega^k(S)$ as the bijection
$$*:\,\Omega^k(S)\longrightarrow \Omega^{2-k}(S),$$
given by
$$\alpha\wedge *\beta=\alpha\cdot \beta \, \text{area}_S,$$
where $\text{area}_S$ stands for the Riemannian area 2-form on $S$ and $\alpha,\beta\in\Omega^1(S)$. The dot symbol here is the pointwise inner product of $k$-forms induced by the musical isomorphisms. Its inverse can be computed thought the next classical formula
$$**=(-1)^{k(2-k)}I\ \ \mbox{ in }\ \Omega^k(S).$$
Analogously, 
$$\delta_S:\,\Omega^k(S) \longrightarrow  \Omega^{k-1}(S),$$
acts on each $k$-forms space $\Omega^k(S)$ as
$$\delta_S \alpha:=(-1)^{2k-1}(*d_S*) \alpha.$$
Recall that $\delta_S$ is the adjoint of $d_S$. Specifically, for any $\alpha\in \Omega^1(S)$ and $\varphi\in C^1(S)$ one has
$$\int_S \varphi \, \delta_S \alpha\,dS=\int_S d_S\varphi\cdot \alpha\,dS,$$
where the above pointwise inner product is the one induced by the Riemannian metric in $S$ through the musical isomorphisms, i.e.,
$$\int_S \varphi \, \delta_S \alpha\,dS=\int_S (d_S\varphi)^\sharp\cdot \alpha^\sharp\,dS=\int_S \nabla_S \varphi\cdot \alpha^\sharp\,dS.$$
As a consequence, take any couple $X\in \mathfrak{X}(S)$ and $\varphi\in C^1(S)$ and note that
$$\int_S \divop_S (X)\,\varphi\,dS=-\int_S \nabla_S \varphi\cdot X,dS=-\int_S \delta_S(X^\flat)\varphi\,dS.$$
Consequently,
$$\divop_SX=-\delta_S(X^\flat)=(*d_S*)(X^\flat)\ \mbox{ on }S.$$
With $\curl_S$ we can also argue as above to arrive at the analogous formula
$$\curl_S X=(*d_S)(X^\flat)\ \mbox{ on }S.$$

To conclude, let us list a few useful identities that follow from the definition of $\nabla_S$, $\divop_S$ and $\curl_S$:

\begin{pro}\label{GradienteDivergenciaRotacional.S.Propiedades.pro}
$\,$
\begin{enumerate}
\item $\curl_S(X)=-\divop_S(\eta\times X)\ \ \forall\,X\in \mathfrak{X}(S).$
\item $\curl_S(-\eta\times(\eta\times F))=\eta\cdot \curl F\ \ \forall\,F\in C^1(\overline{\Omega})$.
\item $\curl_S(\nabla_S f)=0\ \ \forall\,f\in C^2(S).$
\item $\divop_S(\eta\times \nabla_S f)=0\ \ \forall\,f\in C^2(S).$
\item (Poincar\'e's lemma) Assume that $S$ is simply connected and consider any tangent vector field  $X\in \mathfrak{X}(S)$ such that $\curl_S(X)=0$. Then, there exists some $f\in C^2(S)$ such that $X=\nabla_S f$.
\end{enumerate}
\end{pro}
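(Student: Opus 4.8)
The plan is to establish the five identities in the order listed, using throughout the distributional definitions of $\divop_S$ and $\curl_S$ together with the coordinate-free descriptions $\divop_S X=(*d_S*)(X^\flat)$, $\curl_S X=(*d_S)(X^\flat)$, and $(\nabla_S f)^\flat=d_S f$ recalled above. First I would prove (1) by testing against an arbitrary $\varphi\in C^\infty(S)$: expanding the scalar triple product gives $X\cdot(\eta\times\nabla_S\varphi)=-\nabla_S\varphi\cdot(\eta\times X)$, and since $X$ is tangent and $\eta$ is normal the field $\eta\times X$ lies in $\mathfrak{X}(S)$. Hence $\int_S \curl_S(X)\varphi\,dS=\int_S \nabla_S\varphi\cdot(\eta\times X)\,dS=-\int_S \divop_S(\eta\times X)\varphi\,dS$ for every $\varphi$, which yields $\curl_S(X)=-\divop_S(\eta\times X)$. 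Identity (3) is immediate from the exterior-calculus formula, since $\curl_S(\nabla_S f)=(*d_S)\bigl((\nabla_S f)^\flat\bigr)=(*d_S)(d_S f)=*(d_S d_S f)=0$ by $d_S^2=0$. Identity (4) then follows at once by taking $X=\nabla_S f$ in (1) and invoking (3).

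The only item that genuinely couples the intrinsic surface operators to the ambient ones is (2), and this is where the argument needs care. Writing $F^{\tan}:=-\eta\times(\eta\times F)$ for the tangential part of $F$ along $S$, so that $F-F^{\tan}=(F\cdot\eta)\eta$ is normal, I would argue via Stokes' theorem: for any smooth domain $U\subseteq S$ with boundary curve $\gamma=\partial U$, the Kelvin--Stokes theorem in $\RR^3$ gives $\int_U (\curl F)\cdot\eta\,dS=\oint_\gamma F\cdot d\ell$, while the intrinsic Stokes theorem on $S$ gives $\oint_\gamma F^{\tan}\cdot d\ell=\int_U \curl_S(F^{\tan})\,dS$; since along $\gamma$ only the component of $F$ tangent to $S$ contributes to the line integral, the two right-hand sides agree, and letting $U$ shrink to a point yields the pointwise identity $\curl_S(F^{\tan})=(\curl F)\cdot\eta$. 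Alternatively, the same identity is obtained by pulling back the ambient $2$-form $dF^\flat=*_{\RR^3}(\curl F)^\flat$ to $S$ along the inclusion $\iota_S\colon S\hookrightarrow\RR^3$ and using the compatibility $*_S\circ\iota_S^{*}=\langle\,\cdot\,,\eta\rangle$ between the Hodge stars, together with $\curl_S(F^{\tan})=(*d_S)(\iota_S^{*}F^\flat)$ and $d_S\iota_S^{*}=\iota_S^{*}d$.

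For (5), suppose $X\in\mathfrak{X}(S)$ satisfies $\curl_S(X)=0$. Then $(*d_S)(X^\flat)=0$, and since the Hodge star is a bijection on forms this forces $d_S X^\flat=0$, i.e.\ $X^\flat$ is a closed $1$-form. As $S$ is simply connected, $H^1_{\mathrm{dR}}(S)=0$, so $X^\flat$ is exact: fixing $p_0\in S$ and setting $f(p):=\int_\gamma X^\flat$ over any path $\gamma$ from $p_0$ to $p$ gives a well-defined function (path-independence following from closedness of $X^\flat$ and simple connectedness via Stokes' theorem) of the same regularity as $X$, with $d_S f=X^\flat$. Applying the sharp isomorphism gives $X=(d_S f)^\sharp=\nabla_S f$, with $f\in C^2(S)$. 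The main obstacle is item (2), precisely because it requires making rigorous the passage between the ambient curl and the intrinsic surface curl; all the remaining identities are formal consequences of the exterior-calculus formalism and the distributional definitions already set up.
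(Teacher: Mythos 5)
The paper states Proposition~\ref{GradienteDivergenciaRotacional.S.Propiedades.pro} without proof, listing these identities as standard consequences of the definitions set up in Appendix~\ref{Appendix.A}; your proposal supplies a complete and correct proof using precisely that distributional and exterior-calculus framework. Items (1), (3), (4) and (5) are handled exactly as one should: (1) by cycling the scalar triple product inside the test-function definitions, (3) from $d_S^2=0$, (4) as the composition of (1) and (3), and (5) via closedness plus simple connectedness; your Stokes-theorem derivation of (2) (or equivalently the Hodge-star compatibility argument you sketch) is the standard way to link $\eta\cdot\curl F$ to $\curl_S$ of the tangential part and is correct. The only cosmetic slip is the phrase ``of the same regularity as $X$'' in item (5): the antiderivative $f$ gains one order of regularity over $X$ (so $X\in\mathfrak{X}^{1,\alpha}$ gives $f\in C^{2,\alpha}$), which is in fact what you then correctly assert when you conclude $f\in C^2(S)$.
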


\section{Obstructions to the existence of generalized Beltrami fields}\label{Appendix.B}

In this Appendix we will review the main results on the non-existence of Beltrami fields with a non-constant factor proved in~\cite{Enciso1}, as they are of direct interest for the theorems that we have presented in this paper.

Hence, let us consider in this Appendix a solution to the Beltrami field equation with a factor~$f$:
\begin{equation}\label{B}
\curl u= fu\,,\qquad \divop u=0\,.
\end{equation}
We will not specify the domain of the solution as the results that we will review are mostly local. The key observation is that, as the divergence of $u$ is zero, $f$ is a first integral of $u$:
\[
u\cdot \nabla f=0\,.
\]
Since this first integral condition is
very restrictive, it stands to reason that Equation~\eqref{B}
should not admit any nontrivial solutions for most functions~$f$. Before we make this idea precise in the next paragraphs, let us point out that the (well established) idea of constructing the iterations starting by dragging a function along the integral curves of a field, as we have done in the main body of this work, is fully consistent with the intuition that the first integral condition is the heart of the matter.

The first obstruction to the existence of solutions to the Beltrami equation~\eqref{B} is the following:

\begin{theo}\label{T.main}
  Let $D\subseteq\RR^3$ be a domain and assume that the function $f$
  is nonconstant and of class~$C^{6,\alpha}$. Suppose that the vector
  field~$u$ satisfies the Eq.~\eqref{B} in~$D$. Then there is a
  nonlinear partial differential operator $P\neq0$, which can be
  computed explicitly and involves derivatives of order at most~$6$,
  such that $u\equiv 0$ unless $P[f]$ is identically zero in $D$. In
  particular, $u\equiv0$ for all $f$ in a set of infinite codimension of
  $C^{k,\alpha}(U)$ with any $k\geq 6$.
\end{theo}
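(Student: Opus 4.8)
The plan is to exploit the first integral condition $u\cdot\nabla f=0$ together with the Beltrami equation~\eqref{B} to generate a large collection of algebraic constraints on $f$, and to show that failure of any one of these forces $u\equiv 0$. The starting point is the observation that $u$ is not only tangent to the level sets of $f$ but also to the level sets of every function obtained by iterated Lie differentiation along $u$. Concretely, if we set $g_0:=f$ and $g_{j+1}:=u\cdot\nabla g_j=\mathcal L_u g_j$, then $u\cdot\nabla g_j=0$ for all $j$ by construction. Since $u$ is nonvanishing (on the open set where $u\neq 0$; the zero set is handled separately by unique continuation, which holds because $u$ solves an elliptic system $-\Delta u=\nabla(f\cdot\ )\wedge u+\dots$ of the type in Corollary~\ref{BeltramiNoHomog.CaidaOptima.cor}), at each such point $u$ spans a one-dimensional space inside the common kernel of the differentials $dg_0,dg_1,dg_2,\dots$. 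Hence any three of these differentials are linearly dependent, which produces the vanishing of all $3\times 3$ minors of the matrix whose rows are $\nabla g_0,\nabla g_1,\nabla g_2$. This is the mechanism that yields a nonzero differential operator $P$.

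\textbf{Key steps.} First I would record the algebraic identities that express $g_1=u\cdot\nabla f$ and $g_2=u\cdot\nabla g_1$ purely in terms of $f$, $u$, and $\curl u=fu$; using $\curl u=fu$ repeatedly one eliminates the antisymmetric part of $\nabla u$, so that every first-order derivative of $u$ appearing in $g_1,g_2$ can be re-expressed through $f$, $\nabla f$ and the symmetric part of $\nabla u$. The key computation is to show that, after using $u\cdot\nabla f=0$ and differentiating it twice, one can solve for the components of $u$ (up to scale) in terms of $f$ and its derivatives on the region where $\nabla f$, $\nabla(u\cdot\nabla f)$ and a third independent vector are present — and then substituting back into $\divop u=0$ and $\curl u=fu$ gives closed scalar equations on $f$ alone, of order at most~$6$ (two derivatives to recover $u$, and up to four more from imposing the field equations). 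Collecting the ``generic'' such equation gives the operator $P$, and one must exhibit a single explicit $f$ (e.g. a suitable polynomial, or one of the known non-symmetric examples) for which $P[f]\neq 0$, so that $P\not\equiv 0$ as an operator. This is exactly the content of Theorem~\ref{T.main} as stated: either $P[f]\equiv 0$, or $u\equiv 0$.

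\textbf{From the pointwise obstruction to infinite codimension.} Granting Theorem~\ref{T.main}, the final statement is a soft consequence. The set $\mathcal{N}_k:=\{f\in C^{k,\alpha}(U):P[f]\equiv 0\text{ in }U\}$ is the relevant obstruction set: if $f\in C^{k,\alpha}(U)$ with $k\ge 6$ admits a nontrivial Beltrami field then $f\in\mathcal{N}_k$. It remains to argue that $\mathcal{N}_k$ has infinite codimension, i.e. is contained in a countable intersection of proper closed affine subspaces, or more simply that it contains no finite-codimensional subspace. Fix any point $x_0\in U$ and any multi-index $\beta$. The map $f\mapsto D^\beta(P[f])(x_0)$ is a (in general nonlinear) functional; to get codimension statements cleanly one linearizes: pick a reference $f_*$ with $P[f_*]\neq 0$ near $x_0$, and observe that the differential $DP[f_*]$ is a nonzero linear partial differential operator of order $\le 6$ with analytic-enough coefficients, so the conditions $D^\beta(DP[f_*]h)(x_0)=0$ over all $\beta$ cut out an infinite-codimensional subspace of increments $h$. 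Since $\mathcal{N}_k$ near $f_*$ is contained in the zero set of $P$, hence (to first order, and in fact exactly, by iterating with higher jets of $P$) in an infinite-codimensional set, one concludes that $\mathcal{N}_k$ is contained in a set of infinite codimension in $C^{k,\alpha}(U)$. A clean way to phrase ``infinite codimension'': for every $N$ there is a finite-dimensional subspace $V_N\subseteq C^{k,\alpha}(U)$ with $V_N\cap\mathcal{N}_k=\{0\}$ and $\dim V_N\ge N$ — take $V_N$ spanned by suitable bump functions times monomials chosen so that $P$ (or its linearization at a fixed nondegenerate point) is injective on $V_N$.

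\textbf{Main obstacle.} The genuinely hard part is the explicit derivation of the operator $P$ and the verification that $P\not\equiv 0$ — that is, showing that the elimination of $u$ from the system $\{\curl u=fu,\ \divop u=0,\ u\cdot\nabla f=0\}$ really does terminate in a nontrivial scalar PDE on $f$ of order at most six, rather than collapsing to a triviality. This requires carefully tracking the open conditions under which $u$ can be solved for (nondegeneracy of certain Jacobians built from $\nabla f$ and its derivatives) and checking that these conditions are compatible with $P[f]\neq 0$ on a nonempty open set; the codimension bookkeeping at the end is comparatively routine once this is in hand. All of this is precisely the analysis carried out in~\cite{Enciso1}, which we invoke for Theorem~\ref{T.main}; the passage to the infinite-codimension corollary is then the short functional-analytic argument sketched above.
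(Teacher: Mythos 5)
Your central construction is vacuous, and this is a fatal gap. You set $g_0:=f$ and $g_{j+1}:=u\cdot\nabla g_j$, and you claim this produces a chain of functions all of which are first integrals of $u$. But the very first step already collapses: the Beltrami system forces $u\cdot\nabla f=0$, so $g_1=u\cdot\nabla g_0\equiv 0$, and hence $g_j\equiv 0$ for every $j\geq 1$. Consequently $dg_1=dg_2=\cdots\equiv 0$, the ``common kernel'' of the differentials $dg_0,dg_1,dg_2,\dots$ is just $\ker df$ (two-dimensional at regular points), and every $3\times 3$ minor of the matrix with rows $\nabla g_0,\nabla g_1,\nabla g_2$ vanishes \emph{identically}, for any $f$ whatsoever. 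The ``operator $P$'' produced by this mechanism is therefore the zero operator, which is precisely what Theorem~\ref{T.main} requires you to rule out. The later part of your argument (solving for $u$ ``up to scale'' using $\nabla f$ and $\nabla(u\cdot\nabla f)$) suffers from the same issue, since $\nabla(u\cdot\nabla f)=\nabla(0)=0$; differentiating the first integral relation $u\cdot\nabla f=0$ does yield identities such as $(\partial_i u_j)(\partial_j f)+u_j\partial_i\partial_j f=0$, but these involve $\nabla u$ and do not let you eliminate $u$ in the direct way you describe.

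The approach actually taken in~\cite{Enciso1}, and sketched in Appendix~\ref{Appendix.B} of the present paper, is structurally different. One shows that the Beltrami equation~\eqref{B} is locally equivalent to a \emph{constrained evolution problem}: a time-dependent $1$-form $\beta(t)$ on a regular level surface $\Sigma$ of $f$ must satisfy a transport equation $\partial_t\beta=T(t)\,\beta$ (with $T$ built from $f$) \emph{and} the closedness constraint $d\beta=0$. The evolution does not preserve closedness for generic $f$, and the compatibility conditions --- expressing that $d(\partial_t\beta)=0$ is propagated --- are what generate the explicit nonlinear operator $P$ of order at most $6$. The nonvanishing of $P$ is then checked by exhibiting $f$ for which the compatibility fails. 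This is genuinely different from the elimination scheme you propose, and it is the mechanism you would need to reproduce. By contrast, your final paragraph (linearizing $P$ at a nondegenerate $f_*$, using jets at a point to cut out arbitrarily many independent linear conditions, and concluding infinite codimension of $\{P[f]\equiv 0\}$) is a reasonable way to pass from the pointwise obstruction to the infinite-codimension statement, and is consistent with the intent of the theorem; it just cannot get off the ground without a valid, nonzero $P$.
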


It should be noticed that Theorem~\ref{T.main} is of a purely {local} nature, as it provides obstructions for the existence of
nontrivial Beltrami fields in any open set and most proportionality
factors.

A less powerful but more easily visualized obstruction is that if~$f$ has
a regular level set homeomorphic to the sphere, then 
Equation~\eqref{B} does not have any nontrivial solutions. In
particular, there are no Beltrami fields whenever $f$ has local
extrema or is a radial function. This is related to the
classical theorem of Cowling ensuring that
there are no poloidal Beltrami fields with nonconstant
factor and axial symmetry~\cite{Chandra}:

\begin{theo}\label{T.spheres}
Suppose that the function $f$ is
of class $C^{2,\alpha}$ in a domain $D\subseteq\RR^3$. If a regular level set $f^{-1}(c)$ has a connected component
 in $D$ homeomorphic to the sphere, then any solution to 
  Equation~\eqref{B} in $D$ is identically zero. 
\end{theo}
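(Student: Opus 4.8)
The plan is to prove Theorem~\ref{T.spheres} by a topological argument on the sphere-like component of the level set combined with a flux computation. Let me denote by $\Sigma \subseteq D$ a connected component of the regular level set $f^{-1}(c)$ that is homeomorphic to $S^2$, and let $V$ be the bounded component of $\RR^3 \setminus \Sigma$ it encloses (shrinking $D$ if necessary so that $\overline V \subseteq D$). Since $\Sigma$ is a regular level set, $\nabla f$ is nowhere zero on $\Sigma$ and is normal to $\Sigma$; the Beltrami condition $u \cdot \nabla f = 0$ (which follows from $\divop u = 0$ applied to \eqref{B}, exactly as recalled in Appendix~\ref{Appendix.B}) then forces $u$ to be tangent to $\Sigma$ everywhere on $\Sigma$. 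So $\Sigma$ is an invariant surface of both $u$ and $\curl u = fu$.

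First I would exploit the fact that $u$ restricted to $\Sigma$ is a tangent vector field on a $2$-sphere. The key structural input is that $u$ is divergence-free, so its flux through $\Sigma$ vanishes trivially (it is tangent), but more importantly $\curl u = f u$ with $f \equiv c$ constant \emph{on $\Sigma$} means that, viewing things on $\Sigma$, the normal component of $\curl u$ is $c$ times the normal component of $u$, i.e.\ zero. By the surface identity $\eta \cdot \curl u = -\divop_S(\eta \times u)$ from Proposition~\ref{GradienteDivergenciaRotacional.S.Propiedades.pro}, this gives $\divop_S(\eta \times u) = 0$ on $\Sigma$. Since $\Sigma$ is a topological sphere, hence simply connected, Poincaré's lemma (Proposition~\ref{GradienteDivergenciaRotacional.S.Propiedades.pro}(5)) applied to the curl-free field $\eta \times u$... — wait, I need $\curl_S(\eta\times u)=0$, not $\divop_S$. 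Let me instead argue directly: $u|_\Sigma$ is a tangent field with $\divop_S u = 0$ as well (since $u$ is tangent to $\Sigma$ and divergence-free in $\RR^3$, its surface divergence vanishes on an invariant surface — this needs a short justification using that $\nabla f$ is an eigen-direction). A divergence-free tangent field on $S^2$ is the surface-curl of a function $\psi$, i.e.\ $u = \eta \times \nabla_S \psi$ for some $\psi \in C^2(\Sigma)$. But any such $\psi$ has critical points (e.g.\ its max and min on the compact $\Sigma$), and at a critical point $u$ vanishes. Alternatively, and more robustly, the Poincaré–Hopf theorem forces $u$ to vanish somewhere on $\Sigma$ because the Euler characteristic of $S^2$ is $2 \neq 0$.

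The heart of the proof is then to upgrade "$u$ vanishes at a point of $\Sigma$" to "$u \equiv 0$." This is where I would invoke the unique continuation principle for the Beltrami system, or more directly a Gronwall/Carleman-type argument: $u$ satisfies a first-order elliptic system $\curl u = fu$, $\divop u = 0$ with $f \in C^{2,\alpha}$, which can be differentiated once to yield a second-order elliptic equation $\Delta u = -\curl(fu) = -\nabla f \times u - f\,\curl u = -\nabla f \times u - f^2 u$ with bounded coefficients, whence the strong unique continuation property holds. So it suffices to show $u$ vanishes to infinite order at the point $p \in \Sigma$ where $u(p) = 0$. This last step is the main obstacle: vanishing at a single point does not automatically give infinite-order vanishing. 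The fix is to use the invariant-surface structure more aggressively — namely, to show $u \equiv 0$ on all of $\Sigma$, not just at one point. Here the plan is: the flow of $u$ on $\Sigma$ preserves $\Sigma$, and pulling the relation $\curl u = cu$ back to $\Sigma$ shows that the circulation of $u$ around any closed curve on $\Sigma$ equals $c$ times the flux of $u$ through the enclosed cap; taking the curve to shrink to $p$ forces the circulation to be $o(\text{area})$, and combined with $u = \eta \times \nabla_S\psi$ this forces $\psi$ to be harmonic-like enough that the single critical point propagates. More cleanly: since $u|_\Sigma = \eta \times \nabla_S \psi$ and, by the Beltrami equation restricted to $\Sigma$, one can show $\Delta_S \psi = -c\,\psi + \text{const}$ (an eigenfunction-type equation on $S^2$), and an eigenfunction of $\Delta_S + c$ on $S^2$ that has a critical point with zero value at $p$ together with $\nabla_S\psi(p)=0$ ... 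Actually the cleanest route, and the one I would write up, is: show $u|_\Sigma \equiv 0$ directly by proving $\nabla_S \psi \equiv 0$, using that $\psi$ satisfies a Helmholtz-type equation on $\Sigma$ forced by the Beltrami relation, and that a nonzero such $\psi$ cannot be everywhere "compatible" with the transversal derivative data encoded in the remaining equations. Once $u|_\Sigma \equiv 0$, the vector field $u$ vanishes together with all its tangential derivatives along $\Sigma$; combined with $\curl u = fu$ and $\divop u = 0$, an induction on the order of normal derivatives at $\Sigma$ (an ODE-type uniqueness along the normal geodesics, since $\nabla f \neq 0$ lets us use $f$ as a coordinate) shows $u$ vanishes to infinite order on $\Sigma$, and strong unique continuation finishes the argument: $u \equiv 0$ on the connected set $D$.

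I expect the genuinely delicate point to be making "$u|_\Sigma \equiv 0$" rigorous: the Poincaré–Hopf argument gives only isolated zeros a priori, so one must either show the zero set of $u$ on $\Sigma$ is all of $\Sigma$, or carry out a careful local analysis near a zero $p$ using the structure of the Beltrami equation in coordinates adapted to the level sets of $f$ near $\Sigma$ (with $f$ itself as one coordinate). In those coordinates the system becomes a first-order evolution in the $f$-direction with the constraint $u \cdot \nabla f = 0$ as initial data on $\Sigma$; the strategy is to show that the restriction $u|_\Sigma$ solves, by itself, an overdetermined elliptic problem on $S^2$ whose only solution is zero. That reduction — extracting a self-contained equation on $\Sigma$ — is the crux, and I would handle it by combining the surface identities of Appendix~\ref{Appendix.A} with the observation that on $\Sigma$ the factor is the constant $c$, so the machinery for \emph{strong} (constant-factor) Beltrami fields, in particular a uniqueness statement in the spirit of Lemma~\ref{UnicidadBeltramiNoAcotados.lem} adapted to a closed surface, applies to the induced problem.
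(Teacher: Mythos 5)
There is a genuine error in your reduction to $u|_\Sigma\equiv 0$, and a further unsupported claim built on top of it. The assertion that $\divop_S u=0$ on the invariant sphere $\Sigma$ is false: in coordinates $(y,f)$ adapted to the level sets, the Euclidean volume element is $dA\,df/|\nabla f|$ while the induced area on $\Sigma_t$ is $dA$, so for a tangent field with $\divop u=0$ one actually gets $\divop_S u = u\cdot\nabla_S\log|\nabla f|$, which is generically nonzero (the flow of $u$ on $\Sigma$ preserves the weighted measure $dA/|\nabla f|$, not $dA$). Consequently $u|_\Sigma$ is generally not of the form $\eta\times\nabla_S\psi$. The subsequent claim that ``$\Delta_S\psi=-c\psi+\text{const}$'' is also wrong: the only intrinsic consequence of $\curl u=cu$ on $\Sigma$ is $\eta\cdot\curl u=c\,u\cdot\eta=0$, in which the constant $c$ cancels, so no Helmholtz-type equation with $c$ as an eigenvalue arises. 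Finally, the Poincar\'e--Hopf detour yields only isolated zeros, which, as you acknowledge, is not enough.

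The good news is that the other surface identity you wrote down does close this step. From $\divop_S(\eta\times u)=0$, Proposition~\ref{GradienteDivergenciaRotacional.S.Propiedades.pro}(1) gives $\curl_S u=0$; since $\Sigma\cong S^2$ is simply connected, $u|_\Sigma=\nabla_S\phi$ for some $\phi\in C^2(\Sigma)$. Combined with the corrected divergence identity this becomes $\divop_S\bigl(|\nabla f|^{-1}\nabla_S\phi\bigr)=0$ on $\Sigma$, and multiplying by $\phi$ and integrating over the closed surface yields $\int_\Sigma|\nabla f|^{-1}|\nabla_S\phi|^2\,dS=0$, hence $\nabla_S\phi\equiv 0$ (as $|\nabla f|>0$ on a regular level set) and $u|_\Sigma=0$. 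For the propagation, your plan of ``infinite-order vanishing $\Rightarrow$ strong unique continuation'' is over-engineered and has a regularity problem: with $f\in C^{2,\alpha}$ the solution $u$ has only finite H\"older regularity, so the iterated normal derivatives needed for a Carleman argument do not all exist. The route that actually matches the hypotheses (and is how the proof in~\cite{Enciso1} is organized, via the constrained-evolution formulation recalled in Appendix~\ref{Appendix.B}) is to read the tangential part of $\curl u=fu$ as a first-order linear evolution in the variable $t=f$ for the restriction of $u$ to the level sets, with $\curl_S u=0$ as the constraint; then $u|_\Sigma=0$ propagates to all nearby level sets by Gronwall, and a covering argument extends this to all of $D$. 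So the overall plan is workable, but the two steps you flagged as ``needing a short justification'' and ``the crux'' are precisely where the argument as written goes wrong.
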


Although we will not repeat here the proof of these results, which can be found in~\cite{Enciso1}, let us give a few words on the main idea. The proof of these theorems is based on formulating the Beltrami
equation~\eqref{B} as a constrained evolution problem. Indeed, one can
show that Equation~\eqref{B} is locally equivalent, in a sense
that can be made precise, to the assertion that there is a
time-dependent 1-form $\beta(t)$ on a surface $\Sigma$ that satisfies the
evolution equation
\begin{equation}\label{evolution}
\partial_t\beta=T(t)\, \beta
\end{equation}
together with the differential constraint
\begin{equation}\label{constraint}
d\beta=0\,.
\end{equation}
Here $T(t)$ is a time-dependent tensor field that depends on $f$ and the exterior differential
$d$ is computed with respect to the coordinates on the surface $\Sigma$,
which, in turn, is a regular level set of $f$. It should be stressed
that this formulation depends strongly on the choice of coordinates.

This formulation lays bare the reason for which the Beltrami equation does not
generally admit nonzero solutions: the evolution~\eqref{evolution} is
not generally compatible with the constraint~\eqref{constraint}, and
the resulting compatibility conditions translate into equations that
$f$ and its derivatives must satisfy. In Theorems~\ref{T.main}
and~\ref{T.spheres} we have presented the first two of these
compatibility conditions, but in fact the method of proof yields a whole hierarchy of
explicitly computable obstructions (with increasingly
cumbersome expressions) to the existence of solutions. To ascertain
how many of these obstructions are actually independent remains an interesting
open problem.

Furthermore, the above formulation provides an appealing explanation,
without even resorting to the statement of the previous theorems, of
the reason for which the attempts at constructing solutions
to~\eqref{B} using variational techniques have failed: while
the regularity of the equation is indeed determined by an elliptic
system, its existence is in fact controlled by a constrained
evolution problem for which the existence theory is ill posed.

\end{document}